\documentclass[11pt,a4paper]{article}
\usepackage{amsmath,amssymb,amsthm,amsfonts,latexsym,graphicx,subfigure}
\usepackage[all,import]{xy}
\usepackage{indentfirst,cite}
\usepackage{tabularx,booktabs}
\usepackage{caption,amscd}
\usepackage{hyperref}

\usepackage{fancyhdr,enumerate}
\usepackage{bbm,color}
\usepackage{tikz}
\usetikzlibrary{shapes.geometric, arrows}
\usepackage{accents,cases}
\usepackage{multirow}
\usepackage{euscript}\usepackage{wasysym}\usepackage{pdfsync}\usepackage{comment}
\usepackage{extarrows}
\newtheorem{introthm}{Theorem}

\usepackage{array,float}
\newcommand{\PreserveBackslash}[1]{\let\temp=\\#1\let\\=\temp}
\newcolumntype{C}[1]{>{\PreserveBackslash\centering}p{#1}}
\newcolumntype{R}[1]{>{\PreserveBackslash\raggedleft}p{#1}}
\newcolumntype{L}[1]{>{\PreserveBackslash\raggedright}p{#1}}

\setlength{\textheight}{262mm} \setlength{\textwidth}{168mm}
\setlength{\oddsidemargin}{-5mm} \setlength{\evensidemargin}{0mm}
\setlength{\topmargin}{-23mm}

\DeclareMathOperator*{\argmin}{\ensuremath{arg\,min}}

\DeclareMathOperator*{\argopti}{\ensuremath{arg\,opti}}

\DeclareMathOperator*{\Sgn}{\ensuremath{Sgn}}

\DeclareMathOperator*{\sgn}{\ensuremath{sign}}

\DeclareMathOperator*{\vol}{\ensuremath{vol}}

\newcommand{\dist}{\mathrm{dist}}

\newcommand{\E}{\ensuremath{\mathcal{E}}}
\newcommand{\C}{\ensuremath{\mathcal{C}}}

\makeatletter
\def\wbar{\accentset{{\cc@style\underline{\mskip8mu}}}}

\makeatother

\renewcommand{\vec}[1]{\mbox{\boldmath \small $#1$}}

\newcommand{\power}{\ensuremath{\mathcal{P}}}

\newcommand{\supp}{\ensuremath{\mathrm{supp}}}
\newcommand{\Ch}{\ensuremath{\mathrm{Ch}}}
\newcommand{\R}{\ensuremath{\mathbb{R}}}
\newcommand{\A}{\ensuremath{\mathcal{A}}}

\theoremstyle{plain}
\newtheorem{theorem}{Theorem}[section]

\newtheorem{defn}{Definition}[section]
\newtheorem{notification}{Convention}

\newtheorem{remark}{Remark}

\newtheorem{cor}{Corollary}[section]
\newtheorem{pro}{Proposition}[section]
\newtheorem{example}{Example}[section]

\def\D{{\mathcal D}}

\allowdisplaybreaks[4]
\begin{document}
\bibliographystyle{unsrt}
\title{Discrete-to-Continuous Extensions: Lov\'asz extension, optimizations and eigenvalue problems
}
\author{J\"urgen Jost\footnotemark[1]\footnotemark[3], \and Dong Zhang\footnotemark[2]}
\footnotetext[1]{Max Planck Institute for Mathematics in the Sciences, Inselstrasse 22, 04103 Leipzig,
Germany. \\Email address:
{\tt  jost@mis.mpg.de}  (J\"urgen Jost).
}
\footnotetext[3]{Santa Fe Institute for the Sciences of Complexity, Santa Fe, NM 87501, USA}
\footnotetext[2]{LMAM and School of Mathematical Sciences, 
        Peking University,  
      100871 Beijing, China
\\
Email address:  
{\tt dongzhang@math.pku.edu.cn}  
(Dong Zhang).
}
\date{}
\maketitle

\begin{abstract}

\small
In this paper, we use various versions of Lov\'asz extension 
to systematically derive  continuous formulations of  problems from discrete mathematics.  This will take place in the following context:
\begin{enumerate}[-]
\item  For combinatorial optimization problems  in  quotient  form, we  systematically develop equivalent continuous versions, thereby making tools from convex optimization, fractional programming and more general continuous algorithms like the stochastic subgradient method available for such optimization problems.  Among other applications, we present an  
iteration scheme combining  the inverse power  and the steepest descent method to relax a Dinkelbach-type scheme for solving the equivalent continuous optimization. 
These results are natural and nontrivial generalizations of the related works by Hein   et al 
\cite{HeinBuhler2010,HS11,TVhyper-13}.

\item 

For some combinatorial quantities like Cheeger-type constants, 
we suggest a  nonlinear eigenvalue problem for a pair of Lov\'asz extensions of certain  functions, which encodes  certain  combinatorial structures. 
{This helps us to understand  the data generated by a pair of functions on a power set from a geometric point of view.}
\end{enumerate}

This theory has several applications to quantitative and combinatorial problems, including
\begin{enumerate}[(1)]
\item 
The equivalent continuous representations for the max $k$-cut problem, various Cheeger sets and isoperimetric constants are constructed.
This also initiates a study of Dirichlet and Neumann 1-Laplacians on graphs, in which the nodal domain property and Cheeger-type equalities are presented.
Among them, some Cheeger constants using different versions of vertex-boundary introduced in expander graph theory \cite{BHT00},
    are transformed into continuous forms, which recover the inequalities and identities on graph Poincare profiles proposed by Hume et al \cite{Hume17,HMT19,Hume19arxiv,HMT22}. { Also, we find that  the min-cut and max-cut problems are equivalent to solving  the first nontrivial  eigenvalue and the largest eigenvalue of a certain nonlinear eigenvalue  problem provided by the Lov\'asz extension,  respectively. {This leads  to one of the best continuous algorithms for the max-cut problem \cite{SZZmaxcut}, as recognized in the field of graph optimization.} }

\item Also, we derive a new equivalent continuous
    representation of the graph independence number, which can be compared with the Motzkin-Straus theorem. More importantly, an equivalent continuous optimization for the chromatic number is provided, which seems to be the first continuous representation of the graph vertex coloring number. We provide the first continuous reformulation of the frustration index in signed networks, and we find a connection to the so-called modularity measure. Graph matching numbers, submodular vertex covers and multiway partition problems can also be studied in our framework.
\end{enumerate}

\vspace{0.2cm}

\noindent\textbf{Keywords:}
Lov\'asz extension;
submodularity;
combinatorial optimization;
Cheeger inequalities \& isoperimetric problems;
chromatic number; frustration index; expanders 
\end{abstract}
\tableofcontents

\tikzstyle{startstop} = [rectangle, rounded corners, minimum width=1cm, minimum height=1cm,text centered, draw=black, fill=red!0]
\tikzstyle{io1} = [rectangle, trapezium left angle=80, trapezium right angle=100, minimum width=1cm, minimum height=1cm, text centered, draw=black, fill=blue!0]
\tikzstyle{io2} = [trapezium,  rounded corners, trapezium left angle=100, trapezium right angle=100, minimum width=1cm, minimum height=1cm, text centered, draw=black, fill=yellow!0]
\tikzstyle{process} = [rectangle, minimum width=1cm, minimum height=1cm, text centered, draw=black, fill=orange!0]
\tikzstyle{decision} = [circle, minimum width=1cm, minimum height=1cm, text centered, draw=black, fill=green!0]
\tikzstyle{decision2} = [ellipse, rounded corners=10mm, minimum width=2cm, minimum height=2cm, text centered, draw=black, fill=green!0]
\tikzstyle{arrow} = [thick,->,>=stealth]

\section{Introduction and Background}\label{sec:introduction}

As a fundamental tool in discrete mathematics,  Lov\'asz extension  has been deeply connected to  submodular analysis \cite{Choquet54,Lovasz}, and has been applied in many  areas like  combinatorial  optimization, game  theory, matroid theory, stochastic processes, electrical networks, computer vision and machine learning \cite{F05-book}. There are many generalizations, such as the disjoint-pair Lov\'asz extension and the Lov\'asz extension on distributive lattices \cite{F05-book,Murota03book}.  Recent developments include quasi-Lov\'asz extension on some algebraic structures and fuzzy mathematics  \cite{CouceiroMarichal11,CouceiroMarichal13}, applications of Lov\'asz extensions to  graph cut problems and computer science \cite{CSZ18,PRK19}, as well as Lov\'asz-softmax loss in deep learning \cite{BTB18}.


\vspace{0.16cm}

We shall start by looking  at the original Lov\'asz extension. For simplicity, we shall work throughout this paper with a finite and nonempty set $V=\{1,\cdots,n\}$ and its power set $\mathcal{P}(V)$. Also, we shall sometimes work on $\mathcal{P}(V)^k:=\{(A_1,\cdots,A_k):A_i\subset V,\,i=1,\cdots,k\}$ and $\mathcal{P}_k(V):=\{(A_1,\cdots,A_k)\in\power(V)^k:A_i\cap A_j=\varnothing,\,\forall i\ne j\}$, as well as some restricted family $\A\subset \mathcal{P}(V)^k$. We denote the cardinality of a set $A$ by $\#A$, and  identify every $A\in \mathcal{P}(V){\setminus\{\varnothing\}}$ with its indicator vector $\vec1_A\in \R^V=\R^n$. The Lov\'asz extension  extends the domain of $f$ to the whole Euclidean space\footnote{Some other versions in the literature only extend the domain to the cube $[0,1]^V$ or the nonnegative orthant $\R_{\ge0}^V$. In fact, many works on Boolean lattices  identify  $\power(V)$ with the discrete cube $\{0,1\}^n$.} $\R^V$. There are several equivalent
expressions:

\begin{itemize}
\item For $\vec x =(x_1,\dots ,x_n)\in \mathbb{R}^n$, let $\sigma:V\cup\{0\}\to V\cup\{0\}$ be a bijection such that $ x_{\sigma(1)}\le x_{\sigma(2)} \le \cdots\le x_{\sigma(n)}$ and $\sigma(0)=0$, where $x_0:=0$. The Lov\'asz extension of $f$ is defined by
\begin{equation}\label{eq:Lovasum}
f^{L}(\vec x)=\sum_{i=0}^{n-1}(x_{\sigma(i+1)}-x_{\sigma(i)})f(V^{\sigma(i)}(\vec x)),
\end{equation}
where   $V^0(\vec x)=V$ and $V^{\sigma(i)}(\vec x):=\{j\in V: x_{j}> x_{\sigma(i)}\},\;\;\;\; i=1,\cdots,n-1$. We can  write \eqref{eq:Lovasum} in an integral form as
\begin{align}\label{eq:Lovaintegral}
f^{L}(\vec x)&=\int_{\min\limits_{1\le i\le n}x_i}^{\max\limits_{1\le i\le n}x_i} f(V^t(\vec x))d t+f(V)\min_{1\le i\le n}x_i
\end{align}

where $V^t(\vec x)=\{i\in V:  x_i>t\}$. If we apply the   M\"obius transformation, this becomes
 \begin{equation}\label{eq:LovaMobuis} f^{L}(\vec x)=\sum\limits_{A\subset V}\sum\limits_{B\subset A}(-1)^{\#A-\#B}f(B)\bigwedge\limits_{i\in A} x_i,\end{equation}
where $\bigwedge\limits_{i\in A} x_i$ is the minimum over $\{x_i:i\in A\}$.
\end{itemize}
It is easy to see  that $f^L$ is positively one-homogeneous, PL (piecewise linear) and Lipschitz continuous \cite{Lovasz,Bach13}. Also, $f^L(\vec x+t\vec 1_V)=f^L(\vec x)+tf(V)$, $\forall t\in\R$, $\forall \vec x\in\R^V$, {and $f^L(\vec1_A)=f(A)$ for any $A\in\power(V)\setminus\{\varnothing\}$. The definition of $f^L$ does not involve the datum $f(\emptyset)$, and thus by convention, it is natural to reset $f(\emptyset)=0$ to match the equality  $f^L(\vec0)=0$, unless stated otherwise. For convenience, we say that $f:\power(V)\to\R$ is a constant (resp., positive) function if $f$ is constant (resp., positive) on $\power(V)\setminus\{\varnothing\}$}. 
Moreover, a continuous function $F:\R^V\to \R$ is the Lov\'asz extension of some $f:\power(V)\to\R$ if and only if $F(\vec x+\vec y)=F(\vec x)+F(\vec y)$ whenever $(x_i-x_j)(y_i-y_j)\ge0$, $\forall i,j\in V$.


\vspace{0.13cm}

In this paper, we shall use the Lov\'asz extension and its variants to study
the interplay between discrete and continuous aspects in  topics such as  convexity, optimization and spectral  theory.



\vspace{0.13cm}

\textbf{Submodular and convex functions}

Submodular function have emerged as a powerful concept in discrete optimization, see  Fujishige's monograph \cite{F05-book} and {Bach's works \cite{Bach13,Bach19}}.   We also refer the readers to some recent related works regarding  
submodular functions on  hypergraphs  \cite{LM18,LM18-,LHM20}.  We recall
  that a discrete function $f:\A\to \R$  defined on  an algebra $\A\subset\mathcal{P}(V)$ (i.e., $\A$ is  closed under  union and  intersection)  is submodular if $f(A)+f(B)\ge f(A\cup B)+f(A\cap B)$, $\forall A,B\in\A$. The Lov\'asz extension turns a  submodular into a convex function, and we can hence minimize the former by minimizing the latter:
\begin{theorem}[Lov\'asz \cite{Lovasz}]
	$f:\mathcal{P}(V)\to\mathbb{R}$ is submodular if and only if $f^L$ is convex.
\end{theorem}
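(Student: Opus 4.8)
The plan is to exploit that $f^L$ is positively one-homogeneous and continuous, so that convexity is equivalent to subadditivity, i.e. $f^L(\vec x+\vec y)\le f^L(\vec x)+f^L(\vec y)$ for all $\vec x,\vec y\in\R^V$. I would also lean on the two structural facts already recorded above: the interpolation identity $f^L(\vec1_A)=f(A)$, and the comonotone additivity $f^L(\vec x+\vec y)=f^L(\vec x)+f^L(\vec y)$ whenever $(x_i-x_j)(y_i-y_j)\ge0$ for all $i,j\in V$. The two implications are then treated separately, the forward (necessity) direction being short and the converse (sufficiency) carrying all the weight.

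For necessity, assume $f^L$ is convex and fix $A,B\in\power(V)$. The key observation is the pointwise identity $\vec1_A+\vec1_B=\vec1_{A\cap B}+\vec1_{A\cup B}$, together with the fact that the nested indicators $\vec1_{A\cap B}$ and $\vec1_{A\cup B}$ are comonotone. Comonotone additivity then gives $f^L(\vec1_A+\vec1_B)=f(A\cap B)+f(A\cup B)$, while subadditivity (convexity plus homogeneity) gives $f^L(\vec1_A+\vec1_B)\le f^L(\vec1_A)+f^L(\vec1_B)=f(A)+f(B)$. Comparing the two yields $f(A\cup B)+f(A\cap B)\le f(A)+f(B)$, which is submodularity; the degenerate cases $A\cap B=\varnothing$ are absorbed by the convention $f(\varnothing)=f^L(\vec0)=0$.

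For sufficiency I would show that, when $f$ is submodular, $f^L$ is a support function and hence convex. Reading off \eqref{eq:Lovasum} by Abel summation, on the sorting chamber $C_\sigma=\{\vec x:x_{\sigma(1)}\le\cdots\le x_{\sigma(n)}\}$ the extension is the linear map $\langle \vec w^\sigma,\cdot\rangle$, where $w^\sigma_{\sigma(i)}=f(V^{\sigma(i-1)}(\vec x))-f(V^{\sigma(i)}(\vec x))$ along the descending chain of superlevel sets, with the conventions $V^{\sigma(0)}(\vec x)=V$ and $V^{\sigma(n)}(\vec x)=\varnothing$; for $\vec x\in C_\sigma$ these sets equal $\{\sigma(i+1),\dots,\sigma(n)\}$ and so $\vec w^\sigma$ depends only on $\sigma$. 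I would then establish (i) that submodularity forces each $\vec w^\sigma$ into the base polytope $B(f)=\{\vec w:\langle \vec w,\vec1_V\rangle=f(V),\ \langle \vec w,\vec1_S\rangle\le f(S)\ \forall S\subseteq V\}$ (the equality is the telescoping sum, the inequalities use submodularity), and (ii) that $f^L(\vec x)=\max_{\sigma}\langle \vec w^\sigma,\vec x\rangle=\max_{\vec w\in B(f)}\langle \vec w,\vec x\rangle$. Since the support function of any set is automatically convex, (ii) closes the argument.

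The crux, and the step I expect to be the main obstacle, is the greedy optimality in (ii): for an arbitrary permutation $\sigma$ one must verify $\langle \vec w^\sigma,\vec x\rangle\le f^L(\vec x)$, with equality when $\sigma$ sorts $\vec x$. I would obtain this by reducing a general $\sigma$ to the sorting permutation through adjacent transpositions and checking that each swap of two order-adjacent elements $a,b$ cannot decrease $f^L$; a direct computation of the two gradients shows that this single-swap inequality is exactly $f(T\cup\{a\})+f(T\cup\{b\})\ge f(T)+f(T\cup\{a,b\})$ for the appropriate "upper" set $T$, which is precisely submodularity. Equivalently, one may argue geometrically: the chambers $C_\sigma$ form a complete polyhedral fan on which $f^L$ is piecewise linear, and such a function is convex iff it is convex across every codimension-one wall, the wall between two adjacent chambers again producing exactly this two-element exchange inequality. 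Either way, submodularity enters through, and only through, the adjacent-transposition estimate, so the remaining care lies in the bookkeeping of the gradients $\vec w^\sigma$ and in justifying the local-to-global convexity reduction.
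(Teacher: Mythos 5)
Your proposal is correct, and its two halves relate differently to the paper's own proof (which is given for a generalized statement, Theorem \ref{thm:submodular-L-equivalent}). The necessity direction is essentially the paper's argument: the paper likewise combines the midpoint inequality $f(A)+f(B)=f^L(\vec 1_A)+f^L(\vec 1_B)\ge 2f^L\bigl(\tfrac12(\vec 1_A+\vec 1_B)\bigr)$ with additivity of $f^L$ on the comonotone (nested) pair $\vec 1_{A\cap B}$, $\vec 1_{A\cup B}$. The sufficiency direction is where you genuinely diverge. The paper follows Lov\'asz's original route: it introduces $f^{\mathrm{convex}}(\vec x)=\inf_{\{\lambda_A\}\in\Lambda(\vec x)}\sum_A\lambda_A f(A)$, the infimum over representations $\sum_A\lambda_A\vec 1_A=\vec x$ with $\lambda_A\ge 0$ for $A\ne V$, notes that this infimum is convex, and shows that for submodular $f$ the minimizing representation is supported on a chain, forcing $f^{\mathrm{convex}}=f^L$. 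You instead realize $f^L$ from below as a pointwise maximum $\max_\sigma\langle\vec w^\sigma,\cdot\rangle$ of the finitely many greedy linear functionals, with submodularity entering only through the adjacent-transposition exchange inequality $f(T\cup\{a\})+f(T\cup\{b\})\ge f(T)+f(T\cup\{a,b\})$ --- i.e., Edmonds' greedy/base-polytope theorem rather than Lov\'asz's chain-decomposition argument; your bookkeeping of the gradients $\vec w^\sigma$ and of the wall inequality is exactly right. Note that your step (i), the membership $\vec w^\sigma\in B(f)$, is not even needed for bare convexity, since a maximum of finitely many linear functions is automatically convex; it is only needed for the stronger support-function identity $f^L=\max_{\vec w\in B(f)}\langle\vec w,\cdot\rangle$, whose second equality also requires the separate (standard, Abel-summation) verification that no $\vec w\in B(f)$ beats the sorting greedy vector. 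What each approach buys: yours makes convexity immediate once the max-representation is established and yields the base polytope of $f$ as a by-product; the paper's infimum-representation argument is the one that transfers to its generalized setting ($k$-way and disjoint-pair Lov\'asz extensions on restricted lattice domains $\D_\A$, where greedy vectors and base polytopes are not readily available), and the paper's integral-formula computation simultaneously proves the third equivalence it records --- $f$ is submodular iff $f^L$ is continuously submodular --- which your route does not address.
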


\begin{center}
	\begin{tikzpicture}[node distance=6cm]
	
	\node (convex) [startstop] {  Submodularity };
	
	\node (submodular) [startstop, right of=convex, xshift=1.6cm]  { Convexity  };
	
	\draw [arrow](convex) --node[anchor=south] { \small Lov\'asz extension } (submodular);
	\draw [arrow](submodular) --node[anchor=north] {  } (convex);
	\end{tikzpicture}
\end{center}

\begin{theorem}[Lov\'asz \cite{Lovasz}]If $f:\mathcal{P}(V)\to\mathbb{R}$ is submodular with $f(\varnothing)=0$, then
	$$\min\limits_{A\subset V}f(A)=\min\limits_{\vec x\in [0,1]^V}f^L(\vec x).$$
\end{theorem}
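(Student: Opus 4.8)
The plan is to prove the two inequalities separately, the crucial device being the level-set representation of $f^L$ on the cube obtained from the integral form \eqref{eq:Lovaintegral}. First I would fix $\vec x\in[0,1]^V$, so that $0\le\min_i x_i\le\max_i x_i\le 1$, and evaluate $\int_0^1 f(V^t(\vec x))\,dt$ by splitting at the thresholds $\min_i x_i$ and $\max_i x_i$: on $[0,\min_i x_i)$ every coordinate exceeds $t$, so $V^t(\vec x)=V$; on $[\max_i x_i,1]$ no coordinate exceeds $t$, so $V^t(\vec x)=\varnothing$ and $f(\varnothing)=0$ by hypothesis. Matching the surviving middle integral against \eqref{eq:Lovaintegral} yields the clean identity
\[
f^{L}(\vec x)=\int_0^1 f(V^t(\vec x))\,dt,\qquad \vec x\in[0,1]^V,
\]
and this is exactly the place where the assumption $f(\varnothing)=0$ enters.

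For the inequality $\min_{\vec x\in[0,1]^V}f^{L}(\vec x)\ge\min_{A\subset V}f(A)$ I would observe that each level set $V^t(\vec x)$ is merely some subset of $V$, hence $f(V^t(\vec x))\ge\min_{A\subset V}f(A)$ for almost every $t\in[0,1]$; integrating over $[0,1]$ and invoking the identity above gives $f^{L}(\vec x)\ge\min_{A}f(A)$ for every $\vec x$ in the cube. For the reverse inequality I would use that each indicator vector $\vec 1_A$ lies in $[0,1]^V$ and satisfies $f^{L}(\vec 1_A)=f(A)$ (already recorded before the theorem, or read off from the identity since $V^t(\vec 1_A)=A$ for $t\in(0,1)$), so that $\min_{\vec x}f^{L}\le f(A)$ for all $A$ and thus $\min_{\vec x}f^{L}\le\min_{A}f$. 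Continuity of $f^{L}$ together with compactness of $[0,1]^V$ guarantees the continuous minimum is attained, so both sides are genuine minima and the two inequalities combine to the claimed equality.

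Finally I would remark that, somewhat notably, this argument does not actually use submodularity: the equality of the two minima holds for any $f$ with $f(\varnothing)=0$. The role of submodularity is the content of the preceding theorem — it renders $f^{L}$ convex, so that the right-hand side becomes a convex minimization over a polytope and is therefore tractable, which is the practical payoff of the identity. I expect no serious obstacle here; the only points demanding care are the correct bookkeeping of the two boundary contributions when converting \eqref{eq:Lovaintegral} into the level-set integral, and the (harmless) observation that the finitely many thresholds $t$ at which $V^t(\vec x)$ jumps form a null set and hence do not affect the integral comparison.
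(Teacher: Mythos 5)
Your proof is correct, but it follows a genuinely different route from the one the paper takes. You argue directly from the integral form \eqref{eq:Lovaintegral}: on the cube it collapses to $f^L(\vec x)=\int_0^1 f(V^t(\vec x))\,dt$ (this is where $f(\varnothing)=0$ enters), the lower bound follows by bounding the integrand by $\min_{A\subset V}f(A)$, and the upper bound by evaluating at indicator vectors. The paper instead obtains the identity as a corollary of its general discrete-to-continuous machinery (Theorem \ref{thm:tilde-H-f}, resp.\ Proposition \ref{pro:fraction-f/g}): in the first example of Section \ref{sec:examples-Applications} it takes $g\equiv 1$, whose Lov\'asz extension is $\max_{i\in V}x_i$, writes $\min_A f(A)=\min_A f(A)/1=\min_{\vec x\in[0,\infty)^V} f^L(\vec x)/\max_i x_i$, restricts to the cube by zero-homogeneity of the quotient, and then removes the normalization $\max_i x_i=1$ by a short case analysis ($f\ge 0$ versus $f(A)<0$ for some $A$). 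Your argument is more elementary and self-contained, and it makes completely transparent your closing observation --- with which the paper implicitly agrees, since its derivation never invokes submodularity either --- that the identity holds for every $f$ with $f(\varnothing)=0$, submodularity serving only to make the right-hand side a convex program. What the paper's route buys is a demonstration that the classical Lov\'asz identity drops out of its general fractional-programming framework as a one-line special case, which is the point of that section.
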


\begin{center}
	\begin{tikzpicture}[node distance=6cm]
	
	\node (convex) [process] {
		Submodular minimization  };
	
	\node (submodular) [process, right of=convex, xshift=1.6cm]  {
		Convex programming   };
	
	\draw [arrow](convex) --node[anchor=south] { \small Lov\'asz extension } (submodular);
	\draw [arrow](submodular) --node[anchor=north] { \small} (convex);
	\end{tikzpicture}
\end{center}

Thus, submodularity can be seen  as some kind of
 `discrete convexity', and that naturally lead to  many generalizations, such as bisubmodular, $k$-submodular, L-convex and M-convex, see \cite{F05-book,Murota03book}.
Moreover, the following classical result characterizes the class of all functions which can be expressed as Lov\'asz extensions of  submodular functions. 
 \begin{theorem}[
  Theorem 7.40 in \cite{Murota03book}]
 	A one-homogeneous function $F:\R^V\to \R$ is a Lov\'asz extension of some submodular function if and only if $F(\vec x+t\vec 1_V)=F(\vec x)+tF(\vec 1_V)$, $\forall t\in\R$, $\forall \vec x\in\R^V$, and $F(\vec x)+F(\vec y)\ge F(\vec x\vee \vec y)+F(\vec x\wedge \vec y)$, where the $i$-th components of $\vec x\vee \vec y$ and $\vec x\wedge \vec y$ are
 	$(\vec x\vee \vec y)_i=\max\{x_i,y_i\}$ and $(\vec x\wedge \vec y)_i=\min\{x_i,y_i\}$.
 \end{theorem}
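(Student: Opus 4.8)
The plan is to prove both implications, using throughout the identities $f^L(\vec 1_A)=f(A)$ and the characterization quoted above, namely that a continuous positively one-homogeneous map is a Lov\'asz extension precisely when it is additive on comonotone pairs.

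\textbf{Necessity.} Suppose $F=f^L$ with $f$ submodular. One-homogeneity and the weighted translation identity $F(\vec x+t\vec 1_V)=F(\vec x)+tF(\vec 1_V)$ (with $F(\vec 1_V)=f^L(\vec 1_V)=f(V)$) are exactly the cited elementary properties of $f^L$. For the lattice inequality I would use the integral form of $f^L$ together with the level-set identities
\begin{equation*}
V^t(\vec x\vee\vec y)=V^t(\vec x)\cup V^t(\vec y),\qquad V^t(\vec x\wedge\vec y)=V^t(\vec x)\cap V^t(\vec y),
\end{equation*}
which hold for every $t$ because $\max(x_i,y_i)>t$ iff $x_i>t$ or $y_i>t$, and dually for the minimum. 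Submodularity of $f$ then gives, for each fixed $t$, the pointwise inequality $f(V^t(\vec x))+f(V^t(\vec y))\ge f(V^t(\vec x\vee\vec y))+f(V^t(\vec x\wedge\vec y))$; integrating this over $t$, where the boundary $f(V)$-contributions cancel in the alternating combination $F(\vec x)+F(\vec y)-F(\vec x\vee\vec y)-F(\vec x\wedge\vec y)$, yields submodularity of $F$ on $(\R^V,\vee,\wedge)$.

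\textbf{Sufficiency, the easy half.} Now assume $F$ is one-homogeneous and satisfies the two displayed conditions. Define $f(A):=F(\vec 1_A)$ for $A\ne\varnothing$ and $f(\varnothing):=0$. Since $\vec 1_A\vee\vec 1_B=\vec 1_{A\cup B}$ and $\vec 1_A\wedge\vec 1_B=\vec 1_{A\cap B}$, restricting the lattice inequality to indicator vectors gives $f(A)+f(B)\ge f(A\cup B)+f(A\cap B)$, so $f$ is submodular (the case $A\cap B=\varnothing$ uses $F(\vec 0)=0$). It remains to prove $F=f^L$. One inequality comes cheaply: every $\vec x$ is, after subtracting $(\min_i x_i)\vec 1_V$, a nonnegative combination $\sum_i c_i\vec 1_{A_i}$ along the nested chain $A_1\supseteq\cdots\supseteq A_{n-1}$ of its super-level sets, and a short induction peeling off the innermost set (writing $\sum_{i\le m}c_i\vec 1_{A_i}=\vec p\vee\vec q$ with $\vec q$ a multiple of $\vec 1_{A_m}$ and applying the lattice inequality) shows $F(\sum_i c_i\vec 1_{A_i})\le\sum_i c_i f(A_i)$; together with the translation identity this is exactly $F\le f^L$.

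\textbf{The crux.} The reverse inequality $F\ge f^L$ is the main obstacle, and it is genuinely where the combinatorial content enters: the global lattice inequality only produces comonotone \emph{sub}additivity, and one checks that the reflection $\vec x\mapsto -F(-\vec x)$ merely converts submodularity into supermodularity and reproduces the same subadditivity, so no cheap symmetry closes the gap. The route I would take is to invoke Lov\'asz's theorem to get that $f^L$ is convex and equals the support function of the base polytope $B(f)=\{\vec g:\langle\vec g,\vec 1_A\rangle\le f(A)\ \forall A,\ \langle\vec g,\vec 1_V\rangle=f(V)\}$, and then to show that the greedy vertex $\vec g^{*}$ attached to the ordering cone containing $\vec x$ lies in the subdifferential $\partial F(\vec 0)$; since $\langle\vec g^{*},\vec x\rangle=f^L(\vec x)$ this forces $F(\vec x)\ge f^L(\vec x)$. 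Equivalently one shows directly that $F$ is linear on each cone $C_\sigma=\{x_{\sigma(1)}\le\cdots\le x_{\sigma(n)}\}$, whence $F$ is comonotonically additive and thus a Lov\'asz extension by the quoted characterization; as $F$ agrees with $f^L$ on every indicator ray, that extension must be $f^L$. The delicate point is to rule out curvature inside each cone: the equalities $F(\vec 1_A)=f(A)$ on the extreme rays, together with $F\le f^L$, must be propagated to the interior, and convexity alone does not achieve this (a convex function lying below a linear one and equal to it on the extreme rays of a cone need not coincide with it), so the Fujishige-type greedy argument carries the real weight.
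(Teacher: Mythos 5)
Your necessity argument and the inequality $F\le f^L$ are sound. The genuine gap is the reverse inequality $F\ge f^L$: you correctly flag it as the crux, but neither of your two routes is a proof. The base-polytope route is circular as stated: membership $\vec g^*\in\partial F(\vec 0)$ only yields the global bound $F(\vec x)\ge\langle \vec g^*,\vec x\rangle$ if $F$ is convex, and convexity of $F$ is not among the hypotheses — that global linear lower bound on $F$ is precisely what you are trying to establish. Your alternative, "show $F$ is linear on each cone $C_\sigma$", is asserted rather than proved; you yourself observe that $F\le f^L$ together with equality on the extreme rays does not force equality inside, and you then defer the real step to an unspecified "Fujishige-type greedy argument". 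So the proposal does not close.

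Moreover, your diagnosis that no cheap argument exists is incorrect; the reason you miss it is that in the sufficiency half you use the translation identity only to normalize, never inside an estimate. The missing idea — which is exactly how the paper proves its bisubmodular and $k$-way analogues (Proposition \ref{pro:bisubmodular-continuous} and Theorem \ref{thm:submodular-L-equivalent}); for the original statement itself the paper only cites \cite{Murota03book} and \cite{CC18} — is a second telescoping that combines the lattice inequality with the translation identity used as an equality. Translate so that $x_{\sigma(1)}=\min_i x_i=0$ (legitimate, since $F$ and $f^L$ change by the same amount $-x_{\sigma(1)}F(\vec 1_V)$), put $c_i=x_{\sigma(i+1)}-x_{\sigma(i)}\ge 0$, $A_i=V^{\sigma(i)}(\vec x)$, and $\vec z_i=(\vec x-x_{\sigma(i)}\vec 1_V)\vee\vec 0$. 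One checks the exact identities
$$(c_i\vec 1_V)\wedge\vec z_i=c_i\vec 1_{A_i},\qquad (c_i\vec 1_V)\vee\vec z_i=\vec z_{i+1}+c_i\vec 1_V,$$
so the lattice inequality gives $F(c_i\vec 1_{A_i})\le F(c_i\vec 1_V)+F(\vec z_i)-F(\vec z_{i+1}+c_i\vec 1_V)$, and the translation hypothesis splits the last term as $F(\vec z_{i+1}+c_i\vec 1_V)=F(\vec z_{i+1})+c_iF(\vec 1_V)=F(\vec z_{i+1})+F(c_i\vec 1_V)$. Hence $F(c_i\vec 1_{A_i})\le F(\vec z_i)-F(\vec z_{i+1})$, and summing over $i=1,\dots,n-1$ telescopes (using $\vec z_1=\vec x$, $\vec z_n=\vec 0$, $F(\vec 0)=0$) to
$$f^L(\vec x)=\sum_{i=1}^{n-1}F(c_i\vec 1_{A_i})\le F(\vec x),$$
where the first equality holds because the $i=0$ term of the chain expansion vanishes and $c_if(A_i)=F(c_i\vec 1_{A_i})$ by one-homogeneity. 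This uses only one-homogeneity, the translation identity, and the lattice inequality — no convexity, no base polytope, no greedy vertex — and together with your inequality $F\le f^L$ it finishes the sufficiency.
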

 One may want to extend such a result to the  bisubmodular  or more general cases. 
 In that direction, we shall obtain some   results such as Proposition \ref{pro:bisubmodular-continuous} and Theorem \ref{thm:submodular-L-equivalent} in Section \ref{sec:SubmodularityConvexity}. It is also worth noting that Bach investigated an interesting generalization of submodular functions by a generalized Lov\'asz extension \cite{Bach19}.

So far,  research has mainly  focused  on `discrete convex' functions, leading to
`Discrete Convex Analysis' \cite{Murota98,Murota03book}, whereas the  discrete non-convex setting which is quite  popular in modern sciences has not yet received that much attention.

\vspace{0.16cm}

\textbf{Non-submodular cases}

Obviously, the non-convex case is so diverse and general that it cannot be directly studied by standard submodular tools.  Although some publications show several results on non-submodular (i.e., non-convex) minimization based on Lov\'asz extension \cite{HS11}, so far, these only work for special minimizations over the whole power set.  Here, we shall find applications for discrete optimization and nonlinear spectral graph theory by employing the multi-way Lov\'asz extension on enlarged and restricted  domains.

\vspace{0.16cm}

In summary, we are going to initiate the study of diverse continuous extensions in non-submodular settings. This paper develops a systematic framework for many aspects around the topic. We  establish a universal discrete-to-continuous framework via multi-way extensions, by systematically utilizing integral representations.  
In \cite{JZ-prepare21}, we establish the links between discrete Morse theory and continuous Morse theory via the original Lov\'asz extension. 
We shall now discuss some connections with other various fields.

\vspace{0.19cm}

\textbf{Connections with  combinatorial optimization}

Because of the wide range of applications of discrete mathematics in computer science,  combinatorial optimization has been much studied from the mathematical perspective.
It is known that any combinatorial optimization can be equivalently expressed as a continuous optimization via convex (or concave) extension, but often, 
there is the difficulty that one cannot  write down an equivalent continuous
object function in closed form. 
For practical purposes, it would be very helpful if one could  transfer a
combinatorial optimization problem to an explicit and simple equivalent
continuous optimization problem in closed form. Formally, in many concrete situations, it would be useful if one could get an identity of the  form
\begin{equation}\label{eq:D-to-C-formal}\min\limits_{(A_1,\cdots,A_k)\in \A\cap \supp(g)}\frac{f(A_1,\cdots,A_k)}{g(A_1,\cdots,A_k)}=\inf\limits_{\psi\in \D(\A)}\frac{\widetilde{f}(\psi)}{\widetilde{g}(\psi)}.\end{equation}
where $f,g:\A\to [0,\infty)$,  $\D(\A)$ is a feasible domain determined by $\A$ only, $\mathrm{supp}(g)$ is the support of $g$, and $\widetilde{f}$ and $\widetilde{g}$ are suitable continuous extensions of $f$ and $g$.

So far, only situations where  $f:\power(V)\to \R$ or $f:\power_2(V)\to\R$ have been investigated systematically \cite{HS11,CSZ18}, and what is lacking are situations with restrictions, that is,  incomplete data. 

Also, to the best of our knowledge, the known results in the literature do not work for  combinatorial optimization directly on set-tuples.  But most of combinatorial optimization problems should be formalized in the form of set-tuples, and only a few can be represented in set form or disjoint-pair form. Whenever one can find an equivalent Lipschitz function for a combinatorial problem in the field of discrete optimization, this makes  useful tools available and leads to new connections.
That is, one wishes to establish   a {\sl discrete-to-continuous transformation} like the operator $\sim$ in \eqref{eq:D-to-C-formal}.
We will show in Section \ref{sec:CC-transfer} that the Lov\'asz extension and its variants  
are suitable choices for such a transformation
(see Theorems \ref{thm:tilde-fg-equal}, \ref{thm:tilde-H-f} and Proposition \ref{pro:fraction-f/g} for details).

\vspace{0.15cm}

To reach these goals, we need to systematically study various generalizations  of the  Lov\'asz extension. More precisely, we shall work with the following  two different multi-way forms:

\begin{enumerate}[(1)]

\item Disjoint-pair version: 
    for a function $f:\power_2(V)\to\R$, its disjoint-pair
  Lov\'asz extension is defined as
\begin{equation}\label{eq:disjoint-pair-Lovasz-def-integral}
f^{L}(\vec x)=\int_0^{\|\vec x\|_\infty} f(V_+^t(\vec x),V_-^t(\vec x))dt,
\end{equation}
where $V_\pm^t(\vec x)=\{i\in V:\pm x_i>t\}$, $\forall t\ge0$. For $\A\subset\power_2(V)$ and $f:\A\to\R$, the feasible domain $\D_\A$ of the disjoint-pair Lov\'asz extension is $\{\vec x\in\R^V:(V_+^t(\vec x),V_-^t(\vec x))\in\A,\forall t\ge0\}$.  We simply use $\vec1 _{A,B}$ to represent the  indicator vector $\vec 1_A-\vec 1_B\in\D_\A$ of the disjoint set-pair $(A,B)\in\A$.

It should be noted that the disjoint-pair Lov\'asz extension introduced by
Qi \cite{Qi88} has been systematically  investigated by Fujishige \cite{Fujishige14,F05-book} and Murota \cite{Murota03book} in the context of  discrete convex analysis (or the theory of submodular functions).  They  defined and investigated the disjoint-pair  Lov\'asz extension in a summation form. 
 The 
integral formulation \eqref{eq:disjoint-pair-Lovasz-def-integral}, however,   is 
more convenient to obtain a closed formula of the equivalent continuous
optimization problem for a combinatorial optimization problem. Moreover, the references and the present paper 
focus on different aspects, with the exception of the submodularity theorem (i.e., $f$ is bisubmodular iff $f^L$ is convex). 

\item $k$-way version:
for a function $f:\mathcal{P}(V)^k\to \R$, the {\sl simple $k$-way Lov\'asz extension} $f^L:\R^{kn}\to \R$ is defined as
\begin{equation}\label{eq:Lovasz-Form-1}
f^L(\vec x^1,\cdots,\vec x^k)=\int_{\min \vec x}^{\max \vec x}f(V^t(\vec x^1),\cdots,V^t(\vec x^k))dt+ f(V,\cdots,V)\min\vec x,
\end{equation}
where $V^t(\vec x^i)=\{j\in V:x^i_j>t\}$, $\min\vec x=\min\limits_{i,j} x^i_j$ and $\max\vec x=\max\limits_{i,j} x^i_j$.  For $\A\subset\power^k(V)$ with $(\varnothing,\cdots,\varnothing),(V,\cdots,V)\in\A$  and $f:\A\to\R$, we take  $\D_\A=\{\vec x\in\R^{kn}_{\ge0}:(V^t(\vec x^1),\cdots,V^t(\vec x^k))\in\A,\forall t\in\R\}$ as a feasible domain of the $k$-way Lov\'asz extension $f^L$.  For convenience, we will simply use $\vec1 _{A_1,\cdots,A_k}$ to represent the  indicator vector $(\vec1_{A_1},\cdots,\vec1_{A_k})\in\D_\A$ of the  set-tuple $(A_1,\cdots,A_k)\in\A$. 

By the Lov\'asz extension of  submodular functions on distributive
  lattices \cite{F05-book,Murota03book},  our $k$-way version
  \eqref{eq:Lovasz-Form-1} can be reduced to the classical version on
  distributive lattices. Our main purposes and key results, however,  are
  different from that approach. In fact, we mainly aim to deal with discrete
  fractional programming by the $k$-way  Lov\'asz extension, while those references concentrate on  submodularity and  convex optimization. 
\end{enumerate}

 All these multi-way Lov\'asz extensions satisfy the optimal identity Eq.~\eqref{eq:D-to-C-formal}:

\begin{introthm}[Theorem \ref{thm:tilde-H-f} and Proposition \ref{pro:fraction-f/g}]\label{thm:tilde-fg-equal}
Given two  functions $f,g:\A\to [0,+\infty)$, let $\tilde{f}$ and $\tilde{g}$ be two real 
functions on $\D_\A$ satisfying $\tilde{f}(\vec1_{A_1,\cdots,A_k})=f(A_1,\cdots,A_k)$ and $\tilde{g}(\vec1_{A_1,\cdots,A_k})=g(A_1,\cdots,A_k)$,   where $\vec1_{A_1,\cdots,A_k}\in\D_\A$ is the indicator vector of the set-tuple $(A_1,\cdots,A_k)\in\A$. Then Eq.~\eqref{eq:D-to-C-formal} holds if $\tilde{f}$ and $\tilde{g}$ further possess  (P1) or (P2) below. Correspondingly, if $\tilde{f}$ and $\tilde{g}$ fulfil (P1') or (P2),
there similarly holds
$$\max\limits_{(A_1,\cdots,A_k)\in \A\cap\supp(g)}\frac{f(A_1,\cdots,A_k)}{g(A_1,\cdots,A_k)}=\sup\limits_{\psi\in \D_\A\cap\supp(\widetilde{g})}\frac{\widetilde{f}(\psi)}{\widetilde{g}(\psi)}.$$
Here the optional additional conditions of $\tilde{f}$ and $\tilde{g}$ are:

(P1) $\tilde{f}\ge f^L$ and $\tilde{g}\le g^L$.\;\;\;  (P1') $\tilde{f}\le f^L$ and $\tilde{g}\ge g^L$.

(P2) 
$\tilde{f}=((f^\alpha)^L)^{\frac1\alpha}$ and $\tilde{g}=((g^\alpha)^L)^{\frac1\alpha}$ for some  $\alpha>0$.

Here $f^L$ is either the original or the disjoint-pair or the $k$-way Lov\'asz extension. 
\end{introthm}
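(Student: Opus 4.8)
The plan is to prove the minimization identity (the maximization one being entirely dual) by establishing the two inequalities separately, and then to read off the four cases (P1), (P1'), (P2) from a single core estimate applied to the Lov\'asz extensions themselves. Write
\[
c_{\min}=\min_{(A_1,\cdots,A_k)\in\A\cap\supp(g)}\frac{f(A_1,\cdots,A_k)}{g(A_1,\cdots,A_k)},
\]
which I assume finite, i.e. $\A\cap\supp(g)\neq\varnothing$. The inequality $\inf_\psi\frac{\tilde f(\psi)}{\tilde g(\psi)}\le c_{\min}$ is immediate: every feasible set-tuple $(A_1,\cdots,A_k)\in\A\cap\supp(g)$ has an indicator vector $\vec1_{A_1,\cdots,A_k}\in\D_\A$ on which $\tilde f$ and $\tilde g$ reproduce $f$ and $g$ by hypothesis, so the continuous infimum over the larger set $\D_\A$ can only be smaller. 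All the work is in the reverse inequality $\frac{\tilde f(\psi)}{\tilde g(\psi)}\ge c_{\min}$ for every feasible $\psi$.

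The core estimate I would isolate as a lemma is that, for every $\psi\in\D_\A$,
\[
f^L(\psi)\ge c_{\min}\,g^L(\psi),
\]
where $f^L,g^L$ denote whichever of the three extensions is in force. The proof rests on putting all three into the uniform layer-cake form $f^L(\psi)=\int_0^\infty f(\Theta^t(\psi))\,dt$, where $\Theta^t(\psi)$ is the relevant super-level set-tuple ($V^t$ in the original and $k$-way cases, $(V_+^t,V_-^t)$ in the disjoint-pair case) and $f$ is evaluated with the convention $f(\varnothing,\cdots,\varnothing)=0$. For the disjoint-pair version this is exactly \eqref{eq:disjoint-pair-Lovasz-def-integral}; for the $k$-way version \eqref{eq:Lovasz-Form-1} one first absorbs the boundary term $f(V,\cdots,V)\min\vec x$ into the integral, using that $\D_\A\subset\R^{kn}_{\ge0}$ forces $\min\vec x\ge0$ and that $\Theta^t(\psi)=(V,\cdots,V)$ for $0\le t<\min\vec x$ (the original case is analogous). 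Once in this form the estimate is a pointwise-in-$t$ comparison: by definition of $\D_\A$ each $\Theta^t(\psi)$ lies in $\A$, so either $\Theta^t(\psi)\in\A\cap\supp(g)$, whence $f(\Theta^t(\psi))\ge c_{\min}\,g(\Theta^t(\psi))$ by definition of $c_{\min}$, or $g(\Theta^t(\psi))=0$, whence $f(\Theta^t(\psi))\ge0=c_{\min}\cdot0$ by nonnegativity of $f$. Integrating over $t$ gives the lemma, and dividing (where $g^L(\psi)>0$) yields $\frac{f^L(\psi)}{g^L(\psi)}\ge c_{\min}$.

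It remains to transfer this from $f^L,g^L$ to the arbitrary extensions $\tilde f,\tilde g$. Under (P1), $\tilde f\ge f^L$ and $\tilde g\le g^L$ give $\frac{\tilde f(\psi)}{\tilde g(\psi)}\ge\frac{f^L(\psi)}{g^L(\psi)}\ge c_{\min}$ directly. For (P2), I would apply the lemma to the nonnegative functions $f^\alpha,g^\alpha$ to get $(f^\alpha)^L(\psi)\ge c_{\min}^\alpha\,(g^\alpha)^L(\psi)$, since $\min\frac{f^\alpha}{g^\alpha}=\left(\min\frac{f}{g}\right)^\alpha=c_{\min}^\alpha$; taking $\alpha$-th roots and using $\tilde f=((f^\alpha)^L)^{1/\alpha}$, $\tilde g=((g^\alpha)^L)^{1/\alpha}$ returns $\frac{\tilde f}{\tilde g}\ge c_{\min}$, while the indicator-vector direction is unchanged because $((f^\alpha)^L)^{1/\alpha}(\vec1_{A_1,\cdots,A_k})=f(A_1,\cdots,A_k)$. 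The maximization identity is the mirror image: with $c_{\max}=\max\frac{f}{g}$ one proves $f^L(\psi)\le c_{\max}\,g^L(\psi)$ by the same layer-cake comparison with the inequalities reversed, then invokes (P1') ($\tilde f\le f^L$, $\tilde g\ge g^L$) or again (P2), which is self-dual and hence serves both cases.

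The step I expect to be the main obstacle is the careful handling of the degeneracies that the support restriction hides, rather than the algebra. One must confirm that the set $\{t:g(\Theta^t(\psi))=0\}$ does not corrupt the integral comparison (it cannot, since there $f\ge0$), and one must be precise about the range of the continuous infimum, replacing the loosely written $\D_\A$ by $\D_\A\cap\supp(\tilde g)$ so that $g^L(\psi)>0$ and the division is legitimate, excluding (or showing harmless) points with $g^L(\psi)=0$. A secondary technical point is verifying the uniform layer-cake representation across all three extensions at once, in particular the absorption of the boundary term in the $k$-way case and the role of the convention $f(\varnothing,\cdots,\varnothing)=0$, so that a single lemma genuinely covers the original, disjoint-pair and $k$-way settings simultaneously.
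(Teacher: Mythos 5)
Your treatment of the minimization half, including the (P1) and (P2) transfers, is correct and is essentially the paper's own proof of Proposition \ref{pro:fraction-f/g}. Where you compare each level set against the global constant $c_{\min}$, the paper introduces a per-$\psi$ constant $C=\min_{t\in W(\psi)}f(V^t(\psi))/g(V^t(\psi))$, proves $f(V^t(\psi))\ge C\,g(V^t(\psi))$ for all $t$ (using $f\ge 0$ on the levels where $g$ vanishes, exactly as you do), integrates, and then bounds $C$ below by the discrete minimum; this is the same argument up to packaging. Your (P2) step, passing to $f^\alpha,g^\alpha$ and taking $\alpha$-th roots, is the multiplicative rephrasing of the paper's chain of equalities, and your insistence on restricting the infimum to $\D_\A\cap\supp(\tilde g)$ matches the paper's precise formulation.

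The gap is in the maximization half, which you dispatch as ``the mirror image''. The reversed pointwise comparison $f(\Theta^t(\psi))\le c_{\max}\,g(\Theta^t(\psi))$ is simply unavailable at levels $t$ where $g(\Theta^t(\psi))=0$ but $f(\Theta^t(\psi))>0$: nonnegativity of $f$ is a one-sided shield (your parenthetical ``it cannot, since there $f\ge0$'' only protects the minimization direction), and for maximization such levels inflate $f^L(\psi)$ while contributing nothing to $g^L(\psi)$. This is not a repairable technicality without a new hypothesis. Take $V=\{1,2\}$, $\A=\power(V)$, the original Lov\'asz extension, $f(\{1\})=1$ and $f=0$ otherwise, $g(\{2\})=g(V)=1$ and $g=0$ otherwise. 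Then $\A\cap\supp(g)=\{\{2\},V\}$, so the discrete maximum of $f/g$ is $0$; but $\psi=(1,\epsilon)$ gives $f^L(\psi)=1-\epsilon$ and $g^L(\psi)=\epsilon>0$, so with $\tilde f=f^L$, $\tilde g=g^L$ (allowed under (P1'), and under (P2) with $\alpha=1$) the continuous supremum is $+\infty$. The dual identity thus needs something like $\supp(f)\subset\supp(g)$ or positivity of $g$ off the trivial tuple, under which your reversed comparison does go through verbatim. In fairness, this lacuna is inherited from the paper: its proof of Proposition \ref{pro:fraction-f/g} ends the dual case with ``The dual case is similar'', and the same degeneracy affects the (MAX) version of Theorem \ref{thm:tilde-H-f}, where level sets outside $\supp(g)$ produce $H=+\infty$ and invalidate the upper bound by $\max_{A\in\A'}H$. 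So your proposal reproduces the paper's argument faithfully, including its one genuine weak point; a complete write-up should either add the missing support hypothesis for the maximization statement or flag the asymmetry explicitly.
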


Theorem \ref{thm:tilde-fg-equal}
shows that by the multi-way Lov\'asz extension, the  combinatorial
optimization in quotient form can be transformed to  fractional
programming. And based on this fractional optimization, we propose an
effective local convergence scheme, which relaxes the Dinkelbach-type
iterative scheme and mixes the inverse power method and the steepest descent
method. Furthermore, many other  continuous iterations, such as
Krasnoselski-Mann iteration, and the stochastic subgradient method, could be directly applied here. We refer the readers to \cite{JostZhang} for another development on  equalities between  discrete and continuous optimization problems via various generalizations of Lov\'asz extension.

The power of Theorem \ref{thm:tilde-fg-equal} is embodied in many new examples and applications including  Cheeger-type problems, various
isoperimetric constants and max $k$-cut problems (see Subsections \ref{sec:max-k-cut}, \ref{sec:boundary-graph-1-lap} and \ref{sec:variantCheeger}). 
And moreover, we find that not only combinatorial optimization,
but also some combinatorial invariants like the independence number and the chromatic number, can 
be transformed into a continuous representation by this scheme.

\begin{introthm}[Sections \ref{sec:independent-number} and \ref{sec:chromatic-number}]
\label{thm:graph-numbers}
For an unweighted and undirected simple graph $G=(V,E)$ with $\#V=n$, its independence number can be represented as
$$\alpha(G)=\max\limits_{\vec x\in \R^n\setminus\{\vec 0\}}\frac{\sum\limits_{\{i,j\}\in E}(|x_i-x_j|+|x_i+x_j|)- 2\sum\limits_{i\in V}(\deg_i-1)|x_i|}{2\|\vec x\|_\infty},$$
where $\deg_i=\#\{j\in V:\{j,i\}\in E\}$, $i\in V$, and its chromatic number is
\begin{equation*}
\gamma(G)= n^2-\max\limits_{\vec x\in\R^{n^2}\setminus\{\vec 0\}}\sum\limits_{k\in V}\frac{n\sum\limits_{\{i,j\}\in E}(|x_{ik}-x_{jk}|+|x_{ik}+x_{jk}|)+2n\|\vec x^{,k}\|_{\infty}-2n\deg_k\|\vec x^{,k}\|_1- 2\|\vec x^{k}\|_{\infty}}{2\|\vec x\|_\infty},
\end{equation*}
where $\vec x=(x_{ki})_{k,i\in V}$, $\vec x^{k}=(x_{k1},\cdots,x_{kn})$ and $\vec x^{,k}=(x_{1k},\cdots,x_{nk})
$.
The maximum matching number of $G$ can be expressed as
$$\max\limits_{\vec y\in\R^E\setminus\{\vec 0\}}\frac{\|\vec y\|_1^2}{\|\vec y\|_1^2-2\sum_{e\cap e'=\varnothing}y_ey_{e'}}.$$
\end{introthm}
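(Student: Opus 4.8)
The plan is to realize each of the three formulas as an instance of the fractional identity of Theorem~\ref{thm:tilde-fg-equal}, the only real work being (i) to recognize the displayed continuous expressions as quotients of (simple, disjoint-pair, or $k$-way) Lov\'asz extensions of explicit set- or set-tuple functions, and (ii) to evaluate the resulting \emph{discrete} extremal problem. The algebraic engine for (i) is the pointwise identity $|a-b|+|a+b|=2\max\{|a|,|b|\}$, which rewrites every edge contribution $|x_i-x_j|+|x_i+x_j|$ as $2\max\{|x_i|,|x_j|\}$. Since all three expressions are even in each coordinate, I may restrict to $\vec x\ge\vec 0$; on $\R^V_{\ge0}$ the Lov\'asz extension of $A\mapsto\mathbf 1_{\{i,j\}\cap A\ne\varnothing}$ is $\max\{x_i,x_j\}$, that of $A\mapsto\mathbf 1_{i\in A}$ is $x_i$, and that of $A\mapsto\mathbf 1_{A\ne\varnothing}$ is $\|\vec x\|_\infty$, so each displayed numerator/denominator is assembled from such blocks.

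For the independence number I would take $g\equiv 1$ on $\power(V)\setminus\{\varnothing\}$ and
$$f(A)=\sum_{\{i,j\}\in E}\mathbf 1_{\{i,j\}\cap A\ne\varnothing}-\sum_{i\in V}(\deg_i-1)\mathbf 1_{i\in A},$$
so that $2f^L$ and $2g^L=2\|\vec x\|_\infty$ are exactly the displayed numerator and denominator on $\R^V_{\ge0}$. A short count collapses this to $f(A)=\#A-e(A)$, where $e(A)$ is the number of edges induced on $A$. The combinatorial heart is the lemma $\max_A(\#A-e(A))=\alpha(G)$: the bound $\le$ holds because deleting one endpoint of each induced edge turns $A$ into an independent set of size $\ge\#A-e(A)$, while $\ge$ is witnessed by any maximum independent set. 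This fits the $\max$-part of Theorem~\ref{thm:tilde-fg-equal} with $\tilde f=f^L,\ \tilde g=g^L$ (which satisfy (P2) with $\alpha=1$). Because $f$ is not everywhere nonnegative, I would instead give the two inequalities directly: $\ge$ by evaluating at $\vec1_S$ for a maximum independent set $S$ (the ratio is $\#S$), and $\le$ from $f^L(\vec x)=\int_0^{\|\vec x\|_\infty}f(V^t(\vec x))\,dt\le\alpha(G)\|\vec x\|_\infty$, using $f(V^t(\vec x))\le\alpha(G)$ at every level $t$.

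The chromatic number runs the same scheme over $n$-way set-tuples. I regard $\vec x=(x_{ik})$ as an ordered partition $(A_1,\dots,A_n)$ of $V$ into colour classes, with column $\vec x^{,k}=\vec1_{A_k}$ and row $\vec x^{k}$ recording the colours received by vertex $k$. For each colour $k$ the edge/degree/$\ell_1$ terms assemble (via the $\max$-identity and the independence computation above) $n$ times the independence-defect extension of $A_k$, the factor $2n\|\vec x^{,k}\|_\infty$ detects whether $A_k$ is nonempty, and $-2\|\vec x^{k}\|_\infty$ is the penalty forcing the tuple to be a genuine partition (each vertex exactly one colour). Dividing by $2\|\vec x\|_\infty$ and summing, the discrete problem should reduce to maximizing $\sum_k\bigl[n(\#A_k-e(A_k))-\mathbf 1_{A_k\ne\varnothing}\bigr]$ over partitions; since $\sum_k\#A_k=n$ this equals $n^2-\#\{k:A_k\ne\varnothing\}$ minus the internal-edge penalty $n\sum_k e(A_k)$, so the optimum is attained at proper colourings using the fewest classes, giving $n^2-\gamma(G)$. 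The identity then comes from the $n$-way case of Theorem~\ref{thm:tilde-fg-equal}. I expect the \textbf{main obstacle} to lie precisely here: one must pin down the restricted family $\A\subset\power(V)^n$ of partitions together with companion functions $F,G$ so that the exact four-term combination displayed is literally $F^L/G^L$, and then confirm the discrete optimum is attained only at minimum proper colourings. The bookkeeping of the degree, $\ell_1$ and $\ell_\infty$ terms and of the covering penalty is delicate, and is where sign and index errors would most easily creep in.

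For the matching number I would pass to the line graph $H=L(G)$, whose vertices are the edges of $G$ and in which $e,e'$ are adjacent iff $e\cap e'\ne\varnothing$; a matching of $G$ is then exactly an independent set of $H$, so the matching number equals $\alpha(H)$. Assuming $\vec y\ge\vec0$ is legitimate, since replacing $y_e$ by $|y_e|$ fixes $\|\vec y\|_1^2$ and only increases $2\sum_{e\cap e'=\varnothing}y_ey_{e'}$, hence does not decrease the ratio. The denominator then becomes
$$\|\vec y\|_1^2-2\!\!\sum_{e\cap e'=\varnothing}\!\!y_ey_{e'}=\sum_e y_e^2+2\!\!\sum_{e\cap e'\ne\varnothing}\!\!y_ey_{e'}=\vec y^{\top}(I+A_H)\vec y,$$
the Motzkin--Straus quadratic form of $H$. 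The claimed value is immediate from the independent-set version of the Motzkin--Straus theorem, $\min_{\vec y\in\Delta}\vec y^{\top}(I+A_H)\vec y=1/\alpha(H)$, whence $\max_{\vec y\ge\vec0}\|\vec y\|_1^2/\bigl(\vec y^{\top}(I+A_H)\vec y\bigr)=\alpha(H)=\nu(G)$ after normalizing to the simplex by homogeneity.
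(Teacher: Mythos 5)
Your treatment of the independence number and of the matching number is correct and follows essentially the paper's own route. For the independence number the paper also rests on the combinatorial identity $\alpha(G)=\max_{S\subset V}(\#S-\#E(S))$ (its Eq.~\eqref{eq:independent-difference}; it proves this by a connected-components count where you delete a vertex cover of the induced edges --- both arguments work), and then identifies the displayed quotient with $f^L/g^L$ for $f(A)=\#A-\#E(A)$, $g\equiv 1$, via $|a-b|+|a+b|=2\max\{|a|,|b|\}$. Your direct two-inequality verification (evaluation at $\vec 1_S$, plus $f^L(\vec x)=\int_0^{\|\vec x\|_\infty}f(V^t(\vec x))\,dt\le\alpha(G)\,\|\vec x\|_\infty$ on $\R^n_{\ge0}$) is a legitimate, in fact cleaner, substitute for invoking Theorem~\ref{thm:tilde-fg-equal}, whose hypothesis $f\ge 0$ fails here --- a point the paper glosses over. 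For the matching number the paper does exactly what you do: pass to the line graph, observe that matchings of $G$ are independent sets of $L(G)$, reduce to nonnegative vectors, and quote the independence form of the Motzkin--Straus theorem.

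The chromatic number part is a genuine gap, and you flag it yourself: the step you defer --- exhibiting a discrete function on set-tuples whose extension is literally the displayed expression and evaluating the discrete optimum --- is the entire proof. Worse, the route you sketch is structurally misaligned with the statement. If you take $\A\subset\power(V)^n$ to be the partitions, then $\D_\A$ consists only of vectors all of whose superlevel-set tuples are partitions, a thin subset of $\R^{n^2}$, so the machinery of Section~\ref{sec:CC-transfer} yields a \emph{constrained} optimization, not the stated unconstrained maximum over $\R^{n^2}\setminus\{\vec 0\}$. To get an unconstrained formula one must work over the full family $\A=\power_2(V)^n$ (so that $\D_\A=\R^{n^2}$, negative entries included) and encode covering, properness and parsimony as penalties in the discrete objective. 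That is precisely what the paper does: it first proves the penalty identity (its Eq.~\eqref{eq:coloring-number})
$$\gamma(G)=\min_{(A_1,\dots,A_n)\in\power(V)^n}\left\{n\sum_{i=1}^n\#E(A_i)+\sum_{i=1}^n\sgn(\#A_i)+n\left(n-\#\bigcup_{i=1}^n A_i\right)\right\},$$
where the factor $n$ calibrates the penalties so that any tuple which fails to cover $V$ or contains an edge inside a class costs at least $n+1>\gamma(G)$, while proper colourings with the fewest classes cost exactly $\gamma(G)$; it then lifts this to $\power_2(V)^n$, computes the $n$-way disjoint-pair Lov\'asz extension term by term ($\#E(\cdot)\mapsto\sum_{j\sim j'}\min\{|x_{ij}|,|x_{ij'}|\}$, $\sgn(\#\,\cdot)\mapsto\|\vec x^{i}\|_\infty$, $\#\bigcup\mapsto\sum_j\|\vec x^{,j}\|_\infty$, assembled with Proposition~\ref{pro:separable-summation}), applies Proposition~\ref{pro:fraction-f/g} with denominator $\equiv 1$ (extension $\|\vec x\|_\infty$), and rearranges. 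None of this appears in your sketch, and your per-term reading of the formula does not match the derivation: under the indexing that makes the displayed identity true, $2n\|\vec x^{,k}\|_\infty$ is the covering reward attached to \emph{vertex} $k$ (coming from $n\,\#\bigcup_iA_i$), while $-2\|\vec x^{k}\|_\infty$ counts the \emph{colours} actually used (coming from $\sum_i\sgn(\#A_i)$) --- the opposite of what you propose; with your reading the formula even fails numerically on the star $K_{1,3}$. Your discrete reduction over partitions does agree with the paper's objective when restricted to partitions, but the identity the proof needs is the one over all $n$-tuples, and establishing it, together with the extension bookkeeping, is exactly the content you have left out.
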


 \vspace{0.15cm}




There are some equivalent continuous  reformulations of the maxcut problem and
the  independence number of a graph in the literature. However, a continuous  reformulation of the coloring number has not yet been proposed. The main reason seems to be the complexity of coloring a graph. Hence, it is very difficult to discover a continuous form of the coloring number by direct  observation. 

\begin{introthm}[Theorem \ref{thm:tilde-H-f}]\label{thm:tilde-fg-equal-PQ}
Given   functions $f_1,\cdots,f_n:\A\to[0,+\infty)$,and  $p$-homogeneous functions $P,Q:[0,+\infty)^n\to[0,+\infty)$, we have
$$\max\limits_{A\in\A}\frac{P(f_1(A),\cdots,f_n(A))}{Q(f_1(A),\cdots,f_n(A))}=\sup\limits_{x\in\D_\A}\frac{P(f_1^L(\vec x),\cdots,f_n^L(\vec x))}{Q(f_1^L(\vec x),\cdots,f_n^L(\vec x))}$$
if $P^{\frac1p}$ is 
subadditive  and  $Q^{\frac1p}$ is superadditive. One can replace  `max' by `min' if $P^{\frac1p}$ is 
superadditive  and  $Q^{\frac1p}$ is subadditive.
\end{introthm}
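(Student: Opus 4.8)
The plan is to prove the two inequalities separately, the easy one being $\ge$ and the substance lying in $\le$. For the direction $\ge$, observe that for every $A\in\A$ the indicator vector $\vec1_A$ (resp.\ $\vec1_{A_1,\cdots,A_k}$) lies in $\D_\A$ and satisfies $f_i^L(\vec1_A)=f_i(A)$ for each $i$. Hence evaluating the right-hand quotient at $\vec x=\vec1_A$ reproduces exactly the left-hand quotient at $A$, so the supremum over $\D_\A$ dominates the maximum over $\A$. This requires nothing beyond the defining indicator property of the Lov\'asz extension, and the same argument (with reversed inequality) handles the min-version.

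For the reverse direction, fix $\vec x\in\D_\A$ and abbreviate $\phi:=P^{1/p}$, $\psi:=Q^{1/p}$, which are $1$-homogeneous because $P,Q$ are $p$-homogeneous; by hypothesis $\phi$ is subadditive and $\psi$ is superadditive on $[0,+\infty)^n$. The key structural input is the integral representation (one of \eqref{eq:Lovaintegral}, \eqref{eq:disjoint-pair-Lovasz-def-integral}, \eqref{eq:Lovasz-Form-1}, according to which extension is used): the map $t\mapsto V^t(\vec x)$ is piecewise constant, taking finitely many values $A_1,\dots,A_m$, each of which lies in $\A$ precisely because $\vec x\in\D_\A$. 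Writing $\ell_j$ for the length of the interval on which $V^t(\vec x)=A_j$ and setting $\vec c_j:=\vec f(A_j)=(f_1(A_j),\dots,f_n(A_j))$, the integral representation says exactly that $\vec f^L(\vec x):=(f_1^L(\vec x),\dots,f_n^L(\vec x))=\sum_j \ell_j\,\vec c_j$, a nonnegative finite combination of the values of $\vec f$ on level sets in $\A$.

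Now the whole argument reduces to finite (sub/super)additivity together with homogeneity applied to this combination:
$$\phi\Bigl(\sum_j \ell_j\,\vec c_j\Bigr)\le \sum_j \ell_j\,\phi(\vec c_j),\qquad \psi\Bigl(\sum_j \ell_j\,\vec c_j\Bigr)\ge \sum_j \ell_j\,\psi(\vec c_j).$$
Dividing and invoking the mediant inequality (a quotient of nonnegative weighted sums is at most the largest termwise quotient) gives
$$\frac{\phi(\vec f^L(\vec x))}{\psi(\vec f^L(\vec x))}\le \frac{\sum_j \ell_j\,\phi(\vec c_j)}{\sum_j \ell_j\,\psi(\vec c_j)}\le \max_j \frac{\phi(\vec c_j)}{\psi(\vec c_j)}\le \max_{A\in\A}\left(\frac{P(\vec f(A))}{Q(\vec f(A))}\right)^{1/p}.$$
Raising to the $p$-th power (monotone on $[0,+\infty)$) yields $P(\vec f^L(\vec x))/Q(\vec f^L(\vec x))\le \max_{A}P(\vec f(A))/Q(\vec f(A))$; taking the supremum over $\vec x\in\D_\A$ completes the $\le$ direction and hence the equality. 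The min-version is verbatim with every inequality reversed: there $\phi=P^{1/p}$ superadditive gives $\phi(\sum_j\ell_j\vec c_j)\ge\sum_j\ell_j\phi(\vec c_j)$, $\psi$ subadditive controls the denominator from above, and the mediant inequality is applied from below.

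The main obstacle, such as it is, is conceptual rather than computational: the two structural hypotheses must point in compatible directions, namely subadditivity of $P^{1/p}$ to bound the numerator from above and superadditivity of $Q^{1/p}$ to bound the denominator from below, so that the quotient is squeezed correctly — this is exactly why the two hypotheses must swap for the minimum. Because the integrand is piecewise constant, the ``generalized Minkowski'' step collapses to the finite subadditivity above, so no measure-theoretic Jensen argument is needed. The one point demanding care is the degenerate case where $Q(\vec f(A_j))=0$ on some level set (so $\psi(\vec c_j)=0$): then either $\phi(\vec c_j)>0$, making the relevant maximum $+\infty$ and the bound trivial, or the term drops out, and in all cases the support conventions are handled exactly as in Theorem~\ref{thm:tilde-fg-equal}.
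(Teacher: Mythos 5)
Your proof is correct and follows essentially the same route as the paper's: the paper normalizes to the one-homogeneous pair $P^{1/p},Q^{1/p}$, checks the two-term mediant estimate $H(\vec f+\vec g)\le\max\{H(\vec f),H(\vec g)\}$ for $H=P^{1/p}/Q^{1/p}$, and then invokes Theorem~\ref{thm:tilde-H-f}, whose proof is exactly your finite-sum expansion of the integral representation. You have merely inlined that cited theorem — indicator vectors for one direction, level-set decomposition plus sub/superadditivity and the mediant inequality for the other — so the ingredients and their roles coincide with the paper's argument.
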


Theorems \ref{thm:tilde-fg-equal},  \ref{thm:tilde-fg-equal-PQ} and \ref{thm:tilde-H-f}  can be seen as natural and nontrivial generalizations of the related original works by   Hein and Setzer   \cite{HS11}.

\vspace{0.1cm}

\textbf{Connections with  spectral graph theory}

Spectral graph theory aims to derive properties of a (hyper-)graph from its eigenvalues and eigenvectors. Going beyond the linear case, nonlinear spectral graph theory is developed in terms of discrete geometric analysis and difference equations on (hyper-)graphs.
Every discrete eigenvalue problem can be formulated as a variational problem  for an
objective functional, a Rayleigh-type quotient.
In some cases, this
functional is natural and easy to obtain,
since one may compare the discrete version with its original continuous analog in geometric analysis. However, in other
situations, there is no such analog. Fortunately, we find a unified framework based on multi-way Lov\'asz extension to produce appropriate objective functions from a combinatorial problem (see Sections 
\ref{sec:CC-transfer} and \ref{sec:eigenvalue}). 

More precisely, for a combinatorial problem with a  discrete objective function of the form $\frac{f(A)}{g(A)}$, we might obtain some correspondences by studying the   
set-valued eigenvalue problem
$$
 \nabla f^L (\vec x)\bigcap \lambda\nabla  g^L (\vec x) \ne\varnothing
$$
which is simply called the eigenvalue problem of the function pair $(f^L,g^L)$.  
Hereafter we use $\nabla$ to denote the (Clarke) sub-gradient operator acting on Lipschitz functions.
\begin{center}
\begin{tikzpicture}[node distance=6cm]

\node (graph) [startstop] {  combinatorial  quantities };

\node (spectrum) [startstop, right of=convex, xshift=2.6cm]  { eigenvalues and eigenvectors }; 

\draw [arrow](graph) --node[anchor=south] { \small Spectral graph theory} (spectrum);
\draw [arrow](spectrum) --node[anchor=north] { } (graph);
\end{tikzpicture}
\end{center}



\vspace{0.16cm}
We shall consider the following three concepts:
\begin{itemize}
\item {\sl Eigenvectors and eigenvalues}:\;\;
The set-valued eigenvalue problems above are usually written as $\vec0\in \nabla f^L (\vec x)-\lambda\nabla  g^L (\vec x)$ by using the Minkowski summation of convex sets. We call $\lambda$ an eigenvalue and $\vec x$ an eigenvector associated to $\lambda$.

\item {\sl Critical points and critical values}:\;\;
The set of critical points
$\left\{\vec x\left|0\in\nabla \frac{f^L(\vec x)}{g^L(\vec x)}\right.\right\}$
 and the corresponding critical values. 

\item {\sl Minimax critical values} (i.e., {\sl variational eigenvalues in Rayleigh quotient form}):\;\;
The Lusternik-Schnirelman theory tells us that the min-max
values 
\begin{equation}\label{eq:def-c_km}
\lambda_{m}=\inf_{S\in \Gamma_m}\sup\limits_{\vec x \in S}\frac{f^L(\vec x)}{g^L(\vec x)},\;\;m=1,2,\ldots,n,
\end{equation}
are critical values of $f^L(\cdot)/g^L(\cdot)$. Here $\Gamma_m$ is a class of certain topological objects at level $m$, e.g., the family of subsets with Krasnoselskii's $\mathbb{Z}_2$-genus  (or Lusternik-Schnirelman category)  not smaller than $m$.   Since this paper does not focus on the min-max critical values, we will not say more about  Krasnoselskii's $\mathbb{Z}_2$-genus and the class $\Gamma_m$.  We  refer the interested readers to \cite{JostZhang} for systematic studies on this topic.
\end{itemize}

  There are the following relations between these three classes:
$$\{\text{Eigenvalues in Rayleigh quotient}\}\subset\{\text{Critical values}\}\subset\{\text{Eigenvalues}\}.$$
For linear spectral theory, the above three classes  coincide. However, for the non-smooth spectral theory derived by Lov\'asz extension, we only have the inclusion relations.


We have the following result on the eigenvalue problem for the  disjoint-pair Lov\'asz extension, while  for the results on the original Lov\'asz extension, we refer to  Section \ref{sec:eigenvalue} for details. 
\begin{introthm}
\label{introthm:eigenvalue}
Given $f,g:\power_2(V)\to\R$, then every eigenvalue of $(f^L,g^L)$ has an eigenvector of the form $\vec 1_A-\vec1_B$. Moreover, we have the following claims:
\begin{itemize}
\item If $2f(A,B)=f(A,V\setminus A)+f(V\setminus B,B)$ and $2g(A,B)=g(A,V\setminus A)+g(V\setminus B,B)$  for any $(A,B)\in \power_2(V)\setminus\{(\varnothing,\varnothing)\}$, then every eigenvalue of $(f^L,g^L)$ has an eigenvector of the form  $\vec 1_A-\vec1_{V\setminus A}$.
\item If $g=\mathrm{Const}$,  then for any $A\subset V$, $\vec 1_A-\vec1_{V\setminus A}$ is an eigenvector.
\item If $f(A,B)=\hat{f}(A)+\hat{f}(B)$ and $g(A,B)=\hat{g}(A)+\hat{g}(B)$  for some  symmetric function  $\hat{f}:\power(V)\to\R$ (i.e., $\hat{f}(A)=\hat{f}(V\setminus A)$, $\forall A$) and non-decreasing submodular function  $\hat{g}:\power(V)\to\R_+$,  then the second 
eigenvalue $\lambda_2$ of $(f^L,g^L)$ equals 
$$\min\limits_{\vec x\bot\vec 1}\frac{f^L(\vec x)}{\min\limits_{t\in\R}g^L(\vec x-t\vec 1)}=\min\limits_{A\in\mathcal{P}(V)\setminus\{\varnothing,V\}}\frac{\hat{f}(A)}{\min\{\hat{g}(A),\hat{g}(V\setminus A)\}}=\min\limits_{(A,B)\in\mathcal{P}_2(V)\setminus\{(\varnothing,\varnothing )\}}\max\{\frac{\hat{f}(A)}{\hat{g}(A)},\frac{\hat{f}(B)}{\hat{g}(B)}\}.$$
\end{itemize}

\end{introthm}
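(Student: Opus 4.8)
The plan is to exploit two structural features of the disjoint-pair Lov\'asz extension: its positive one-homogeneity and its piecewise linearity. First I would record the consequence of homogeneity, namely Euler's identity for the Clarke subgradient: any $\vec p\in\nabla f^L(\vec x)$ satisfies $\langle\vec p,\vec x\rangle=f^L(\vec x)$. Hence if $\vec 0\in\nabla f^L(\vec x)-\lambda\nabla g^L(\vec x)$ with $g^L(\vec x)\neq0$, then necessarily $\lambda=f^L(\vec x)/g^L(\vec x)$, so every eigenpair is a critical pair of the Rayleigh quotient. Next I would use the layer-cake identities $\vec x=\int_0^\infty(\vec 1_{V_+^t(\vec x)}-\vec 1_{V_-^t(\vec x)})\,dt$ and $f^L(\vec x)=\int_0^\infty f^L(\vec 1_{V_+^t(\vec x),V_-^t(\vec x)})\,dt$, which express $\vec x$ and the values $f^L(\vec x),g^L(\vec x)$ as integrals over the threshold pairs $(V_+^t,V_-^t)$.

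For the first assertion (existence of a $\vec 1_A-\vec 1_B$ eigenvector) the key step is a disaggregation of the subgradient inclusion across threshold levels. On each maximal cone of the arrangement $\{x_i=\pm x_j\}\cup\{x_i=0\}$, both $f^L$ and $g^L$ are linear with constant gradients, and the closed cone then consists entirely of eigenvectors whenever its interior contains one; its extreme rays are generated precisely by vectors $\vec 1_A-\vec 1_B$. At a nonsmooth $\vec x$ the inclusion $\vec 0\in\nabla f^L(\vec x)-\lambda\nabla g^L(\vec x)$ is read through the convex-hull description of the Clarke subgradient over the adjacent cones, and I would show it forces at least one threshold pair $(A,B)=(V_+^t,V_-^t)$ to satisfy the same inclusion at $\vec 1_A-\vec 1_B$ with the same $\lambda$. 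I expect this subgradient disaggregation to be the main obstacle, since it requires a precise description of $\nabla f^L$ at a degenerate point purely in terms of the threshold data.

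The three bulleted refinements follow by specializing. For the averaging hypothesis I would use the identity $\vec 1_{A,B}=\tfrac12\big((\vec 1_A-\vec 1_{V\setminus A})+(\vec 1_{V\setminus B}-\vec 1_B)\big)$: along the segment joining the two full-support endpoints, the free coordinates (those outside $A\cup B$) move together through $0$, so $f^L$ and $g^L$ are piecewise affine with a possible kink at the midpoint, and the conditions $2f(A,B)=f(A,V\setminus A)+f(V\setminus B,B)$ (and the same for $g$) say exactly that there is no kink. A $\vec 1_A-\vec 1_B$ eigenvector from the first assertion, sitting at the midpoint, then propagates along this affine segment to an eigenvector $\vec 1_{A'}-\vec 1_{V\setminus A'}$. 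For the constant case $g\equiv c$ one has $g^L(\vec x)=c\|\vec x\|_\infty$, and at a full-support sign vector $\vec x=\vec 1_A-\vec 1_{V\setminus A}$ the subgradient $\nabla\|\vec x\|_\infty=\conv\{\sgn(x_i)\vec e_i\}$ is the whole face; it then suffices to exhibit a sign-consistent $\vec p\in\nabla f^L(\vec x)$ (with $p_ix_i\ge0$), which is the canonical subgradient coming from the defining ordering, since such $\vec p$ automatically satisfies $\|\vec p\|_1=\langle\vec p,\vec x\rangle=f(A,V\setminus A)$ and therefore lands in $\lambda c\,\nabla\|\vec x\|_\infty$ with $\lambda=f(A,V\setminus A)/c$.

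For the $\lambda_2$ formula I would first observe that the splitting $f(A,B)=\hat f(A)+\hat f(B)$, $g(A,B)=\hat g(A)+\hat g(B)$ passes exactly to the extensions as $f^L(\vec x)=\hat f^L(\vec x_+)+\hat f^L(\vec x_-)$ and $g^L(\vec x)=\hat g^L(\vec x_+)+\hat g^L(\vec x_-)$, where $\hat f^L,\hat g^L$ are the original Lov\'asz extensions and $\vec x_\pm$ are the positive and negative parts. Here the first eigenvector is $\vec 1$, since $f^L(\vec 1)=\hat f(V)=\hat f(\varnothing)=0$ by symmetry of $\hat f$, so the centered denominator $\min_t g^L(\vec x-t\vec 1)$ together with $\vec x\bot\vec 1$ is the correct variational form of the second eigenvalue for such a one-homogeneous pair. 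Combining this with the first assertion reduces the infimum to $\pm1/0$ test vectors and, using the symmetry $\hat f(A)=\hat f(V\setminus A)$ and the monotonicity of $\hat g$, yields $\min_{A\neq\varnothing,V}\hat f(A)/\min\{\hat g(A),\hat g(V\setminus A)\}$; the final equality with $\min_{(A,B)}\max\{\hat f(A)/\hat g(A),\hat f(B)/\hat g(B)\}$ is then an elementary rewriting over disjoint pairs. The hard part here is justifying the centered variational characterization and the reduction to two nonempty sets, i.e. the discrete nodal/Courant-type argument that pins down the second eigenvalue.
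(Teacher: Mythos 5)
Your handling of the general assertion and of the averaging bullet tracks the paper's own proof. The ``subgradient disaggregation'' you flag as the main obstacle is actually easier than you expect: the paper's key lemma says that if a piecewise linear $F$ is linear on a convex set $\Omega$, then $\nabla F(\vec x)\subset\nabla F(\vec y)$ for every relative interior point $\vec x$ of $\Omega$ and every $\vec y\in\overline{\Omega}$; applying this with $\Omega$ equal to the simplex spanned by the threshold indicator vectors of an eigenvector transfers the inclusion $\vec 0\in\nabla f^L(\vec x)-\lambda\nabla g^L(\vec x)$ to \emph{every} threshold pair at once -- no disaggregation of a convex combination is needed. Likewise, your observation that $2f(A,B)=f(A,V\setminus A)+f(V\setminus B,B)$ is exactly the no-kink condition along $t\mapsto(1-t)(\vec 1_A-\vec 1_{V\setminus A})+t(\vec 1_{V\setminus B}-\vec 1_B)$ is precisely the mechanism behind the paper's linearity claim on $\conv\{\vec 1_{A_i}-\vec 1_{V\setminus A_i}\}$, so these two parts are sound.

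The constant-$g$ bullet has a genuine gap. You correctly reduce it to exhibiting a sign-consistent $\vec p\in\nabla f^L(\vec x)$ (i.e.\ $p_ix_i\ge 0$) at $\vec x=\vec 1_A-\vec 1_{V\setminus A}$, but your candidate -- ``the canonical subgradient coming from the defining ordering'' -- fails. At such an $\vec x$ all coordinates tie in absolute value, and the gradient of the linear piece attached to a permutation $\sigma$ has entries $\pm\bigl(f(A_i^\sigma,B_i^\sigma)-f(A_{i-1}^\sigma,B_{i-1}^\sigma)\bigr)$ along the chain $(A_1^\sigma,B_1^\sigma)\subset\cdots\subset(A_n^\sigma,B_n^\sigma)=(A,V\setminus A)$; sign-consistency of this vector is equivalent to $f$ being non-decreasing along that chain, which is false for general nonnegative $f$ (take $f$ large on small pairs and small on large ones). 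So no single ordering supplies the required $\vec p$; one must prove that some \emph{convex combination} of these piece-gradients lies in the orthant $\{\vec p:p_ix_i\ge0\}$. That existence statement is the entire content of the paper's Proposition on $(f^L,\|\cdot\|_\infty)$, and its proof there is not a one-line Euler-identity argument: it is a contradiction argument via polar cones and the tangent cone of the sublevel set $\{\vec y:f^L(\vec y)\le f^L(\vec x)\}$, exploiting the special property of the hypercube that the normal cone at a vertex is the negative of the tangent cone, carried out first for positive-definite $f$ and then extended to $f\ge0$ by approximation. The paper even remarks that this hypercube property is the crux and that its extension to other polytopes is open; your proposal is missing this idea entirely.

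In the $\lambda_2$ part you defer, rather than supply, the two steps carrying the real weight. First, the claim that $\min_{\vec x\bot\vec 1}f^L(\vec x)/\min_{t}g^L(\vec x-t\vec 1)$ \emph{is} the second eigenvalue of this nonsmooth one-homogeneous pair is not proved in the paper either -- it is imported from the spectral characterization in the companion work -- so announcing it as ``the correct variational form'' leaves the statement unestablished. Second, the passage from that continuous minimum to the discrete quantities does not follow from the eigenvector structure of the first assertion, as you suggest; in the paper it rests on two further ingredients: the identity $\min_{t}g_s^L(\vec x-t\vec 1)=g_m^L(\vec x)$ for $g_m(A)=\min\{\hat g(A),\hat g(V\setminus A)\}$ (this is exactly where the monotonicity of $\hat g$ enters), and the fractional-programming equality of Theorem A. Finally, the last equality with $\min_{(A,B)}\max\{\hat f(A)/\hat g(A),\hat f(B)/\hat g(B)\}$ is not a pure rewriting: it again uses that $\hat g$ is non-decreasing and $\hat f$ symmetric.
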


This generalizes  recent results on the 
graph 1-Laplacian and Cheeger's constant \cite{HeinBuhler2010,TVhyper-13,Chang16,CSZ15,CSZ17}. { And as a new application, we show  that the   min-cut problem and the   max-cut problem are equivalent to solving  the 
smallest  nontrivial (i.e., the second) eigenvalue and the largest eigenvalue of a certain nonlinear eigenvalue problem (see Theorem \ref{thm:mincut-maxcut-eigen}).
}

\vspace{0.16cm}

\textbf{Applications to   frustration in signed network }

As a key measure for analysing signed networks, the frustration index on a signed graph  quantifies  how far a signature  is
from being balanced (see Section \ref{sec:frustration}). Computing the frustration index  is NP-hard, and few algorithms have been proposed  \cite{ArefWilson19,ArefMasonWilson20}.

Considering a signed graph $(V,E_+\cup E_-)$ with  $E_+$ (resp. $E_-$) 
the set of positive (resp. negative)  edges,   based on the  disjoint-pair Lov\'asz  extension, we obtain an equivalent continuous optimization of the frustration index (or the line index of balance \cite{Harary59}):
$$\#E_-+\min\limits_{ x\ne0}\frac{\sum_{\{i,j\}\in E_+}|x_i-x_j|-\sum_{\{i,j\}\in E_-}|x_i-x_j|}{2\|\vec x\|_\infty}.$$
This new reformulation can be computed via typical algorithms in continuous optimization. 

Also, we propose the  eigenvalue problem
\begin{equation}\label{eq:frustration-eigen}
\nabla\left(\sum_{\{i,j\}\in E_+}|x_i-x_j|+\sum_{\{i,j\}\in E_-}|x_i+x_j|\right)\bigcap\lambda \nabla\|\vec x\|_\infty \ne\varnothing    
\end{equation}
and we show an iterative scheme for searching the frustration index based on the smallest eigenvalue of the nonlinear eigenvalue problem \eqref{eq:frustration-eigen}. See Section \ref{sec:frustration} for details and  more results.  

\vspace{0.16cm}

 Since the  transformation of a combinatorial optimization to a
continuous optimization or a nonsmooth  eigenvalue problem usually leads
  to a quotient, 
the task for  fractional programming then becomes to compute  an optimal value or an eigenvector. In Section \ref{sec:algo}, we present a general  algorithm which is available to compute the resulting continuous reformulations arising in Theorems \ref{thm:tilde-fg-equal}, \ref{thm:graph-numbers}, \ref{thm:tilde-fg-equal-PQ} and \ref{introthm:eigenvalue}. 

 In another paper \cite{JostZhang}, we present  a systematic study of  general function pairs $(F,G)$, in which $F$ and $G$ can be piecewise multilinear   or other general extensions of certain discrete functions. The papers can be read independently of each other.

 In summary, we present a systematic study for  constructing  nonlinear
eigenvalue problems and equivalent continuous  reformulations for
combinatorial quantities, which capture the key properties of the original   combinatorial problems. This is helpful to increase understanding of certain combinatorial problems by the corresponding  eigenvalue problems and the  equivalent continuous  reformulations. The following picture summarizes the relations between the various concepts developed and studied in this paper.

\begin{figure}[H]
\centering
\begin{tikzpicture}[node distance=4.5cm]

\node (CQ) [startstop] {\begin{tabular}{l}
Combinatorial\\ Quantities
\end{tabular}};

\node (DO) [process, right of=CQ, xshift=-1cm]  { \begin{tabular}{l}
 Discrete\\
 Optimization
\end{tabular}};

\node (CO) [startstop, right of=DO, yshift=0cm, xshift=2.6cm] {\begin{tabular}{l}
 Continuous\\
 Optimization
\end{tabular} };
\node (DM) [io1, below of=CQ, xshift=0.1cm,yshift=2cm] {\begin{tabular}{l}
 Discrete Morse theory
\end{tabular}
 };
\node (F/G) [process, right of=DM, xshift=1.9cm] {\begin{tabular}{l}
(topological) Morse theory    \\
  (metric) critical point theory
\end{tabular}
};
\node (FG) [startstop, below of=F/G, yshift=2.6cm,xshift=3.3cm] {
\begin{tabular}{r}
(Nonlinear) Spectral theory\\
\end{tabular}};

\node (algorithm) [io1, right of=F/G, xshift=2cm] {\begin{tabular}{l}
 Continuous\\
 Programming\\
\& Algorithm
\end{tabular}};
\node (submodular) [io2, above of=DO, yshift=-2.6cm] {Submodularity};
\node (convexity) [io2, right of=submodular, xshift=5.2cm] {Convexity};
\draw [arrow](CQ) --node[anchor=south] { } (DO);
\draw [arrow](DO) --node[anchor=south] { } (CQ);
\draw [arrow](DO) --node[anchor=south]{
\small Section  \ref{sec:CC-transfer} }node[anchor=north] {  \cite{JostZhang}} (CO);
\draw [arrow](DM) -- node[anchor=south] {}node[anchor=north] {\cite{JZ-prepare21}} (F/G);
\draw [arrow](FG) -- node[anchor=south] {} (algorithm);
\draw [arrow](F/G) -- node[anchor=south] {  } (FG);
\draw [arrow](CO) -- node[anchor=north] {  \small Section \ref{sec:algo}} (algorithm); 
\draw [arrow](CO) -- node[anchor=south] {  } (F/G);
\draw [arrow](CO) -- node[anchor=north] {  \small Section \ref{sec:eigenvalue}} (FG);
\draw [arrow](submodular) -- node[anchor=south] {  Lov\'asz extension } (convexity);
\draw [arrow](submodular) -- node[anchor=south] { } (DO);
\draw [arrow](convexity) -- node[anchor=south] { } (submodular);
\draw [arrow](convexity) -- node[anchor=south] { } (algorithm);
\end{tikzpicture}
\caption{\label{fig:flowchart} The relationship between the aspects 
studied in our work.}
\end{figure}
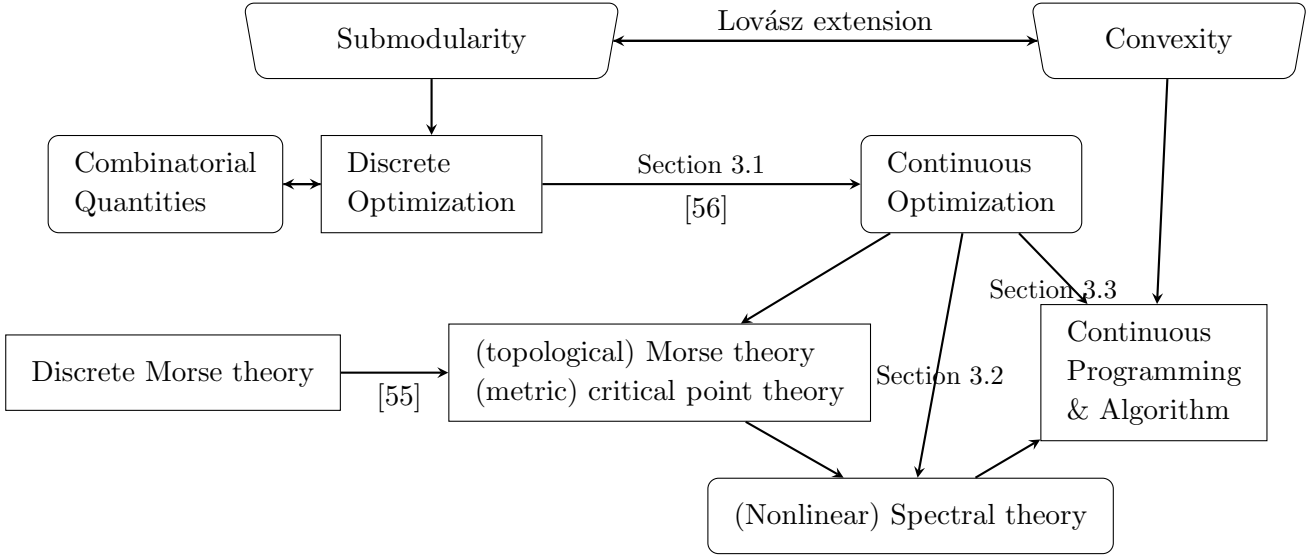
{We shall now  brief discuss how to apply this scheme.  
Our framework gives new  continuous formulations and eigenvalue 
representations for certain combinatorial optimization and related  discrete quantities.  Compared  to other  formulations of those combinatorial  problems, the main advantage of our   formulation is that the critical data (including min-max data, saddle points, and optimal values
) 
of the continuous representations 
 incorporate all the  key information of the original  combinatorial problems (see Sections \ref{sec:CC-transfer}
 and \ref{sec:eigenvalue}). 
For example, by the results in \cite{DPT22}, the $k$-way Cheeger constant on a tree graph  agrees with the $k$-th eigenvalue of the graph 1-Laplacian,  which can be   subsumed into  the above framework. 

Restricted onto  optimization problems, the  continuous representation obtained by Lov\'asz-type extension  leads to an iterative 
algorithm based on  
fractional programming, but we should point out that this is not the main 
focus of the present paper. 

Although the associated  algorithms are  not the main contribution and focus of this work, in Section \ref{sec:algo} we review  fractional programming and  explore more in this direction. A remarkable theoretical advantage we proved in this paper  is that our scheme  provides  an iterative solution  {\sl without}  rounding, and can be used  to improve any  initially given data.   Moreover, just to explain  the applicability, 
we should point out that this framework already  performs well on the Cheeger cut problem (see Sections \ref{sec:boundary-graph-1-lap} and \ref{sec:variantCheeger}), and the maxcut problem (see Section \ref{sec:maxcut}
 for details). One can expect a good  performance of this framework also on other combinatorial problems, such as the frustration set problem, the independent set problem and  the coloring problem.  

\vspace{0.16cm}

\noindent\textbf{Related works}.\;   The present paper is the second one in a series that  develops a systematic
  bridge between constructions in  discrete mathematics and the corresponding   continuous analogs via Lov\'asz type extensions,  
where the other two parts \cite{JZ-prepare21,JostZhang} are concerned with different aspects. Let us  briefly describe their contents and put them into perspective.  The series is motivated by recent developments on  Cheeger inequalities,  Lov\'asz extensions, expander graphs, spectral graph theory and practical applications.  
We  focus on the Lov\'asz extension and introduce some useful generalizations, including the multi-way Lov\'asz extension in this paper, and the piecewise multilinear extension in \cite{JostZhang}, which we simply  call  discrete-to-continuous extensions. 
Then, we investigate    optimization and eigenvalue problems (see Section \ref{sec:main}), Morse theory (see \cite{JZ-prepare21}), min-max theory, critical point theory,  and   spectral theory (see \cite{JostZhang}) for the Lipschitz  functions obtained by these discrete-to-continuous extensions. 
Thus, this series  provides new perspectives for understanding certain relations and interactions between discrete and continuous worlds via Lov\'asz-type  extensions. 

The present paper focuses on the aspect of eigenvalue problems and optimizations regarding Lov\'asz extension, while in \cite{JZ-prepare21} we concentrate on the Morse  and Lusternik-Schnirelman  theoretical aspect involving Lov\'asz extension. More generally, in  \cite{JostZhang}, we further explore min-max relations, saddle point problems, spectral theory and  critical point theory involving a more general class of discrete-to-continuous  extensions (namely, the piecewise multilinear extensions). The mixed IP-SD  algorithm  proposed in Section \ref{sec:algo} 
can also  be applied in 
\cite{JostZhang} for approximating the second eigenvalue.  
}
\begin{notification}
Since this paper contains  many interacting parts and relevant results, some notions and concepts may have slightly distinct
meanings in different sections, but this will be stated at the beginning of each section.
\end{notification}

\section{A preliminary:  Lov\'asz extension and submodular functions}
\label{sec:Lovasz}

While most of the results on submodularity are known in the field of discrete
convex analysis, we present some details in a simple manner, which should be helpful to understand our main results in Section \ref{sec:main}.

We first formalize some important results about the original Lov\'asz extension.

\begin{defn}
Two vectors $\vec x$ and $\vec y$ are {\sl comonotonic} if $(x_i-x_j)(y_i-y_j)\ge0$, $\forall i,j\in \{1,2,\cdots,n\}$.

A function $F:\R^n\to\R$ is  {\sl comonotonic additive} if $F(\vec x+\vec y)=F(\vec x)+F(\vec y)$ for any comonotonic pair $\vec x$ and $\vec y$.
\end{defn}
The following proposition shows that a function is comonotonic additive if and only if it can be expressed as the Lov\'asz extension of some function.
\begin{pro}
\label{pro:comonotonic-additivity}
$F:\R^n\to\R$ is the Lov\'asz extension $F=f^L$ of some function $f:\power(V)\to\R$ if and only if $F$ is comonotonic additive. 
\end{pro}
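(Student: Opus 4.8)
The plan is to prove both implications by isolating the single structural fact that makes the Lov\'asz extension ``secretly linear'': two vectors $\vec x,\vec y$ are comonotonic exactly when they admit a \emph{common} sorting permutation, equivalently when the families of their superlevel sets together form one chain under inclusion. I would first record this as a preliminary observation: if $\vec x,\vec y$ are comonotonic, then any bijection $\sigma$ that sorts $\vec x$ nondecreasingly and breaks ties according to $\vec y$ simultaneously sorts \emph{both} vectors, because the inequality $(x_i-x_j)(y_i-y_j)\ge 0$ rules out the only obstructing pattern ($x_i<x_j$ while $y_i>y_j$). Throughout I abbreviate $S_i^\sigma:=\{\sigma(i+1),\dots,\sigma(n)\}$, so that $V=S_0^\sigma\supsetneq S_1^\sigma\supsetneq\cdots$ is the relevant nested chain.

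For the forward implication, suppose $F=f^L$. Given a comonotonic pair, pick a common sorting $\sigma$; then $\vec x+\vec y$ is also sorted by $\sigma$. Along this ordering the superlevel sets appearing in \eqref{eq:Lovasum} are the \emph{same} sets $S_i^\sigma$ for all three vectors, so \eqref{eq:Lovasum} displays $f^L$ on the comonotone cone of $\sigma$ as the linear functional $c\,f(V)+\sum_i d_i\,f(S_i^\sigma)$ in the ``gap coordinates'' $c=x_{\sigma(1)}$ and $d_i=x_{\sigma(i+1)}-x_{\sigma(i)}\ge 0$. Since these gap coordinates add under $\vec x\mapsto\vec x+\vec y$, the additivity $f^L(\vec x+\vec y)=f^L(\vec x)+f^L(\vec y)$ follows at once. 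The only point needing care is ties: a term with $d_i=0$ contributes $0$ regardless of which set is named, so the identification of superlevel sets matters only where gaps are positive; alternatively the whole argument can be run on the integral form \eqref{eq:Lovaintegral}, which is insensitive to ties.

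For the converse, assume $F$ is comonotonic additive and set $f(A):=F(\vec 1_A)$ for $A\neq\varnothing$, with $f(\varnothing):=0$ (consistent, since $\vec 0$ is comonotonic with itself, so $F(\vec 0)=2F(\vec 0)$ forces $F(\vec 0)=0$). For arbitrary $\vec x$, fix a sorting $\sigma$ and use the telescoping decomposition $\vec x=x_{\sigma(1)}\vec 1_V+\sum_{i=1}^{n-1}(x_{\sigma(i+1)}-x_{\sigma(i)})\,\vec 1_{S_i^\sigma}$ into a constant vector plus nonnegative multiples of indicators of the nested chain. All summands are sorted by $\sigma$, hence lie in one comonotone cone, so each partial sum is comonotonic with the next summand; iterating comonotonic additivity evaluates $F(\vec x)$ as the sum of $F$ over the summands. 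Pulling the scalars out and recognizing $F(\vec 1_{S_i^\sigma})=f(S_i^\sigma)$ and $F(\vec 1_V)=f(V)$ reproduces \eqref{eq:Lovasum} term by term, giving $F=f^L$.

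The main obstacle is the scalar-extraction step, i.e. positive homogeneity of $F$. Comonotonic additivity by itself only yields $F(k\vec v)=kF(\vec v)$ for positive integers $k$, and then for positive rationals (apply additivity to $\vec v$ with its copies and to $\tfrac1k\vec v$ with its $k$ copies, all pairwise comonotonic); the constant coordinate is handled along the line $\R\vec 1_V$, on which additivity holds for all real scalars since constant vectors are mutually comonotonic. Upgrading from rational to arbitrary real scalars, however, requires the continuity of $F$, and this hypothesis is genuinely essential: for $n=1$ comonotonicity is vacuous, so any non-linear additive (Cauchy) solution is comonotonic additive yet fails to be a Lov\'asz extension. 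I would therefore invoke continuity of $F$ (as in the continuous version recorded in the introduction), after which the homogeneity extension is routine and the two formulas agree.
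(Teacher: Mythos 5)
Your proof is correct, and it is worth noting that the paper itself states this proposition \emph{without} proof; the only argument of this kind that the paper actually writes out is for the disjoint-pair analogue (Proposition \ref{pro:setpair-character}). Your two directions — the forward one via a common sorting permutation and linearity of \eqref{eq:Lovasum} in the gap coordinates, and the converse via the telescoping decomposition $\vec x=x_{\sigma(1)}\vec 1_V+\sum_{i=1}^{n-1}(x_{\sigma(i+1)}-x_{\sigma(i)})\vec 1_{S_i^\sigma}$ — constitute the standard direct reconstruction of the Lov\'asz extension. The paper's route for the disjoint-pair version is organized slightly differently: it first observes that being a (disjoint-pair) Lov\'asz extension is equivalent to $F$ being affine on each (absolutely) comonotone cone, so that only positive homogeneity needs to be proved, and it then obtains homogeneity exactly as you do, from the Cauchy equation $F(s\vec x)+F(t\vec x)=F((s+t)\vec x)$ on rays together with continuity. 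So in substance the two arguments share the same key step; yours is more explicit in reassembling the summation formula term by term. Most importantly, your closing observation is sharp and correct: comonotonic additivity alone only yields rational homogeneity, and the proposition as literally stated (with no continuity hypothesis on $F$) is false — a pathological additive function of a single coordinate is comonotonic additive but not piecewise linear, so not a Lov\'asz extension. The paper's introductory formulation of this equivalence does include continuity, and its proof of Proposition \ref{pro:setpair-character} invokes continuity in precisely the way you describe, so your explicit flagging of this hypothesis (with the $n=1$ Cauchy counterexample) fixes a genuine imprecision in the statement rather than revealing a gap in your own argument.
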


Recall the following known results:

\begin{theorem}[Lov\'asz]\label{thm:Lovasz}
The following conditions are equivalent: (1) $f$ is submodular; (2) $f^L$ is convex; (3) $f^L$ is submodular.
\end{theorem}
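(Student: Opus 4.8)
The plan is to prove the equivalence through the implications $(1)\Rightarrow(2)$, $(2)\Rightarrow(1)$, $(1)\Rightarrow(3)$ and $(3)\Rightarrow(1)$; since the last three are short and only $(1)\Rightarrow(2)$ carries real content, this gives $(1)\Leftrightarrow(2)$ and $(1)\Leftrightarrow(3)$ and hence the full statement. Here condition $(3)$ is read in the lattice sense, $f^L(\vec{x})+f^L(\vec{y})\ge f^L(\vec{x}\vee\vec{y})+f^L(\vec{x}\wedge\vec{y})$ for all $\vec{x},\vec{y}\in\R^n$. Throughout I would use that $f^L$ is positively one-homogeneous and comonotone additive (Proposition \ref{pro:comonotonic-additivity}), that $f^L(\vec{1}_A)=f(A)$, and the integral representation \eqref{eq:Lovaintegral}, together with the convention $f(\varnothing)=0$.

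The two implications into $(1)$ are immediate by restricting to indicator vectors. For $(3)\Rightarrow(1)$, since $\vec{1}_A\vee\vec{1}_B=\vec{1}_{A\cup B}$ and $\vec{1}_A\wedge\vec{1}_B=\vec{1}_{A\cap B}$, the lattice inequality for $f^L$ at $\vec{x}=\vec{1}_A$, $\vec{y}=\vec{1}_B$ reads exactly $f(A)+f(B)\ge f(A\cup B)+f(A\cap B)$. For $(2)\Rightarrow(1)$, a positively one-homogeneous convex function is subadditive (from $f^L(\vec{u}+\vec{v})=2f^L(\tfrac{\vec{u}+\vec{v}}{2})\le f^L(\vec{u})+f^L(\vec{v})$), so $f^L(\vec{1}_A+\vec{1}_B)\le f(A)+f(B)$; on the other hand $\vec{1}_A+\vec{1}_B=\vec{1}_{A\cup B}+\vec{1}_{A\cap B}$ with $\vec{1}_{A\cup B}$ and $\vec{1}_{A\cap B}$ comonotonic, so comonotone additivity gives $f^L(\vec{1}_A+\vec{1}_B)=f(A\cup B)+f(A\cap B)$, and submodularity of $f$ follows.

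For $(1)\Rightarrow(3)$ I would first rewrite \eqref{eq:Lovaintegral} in the limit-free form $f^L(\vec{x})=\int_0^{+\infty}f(V^t(\vec{x}))\,dt+\int_{-\infty}^0\big(f(V^t(\vec{x}))-f(V)\big)\,dt$, which is valid on all of $\R^n$ (a routine case check on the sign of $\min\vec{x}$ and $\max\vec{x}$ reconciles it with \eqref{eq:Lovaintegral}). Using the pointwise identities $V^t(\vec{x}\vee\vec{y})=V^t(\vec{x})\cup V^t(\vec{y})$ and $V^t(\vec{x}\wedge\vec{y})=V^t(\vec{x})\cap V^t(\vec{y})$, the quantity $f^L(\vec{x})+f^L(\vec{y})-f^L(\vec{x}\vee\vec{y})-f^L(\vec{x}\wedge\vec{y})$ becomes the integral over $t$ of $f(V^t(\vec{x}))+f(V^t(\vec{y}))-f(V^t(\vec{x})\cup V^t(\vec{y}))-f(V^t(\vec{x})\cap V^t(\vec{y}))$, the four $f(V)$ contributions cancelling. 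Each integrand is nonnegative by submodularity of $f$, so the whole expression is $\ge 0$.

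The main work, and the expected obstacle, is $(1)\Rightarrow(2)$, which I would establish by exhibiting $f^L$ as a pointwise maximum of finitely many linear functions. For each permutation $\sigma$ of $V$ put $B_i^\sigma=\{\sigma(i+1),\dots,\sigma(n)\}$ and define the linear functional $\ell_\sigma(\vec{z})=\sum_{i=0}^{n-1}(z_{\sigma(i+1)}-z_{\sigma(i)})f(B_i^\sigma)$ with $z_{\sigma(0)}=0$, so that $f^L(\vec{x})=\ell_\tau(\vec{x})$ whenever $\tau$ orders the coordinates of $\vec{x}$ increasingly; it then suffices to show $f^L(\vec{x})\ge\ell_\sigma(\vec{x})$ for every $\sigma$, for then $f^L=\max_\sigma\ell_\sigma$ is convex. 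Passing to the Abel-summation form $\ell_\sigma(\vec{z})=\sum_{j=1}^n z_{\sigma(j)}\big(f(B_{j-1}^\sigma)-f(B_j^\sigma)\big)$ and comparing $\sigma$ with the permutation $\sigma'$ obtained by transposing two adjacent positions $k,k+1$ (elements $a,b$ over the common tail $S$), a direct computation gives $\ell_\sigma(\vec{z})-\ell_{\sigma'}(\vec{z})=\Delta\,(z_a-z_b)$ with $\Delta=f(S\cup\{a,b\})-f(S\cup\{a\})-f(S\cup\{b\})+f(S)\le 0$ by submodularity. Thus transposing an out-of-order adjacent pair (one with $x_a>x_b$) never decreases $\ell(\vec{x})$, so bubble-sorting any $\sigma$ into the increasing order $\tau$ yields $\ell_\sigma(\vec{x})\le\ell_\tau(\vec{x})=f^L(\vec{x})$, as needed. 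The delicate points are the bookkeeping in this adjacent-swap computation and the treatment of coordinate ties (which form a measure-zero set and are handled by continuity of $f^L$), and it is precisely this exchange step where submodularity is genuinely used.
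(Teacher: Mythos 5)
Your proof is correct. The implications $(3)\Rightarrow(1)$, $(2)\Rightarrow(1)$ and $(1)\Rightarrow(3)$ run exactly as in the paper (whose detailed proof appears in the generalized Theorem \ref{thm:submodular-L-equivalent}): restriction to indicator vectors, the midpoint/subadditivity argument combined with comonotone additivity applied to $\vec 1_A+\vec 1_B=\vec 1_{A\cup B}+\vec 1_{A\cap B}$, and the integral representation together with the level-set identities $V^t(\vec x\vee\vec y)=V^t(\vec x)\cup V^t(\vec y)$ and $V^t(\vec x\wedge\vec y)=V^t(\vec x)\cap V^t(\vec y)$. Where you genuinely diverge is the core implication $(1)\Rightarrow(2)$. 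The paper follows Lov\'asz's original route: it introduces the convex closure $f^{\mathrm{convex}}(\vec x)=\inf_{\{\lambda_A\}\in\Lambda(\vec x)}\sum_A\lambda_A f(A)$, convex by construction, and argues that for submodular $f$ the optimal weights concentrate on a chain, whence $f^{\mathrm{convex}}=f^L$; that is, it exhibits $f^L$ as an \emph{infimum}. You instead exhibit $f^L$ as a finite \emph{supremum} of linear functionals, $f^L=\max_\sigma \ell_\sigma$, proving $f^L\ge\ell_\sigma$ by an adjacent-transposition (bubble-sort) computation in which submodularity enters precisely as $\Delta\le 0$; this is in effect the Edmonds greedy/base-polytope description of the Lov\'asz extension as a support function. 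What each buys: your route is more elementary and fully self-contained (the exchange computation is explicit, whereas the paper's chain argument is only sketched, deferring to Lov\'asz's original paper), and it gives convexity immediately as a pointwise maximum of linear functions; the paper's route via the convex closure adapts directly to restricted set families and the $k$-way setting that Theorem \ref{thm:submodular-L-equivalent} actually needs, and yields the additional information that $f^L$ coincides with the convex closure of $f$. One small remark: your caveat about coordinate ties is unnecessary — the bubble sort terminates at a weakly increasing ordering $\tau$, and $\ell_\tau(\vec x)=f^L(\vec x)$ holds for any such $\tau$ because tied terms carry zero coefficient, so no measure-zero/continuity argument is required.
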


\begin{remark}
The fact that $f$ is submodular if and only if  $f^L$ is submodular is provided by Propositions 7.38 and 7.39 in \cite{Murota03book}. We shall give a detailed proof for a generalized version of Theorem \ref{thm:Lovasz} (see Theorem \ref{thm:submodular-L-equivalent}).   
\end{remark}

\begin{theorem}[
Murota \cite{Murota03book}]\label{thm:Chateauneuf-Cornet}
$F:\R^n\to\R$ is the Lov\'asz extension $F=f^L$ of some submodular $f:\power(V)\to\R$ if and only if $F$ is positively one-homogeneous, submodular and $F(\vec x+t\vec 1)=F(\vec x)+tF(\vec 1)$.
\end{theorem}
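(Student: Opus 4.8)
The plan is to reduce the statement to the two facts already at hand: Proposition~\ref{pro:comonotonic-additivity} (a function equals some $f^L$ iff it is comonotonic additive) and Theorem~\ref{thm:Lovasz} (a set function $f$ is submodular iff $f^L$ is submodular). Combining them yields the clean reformulation
$$\{F=f^L\text{ for some submodular }f\}=\{F\text{ comonotonic additive}\}\cap\{F\text{ submodular}\},$$
so the theorem becomes the assertion that the three listed properties (positively one-homogeneous, submodular, $F(\vec x+t\vec 1)=F(\vec x)+tF(\vec 1)$) are together equivalent to ``comonotonic additive and submodular''. With this reduction in place the work is concentrated in a single implication.

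\emph{Forward direction.} Assume $F=f^L$ with $f$ submodular. Positive one-homogeneity of $f^L$ is recorded in the introduction; the translation identity is $f^L(\vec x+t\vec 1_V)=f^L(\vec x)+tf(V)$ together with $f(V)=f^L(\vec 1_V)=F(\vec 1)$; and submodularity of $F=f^L$ is exactly the implication $(1)\Rightarrow(3)$ of Theorem~\ref{thm:Lovasz}. This needs no further comment.

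\emph{Backward direction.} Assume $F$ is positively one-homogeneous, submodular, and satisfies the translation property. Set $f(A):=F(\vec 1_A)$ for $A\neq\varnothing$ and $f(\varnothing):=F(\vec 0)=0$ (the last value following from positive homogeneity). Submodularity of $f$ is immediate from submodularity of $F$ applied to the pair $\vec 1_A,\vec 1_B$, since $\vec 1_A\vee\vec 1_B=\vec 1_{A\cup B}$ and $\vec 1_A\wedge\vec 1_B=\vec 1_{A\cap B}$, giving $f(A)+f(B)\ge f(A\cup B)+f(A\cap B)$. By Proposition~\ref{pro:comonotonic-additivity} and Theorem~\ref{thm:Lovasz}, it therefore remains only to prove that $F$ is \emph{comonotonic additive}: once that holds, $F=h^L$ for some $h$, and evaluating at indicator vectors forces $h=f$, while submodularity of $F=f^L$ upgrades $f$ to a submodular function via Theorem~\ref{thm:Lovasz}.

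\emph{The crux.} Proving comonotonic additivity from the three hypotheses is the main obstacle. I would show equivalently that $F$ is linear on each comonotonicity cone $C_\sigma$, which is a sublattice of $(\R^n,\vee,\wedge)$; by relabelling it suffices to treat $C=\{x_1\le\cdots\le x_n\}$. Using the translation property to normalise $x_1=0$ and positive homogeneity, the goal reduces to the additive identity $F(\vec x)=\sum_{k\ge 2}(x_k-x_{k-1})F(\vec 1_{U_k})$ along the chain $U_k=\{k,\dots,n\}$, which I would establish by induction on the number of distinct coordinate values of $\vec x$, peeling off the top level set at each step. The inductive step is the key lemma $F(\vec z+c\vec 1_W)=F(\vec z)+cF(\vec 1_W)$, where $W$ is the current top level set (so $\vec z$ and $\vec 1_W$ are comonotonic). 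The delicate point is precisely here: submodularity is vacuous on comonotonic (in particular nested) pairs, since their join and meet are just the larger and smaller vector, so one cannot apply it to $\vec z$ and $c\vec 1_W$ directly. The substance of the argument is to feed submodularity \emph{auxiliary non-comonotonic} vectors whose joins and meets reassemble the desired terms $F(\vec z)$ and $cF(\vec 1_W)$, yielding the two opposite inequalities that pin down the value; this is the classical kernel of the result (Murota~\cite{Murota03book}, Theorem~7.40), which can alternatively be obtained from the base-polyhedron/greedy description of submodular Lov\'asz extensions. I expect this step, rather than any of the bookkeeping around it, to be where the real effort lies.
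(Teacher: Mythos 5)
Your reduction via Proposition~\ref{pro:comonotonic-additivity} and Theorem~\ref{thm:Lovasz}, and the forward direction, are fine. But the proposal has a genuine gap: the backward direction is never actually proved. The entire content of the theorem is the implication (positive one-homogeneity $+$ submodularity $+$ translation linearity) $\Rightarrow$ comonotonic additivity, and at exactly this point you state that one must ``feed submodularity auxiliary non-comonotonic vectors'' and then defer to Murota's Theorem~7.40 --- which is the very result being proven (the theorem is attributed to Murota in its statement). Identifying where the difficulty sits is not the same as resolving it: without exhibiting the auxiliary vectors and deriving the two opposite inequalities, what you have is an outline whose only nontrivial step is outsourced to the citation.

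For comparison: the paper itself gives no proof of this statement (it quotes Murota, noting an alternative proof in \cite{CC18}), but its proof of the disjoint-pair analog, Proposition~\ref{pro:bisubmodular-continuous}, contains precisely the construction your ``crux'' is missing, and it transcribes verbatim to $\power(V)$. Set $f(A):=F(\vec 1_A)$, use the translation identity to normalize $\min_i x_i=0$, and let $V^i:=V^{\sigma(i)}(\vec x)$ be the (nested) level sets. Apply the submodular inequality to the non-comonotonic pairs $\bigl(x_{\sigma(i+1)}\vec 1_{V^i},\;\vec x*\vec 1_{V^{i+1}}\bigr)$, where $\vec x*\vec 1_{A}$ denotes the restriction of $\vec x$ to $A$ with other components $0$; their join and meet are $\vec x*\vec 1_{V^i}$ and $x_{\sigma(i+1)}\vec 1_{V^{i+1}}$, so positive homogeneity and summation over $i$ telescope to
\begin{equation*}
f^L(\vec x)=\sum_{i=0}^{n-1}x_{\sigma(i+1)}\bigl(f(V^i)-f(V^{i+1})\bigr)\ge \sum_{i=0}^{n-1}\Bigl(F(\vec x*\vec 1_{V^i})-F(\vec x*\vec 1_{V^{i+1}})\Bigr)=F(\vec x).
\end{equation*}
The reverse inequality $f^L(\vec x)\le F(\vec x)$ comes from a second telescoping in which submodularity is applied to pairs built from $(x_{\sigma(i+1)}-x_{\sigma(i)})\vec 1_V$ and the shifted restrictions $\vec x*\vec 1_{V^i}-x_{\sigma(i)}\vec 1_V$, together with the hypothesis $F(\vec x+t\vec 1)=F(\vec x)+tF(\vec 1)$; this is exactly the role played by condition (II) in the paper's argument. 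Supplying this two-sided telescoping argument (or an equivalent one, e.g.\ via the greedy/base-polyhedron description) is what would turn your proposal into a proof.
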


\begin{remark}
Theorem \ref{thm:Chateauneuf-Cornet} was   originally  proved by establishing a  one-to-one correspondence between positively  
homogeneous L-convex functions and submodular  functions  (see 
  Theorem 7.40 in Murota's book  \cite{Murota03book}). An alternative proof is
  given in   \cite{CC18}.
\end{remark}

We shall  establish these results for  the disjoint-pair version and the $k$-way version of the Lov\'asz extension.

\subsection{Disjoint-pair and $k$-way Lov\'asz extensions}

Under the natural additional assumption that $f(\varnothing,\varnothing)=0$, one can  write \eqref{eq:disjoint-pair-Lovasz-def-integral} as
\begin{equation}\label{eq:disjoint-pair-Lovasz-def-integral2}
f^{L}(\vec x)=\int_0^{\infty} f(V_+^t(\vec x),V_-^t(\vec x))dt,
\end{equation}
where $V_\pm^t(\vec x)=\{i\in V:\pm x_i>t\}$, $\forall t\ge0$. 
Another formulation of \eqref{eq:disjoint-pair-Lovasz-def-integral2} (or \eqref{eq:disjoint-pair-Lovasz-def-integral}) is 
\begin{equation}\label{eq:disjoint-pair-Lovasz-def}
f^{L}(\vec x)=\sum_{i=0}^{n-1}(|x_{\sigma(i+1)}|-|x_{\sigma(i)}|)f(V^{\sigma(i)}_+(\vec x),V^{\sigma(i)}_-(\vec x)),
\end{equation}
where $\sigma:V\cup\{0\}\to V\cup\{0\}$ is a bijection such that $|x_{\sigma(1)}|\le |x_{\sigma(2)}| \le \cdots\le |x_{\sigma(n)}|$ and $\sigma(0)=0$, where $x_0:=0$, and
$$V^{\sigma(i)}_\pm(\vec x):=\{j\in V:\pm x_j> |x_{\sigma(i)}|\},\;\;\;\; i=0,1,\cdots,n-1.$$
 In fact, by $f(\varnothing,\varnothing)=0$, $\|\vec x\|_\infty=|x_{\sigma(n)}|$,  and $f(V_+^t(\vec x),V_-^t(\vec x))=f(V^{\sigma(i)}_+(\vec x),V^{\sigma(i)}_-(\vec x))$ whenever  $|x_{\sigma(i)}|\le t<|x_{\sigma(i+1)}|$, we have 
$$\int_0^{\infty} f(V_+^t(\vec x),V_-^t(\vec x))dt=\int_0^{\|\vec x\|_\infty} f(V_+^t(\vec x),V_-^t(\vec x))dt=\sum_{i=0}^{n-1}\int_{|x_{\sigma(i)}|}^{|x_{\sigma(i+1)}|}f(V_+^t(\vec x),V_-^t(\vec x))dt$$
which deduces that \eqref{eq:disjoint-pair-Lovasz-def-integral2}, \eqref{eq:disjoint-pair-Lovasz-def-integral} and \eqref{eq:disjoint-pair-Lovasz-def} are equivalent.   
We regard $\power_2(V)=3^V$ as $\{-1,0,1\}^n$ by identifying the disjoint pair $(A,B)$ with the ternary (indicator) vector $\vec 1_A-\vec1_B$.

One may compare the original and the disjoint-pair Lov\'asz extensions by writing \eqref{eq:disjoint-pair-Lovasz-def-integral} as
\begin{equation}\label{eq:disjoint-pair-form}
\int_{\min_i |x_i|}^{\max_i |x_i|} f(V_+^t(\vec x),V_-^t(\vec x))dt+\min_i |x_i| f(V_+^0(\vec x)),V_-^0(\vec x))),
\end{equation}
Note that \eqref{eq:disjoint-pair-form} is very similar to \eqref{eq:Lovaintegral}.  We say that  $(A,B)\in\power_2(V)$ is an {\sl associate set-tuple} of a given $\vec x\in\R^n$ if $(A,B)=(V_+^t(\vec x),V_-^t(\vec x))$ for some $t\ge0$. Of course, a vector $\vec x$ may have many associate  set-tuples.  

\begin{defn}\label{def:k-way-Lovasz}
Given $V_i=\{1,\cdots,n_i\}$, $i=1,\cdots,k$, and a function $f:\mathcal{P}(V_1)\times \cdots\times \mathcal{P}(V_k)\to \R$, the  $k$-way Lov\'asz extension $f^L: \R^{V_1}\times\cdots\times \R^{V_k}\to \R$ can be written as
\begin{align*}
f^L(\vec x^1,\cdots,\vec x^k)&=\int_{\min \vec x}^{\max \vec x}f(V^t_1(\vec x^1),\cdots,V^t_k(\vec x^k))dt+ f(V_1,\cdots,V_k)\min\vec x\\&
=\int_{-\infty}^0(f(V^t_1(\vec x^1),\cdots,V^t_k(\vec x^k))-f(V_1,\cdots,V_k))dt + \int_0^{+\infty}f(V^t_1(\vec x^1),\cdots,V^t_k(\vec x^k)) d t
\end{align*}
where $V^t_i(\vec x^i)=\{j\in V_i:x^i_j>t\}$, $\min\vec x=\min\limits_{i,j} x^i_j$ and $\max\vec x=\max\limits_{i,j} x^i_j$. 

 We say that  $(A_1,\cdots,A_k)\in\mathcal{P}(V_1)\times \cdots\times \mathcal{P}(V_k)$ is an {\sl associated set-tuple} of a given vector $\vec x:=(\vec x^1,\cdots,\vec x^k)\in\R^{V_1}\times\cdots\times \R^{V_k}$ if $(A_1,\cdots,A_k)=(V^t_1(\vec x^1),\cdots,V^t_k(\vec x^k))$ for some $t\in\R$. 
\end{defn}

\begin{defn}[$k$-way analog for disjoint-pair Lov\'asz extension]
\label{def:k-way-pair-Lovasz}
   Given  $V_i=\{1,\cdots,n_i\}$, $i=1,\cdots,k$, and a function $f:\mathcal{P}_2(V_1)\times \cdots\times \mathcal{P}_2(V_k)\to \R$, define $f^L: \R^{V_1}\times\cdots\times \R^{V_k}\to \R$ by
   $$f^L(\vec x^1,\cdots,\vec x^k)=
   \int_0^{\|\vec x\|_\infty} f(V_{1,+}^t(\vec x^1),V_{1,-}^t(\vec x^1),\cdots,V_{k,+}^t(\vec x^k),V_{k,-}^t(\vec x^k))dt
   $$
   where $V_{i,\pm}^t(\vec x^i)=\{j\in V_i:\pm x^i_j>t\}$, $\|\vec x\|_\infty=\max\limits_{i=1,\cdots,k} \|\vec x^i\|_\infty$. We can replace $\|\vec x\|_\infty$ by $+\infty$ if we set $f(\varnothing,\cdots,\varnothing)=0$.  A set-tuple  $(A^1_+,A^1_-\cdots,A^k_+,A^k_-)\in\mathcal{P}_2(V_1)\times \cdots\times \mathcal{P}_2(V_k)$ is called an {\sl associated set-tuple} of a given vector $\vec x:=(\vec x^1,\cdots,\vec x^k)\in\R^{V_1}\times\cdots\times \R^{V_k}$ if $(A^1_+,A^1_-\cdots,A^k_+,A^k_-)=(V_{1,+}^t(\vec x^1),V_{1,-}^t(\vec x^1),\cdots,V_{k,+}^t(\vec x^k),V_{k,-}^t(\vec x^k))$ for some $t\ge0$. 
\end{defn}
\vspace{0.19cm}

 For convenience, we always use $f^L$ to express different variants of Lov\'asz extensions of $f$. The reader can identify the  version we are referring to by the domain of $f$.  

Some basic properties of the multi-way Lov\'asz extension are shown below. 
\begin{pro}\label{pro:multi-way-property}
For the multi-way Lov\'asz extension $f^L(\vec x)$, we have
\begin{enumerate}[(a)]
\item  $f^L(\cdot)$ is positively one-homogeneous, piecewise linear, and Lipschitz continuous.
\item $(\lambda f)^L=\lambda f^L$, $\forall\lambda\in\R$.

\end{enumerate}
\end{pro}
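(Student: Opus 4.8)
The plan is to dispatch (b) first, then split (a) into homogeneity, piecewise linearity, and Lipschitz continuity, treating all the integral forms (original, disjoint-pair, $k$-way) uniformly by regarding each as $f^L(\vec x)=\int_{a(\vec x)}^{b(\vec x)} f(\Phi^t(\vec x))\,dt + \ell(\vec x)$, where $\Phi^t(\vec x)$ is the relevant $t$-superlevel set-tuple, $a,b$ are $\min/\max$-type functions, and $\ell$ is linear. Part (b) needs no homogeneity at all: since $(\lambda f)(\Phi^t(\vec x))=\lambda f(\Phi^t(\vec x))$ pointwise and $(\lambda f)(V_1,\dots,V_k)=\lambda f(V_1,\dots,V_k)$, I can pull $\lambda$ out of both the integral and the boundary term, giving $(\lambda f)^L=\lambda f^L$ for every real $\lambda$ (in particular for $\lambda\le 0$, where mere positive homogeneity would be insufficient).

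For positive one-homogeneity, fix $c>0$. The threshold sets satisfy $V^t_i(c\vec x^i)=\{j:cx^i_j>t\}=V^{t/c}_i(\vec x^i)$ (and likewise $|cx^i_j|=c|x^i_j|$ in the disjoint-pair case), so substituting $t=cs$ rescales the integration interval back to that of $\vec x$ and produces an overall factor $c=dt/ds$; the linear term scales as $\ell(c\vec x)=c\,\ell(\vec x)$. Hence $f^L(c\vec x)=c\,f^L(\vec x)$.

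For piecewise linearity, I would pass to the summation form by the same computation that turns \eqref{eq:disjoint-pair-Lovasz-def-integral2} into \eqref{eq:disjoint-pair-Lovasz-def}: as $t$ sweeps the integration range, $\Phi^t(\vec x)$ is piecewise constant in $t$ with jumps only at the (magnitudes of the) coordinate values, so the integral collapses to a finite sum of terms of the form (subinterval length) times $f$ of the constant set-tuple on that subinterval. Partitioning $\R^{kn}$ into the finitely many polyhedral cones $C_\sigma$ on which the coordinates have a fixed total order (by value for the $k$-way form, by magnitude for the disjoint-pair form), the set-tuples become constant and the subinterval lengths become linear in $\vec x$ on each $C_\sigma$; together with the linear $\ell$, this exhibits $f^L$ as linear on each $C_\sigma$, hence piecewise linear.

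The crux is global Lipschitz continuity, for which I first establish continuity and then upgrade. Continuity is cleanest straight from the integral form: the integrand is bounded by $\max_{\text{set-tuples}}|f|$, and for $\vec x^{(m)}\to\vec x$ one has $\Phi^t(\vec x^{(m)})=\Phi^t(\vec x)$ for all $t$ outside a set whose measure tends to $0$ (it collapses onto the finitely many $t$ equal to a coordinate value of $\vec x$), so dominated convergence gives $f^L(\vec x^{(m)})\to f^L(\vec x)$, while $a,b$ and $\ell$ are plainly continuous. Equivalently, one checks from the summation form that two cones sharing a face agree there, because on that face two consecutive coordinates tie and the summand carrying the only ambiguous set-tuple is weighted by the vanishing gap $x_{\sigma(i+1)}-x_{\sigma(i)}$. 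Once $f^L$ is continuous and linear on each of the finitely many closed cones $C_\sigma$, Lipschitz continuity follows with constant $\max_\sigma\|L_\sigma\|$, where $L_\sigma$ is the linear map on $C_\sigma$: any two points are joined by a segment meeting finitely many cones, and summing the per-piece bounds telescopes to $\max_\sigma\|L_\sigma\|\cdot\|\vec x-\vec y\|$. I expect the verification of continuity across the cell boundaries (the gluing of the linear pieces) to be the only genuinely delicate point, and the integral representation is what makes it painless, since it bypasses a permutation-by-permutation comparison.
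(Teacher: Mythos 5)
The paper offers no proof of this proposition: immediately after Proposition \ref{pro:setpair-property} it declares the proofs of Propositions \ref{pro:multi-way-property} and \ref{pro:setpair-property}(c) omitted as ``easy and similar to the case of the original Lov\'asz extension''. Judged on its own merits, your argument is essentially the standard one the authors allude to, and it is correct in substance: (b) follows from linearity of the map $f\mapsto f^L$ (valid for all real $\lambda$, as you note); positive homogeneity from the substitution $t=cs$; piecewise linearity from collapsing the integral to the summation form on cones where the combinatorial data are fixed; and the Lipschitz bound from gluing the finitely many linear pieces along a segment. One point, however, needs repair. For the disjoint-pair (and $k$-way disjoint-pair) extension it is not true that the set-tuples $(V_+^t(\vec x),V_-^t(\vec x))$ are constant on cones determined by the magnitude ordering alone: these tuples depend on the signs of the coordinates (compare $\vec x=(1,2)$ with $\vec x=(-1,2)$), and moreover $\{\vec x:|x_1|\le|x_2|\}$ is not convex, so your $C_\sigma$ would not even be polyhedral cones. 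The cones must be cut out by fixing both a sign pattern and a magnitude ordering, i.e. $C_{\varepsilon,\sigma}=\{\vec x:\varepsilon_i x_i\ge 0\ \forall i,\ \varepsilon_{\sigma(1)}x_{\sigma(1)}\le\cdots\le\varepsilon_{\sigma(n)}x_{\sigma(n)}\}$ with $\varepsilon\in\{\pm1\}^n$; with this correction your piecewise-linearity and telescoping arguments go through verbatim. Finally, a simplifying remark: the measure estimate you already invoke for dominated convergence --- the set of $t$ at which the level set-tuples of $\vec x$ and $\vec y$ differ has measure at most a constant times $\sum_j|x_j-y_j|$ --- combined with the uniform bound on $f$ yields an explicit estimate $|f^L(\vec x)-f^L(\vec y)|\le C\|\vec x-\vec y\|_1$ directly from the integral form, so the continuity-then-upgrade detour is unnecessary; this is precisely the kind of explicit bound the paper records in Proposition \ref{pro:setpair-property}(a) for the disjoint-pair case.
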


\begin{pro}\label{pro:setpair-property}
For the disjoint-pair Lov\'asz extension $f^L(\vec x)$, we have
\begin{enumerate}[(a)]
\item $f^L$ is Lipschitz continuous, and $|f^L(x)-f^L(y)|\le 2\max\limits_{(A,B)\in \power_2(V)}f(A,B) \|x-y\|_1$, $\forall x,y\in \mathbb{R}^n$. Also,  $|f^L(x)-f^L(y)|\le 2\sum\limits_{(A,B)\in \power_2(V)}f(A,B)  \|x-y\|_\infty$, $\forall x,y\in \mathbb{R}^n$.
\item $f^L(-\vec x)=\pm f^L(\vec x)$, $\forall \vec x\in\R^V$ if and only if $f(A,B)=\pm f(B,A)$, $\forall (A,B)\in \power_2(V)$.
\item\label{pro:pro-c} $f^L(\vec x+\vec y)=f^L(\vec x)+f^L(\vec y)$ whenever $V_\pm^0(\vec y)\subset V_\pm^0(\widetilde{\vec x})$, where $\widetilde{\vec x}$ has components $\widetilde{ x}_i=\begin{cases}x_i,&\text{ if }|x_i|=\|\vec x\|_\infty,\\ 0,&\text{ otherwise}.\end{cases}$
\end{enumerate}
\end{pro}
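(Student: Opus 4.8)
The plan is to work directly from the integral representation \eqref{eq:disjoint-pair-Lovasz-def-integral2}, i.e.\ $f^L(\vec x)=\int_0^\infty f(V_+^t(\vec x),V_-^t(\vec x))\,dt$ under the convention $f(\varnothing,\varnothing)=0$, exploiting that $t\mapsto(V_+^t(\vec x),V_-^t(\vec x))$ is a piecewise-constant $\power_2(V)$-valued function whose break-points are the numbers $|x_i|$. For the first bound in (a) I would write $f^L(\vec x)-f^L(\vec y)=\int_0^\infty[f(V_+^t(\vec x),V_-^t(\vec x))-f(V_+^t(\vec y),V_-^t(\vec y))]\,dt$; the integrand vanishes off the disagreement set $D=\{t\ge0:(V_+^t(\vec x),V_-^t(\vec x))\ne(V_+^t(\vec y),V_-^t(\vec y))\}$ and is bounded there by $2\max_{(A,B)}|f(A,B)|$. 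The key estimate is that, for each coordinate $i$, the set of $t\ge0$ on which the memberships ``$i\in V_+^t(\cdot)$'' for $\vec x$ and $\vec y$ disagree has length $|x_i^+-y_i^+|$, and likewise $|x_i^--y_i^-|$ for the negative part (writing $x_i^+=\max(x_i,0)$, $x_i^-=\max(-x_i,0)$); since $|x_i^+-y_i^+|+|x_i^--y_i^-|=|x_i-y_i|$, a union bound gives $|D|\le\|\vec x-\vec y\|_1$, and the stated inequality follows.

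For the second bound in (a) I would instead expand $f^L(\vec x)=\sum_{(A,B)}f(A,B)\,m_{A,B}(\vec x)$, where $m_{A,B}(\vec x)$ is the Lebesgue length of $\{t\ge0:(V_+^t(\vec x),V_-^t(\vec x))=(A,B)\}$. A direct computation identifies, for $A\cup B\ne\varnothing$, $m_{A,B}(\vec x)=\bigl(U_{A,B}(\vec x)-\max(L_{A,B}(\vec x),0)\bigr)_+$ with $U_{A,B}=\min(\min_{i\in A}x_i,\ \min_{i\in B}(-x_i))$ and $L_{A,B}=\max_{i\notin A\cup B}|x_i|$, while the term $(\varnothing,\varnothing)$ contributes $f(\varnothing,\varnothing)\,m_{\varnothing,\varnothing}=0$. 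Since $U_{A,B}$ and $L_{A,B}$ are $1$-Lipschitz in $\|\cdot\|_\infty$, each $m_{A,B}$ is $2$-Lipschitz, and summing over $(A,B)$ gives the $\|\cdot\|_\infty$ bound. Part (b) is then a change of variables: because $V_+^t(-\vec x)=V_-^t(\vec x)$ and $V_-^t(-\vec x)=V_+^t(\vec x)$, one has $f^L(-\vec x)=\int_0^\infty f(V_-^t(\vec x),V_+^t(\vec x))\,dt$, which equals $\pm f^L(\vec x)$ when $f(A,B)=\pm f(B,A)$; the converse follows by evaluating at $\vec x=\vec1_{A,B}=\vec1_A-\vec1_B$ and using $f^L(\vec1_{A,B})=f(A,B)$, so that $f^L(-\vec1_{A,B})=f(B,A)$.

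Part (c) is the substantive claim, and I would prove it by splitting the integral for $\vec z=\vec x+\vec y$ at $t=M:=\|\vec x\|_\infty$. Writing $P=V_+^0(\widetilde{\vec x})=\{i:x_i=M\}$ and $N=V_-^0(\widetilde{\vec x})=\{i:x_i=-M\}$, the hypothesis $V_\pm^0(\vec y)\subseteq V_\pm^0(\widetilde{\vec x})$ means $y_i\ge0$ is supported on $P$, $y_i\le0$ is supported on $N$, and $y_i=0$ otherwise, so $|x_i|<M$ off $P\cup N$. For $0\le t<M$ one checks coordinate by coordinate that adding $\vec y$ only pushes the already-extreme coordinates of $P\cup N$ further outward, leaving every threshold crossing below $M$ intact; hence $(V_+^t(\vec z),V_-^t(\vec z))=(V_+^t(\vec x),V_-^t(\vec x))$, and the lower part integrates to $f^L(\vec x)$. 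For $t\ge M$ only coordinates in $P\cup N$ can satisfy $z_i>t$ or $z_i<-t$, and there $z_i>t\Leftrightarrow y_i>t-M$ and $z_i<-t\Leftrightarrow y_i<-(t-M)$; combined with $V_\pm^s(\vec y)\subseteq P,N$ for $s\ge0$, this yields $(V_+^t(\vec z),V_-^t(\vec z))=(V_+^{t-M}(\vec y),V_-^{t-M}(\vec y))$, so after the substitution $s=t-M$ the upper part integrates to $f^L(\vec y)$. Summing the two pieces gives $f^L(\vec x+\vec y)=f^L(\vec x)+f^L(\vec y)$.

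The main obstacle is the bookkeeping in (c): the additivity hinges on showing that the threshold crossings of $\vec x$ below $M$ are untouched by $\vec y$ while those of $\vec y$ are relocated above $M$, so the two level-set identities must be verified carefully on each side of $t=M$, including the boundary value and the degenerate case $\vec x=\vec0$ (which forces $\vec y=\vec0$ and is trivial). Once the interval-length functions $m_{A,B}$ are identified in (a) and the reflection symmetry of the level sets is observed in (b), those parts are routine.
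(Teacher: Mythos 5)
Your proof is correct, and it is considerably more self-contained than what the paper actually does: the paper's ``proof'' of this proposition consists of citing Theorem 2.2 and Proposition 2.5 of the reference \cite{CSZ18} for parts (a) and (b), and of the single remark that (c) ``can be derived from the definition \eqref{eq:disjoint-pair-Lovasz-def}'', i.e.\ from the summation form, with the details omitted as being analogous to the original Lov\'asz extension. Your route works entirely from the integral representation instead: the disagreement-set argument (using the identity $|x_i^+-y_i^+|+|x_i^--y_i^-|=|x_i-y_i|$) for the $\ell^1$ bound, the interval-length functions $m_{A,B}$ with the explicit formula $\bigl(U_{A,B}-\max(L_{A,B},0)\bigr)_+$ for the $\ell^\infty$ bound, the level-set reflection $V_\pm^t(-\vec x)=V_\mp^t(\vec x)$ for (b), and the splitting of the integral at $t=\|\vec x\|_\infty$ for (c). All of these steps check out, including the two level-set identities in (c): for $t<\|\vec x\|_\infty$ adding $\vec y$ leaves every level pair of $\vec x$ unchanged, and for $t\ge\|\vec x\|_\infty$ the level pairs of $\vec x+\vec y$ are exactly the shifted level pairs of $\vec y$ (the containment $V_\pm^s(\vec y)\subset V_\pm^0(\vec y)$ is what forces them back inside $P$ and $N$). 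The integral-splitting argument for (c) is arguably cleaner than manipulating the summation form \eqref{eq:disjoint-pair-Lovasz-def}, since it avoids re-sorting the coordinates of $\vec x+\vec y$.

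One small discrepancy worth noting: you prove the Lipschitz bounds in (a) with the constants $2\max_{(A,B)}|f(A,B)|$ and $2\sum_{(A,B)}|f(A,B)|$, whereas the statement is written without absolute values. For the statement as written to be meaningful one must read $f\ge 0$ (in which case your constants coincide with the stated ones), so your version is in fact the more robust one; this is an imprecision in the statement rather than a gap in your argument.
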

\begin{proof} (a) and (b) are actually known results and their proofs are elementary.  We refer to Theorem 2.2 and its proof in \cite{CSZ18}  for (a). While, for (b), see Proposition 2.5  in \cite{CSZ18}.  
(c) can be derived from the definition \eqref{eq:disjoint-pair-Lovasz-def}.
\end{proof}

 Here we omit  the proofs of Propositions \ref{pro:multi-way-property} and \ref{pro:setpair-property} (c) because they are easy and similar to the case of the original Lov\'asz extension.

\begin{defn}
\label{def:associate-piece}
Two vectors $\vec x$ and $\vec y$ are said to be absolutely comonotonic  
if $x_iy_i\ge0$, $\forall i$, and $(|x_i|-|x_j|)(|y_i|-|y_j|)\ge0$, $\forall i,j$.
\end{defn}

\begin{pro}\label{pro:setpair-character}
A continuous function $F$ is a disjoint-pair Lov\'asz extension of some function $f:\power_2(V)\to\R$, if and only if 
$F(\vec x)+F(\vec y)=F(\vec x+\vec  y)$ whenever $\vec x$ and $\vec y$ are absolutely comonotonic. 
\end{pro}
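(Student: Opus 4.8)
The plan is to prove both implications by the same device that underlies Proposition \ref{pro:comonotonic-additivity}: locate the maximal cones on which the disjoint-pair extension $f^L$ is linear, and show that the relation \emph{absolutely comonotonic} (Definition \ref{def:associate-piece}) is exactly ``lying in a common such cone''. For a sign vector $\vec\epsilon\in\{-1,+1\}^n$ and a bijection $\sigma$ of $V\cup\{0\}$ fixing $0$, set
$$
C_\sigma^{\vec\epsilon}=\{\vec x\in\R^n:\epsilon_i x_i\ge0\ \forall i,\ |x_{\sigma(1)}|\le\cdots\le|x_{\sigma(n)}|\}.
$$
These closed convex cones cover $\R^n$, and the summation formula \eqref{eq:disjoint-pair-Lovasz-def} shows that on the interior of each $C_\sigma^{\vec\epsilon}$ the set-tuples $(V_+^{\sigma(i)}(\vec x),V_-^{\sigma(i)}(\vec x))$ are constant, so $f^L$ is a fixed linear functional there; by the continuity from Proposition \ref{pro:setpair-property}(a) this linearity extends to the whole closed cone. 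Definition \ref{def:associate-piece} then unwinds to the statement that $\vec x$ and $\vec y$ are absolutely comonotonic if and only if a single $C_\sigma^{\vec\epsilon}$ contains both: choose $\epsilon_i$ to be the common sign forced by $x_iy_i\ge0$, and a $\sigma$ sorting $|x_i|$ and $|y_i|$ simultaneously, which is possible precisely because $(|x_i|-|x_j|)(|y_i|-|y_j|)\ge0$.

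For necessity I would assume $F=f^L$ and take absolutely comonotonic $\vec x,\vec y$. Picking a common ordering $\sigma$ as above and using $x_iy_i\ge0$ gives $|x_i+y_i|=|x_i|+|y_i|$, so $\sigma$ also sorts the absolute values of $\vec x+\vec y$. Expanding \eqref{eq:disjoint-pair-Lovasz-def} for $\vec x$, $\vec y$ and $\vec x+\vec y$ with this common $\sigma$, the three expansions run over the same chain of set-tuples wherever the coefficients $|x_{\sigma(i+1)}|-|x_{\sigma(i)}|$ (and their analogues) are nonzero, while $|(x+y)_{\sigma(i+1)}|-|(x+y)_{\sigma(i)}|=(|x_{\sigma(i+1)}|-|x_{\sigma(i)}|)+(|y_{\sigma(i+1)}|-|y_{\sigma(i)}|)$. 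Summing termwise yields $f^L(\vec x+\vec y)=f^L(\vec x)+f^L(\vec y)$; conceptually this is just the linearity of $f^L$ on the convex cone $C_\sigma^{\vec\epsilon}$ containing $\vec x,\vec y,\vec x+\vec y$.

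For sufficiency I would take $F$ continuous and additive on absolutely comonotonic pairs. Since $\vec x$ is absolutely comonotonic with itself, $F(2\vec x)=2F(\vec x)$, and iterating with continuity gives $F(\lambda\vec x)=\lambda F(\vec x)$ for all $\lambda\ge0$, so $F(\vec 0)=0$. Define $f(A,B)=F(\vec 1_A-\vec 1_B)$ on $\power_2(V)$, whence $f(\varnothing,\varnothing)=0$. Given $\vec x$, I would use its layer-cake decomposition $\vec x=\sum_{l}c_l(\vec 1_{A_l}-\vec 1_{B_l})$ along the nested chain of associate set-tuples, with $c_l>0$; all summands and all their partial sums lie in one cone $C_\sigma^{\vec\epsilon}$, hence are pairwise absolutely comonotonic. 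Repeated application of the additivity hypothesis together with positive homogeneity gives $F(\vec x)=\sum_l c_l\,f(A_l,B_l)$, which is exactly the value assigned by \eqref{eq:disjoint-pair-Lovasz-def}, so $F=f^L$.

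The main obstacle I anticipate is the bookkeeping at ``ties'' and zero components: when $|x_i|=|x_j|$ but $|y_i|\ne|y_j|$, or when $x_i=0\ne y_i$, the individual set-tuples $(V_+^{\sigma(i)},V_-^{\sigma(i)})$ attached to $\vec x$, $\vec y$ and $\vec x+\vec y$ need not agree index by index. The key point to verify is that such mismatches occur only where the corresponding coefficient vanishes, so they do not perturb the termwise summation. Routing the whole argument through the closed linearity cones $C_\sigma^{\vec\epsilon}$, and using the continuity in Proposition \ref{pro:setpair-property}(a) to pass from cone interiors to their boundaries, is the clean way to absorb these degeneracies.
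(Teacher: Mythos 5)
Your proposal is correct and follows essentially the same route as the paper: both arguments rest on the fact that the disjoint-pair Lov\'asz extension is linear on the simplicial cones of pairwise absolutely comonotonic vectors (which the paper asserts directly from \eqref{eq:disjoint-pair-Lovasz-def}), and both obtain positive homogeneity in the sufficiency direction from the Cauchy equation $F(s\vec x)+F(t\vec x)=F((s+t)\vec x)$ on rays together with continuity. Your write-up simply makes explicit the cone structure $C_\sigma^{\vec\epsilon}$ and the layer-cake reconstruction $F(\vec x)=\sum_l c_l f(A_l,B_l)$ that the paper compresses into ``by the definition of the disjoint-pair Lov\'asz extension.''
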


\begin{proof} By the definition of the disjoint-pair Lov\'asz extension (see \eqref{eq:disjoint-pair-Lovasz-def}), 
we know that $F$ is a disjoint-pair Lov\'asz extension of some function $f:\power_2(V)\to\R$ if and only if
$\lambda F(\vec x)+(1-\lambda)F(\vec y)=F(\lambda\vec x+(1-\lambda)\vec  y)$ for all absolutely comonotonic 
vectors $\vec x$ and $\vec y$, $\forall \lambda\in[0,1]$. Therefore, we only need to prove the sufficiency part.

For $\vec x\in\R^V$, since $s\vec x$ and $t\vec x$ with $s,t\ge 0$ are absolutely comonotonic, 
 $F(s\vec x)+F(t\vec x)=F((s+t)\vec x)$, which yields a Cauchy equation on the half-line. Thus the continuity assumption implies the linearity of $F$ on the ray $\R^+\vec x$, which implies the property $F(t\vec x)=tF(\vec x)$, $\forall t\ge 0$, and hence $\lambda F(\vec x)+(1-\lambda)F(\vec y)=F(\lambda\vec x+(1-\lambda)\vec  y)$ for any absolutely comonotonic 
 vectors $\vec x$ and $\vec y$, $\forall \lambda\in[0,1]$. This completes the proof.
\end{proof}

For relations between the original and the disjoint-pair Lov\'asz extensions, we further have 
\begin{pro}\label{pro:setpair-original}
 For $h:\power(V)\to [0,+\infty)$ with $h(\varnothing)=0$, and $f:\power_2(V)\to [0,+\infty)$ with $f(\varnothing,\varnothing)=0$ \footnote{In fact, if $h(\varnothing)\ne 0$ or $f(\varnothing,\varnothing)\ne0$, one may change the value and it does not affect the related  Lov\'asz extension.}, we have:
\begin{enumerate}[(a)]
\item If $f(A,B)=h(A)+h(V\setminus B)-h(V)$, $\forall (A,B)\in \power_2(V)$, then   $f^L=h^L$.
\item  If $f(A,B)=h(A)+h(B)$ and $h(A)=h(V\setminus A)$, $\forall (A,B)\in \power_2(V)$, then $f^L=h^L$.
\item  If $f(A,B)=h(A)$, $\forall (A,B)\in \power_2(V)$, then $f^L(\vec x)=h^L(\vec x)$, $\forall \vec x\in[0,\infty)^V$.
\item If $f(A,B)=h(A\cup B)$, $\forall (A,B)\in \power_2(V)$, then  $f^L(\vec x)=h^L(\vec x^++\vec x^-)$.
\item  If $f(A,B)=h(A)\pm h(B)$, $\forall (A,B)\in \power_2(V)$, then $f^L(\vec x)=h^L(\vec x^+)\pm h^L(\vec x^-)$.
\end{enumerate}
Here $\vec x^\pm:=(\pm \vec x)\vee \vec0$.
\end{pro}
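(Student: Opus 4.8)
The plan is to compute each $f^L$ straight from the defining integral \eqref{eq:disjoint-pair-Lovasz-def-integral2} and then recognize the outcome as an original Lov\'asz extension through its integral representations. The one preliminary I will set up is the \emph{nonnegative-orthant formula}: for $\vec y\in[0,\infty)^V$ and any $h$ with $h(\varnothing)=0$ one has $h^L(\vec y)=\int_0^\infty h(V^t(\vec y))\,dt$. This drops out of \eqref{eq:Lovaintegral} because $V^t(\vec y)=V$ for $0\le t<\min\vec y$ (contributing $h(V)\min\vec y$) and $V^t(\vec y)=\varnothing$ for $t>\max\vec y$. Alongside it I will use the two-sided form $h^L(\vec x)=\int_0^\infty h(V^t(\vec x))\,dt+\int_{-\infty}^0\bigl(h(V^t(\vec x))-h(V)\bigr)\,dt$, which is precisely the $k=1$ instance of the second representation in Definition \ref{def:k-way-Lovasz}.

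Next I record the set identities valid for every $t\ge 0$, writing $\vec x^\pm=(\pm\vec x)\vee\vec 0$. Since $x_i>t\ge 0$ forces $x_i^+=x_i$, and symmetrically for the negative part, we get $V_+^t(\vec x)=\{i:x_i>t\}=V^t(\vec x^+)$ and $V_-^t(\vec x)=\{i:-x_i>t\}=V^t(\vec x^-)$. These two sets are disjoint, so their union is $\{i:|x_i|>t\}=V^t(\vec x^++\vec x^-)$ because $x_i^++x_i^-=|x_i|$. Finally $V\setminus V_-^t(\vec x)=\{i:x_i\ge -t\}$ agrees with $V^{-t}(\vec x)=\{i:x_i>-t\}$ for all but finitely many $t$, the two differing only where some $x_i=-t$, which is harmless under the integral sign.

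With these in hand each of (c), (d), (e), (b) is a short rewriting. For (c), substituting $f(A,B)=h(A)$ yields $f^L(\vec x)=\int_0^\infty h(V^t(\vec x^+))\,dt$; for $\vec x\in[0,\infty)^V$ we have $\vec x^+=\vec x$, and the nonnegative-orthant formula gives $h^L(\vec x)$. For (d), the union identity turns the integrand into $h(V^t(\vec x^++\vec x^-))$, and since $\vec x^++\vec x^-\ge\vec 0$ the same formula gives $h^L(\vec x^++\vec x^-)$. For (e), I split $\int_0^\infty\bigl(h(V^t(\vec x^+))\pm h(V^t(\vec x^-))\bigr)\,dt$ into two nonnegative-orthant integrals to obtain $h^L(\vec x^+)\pm h^L(\vec x^-)$. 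Part (b) I will reduce to (a): symmetry of $h$ forces $h(V)=h(\varnothing)=0$ and $h(B)=h(V\setminus B)$, whence $h(A)+h(B)=h(A)+h(V\setminus B)-h(V)$, which is exactly the hypothesis of (a).

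The crux is therefore (a). Substituting $f(A,B)=h(A)+h(V\setminus B)-h(V)$ and using $V\setminus V_-^t(\vec x)=V^{-t}(\vec x)$ gives $f^L(\vec x)=\int_0^\infty h(V^t(\vec x))\,dt+\int_0^\infty\bigl(h(V^{-t}(\vec x))-h(V)\bigr)\,dt$; the change of variable $t\mapsto -t$ in the second integral rewrites it as $\int_{-\infty}^0\bigl(h(V^t(\vec x))-h(V)\bigr)\,dt$, and the two-sided representation identifies the sum with $h^L(\vec x)$. The only genuine points to watch are that the constant $-h(V)$ must be kept paired with $h(V^{-t}(\vec x))$, so that the integrand vanishes for large $t$ and the integral converges (separating them would produce a divergent $\int_0^\infty h(V)\,dt$), and that the boundary identity $V\setminus V_-^t=V^{-t}$ holds only almost everywhere; neither causes trouble since everything occurs inside absolutely convergent integrals.
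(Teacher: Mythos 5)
Your proof is correct and follows essentially the same route as the paper's: both substitute the set identities $V_+^t(\vec x)=V^t(\vec x^+)$, $V_-^t(\vec x)=V^t(\vec x^-)$, $V\setminus V_-^t(\vec x)=V^{-t}(\vec x)$ (a.e.\ in $t$) into the integral \eqref{eq:disjoint-pair-Lovasz-def-integral2}, use the change of variable $t\mapsto -t$ to assemble an integral over the whole line, and recognize it as $h^L$; parts (b)--(e) are handled identically (reduction of (b) to (a), direct rewriting for the rest). The only difference is bookkeeping: you invoke the two-sided representation from Definition \ref{def:k-way-Lovasz} with infinite limits and keep $-h(V)$ paired inside the integrand, whereas the paper works on $[-\|\vec x\|_\infty,\|\vec x\|_\infty]$ and adjusts the boundary constants explicitly -- the substance is the same.
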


\begin{proof}
\begin{enumerate}[(a)]
\item Note that
\begin{align*}
f^L(\vec x)&=\int_0^{\|\vec x\|_\infty}f(V_+^t(\vec x),V_-^t(\vec x))dt
=\int_0^{\|\vec x\|_\infty}(h(V^t(\vec x))+h(V^{-t}(\vec x))-h(V))dt\\
&=\int_{-\|\vec x\|_\infty}^{\|\vec x\|_\infty}h(V^t(\vec x))dt- \|\vec x\|_\infty h(V)
=\int_{x_{\sigma(1)}}^{x_{\sigma(n)}}h(V^t(\vec x))dt +x_{\sigma(1)} h(V) = h^L(\vec x),
\end{align*}
where we use $\|x\|_\infty=\max\{-x_{\sigma(1)},x_{\sigma(n)}\}$ and $h(\varnothing)=0$.
\item This is a direct consequence of (a) since  $h(V)=h(\varnothing)=0$ and $h(B)=h(V\setminus B)$.
\item For any $\vec x\in \R^V$ with $x_i\ge0$, we note that $f^L(\vec x)=\int_0^{\|\vec x\|_\infty}h(V_+^t(\vec x))dt=\int_0^{\max x_i}h(V^t(\vec x))dt=\int_{\min x_i}^{\max x_i}h(V^t(\vec x))dt+\min x_i h(V)=h^L(\vec x)$.
\item 
Similar to (c), one can check that $f^L(\vec x)=h^L(\vec x^++\vec x^-)$.
\item It is straightforward.
\end{enumerate}
\end{proof}

In the sequel, we will not distinguish the original and the disjoint-pair Lov\'asz extensions, since the reader can infer it
from the domains ($\power(V)$ or $\power_2(V)$). Sometime we work on $\power(V)$ only, and in this situation, the disjoint-pair Lov\'asz extension acts on the redefined  $f(A,B)=h(A\cup B)$ as Proposition \ref{pro:setpair-original} states.

The next result is useful for the application on graph coloring.

\begin{pro}\label{pro:separable-summation}
For the  simple $k$-way Lov\'asz extension of $f:\power(V_1)\times\cdots\times\power(V_k)\to \R$ with the separable summation form $f(A_1,\cdots,A_k):=\sum_{i=1}^kf_i(A_i)$, $\forall (A_1,\cdots,A_k)\in \power(V)^k$, we have $f^L(\vec x^1,\cdots,\vec x^k)=\sum_{i=1}^kf_i^L(\vec x^i)$, $\forall (\vec x^1,\cdots,\vec x^k)$.

For $f:\power_2(V_1)\times\cdots \times\power_2(V_k)\to \R$ with the form $f(A_1,B_1\cdots,A_k,B_k):=\sum_{i=1}^kf_i(A_i,B_i)$, $\forall (A_1,B_1,\cdots,A_k,B_k)\in \power_2(V_1)\times\cdots \times \power_2(V_k)$, there similarly holds $f^L(\vec x^1,\cdots,\vec x^k)=\sum_{i=1}^kf_i^L(\vec x^i)$.
\end{pro}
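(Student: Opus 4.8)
The plan is to work directly from the integral representations and reduce the claim to a reconciliation of integration endpoints. I would begin with the simple $k$-way case. Write $A=\min\vec x$ and $B=\max\vec x$ for the global minimum and maximum taken over all components of $(\vec x^1,\cdots,\vec x^k)$. Substituting the separable form into Definition \ref{def:k-way-Lovasz} and using $f(V^t_1(\vec x^1),\cdots,V^t_k(\vec x^k))=\sum_{i=1}^k f_i(V^t_i(\vec x^i))$ together with $f(V_1,\cdots,V_k)=\sum_{i=1}^k f_i(V_i)$, linearity of the integral gives
$$f^L(\vec x^1,\cdots,\vec x^k)=\sum_{i=1}^k\left(\int_A^B f_i(V^t_i(\vec x^i))\,dt + f_i(V_i)\,A\right).$$

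The only real discrepancy between the $i$-th summand and $f_i^L(\vec x^i)$ is that the single-block extension integrates over $[\,\min_j x^i_j,\max_j x^i_j\,]$ rather than over the global range $[A,B]$. To reconcile this, set $a_i=\min_j x^i_j$ and $b_i=\max_j x^i_j$, so that $A\le a_i\le b_i\le B$, and note that for $t<a_i$ one has $V^t_i(\vec x^i)=V_i$ while for $t\ge b_i$ one has $V^t_i(\vec x^i)=\varnothing$. Hence $\int_A^{a_i}f_i(V^t_i(\vec x^i))\,dt=(a_i-A)f_i(V_i)$ and $\int_{b_i}^{B}f_i(V^t_i(\vec x^i))\,dt=0$ by the convention $f_i(\varnothing)=0$. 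Splitting $[A,B]=[A,a_i]\cup[a_i,b_i]\cup[b_i,B]$ and adding back the boundary term shows
$$\int_A^B f_i(V^t_i(\vec x^i))\,dt + f_i(V_i)\,A = \int_{a_i}^{b_i}f_i(V^t_i(\vec x^i))\,dt + f_i(V_i)\,a_i = f_i^L(\vec x^i),$$
so summation over $i$ yields the assertion.

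For the disjoint-pair version the argument is the same but lighter, since the lower limit is already $0$ in both the $k$-way and the single-block formulas of Definition \ref{def:k-way-pair-Lovasz}. After expanding the separable form and exchanging the finite sum with the integral, the $i$-th summand is $\int_0^{\|\vec x\|_\infty}f_i(V^t_{i,+}(\vec x^i),V^t_{i,-}(\vec x^i))\,dt$ with the global radius $\|\vec x\|_\infty=\max_i\|\vec x^i\|_\infty$. Because for $t>\|\vec x^i\|_\infty$ both $V^t_{i,+}(\vec x^i)$ and $V^t_{i,-}(\vec x^i)$ are empty and $f_i(\varnothing,\varnothing)=0$, the tail on $(\|\vec x^i\|_\infty,\|\vec x\|_\infty]$ contributes nothing, and the integral collapses to $\int_0^{\|\vec x^i\|_\infty}f_i(V^t_{i,+},V^t_{i,-})\,dt=f_i^L(\vec x^i)$.

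The entire proof is bookkeeping with integration endpoints, so no step is genuinely difficult; the only place needing care is the endpoint reconciliation in the simple $k$-way case, where one must verify that the extra mass $(a_i-A)f_i(V_i)$ accrued on $[A,a_i)$ combines with the boundary term $f_i(V_i)A$ to reproduce exactly $f_i(V_i)\,a_i$. Everything rests solely on the conventions $f_i(\varnothing)=0$ and $f_i(\varnothing,\varnothing)=0$ already fixed in the excerpt.
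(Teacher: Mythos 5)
Your proof is correct. The paper in fact states this proposition without any proof (it is treated as routine, like Propositions \ref{pro:multi-way-property} and \ref{pro:setpair-property}), and your endpoint-reconciliation computation is exactly the direct verification from the integral definitions that the omission presupposes: splitting $[A,B]$ at $a_i$ and $b_i$, using $V^t_i(\vec x^i)=V_i$ below $a_i$ and $V^t_i(\vec x^i)=\varnothing$ above $b_i$, and observing that $(a_i-A)f_i(V_i)+f_i(V_i)A=f_i(V_i)a_i$. You are also right to flag the conventions $f_i(\varnothing)=0$ and $f_i(\varnothing,\varnothing)=0$ as the load-bearing hypotheses rather than mere bookkeeping: on the interval where block $i$'s level set is empty but some other block's is not, the value $f_i(\varnothing)$ genuinely enters the $k$-way integral, so without this normalization the identity $f^L=\sum_i f_i^L$ would fail.
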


\subsection{Submodularity and Convexity}\label{sec:SubmodularityConvexity}
In this subsection, we give new analogs of Theorems \ref{thm:Lovasz} and \ref{thm:Chateauneuf-Cornet} for the disjoint-pair Lov\'asz extension and the $k$-way Lov\'asz extension. The major difference to existing results in the literature is that we work with the restricted or the enlarged domain of a function.

Let's first recall the standard concepts of submodularity:
\begin{enumerate}[({S}1)]
\item A discrete function $f:\A\to \R$   is submodular if $f(A)+f(B)\ge f(A\cup B)+f(A\cap B)$, $\forall A,B\in\A$, where $\A\subset\mathcal{P}(V)$ is an algebra  (i.e., $\A$ is  closed under  union and intersection).
\item A continuous function $F:\R^n\to \R$ is submodular if
 $F(\vec x)+F(\vec y)\ge F(\vec x\vee \vec y)+F(\vec x\wedge \vec y)$, where $(\vec x\vee \vec y)_i=\max\{x_i,y_i\}$ and $(\vec x\wedge \vec y)_i=\min\{x_i,y_i\}$, $i=1,\cdots,n$. For a sublattice $\D\subset\R^n$ that  is closed under $\vee$ and $\wedge$, one can define  submodularity in the same way.
\end{enumerate}

\begin{notification} The discussion about algebras of sets can be reduced to lattices.
Classical submodular functions on a sublattice of the Boolean lattice
$\{0,1\}^n$ and their continuous versions on $\R^n$ are presented in (S1) and
(S2), respectively. Bisubmodular functions on a graded sub-poset (partially
ordered set)  of   $\{-1,0,1\}^n$ are defined in \eqref{eq:2-submodular} below.
\end{notification}

    Now, we 
    recall the concept of bisubmodularity and introduce its continuous version.
    \begin{enumerate}
\item[(BS1)] A discrete function $f:\mathcal{P}_2(V)\to \R$   is bisubmodular if $\forall\,(A,B),(C,D)\in \power_2(V)$
\begin{equation}\label{eq:2-submodular}
f(A,B)+f(C,D)\ge f((A\cup C)\setminus (B\cup D),(B\cup D)\setminus(A\cup C))+f(A\cap C,B\cap D).
\end{equation}
One can denote $A\vee B=((A_1\cup B_1)\setminus (A_2\cup B_2),(A_2\cup B_2)\setminus (A_1\cup B_1))$ and $A\wedge B=(A_1\cap B_1,A_2\cap B_2)$, where $A=(A_1,A_2)$, $B=(B_1,B_2)$. For a subset  $\A\subset \mathcal{P}_2(V)$ that is closed under $\vee$ and $\wedge$, the bisubmodularity of $f:\A\to\R$ can be expressed as $f(A)+f(B)\ge f(A\vee B)+f(A\wedge B)$, $\forall A,B\in\A$.
\end{enumerate}

If we were to continue the definition of submodularity stated in (S2), we would obtain nothing new.  Hence, the proof of Theorem \ref{thm:Chateauneuf-Cornet} cannot directly apply to our situation. To overcome this issue, we need to provide a matched definition of bisubmodularity for functions on $\R^n$, and an appropriate and careful modification of the translation linearity
condition.

\begin{enumerate}
 \item[(BS2)] A continuous function $F:\R^n\to \R$ is bisubmodular if
 $F(x)+F(y)\ge F(x\vee y)+F(x\wedge y)$, where
 $$(x \vee y)_i=\begin{cases}\max\{x_i,y_i\},&\text{ if } x_i,y_i\ge0,\\
  \min\{x_i,y_i\},&\text{ if } x_i,y_i\le0, \\
  0,&\text{ if } x_iy_i<0,\end{cases}\;\;\;\;\;\;\;(x \wedge y)_i=\begin{cases}\min\{x_i,y_i\},&\text{ if } x_i,y_i\ge0,\\
  \max\{x_i,y_i\},&\text{ if } x_i,y_i\le0, \\
  0,&\text{ if } x_iy_i<0.\end{cases}$$
\end{enumerate}

Henceforth, we simply use $\vec 1_{A,B}$ to denote the vector  $\vec 1_A-\vec 1_B$, where $A,B\subset V$.

\begin{pro}\label{pro:bisubmodular-continuous}
A function $F:\R^V\to \R$ is a disjoint-pair Lov\'asz extension of a bisubmodular function if and only if $F$ is (continuously) bisubmodular (in the sense of (BS2)) and for any $\vec x\in\R^V ,\,t\ge0$, 
{ \linespread{0.95} \begin{enumerate}
\item[$\mathrm{(I)}$] $F(t\vec x)=tF(\vec x)$ (positive homogeneity);
\item[$\mathrm{(II)}$] $F(\vec x+t\vec 1_{V_+,V_-})\ge F(\vec x)+F(t\vec 1_{V_+,V_-})$ for some\footnote{This is some kind of `translation linearity' if we adopt the assumption $F(\vec x+t\vec 1_{V_+,V_-})= F(\vec x)+F(t\vec 1_{V_+,V_-})$. } $V_\pm\supset V_\pm^0(\vec x)$ with $V_+\cup V_-=V$.
\end{enumerate} }
\end{pro}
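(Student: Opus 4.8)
The plan is to prove both implications by passing through the restriction of $F$ to ternary indicator vectors. Throughout I write $f(A,B):=F(\vec 1_{A,B})$ for $(A,B)\in\power_2(V)$, so that $f$ and $F$ agree on every $\vec 1_{A,B}$ by construction. For necessity, assume $F=f^L$ with $f$ bisubmodular. Property (I) is immediate from Proposition \ref{pro:multi-way-property}, since $f^L$ is positively one-homogeneous. Property (BS2) is the continuous half of the bisubmodular Lov\'asz theorem (the disjoint-pair analogue of Theorem \ref{thm:Lovasz}, cf. Theorem \ref{thm:submodular-L-equivalent}): the disjoint-pair Lov\'asz extension of a bisubmodular function is bisubmodular in the sense of (BS2). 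For (II), I observe that whenever $V_\pm\supset V_\pm^0(\vec x)$ with $V_+\cup V_-=V$, the vectors $\vec x$ and $t\vec 1_{V_+,V_-}$ are absolutely comonotonic: the sign condition holds because $x_i>0$ forces $i\in V_+^0(\vec x)\subset V_+$ while $x_i<0$ forces $i\in V_-$, and $|t\vec 1_{V_+,V_-}|$ is constant on $V$ so the second comonotonicity inequality is trivial. Proposition \ref{pro:setpair-character} then yields $F(\vec x+t\vec 1_{V_+,V_-})=F(\vec x)+F(t\vec 1_{V_+,V_-})$, i.e. (II) with equality.

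For sufficiency I would first check that $f$ is bisubmodular. Given $(A,B),(C,D)\in\power_2(V)$, a coordinatewise computation shows that the $\vee$ and $\wedge$ of (BS2) applied to $\vec 1_{A,B}$ and $\vec 1_{C,D}$ reproduce exactly the bisubmodular lattice operations, namely $\vec 1_{A,B}\vee\vec 1_{C,D}=\vec 1_{(A\cup C)\setminus(B\cup D),\,(B\cup D)\setminus(A\cup C)}$ and $\vec 1_{A,B}\wedge\vec 1_{C,D}=\vec 1_{A\cap C,\,B\cap D}$, the sign-conflict coordinates being sent to $0$ in both. Hence (BS2) evaluated on these two vectors is precisely the bisubmodularity inequality \eqref{eq:2-submodular} for $f$.

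It then remains to prove $F=f^L$, which by Proposition \ref{pro:setpair-character} is equivalent to additivity of $F$ on absolutely comonotonic pairs. Fixing a maximal absolutely comonotonic (simplicial) cone, every $\vec x$ in it decomposes along a chain of nested disjoint pairs $Q^{(1)}\supsetneq\cdots\supsetneq Q^{(n)}$ with $Q^{(1)}=(V_+,V_-)$, $V_+\cup V_-=V$, as $\vec x=\sum_j b_j\vec 1_{Q^{(j)}}$ with $b_j\ge0$, and $f^L(\vec x)=\sum_j b_j f(Q^{(j)})$; so it suffices to prove $F(\vec x)=\sum_j b_j f(Q^{(j)})$ on each such cone. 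I would obtain this from two opposite estimates. For ``$\ge$'', peel off the least-modulus layer $b_1\vec 1_{Q^{(1)}}$ using (II), note that the remainder is supported on a proper face, and iterate; the crucial point that (II) propagates from $V$ to each face $W$ (with $\vec 1_{V_+,V_-}$ replaced by $\vec 1_{W_+,W_-}$) follows from a single application of (BS2) to $\vec z+t\vec 1_{W_+,W_-}$ and $t\vec 1_{V_+,V_-}$, whose join and meet are $\vec z+t\vec 1_{V_+,V_-}$ and $t\vec 1_{W_+,W_-}$, combined with the hypothesis (II) on $V$. This gives $F(\vec x)\ge f^L(\vec x)$. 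For ``$\le$'', I would show, exactly as in the convexity half of Theorem \ref{thm:Lovasz}, that a positively one-homogeneous function satisfying (BS2) is convex, hence sublinear; on the simplicial cone it then lies below the linear interpolation of its vertex values, giving $F(\vec x)\le\sum_j b_j F(\vec 1_{Q^{(j)}})=f^L(\vec x)$. Combining the two estimates yields $F=f^L$ on each cone, and the cones cover $\R^V$; vectors on cone boundaries (vanishing or tied coordinates) are handled by the continuity of $F$.

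The main obstacle is the ``$\le$'' estimate, i.e. the sublinearity of $F$. Unlike (II), which delivers the superadditive direction by a single translation, bisubmodularity only controls the lattice combinations $\vec x\vee\vec y$ and $\vec x\wedge\vec y$, and these never reconstitute the sum $\vec x+\vec y$; extracting a genuine subadditivity statement therefore requires the telescoping argument at the heart of the bisubmodular Lov\'asz convexity theorem (writing the relevant vector as an iterated join of suitably misaligned vectors and summing the resulting (BS2) inequalities). A secondary bookkeeping issue is the face reduction together with the verification that the restriction of $F$ to a coordinate subspace still satisfies (BS2), (I) and the face version of (II), and the passage to boundary cones by continuity.
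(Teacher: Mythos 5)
Your overall architecture coincides with the paper's: necessity exactly as you describe (your verification of (II) via absolute comonotonicity and Proposition \ref{pro:setpair-character} is correct, as is reading (BS2) off the bisubmodular Lov\'asz theorem), and sufficiency by setting $f(A,B)=F(\vec 1_{A,B})$, extracting bisubmodularity of $f$ from (BS2) on indicator vectors, and sandwiching $F$ between two estimates along the chain of level-set pairs. Your ``$\ge$'' half --- peeling with (II) and using (BS2) to transport (II) to faces (your join/meet identities are correct when the face pair sits inside the full pair) --- is in substance the paper's second telescoping chain. (Like the paper, you apply (II) to the particular pair of your chain even though the statement only grants it for \emph{some} admissible pair; since the paper's own proof reads (II) the same way, I do not count this against you.)

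The genuine gap is your ``$\le$'' half. You claim that positive one-homogeneity plus (BS2) already forces convexity, ``exactly as in the convexity half of Theorem \ref{thm:Lovasz}''. But that convexity statement concerns $f^L$ for a \emph{discrete} submodular $f$: its proof rests on $f^L$ being piecewise linear on the comonotonicity cones, i.e.\ comonotonically additive --- which is precisely the structure of $F$ you are still trying to establish --- so the citation is circular, and no proof of ``(I) $+$ (BS2) $\Rightarrow$ convex'' is supplied; it is a strictly stronger assertion than what you need, and it is not at all clear it holds without (II). The repair is that convexity/sublinearity is never needed: the inequality $F(\vec x)\le f^L(\vec x)$ only requires subadditivity along the single nested decomposition $\vec x=\sum_j b_j\vec 1_{Q^{(j)}}$, and this follows from (I) and (BS2) alone by a direct telescoping, which is exactly what the paper does. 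Concretely, order $|x_{\sigma(1)}|\le\cdots\le|x_{\sigma(n)}|$, write $V^i_\pm=V^{\sigma(i)}_\pm(\vec x)$, and apply (BS2) to $\vec a_i=|x_{\sigma(i+1)}|\vec 1_{V^{i}_+,V^{i}_-}$ and $\vec b_i=\vec x_{V^{i+1}_+,V^{i+1}_-}$ (the restriction of $\vec x$ to $V^{i+1}_+\cup V^{i+1}_-$): one checks coordinatewise that $\vec a_i\vee\vec b_i=\vec x_{V^{i}_+,V^{i}_-}$ and $\vec a_i\wedge\vec b_i=|x_{\sigma(i+1)}|\vec 1_{V^{i+1}_+,V^{i+1}_-}$ (no sign conflicts occur), so (BS2) gives $F(\vec a_i)-F(\vec a_i\wedge\vec b_i)\ge F(\vec a_i\vee\vec b_i)-F(\vec b_i)$. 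Summing over $i$, the right-hand sides telescope to $F(\vec x_{V^0_+,V^0_-})-F(\vec 0)=F(\vec x)$, while by (I) and Abel summation the left-hand sides sum to $\sum_i\bigl(|x_{\sigma(i+1)}|-|x_{\sigma(i)}|\bigr)F(\vec 1_{V^i_+,V^i_-})=f^L(\vec x)$. This yields $f^L(\vec x)\ge F(\vec x)$ directly, replaces your convexity step, and closes the gap.
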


The proof is a modification of the previous  version on the original Lov\'asz extension for  submodular functions. 

\begin{proof}

We  focus on the ``if'' part. Take the discrete function $f$ defined as $f(A_1,A_2)=F(\vec 1_{A_1,A_2})$. One can check the bisubmodularity of $f$ directly. Fix an $\vec x\in\R^n$ and let $\sigma:V\cup\{0\}\to V\cup\{0\}$ be a bijection such that $|x_{\sigma(1)}|\le |x_{\sigma(2)}| \le \cdots\le |x_{\sigma(n)}|$ and $\sigma(0)=0$, where $x_0:=0$, and
$$V^{\sigma(i)}_\pm=V^{\sigma(i)}_\pm(\vec x):=\{j\in V:\pm x_j> |x_{\sigma(i)}|\},\;\;\;\; i=0,1,\cdots,n-1.$$
Also, we denote $\vec x_{V^{\sigma(i)}_+,V^{\sigma(i)}_-}=\vec x * \vec 1_{V^{\sigma(i)}_+\cup V^{\sigma(i)}_-}$ (i.e., the restriction of $\vec x$ onto $V^{\sigma(i)}_+\cup V^{\sigma(i)}_-$,  with other components $0$), where $\vec x*\vec y:=(x_1y_1,\cdots,x_ny_n)$.

For simplicity, in the following formulas, we identify $\sigma(i)$ with $i$ for all $i=0,\cdots,n$.

It follows from $|x_{i+1}|\vec 1_{V^{i}_+,V^{i}_-}\bigvee \vec x_{V^{i+1}_+,V^{i+1}_-}=\vec x_{V^{i}_+,V^{i}_-}$ and
$$|x_{i+1}|\vec 1_{V^{i}_+,V^{i}_-}\bigwedge \vec x_{V^{i+1}_+,V^{i+1}_-}= |x_{i+1}|\vec 1_{V^{i+1}_+,V^{i+1}_-}$$ that
\begin{align*}
f^{L}(\vec x)&=\sum_{i=0}^{n-1}(|x_{i+1}|-|x_{i}|)f(V^{i}_+,V^{i}_-)
\\&=\sum_{i=0}^{n-1}|x_{i+1}|\left(f(V^{i}_+,V^{i}_-)-f(V^{i+1}_+,V^{i+1}_-)\right)
\\&=\sum_{i=0}^{n-1}\left\{F\left(|x_{i+1}|\vec 1_{V^{i}_+,V^{i}_-}\right)-F\left(|x_{i+1}|\vec 1_{V^{i+1}_+,V^{i+1}_-}\right)\right\}
\\&\ge\sum_{i=0}^{n-1}\left\{F\left(\vec x_{V^{i}_+,V^{i}_-}\right)-F\left(\vec x_{V^{i+1}_+,V^{i+1}_-}\right)\right\}=F(\vec x).
\end{align*}
On the other hand,
\begin{align*}
f^{L}(\vec x)&=\sum_{i=0}^{n-1}(|x_{i+1}|-|x_{i}|)f(V^{i}_+,V^{i}_-)
=\sum_{i=0}^{n-1}F\left((|x_{i+1}|-|x_{i}|)\vec 1_{V^{i}_+,V^{i}_-}\right)
\\&=\sum_{i=0}^{n-2}\left\{F((|x_{i+1}|-|x_{i}|)\vec 1_{V^{i}_+,V^{i}_-})-F((|x_{i+1}|-|x_{i}|)\vec 1_{V_+^0,V_-^0})\right\}
\\&\;\;\;\;\;+\left\{\sum_{i=0}^{n-2}F((|x_{i+1}|-|x_{i}|)\vec 1_{V_+^0,V_-^0})\right\}+F\left((|x_n|-|x_{n-1}|)\vec 1_{V_+^{n-1},V_-^{n-1}}\right)
\\ \text{by (BS2)}&\le \sum_{i=0}^{n-2}\left\{F\left(\vec x_{V^{i}_+,V^{i}_-}-|x_{i}|\vec 1_{V_+^0,V_-^0}\right)-F\left(\vec x_{V^{i+1}_+,V^{i+1}_-}-|x_{i+1}|\vec 1_{V_+^0,V_-^0}+(|x_{i+1}|-|x_{i}|)\vec 1_{V_+^0,V_-^0}\right)\right\}
\\&\;\;\;\;\;+\left\{\sum_{i=0}^{n-2}(|x_{i+1}|-|x_{i}|)F(\vec 1_{V_+^0,V_-^0})\right\}+F\left((|x_n|-|x_{n-1}|)\vec 1_{V_+^{n-1},V_-^{n-1}}\right)
\\ \text{by (II)}&\le \sum_{i=0}^{n-2}\left(F(\vec x_{V^{i}_+,V^{i}_-}-|x_{i}|\vec 1_{V_+^0,V_-^0})-F(\vec x_{V^{i+1}_+,V^{i+1}_-}-|x_{i+1}|\vec 1_{V_+^0,V_-^0})\right)+F\left((|x_n|-|x_{n-1}|)\vec 1_{V_+^{n-1},V_-^{n-1}}\right)
\\&= F(\vec x_{V_+^0,V_-^0})=F(\vec x)
\end{align*}
according to $(|x_{i+1}|-|x_{i}|)\vec 1_{V_+^0,V_-^0}\bigwedge (\vec x_{V^{i}_+,V^{i}_-}-|x_{i}|\vec 1_{V_+^0,V_-^0})= (|x_{i+1}|-|x_{i}|)\vec 1_{V^{i}_+,V^{i}_-}$ and $$(|x_{i+1}|-|x_{i}|)\vec 1_{V_+^0,V_-^0}\bigvee (\vec x_{V^{i}_+,V^{i}_-}-|x_{i}|\vec 1_{V_+^0,V_-^0})=\vec x_{V^{i+1}_+,V^{i+1}_-}-|x_{i+1}|\vec 1_{V_+^0,V_-^0}+(|x_{i+1}|-|x_{i}|)\vec 1_{V_+^0,V_-^0}$$
for $i=0,\cdots,n-2$, as well as $\vec x_{V^{n-1}_+,V^{n-1}_-}-|x_{n-1}|\vec 1_{V_+^0,V_-^0}= (|x_n|-|x_{n-1}|)\vec 1_{V_+^{n-1},V_-^{n-1}}$.  Therefore, we have  $F(\vec x)=f^L(\vec x)$.

 The ``only if'' part is easy. We only need to prove that for a bisubmodular function $f:\power_2(V)\to\R$, $f^L$ satisfies (BS2), (I) and (II). For convenience,  the proof is provided below. 
\begin{itemize}   \item By the definition of $f^L$,  it is positively homogeneous. Thus, (I)  holds.
\item Again, by the definition of $f^L$, it is easy to check that $f^L(\vec x+t\vec 1_{V_+,V_-})=f^L(\vec x)+tf(V_+,V_-)$ for any $V_\pm\supset V_\pm^0(\vec x)$, and any $t\ge0$. So,  (II) is proved.
\item We use the formulation \eqref{eq:disjoint-pair-Lovasz-def-integral2} of  $f^L$. It is easy to check that $$(V^t_+(\vec x),V^t_-(\vec x))\vee (V^t_+(\vec y),V^t_-(\vec y))=(V^t_+(\vec x\vee \vec y),V^t_-(\vec x\vee \vec y)),$$ 
$$(V^t_+(\vec x),V^t_-(\vec x))\wedge (V^t_+(\vec y),V^t_-(\vec y))=(V^t_+(\vec x\wedge \vec y),V^t_-(\vec x\wedge \vec y)).$$
By  the bisubmodularity  of $f$, 
and the above equalities, we have
\begin{align*}
f^{L}(\vec x)+f^{L}(\vec y)&=\int_0^{\infty} \left(f(V_+^t(\vec x),V_-^t(\vec x))+f(V_+^t(\vec y),V_-^t(\vec y))\right)dt
\\&\ge \int_0^{\infty} \left(f(V^t_+(\vec x\vee \vec y),V^t_-(\vec x\vee \vec y) )+f(V^t_+(\vec x\wedge \vec y),V^t_-(\vec x\wedge \vec y))\right)dt
\\&=f^{L}(\vec x\vee \vec y)+f^{L}(\vec x\wedge\vec y).
\end{align*}
\end{itemize}
 
The proof is completed.
\end{proof}

\begin{pro}\label{pro:setpair-character2}
A continuous function $F$ is a disjoint-pair Lov\'asz extension of some function $f:\power_2(V)\to\R$ if and only if  for any  $\vec x\in\R^n$, there exists  $(V_+,V_-)\in\power_2(V)$   with  
$V_\pm\supset V^0_\pm(\vec x)$, such that $F(\vec x\wedge c\vec 1_{V_+,V_-})+F(\vec x-\vec x\wedge c\vec 1_{V_+,V_-})=F(\vec x)$, $\forall c\ge0$. 
\end{pro}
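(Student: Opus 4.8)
The plan is to reduce both implications to the comonotone-additivity characterization of Proposition~\ref{pro:setpair-character}, after first unwinding what the operation $\vec x\wedge c\vec 1_{V_+,V_-}$ actually produces (here $\wedge$ is the bisubmodular meet of (BS2)). Since $(V_+,V_-)\in\power_2(V)$ is a disjoint pair with $V_\pm\supset V_\pm^0(\vec x)$, disjointness forces $x_i\ge0$ on $V_+$ and $x_i\le0$ on $V_-$, so a direct case check gives, for every $c\ge0$,
$$(\vec x\wedge c\vec 1_{V_+,V_-})_i=\sgn(x_i)\min\{|x_i|,c\},\qquad (\vec x-\vec x\wedge c\vec 1_{V_+,V_-})_i=\sgn(x_i)\max\{|x_i|-c,0\}.$$
This value is independent of the choice of $(V_+,V_-)$ (entries with $x_i=0$ are killed in all three branches of the meet), so the two pieces are just the radial truncation of $\vec x$ at height $c$ and its complementary part; write them $\vec u$ and $\vec w$. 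The first fact to record is that $\vec u,\vec w$ are absolutely comonotonic in the sense of Definition~\ref{def:associate-piece}: they share the sign pattern of $\vec x$ (so $u_iw_i\ge0$), while $|u_i|=\min\{|x_i|,c\}$ and $|w_i|=\max\{|x_i|-c,0\}$ are both nondecreasing in $|x_i|$, whence $(|u_i|-|u_j|)(|w_i|-|w_j|)\ge0$.

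For the ``only if'' direction I would assume $F=f^L$ and apply Proposition~\ref{pro:setpair-character}, which makes $F$ additive on absolutely comonotonic pairs. Given $\vec x$, take $(V_+,V_-)=(V_+^0(\vec x),V_-^0(\vec x))$; by the display above the pieces $\vec u,\vec w$ are absolutely comonotonic and sum to $\vec x$, so $F(\vec x)=F(\vec u)+F(\vec w)$ holds for all $c\ge0$, which is exactly the asserted identity.

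For the ``if'' direction—the substantive one—I would set $f(A,B):=F(\vec 1_{A,B})$ and prove $F=f^L$ in two stages. First, positive homogeneity along each ray $\R^+\vec 1_{P,N}$: feeding $\vec x=a\vec 1_{P,N}$ with $0\le c\le a$ into the hypothesis collapses the identity to $g(c)+g(a-c)=g(a)$ for $g(s):=F(s\vec 1_{P,N})$, i.e. a Cauchy equation $g(s)+g(t)=g(s+t)$ on $[0,\infty)$, and continuity of $F$ then forces $g(s)=sF(\vec 1_{P,N})=sf(P,N)$. Second, I would induct on the number $m$ of distinct nonzero values among $\{|x_i|\}$. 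Writing $b_1$ for the smallest such value and applying the hypothesis with $c=b_1$ splits $\vec x=\vec u+\vec w$ with $\vec u=b_1\vec 1_{V_+^0(\vec x),V_-^0(\vec x)}$ (one distinct height, handled by the homogeneity step) and $\vec w$ carrying the remaining $m-1$ heights (handled by the inductive hypothesis). It then remains to match this split against the integral formula \eqref{eq:disjoint-pair-Lovasz-def-integral2}: the identity $V_\pm^t(\vec w)=V_\pm^{t+b_1}(\vec x)$ yields $f^L(\vec w)=\int_{b_1}^{\infty} f(V_+^s(\vec x),V_-^s(\vec x))\,ds$, while on $[0,b_1)$ the level sets stay constant at $(V_+^0(\vec x),V_-^0(\vec x))$, giving $\int_0^{b_1}=b_1f(V_+^0,V_-^0)$. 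Hence $f^L(\vec x)=b_1f(V_+^0,V_-^0)+f^L(\vec w)=F(\vec u)+F(\vec w)=F(\vec x)$, closing the induction (base case $\vec x=\vec 0$, with $F(\vec 0)=0$ coming from the homogeneity step).

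The main obstacle I anticipate sits entirely in the ``if'' direction and is conceptual rather than computational: one must first recognize that the bisubmodular meet turns $\vec x\wedge c\vec 1_{V_+,V_-}$ into a radial truncation, and then choose the right induction parameter (the count of distinct absolute values) together with the right truncation height ($c=b_1$) so that each resulting piece falls into a case already under control—one by homogeneity, one by the inductive hypothesis—while the peeled layer reproduces precisely the innermost slab $\int_0^{b_1}$ of the Lov\'asz integral. Once this scaffolding is fixed, the remaining ingredients (the case analysis for the meet, solving the Cauchy equation, and the level-set shift $V_\pm^t(\vec w)=V_\pm^{t+b_1}(\vec x)$) are routine.
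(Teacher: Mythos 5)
Your proposal is correct and takes essentially the same route as the paper: both prove ray homogeneity $F(t\vec 1_{A,B})=tf(A,B)$ via Cauchy's functional equation plus continuity, and both establish $F=f^L$ by inductively peeling off the bottom slab of $\vec x$ using the truncation identity (your induction on the number of distinct nonzero $|x_i|$ is precisely the induction the paper uses to reach its summation form \eqref{eq:F-summation}). The only cosmetic differences are that you match the peeled slabs against the integral formula \eqref{eq:disjoint-pair-Lovasz-def-integral2} instead of the summation form \eqref{eq:disjoint-pair-Lovasz-def}, and you obtain the ``only if'' direction from the comonotone-additivity characterization of Proposition \ref{pro:setpair-character} where the paper checks it directly from the definition.
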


\begin{proof}
Let $F$ be a continuous function such that  for any  $\vec x\in\R^n$, there exists $(V_+,V_-)\in\power_2(V)$   with 
$V_\pm\supset V^0_\pm(\vec x)$ satisfying $F(\vec x\wedge c\vec 1_{V_+,V_-})+F(\vec x-\vec x\wedge c\vec 1_{V_+,V_-})=F(\vec x)$, $\forall c\ge0$. 
Define the function $f:\power_2(V)\to\R$ by $f(A,B)=F(\vec1_{A,B})$. 
By induction, the property $F(\vec x\wedge c\vec 1_{V_+,V_-})+F(\vec x-\vec x\wedge c\vec 1_{V_+,V_-})=F(\vec x)$ implies a   summation form   of $F$, i.e., 
 \begin{equation}
\label{eq:F-summation}F(\vec x)=\sum_{i=0}^{n-1}F\left((|x_{\sigma(i+1)}|-|x_{\sigma(i)}|)\vec1_{V^{\sigma(i)}_+(\vec x),V^{\sigma(i)}_-(\vec x)}\right).
\end{equation}
Also, for any $(A,B)\in\power_2(V)$, $t>0$  and $c\ge0$, taking $\vec x=(t+c)\vec1_{A,B}$ and $(V_+,V_-)=(A,B)$, 
we obtain   
$F(c\vec1_{A,B})+F(t\vec1_{A,B})=F((t+c)\vec1_{A,B})$. By Cauchy's functional  equation,  this implies that for any $ t\ge0$ and  $(A,B)\in\power_2(V)$,  $F(t\vec1_{A,B})=tF(\vec1_{A,B})=tf(A,B)$, and together with 
\eqref{eq:F-summation} and the   summation  form (see \eqref{eq:disjoint-pair-Lovasz-def}) of the disjoint-pair Lov\'asz extension,  we further derive $$F(\vec x)
=\sum_{i=0}^{n-1}(|x_{\sigma(i+1)}|-|x_{\sigma(i)}|)f(V^{\sigma(i)}_+(\vec x),V^{\sigma(i)}_-(\vec x))=f^L(\vec x).$$  

On the other hand, based on \eqref{eq:disjoint-pair-Lovasz-def}, it is easy to check that for any  $\vec x\in\R^n$, for any  $(V_+,V_-)\in\power_2(V)$   with 
$V_\pm\supset V^0_\pm(\vec x)$,   $f^L(\vec x\wedge c\vec 1_{V_+,V_-})+f^L(\vec x-\vec x\wedge c\vec 1_{V_+,V_-})=f^L(\vec x)$,  $\forall c\ge0$. 
The proof is then completed.
\end{proof}

The $k$-way submodularity can be naturally defined as (S1) and (S2):
\begin{enumerate}
\item[({KS})] Given a tuple $V=(V_1,\cdots,V_k)$ of finite sets and $\A\subset\{(A_1,\cdots,A_k):A_i\subset V_i,\,i=1,\cdots,k\}$,
a discrete function $f:\A\to \R$   is $k$-way submodular if $f(A)+f(B)\ge f(A\vee B)+f(A\wedge B)$, $\forall A,B\in\A$, where $\A$ is a lattice under the corresponding  lattice operations join $\vee$ and meet $\wedge$ defined by $A\vee B=(A_1\cup B_1,\cdots, A_k\cup B_k)$ and $A\wedge B=(A_1\cap B_1,\cdots, A_k\cap B_k)$.
\end{enumerate}

\begin{theorem}\label{thm:submodular-L-equivalent}
Under the assumptions and notations in (KS) above, $\D_\A$ is also closed under $\wedge$ and $\vee$, with $\wedge$ and $\vee$ as in (S2).
 Moreover, the following statements are equivalent:
 \begin{enumerate}[a)]
 \item $f$ is $k$-way submodular on $\A$;
\item the $k$-way Lov\'asz extension $f^L$  is convex on each convex subset of $\D_\A$;
\item the $k$-way Lov\'asz extension $f^L$  is submodular on $\D_\A$.
\end{enumerate}

If one replaces (KS) and (S2) by (BS1) and (BS2) respectively for the bisubmodular setting, then all the above results hold analogously.
\end{theorem}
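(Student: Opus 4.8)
The plan is to mirror the classical argument behind Theorem \ref{thm:Lovasz}, replacing the ordinary super-level sets by the $k$-tuples $V^t(\vec x)=(V^t_1(\vec x^1),\dots,V^t_k(\vec x^k))$ and exploiting the integral representation of Definition \ref{def:k-way-Lovasz}. The backbone is the pair of level-set identities $V^t(\vec x\vee\vec y)=V^t(\vec x)\vee V^t(\vec y)$ and $V^t(\vec x\wedge\vec y)=V^t(\vec x)\wedge V^t(\vec y)$, where on the right the join and meet are the componentwise union and intersection from (KS). These hold coordinatewise because $\max\{x^i_j,y^i_j\}>t$ is equivalent to ($x^i_j>t$ or $y^i_j>t$), and dually for the minimum. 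First I would use them to settle the closure claim: if $\vec x,\vec y\in\D_\A$, then every super-level tuple of $\vec x\vee\vec y$ and of $\vec x\wedge\vec y$ is a join or meet of members of $\A$, hence lies in $\A$ because $\A$ is a lattice; since $\R^{kn}_{\ge0}$ is preserved by $\vee$ and $\wedge$, this gives $\vec x\vee\vec y,\vec x\wedge\vec y\in\D_\A$.

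The implication (a)$\Rightarrow$(c) is then a pointwise integration. Using the limit-free second form of $f^L$ in Definition \ref{def:k-way-Lovasz}, the constants $f(V_1,\dots,V_k)$ appearing on $(-\infty,0)$ cancel in the combination $f^L(\vec x)+f^L(\vec y)-f^L(\vec x\vee\vec y)-f^L(\vec x\wedge\vec y)$, leaving $\int_{\R}(f(V^t(\vec x))+f(V^t(\vec y))-f(V^t(\vec x)\vee V^t(\vec y))-f(V^t(\vec x)\wedge V^t(\vec y)))\,dt$, whose integrand is nonnegative for each $t$ by the $k$-way submodularity of $f$. Conversely (c)$\Rightarrow$(a) is immediate by restriction: evaluating the continuous submodularity inequality at indicator tuples and using $f^L(\vec 1_{A_1,\dots,A_k})=f(A_1,\dots,A_k)$ together with $\vec1_{A}\vee\vec1_{B}=\vec1_{A\vee B}$ and $\vec1_{A}\wedge\vec1_{B}=\vec1_{A\wedge B}$ reproduces exactly (KS).

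The link with convexity, (b)$\Leftrightarrow$(c), is where positive homogeneity and comonotone additivity (Proposition \ref{pro:comonotonic-additivity}) enter. The vectors $\vec x\vee\vec y$ and $\vec x\wedge\vec y$ are comonotonic and satisfy $(\vec x\vee\vec y)+(\vec x\wedge\vec y)=\vec x+\vec y$, so comonotone additivity yields $f^L(\vec x\vee\vec y)+f^L(\vec x\wedge\vec y)=f^L(\vec x+\vec y)$. Hence the continuous submodularity inequality (c) is \emph{equivalent} to the subadditivity $f^L(\vec x+\vec y)\le f^L(\vec x)+f^L(\vec y)$ on $\D_\A$; and since $f^L$ is positively one-homogeneous (Proposition \ref{pro:multi-way-property}), subadditivity is in turn equivalent to convexity on every convex subset of $\D_\A$, which is precisely (b). This closes the cycle (a)$\Leftrightarrow$(b)$\Leftrightarrow$(c). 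The step I expect to be most delicate is exactly this domain bookkeeping: one must verify that $\vec x\vee\vec y$, $\vec x\wedge\vec y$, $\vec x+\vec y$ and the convex combinations used all remain inside $\D_\A$, so that each inequality is genuinely asserted where it is applied. This is guaranteed by the closure statement above together with the fact that a family of sets closed under $\cup$ and $\cap$ is automatically a distributive lattice, which makes $\D_\A$ a polyhedral cone; the passage from subadditivity back to convexity is where this structure is really needed.

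Finally, the bisubmodular analog follows from the same template with the disjoint-pair data in place of the ordinary ones: the signed super-level sets $V^t_\pm$ replace $V^t$, the lattice operations of (BS2) replace those of (KS), the corresponding level-set identities and the additivity on absolutely comonotonic vectors are exactly those recorded in Proposition \ref{pro:setpair-character} and inside the proof of Proposition \ref{pro:bisubmodular-continuous}, and the integral representation \eqref{eq:disjoint-pair-Lovasz-def-integral2} plays the role of the $k$-way integral. The only genuinely new check is the closure of $\D_\A$ under the signed operations of (BS2), which must be verified directly rather than inherited from the ordinary lattice argument.
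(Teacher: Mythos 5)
Your closure argument and the equivalence a) $\Leftrightarrow$ c) are correct and essentially identical to the paper's proof: both rest on the level-set identities $V^t(\vec x\vee\vec y)=V^t(\vec x)\vee V^t(\vec y)$, $V^t(\vec x\wedge\vec y)=V^t(\vec x)\wedge V^t(\vec y)$ and integration of the pointwise submodular inequality. The gap is in your link b) $\Leftrightarrow$ c). You assert that $\vec x\vee\vec y$ and $\vec x\wedge\vec y$ are comonotonic, and hence that comonotone additivity gives $f^L(\vec x\vee\vec y)+f^L(\vec x\wedge\vec y)=f^L(\vec x+\vec y)$. Both claims are false in general. Take $k=1$, $n=2$, $\A=\power(V)$, and the submodular function $f(\{1\})=f(\{2\})=1$, $f(\varnothing)=f(V)=0$, whose Lov\'asz extension on $\D_\A=\R^2_{\ge0}$ is $f^L(\vec x)=|x_1-x_2|$. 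For $\vec x=(3,1)$ and $\vec y=(2,4)$ one has $\vec x\vee\vec y=(3,4)$ and $\vec x\wedge\vec y=(2,1)$, and $\bigl((\vec x\vee\vec y)_1-(\vec x\vee\vec y)_2\bigr)\bigl((\vec x\wedge\vec y)_1-(\vec x\wedge\vec y)_2\bigr)=(-1)(1)<0$, so the two vectors are not comonotonic; correspondingly $f^L(\vec x\vee\vec y)+f^L(\vec x\wedge\vec y)=2$ while $f^L(\vec x+\vec y)=f^L(5,5)=0$. So your claimed equivalence of continuous submodularity with subadditivity is not established by this route. (The two conditions do turn out to be equivalent for Lov\'asz extensions, but only as a consequence of the theorem itself, so invoking that equivalence here would be circular.)

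What is needed, and what the paper supplies, is an independent argument connecting a) with b). The paper proves a) $\Rightarrow$ b) by Lov\'asz's chain argument: $f^L(\vec x)=\sum_{A\in\C(\vec x)}\lambda_A f(A)$ along the unique chain $\C(\vec x)$ determined by $\vec x$, and this coincides with the convex extension $f^{\mathrm{convex}}(\vec x)=\inf\bigl\{\sum_{A\in\A}\lambda_A f(A):\sum_{A\in\A}\lambda_A\vec 1_A=\vec x,\ \lambda_A\ge 0\text{ for }A\ne V\bigr\}$ exactly when $f$ is submodular, because for submodular $f$ an optimal weight system is supported on a chain. The converse b) $\Rightarrow$ a) is the computation $f(A)+f(B)=f^L(\vec 1_A)+f^L(\vec 1_B)\ge 2f^L\bigl(\tfrac12(\vec 1_A+\vec 1_B)\bigr)=f^L(\vec 1_{A\vee B}+\vec 1_{A\wedge B})=f(A\vee B)+f(A\wedge B)$, where the final splitting is legitimate precisely because $\vec 1_{A\vee B}$ and $\vec 1_{A\wedge B}$ are nested ($A\wedge B\subset A\vee B$) and therefore genuinely comonotonic --- this is the only place where an additivity argument of the kind you propose actually applies. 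If you replace your b) $\Leftrightarrow$ c) step by such an argument, the remainder of your proof, including the bisubmodular variant, goes through.
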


The proof is a slight  variation of the  original version by  Lov\'asz,  and
is provided for convenience. 

\begin{proof}
Note that $V^t(\vec x)\vee V^t(\vec y)=V^t(\vec x\vee \vec y)$ and $V^t(\vec x)\wedge V^t(\vec y)=V^t(\vec x\wedge \vec y)$, where $V^t(\vec x):=(V^t(\vec x^1),\cdots,V^t(\vec x^k))$, $\forall t\in\R$.
Since $\vec x\in\D_\A$ if and only if $V^t(\vec x)\in \A$, $\forall t\in\R$, and $\A$ is a lattice, $\D_\A$ must be 
a lattice that is closed under the operations $\wedge$ and $\vee$.
According to the $k$-way Lov\'asz extension \eqref{eq:Lovasz-Form-1}, 
we may write
$$f^L(\vec x)=\int_{-N}^Nf(V^t(\vec x))dt-Nf(V)
$$
where $N>\|\vec x\|_\infty$ is a sufficiently large number\footnote{Here we set $f(\varnothing,\cdots,\varnothing)=0$}.
Note that $\vec 1_A\vee \vec 1_B=\vec 1_{A\vee B}$ and $\vec 1_A\wedge \vec 1_B=\vec 1_{A\wedge B}$.
Combining the above results, we immediately get
$$f(A)+f(B)\ge f(A\vee B)+ f(A\wedge B) \; \Leftrightarrow \; f^L(\vec x)+f^L(\vec y)\ge f^L(\vec x\vee \vec y)+f^L(\vec x\wedge \vec y),$$
which proves (a) $\Leftrightarrow$ (c). Note that for  $\vec x\in\D_\A$, $f^L(\vec x)=\sum\limits_{A\in \C(\vec x)}\lambda_Af(A)$ for a unique chain $\C(\vec x)\subset\A$ that is determined by $\vec x$ only, and
the extension $f^{\mathrm{convex}}(\vec x):=\inf\limits_{\{\lambda_A\}_{A\in\A}\in\Lambda(\vec x)}\sum\limits_{A\in \A}\lambda_Af(A)$ is convex on each convex subset of $\D_\A$, where $\Lambda(\vec x):=\{\{\lambda_A\}_{A\in\A}\in\R^\A:\sum\limits_{A\in\A}\lambda_A\vec 1_A=\vec x,\,\lambda_A\ge 0\text{ whenever }A\ne V\}$. We only need to prove $f^L(\vec x)=f^{\mathrm{convex}}(\vec x)$ if and only if $f$ is submodular. In fact, along a standard idea proposed in Lov\'asz's original paper \cite{Lovasz}, one could prove that for a (strictly) submodular function, the set $\{A:\lambda_A^*\ne0\}$ must be a chain, where $\sum\limits_{A\in \A}\lambda_A^*f(A)=f^{\mathrm{convex}}(\vec x)$ achieves the minimum over $\Lambda(\vec x)$, and  one can then easily check that it agrees with $f^L$. The converse can be proved 
in a standard way: $f(A)+f(B)=f^L(\vec 1_A)+f^L(\vec 1_B)\ge 2f^L(\frac{1}{2}(\vec 1_A+\vec 1_B))=f(\vec 1_A+\vec 1_B)=f(\vec 1_{A\vee B}+\vec 1_{A\wedge B})=f(\vec 1_{A\vee B})+f(\vec 1_{A\wedge B})=f(A\vee B)+f(A\wedge B)$. Now, the proof is completed.

For the  bisubmodular case, the above reasoning can be repeated  with minor differences. 
\end{proof}
\label{pro:how-to-be-k-way-Lovasz-submodular}


\begin{remark}
We show some examples about how both convexity and continuous
submodularity can be satisfied. In fact, it is easy to see that the $l^p$-norm $\|\vec x\|_p$ is both convex and continuously
submodular on $\R_+^n$, while the $l^1$-norm $\|\vec x\|_1$ is convex and continuously
submodular on the whole  $\R^n$. Besides, an elementary proof shows that a one-homogeneous continuously
submodular function on $\R_+^2$ must be convex.
\end{remark}

\section{Main  results on  optimization and eigenvalue problems}
\label{sec:main}

We uncover the links between  combinatorial optimization and continuous  programming as well as eigenvalue
problems  in a general setting.
\subsection{Combinatorial and continuous optimization}
\label{sec:CC-transfer}

As we have told in the introduction, the application of the Lov\'asz extension to non-submodular optimization meets with several difficulties, and in this section, we start attacking those. First, we set up some useful results.

\begin{notification}\label{notification:fL}
In this section, $\R_{\ge0}:=[0,\infty)$ is the set of all non-negative numbers. We use $f^L$ to denote the multi-way Lov\'asz extension which can be either the original or the disjoint-pair or the $k$-way Lov\'asz extension.  Moreover, the families $\A$ and $\D_\A$ we consider for optimization problems are  restricted as follows.
\begin{itemize}
\item For the original Lov\'asz extension, we require  $\{\varnothing,V\}\subset\A\subset \power(V)$, and let  $\D_\A=\{\vec x\in[0,+\infty)^n:V^t(\vec x)\in \A,\forall t\in\R\}$.
\item For the  disjoint-pair Lov\'asz extension, we require  $(\varnothing,\varnothing)\in\A\subset \power_2(V)$, and take  $\D_\A=\{\vec x\in\R^n:(V^t_+(\vec x),V^t_-(\vec x))\in \A,\forall t\ge0\}$.
\item For the $k$-way Lov\'asz extension introduced in Definition \ref{def:k-way-Lovasz}, we require   $\{(\varnothing,\cdots,\varnothing),(V_1,\cdots,V_k)\}\subset\A\subset\mathcal{P}(V_1)\times \cdots\times \mathcal{P}(V_k)$, and let  $\D_\A=\{(\vec x^1,\cdots,\vec x^k)\in[0,+\infty)^{n_1+\cdots+n_k}:(V^t(\vec x^1),\cdots,V^t(\vec x^k))\in \A,\forall t\in\R\}$.
\item For the $k$-way disjoint-pair Lov\'asz extension introduced in Definition \ref{def:k-way-pair-Lovasz}, we let   
$(\varnothing,\cdots,\varnothing)\in\A\subset\mathcal{P}_2(V_1)\times \cdots\times \mathcal{P}_2(V_k)$, and  $\D_\A=\{(\vec x^1,\cdots,\vec x^k)\in\R^{n_1+\cdots+n_k}:(V^t_+(\vec x^1),V^t_-(\vec x^1),\cdots,V^t_+(\vec x^k),V^t_-(\vec x^k))\in \A,\forall t\ge0\}$.
\end{itemize} 
\end{notification}

\begin{theorem}\label{thm:tilde-H-f}
 Given set functions $f_1,\cdots,f_n:\A\to \R_{\ge0}$, and a zero-homogeneous function $H:\R^n_{\ge0}\setminus\{\vec 0\}\to\R\cup\{+\infty\}$ 
 with $H(\vec a+\vec b)\ge\min\{H(\vec a),H(\vec b)\}$, $
 \forall \vec a,\vec b\in\R^n_{\ge0}\setminus\{\vec 0\}$, we have
 \begin{equation}\label{eq:H-minimum}
 \min\limits_{A\in \A'}H(f_1(A),\cdots,f_n(A))=\inf\limits_{\vec x\in \D'} H(f^L_1(\vec x),\cdots,f^L_n(\vec x)),\end{equation}
where $\A'=\{A\in\A: (f_1(A),\cdots,f_n(A))\in\mathrm{Dom}(H)\}$, $\D'=\{ \vec x\in\D_\A 
:\,(f^L_1(\vec x),\cdots,f^L_n(\vec x))\in\mathrm{Dom}(H)\}$ and $\mathrm{Dom}(H)=\{\vec a\in \R^n _{\ge0}\setminus\{\vec 0\}: H(\vec a)\in\R\}$.
\end{theorem}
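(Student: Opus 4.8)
The plan is to write the two sides as $L=\min_{A\in\A'}H(\vec f(A))$ and $R=\inf_{\vec x\in\D'}H(\vec f^L(\vec x))$, where $\vec f(A)=(f_1(A),\dots,f_n(A))\in\R^n_{\ge0}$ and $\vec f^L(\vec x)=(f_1^L(\vec x),\dots,f_n^L(\vec x))$, and to establish the inequalities $R\le L$ and $R\ge L$ separately. The inequality $R\le L$ is the routine half: for each $A\in\A'$ the indicator $\vec1_A$ (resp.\ $\vec1_{A_1,\dots,A_k}$) lies in $\D_\A$ and satisfies $\vec f^L(\vec1_A)=\vec f(A)$ by the defining property $f_i^L(\vec1_A)=f_i(A)$; hence $\vec1_A\in\D'$ with $H(\vec f^L(\vec1_A))=H(\vec f(A))$, and taking the infimum over these test points and then the minimum over $A\in\A'$ gives $R\le L$.

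For the reverse inequality $R\ge L$, I would first upgrade the hypothesis $H(\vec a+\vec b)\ge\min\{H(\vec a),H(\vec b)\}$ to its finite form: a straightforward induction shows $H\bigl(\sum_{j}\vec a_j\bigr)\ge\min_j H(\vec a_j)$ for any finite family $\vec a_j\in\R^n_{\ge0}\setminus\{\vec0\}$, all partial sums staying nonzero because the $\vec a_j$ are nonnegative and nonzero. Next I would exploit the integral representation of the (multi-way) Lov\'asz extension: after the harmless normalization $f_i(\varnothing)=0$ (recall $f^L$ does not depend on the empty-set value), one has $f_i^L(\vec x)=\int_0^{\infty}f_i(A^t)\,dt$, where $A^t$ is the associated set-tuple of $\vec x$ at level $t$, and the family $\{A^t\}_{t\ge0}\subset\A$ is a chain taking only finitely many distinct values $A^{(1)},\dots,A^{(m)}$ on intervals of positive lengths $\ell_1,\dots,\ell_m$. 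Componentwise this reads $\vec f^L(\vec x)=\sum_{j=1}^m\ell_j\,\vec f(A^{(j)})$.

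Fixing $\vec x\in\D'$, so that $\vec f^L(\vec x)\in\mathrm{Dom}(H)$ and in particular $\vec f^L(\vec x)\ne\vec0$, I would discard the indices with $\vec f(A^{(j)})=\vec0$ (which contribute nothing) and let $J$ be the remaining nonempty index set, so that $\vec f^L(\vec x)=\sum_{j\in J}\ell_j\vec f(A^{(j)})$ with each $\ell_j\vec f(A^{(j)})\in\R^n_{\ge0}\setminus\{\vec0\}$. Combining the finite min-superadditivity with the zero-homogeneity of $H$ (which gives $H(\ell_j\vec f(A^{(j)}))=H(\vec f(A^{(j)}))$ since $\ell_j>0$) yields
\begin{equation*}
H(\vec f^L(\vec x))=H\Bigl(\sum_{j\in J}\ell_j\vec f(A^{(j)})\Bigr)\ge\min_{j\in J}H(\vec f(A^{(j)})).
\end{equation*}
Since the left-hand side is finite, the minimum on the right is attained at some $j^\ast$ with $H(\vec f(A^{(j^\ast)}))<+\infty$, so $A^{(j^\ast)}\in\A'$ and therefore $H(\vec f^L(\vec x))\ge H(\vec f(A^{(j^\ast)}))\ge L$. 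Taking the infimum over $\vec x\in\D'$ gives $R\ge L$, and together with the first half the equality $L=R$ follows.

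I expect the main obstacle to be bookkeeping rather than conceptual: one must track $\mathrm{Dom}(H)$ carefully throughout, excluding the vanishing vectors $\vec f(A^{(j)})=\vec0$ \emph{before} invoking the pairwise hypothesis, and then arguing that the attained minimizer $A^{(j^\ast)}$ genuinely lies in $\A'$ (i.e.\ has finite $H$-value), which is where the finiteness of $H(\vec f^L(\vec x))$ is used. A secondary point requiring care is to phrase the integral/chain decomposition uniformly so that it covers the original, the disjoint-pair and the $k$-way versions at once; for the disjoint-pair version the chain consists of the set-pairs $(V_+^t,V_-^t)$ and the argument runs verbatim.
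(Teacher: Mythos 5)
Your proposal is correct and follows essentially the same route as the paper's proof: upgrade the pairwise hypothesis to a finite (MIN) property via induction and zero-homogeneity, write the Lov\'asz extension as a finite positive combination $\sum_j \ell_j \vec f(A^{(j)})$ over the level-set chain, apply the (MIN) property to get one inequality, and use the indicator vectors $\vec 1_A$ for the other. Your extra bookkeeping (discarding the levels with $\vec f(A^{(j)})=\vec 0$ and checking that the minimizing level set lies in $\A'$) is a slightly more careful rendering of steps the paper leaves implicit, not a different argument.
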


\begin{proof}
By the property of $H$, $\;\forall t_i\ge0\,,n\in\mathbb{N}^+,\, a_{i,j}\ge0,i=1,\cdots,m,\,j=1,\cdots,n$,
\begin{align*}H\left(\sum_{i=1}^m t_i a_{i,1},\cdots,\sum_{i=1}^m t_i a_{i,n}\right)&=H\left(\sum_{i=1}^m t_i \vec a^i\right)\ge \min_{i=1,\cdots,m} H(t_i\vec a^i) \\ &= \min_{i=1,\cdots,m} H(\vec a^i)
= \min_{i=1,\cdots,m} H(a_{i,1},\cdots,a_{i,n}).
\end{align*}
Therefore, in  the case of the original Lov\'asz extension, for any $\vec x\in\D'$,
\begin{align}
&H\left(f^L_1(\vec x),\cdots,f^L_n(\vec x)\right)  \label{eq:psi-H}
\\ =\, & H\left(\int_{\min \vec x}^{\max  \vec x}f_1(V^t(\vec x))dt+ f_1(V(\vec x))\min \vec x,\cdots,\int_{\min \vec x}^{\max \vec x}f_n(V^t(\vec x))dt+ f_n(V(\vec x))\min \vec x\right)\notag
\\ =\, & H\left(\sum_{i=1}^m (t_i-t_{i-1}) f_1(V^{t_{i-1}}(\vec x)),\cdots,\sum_{i=1}^m (t_i-t_{i-1}) f_n(V^{t_{i-1}}(\vec x)) \right)\notag
\\ \ge\, & \min_{i=1,\cdots,m} H\left(f_1(V^{t_{i-1}}(\vec x)),\cdots,f_n(V^{t_{i-1}}(\vec x)) \right)\notag
\\ \ge\, &  \min\limits_{A\in \A'}H(f_1(A),\cdots,f_n(A))\label{eq:min-H}
\\ =\, &  \min\limits_{A\in \A'}H(f_1^L(\vec1_A),\cdots,f_n^L(\vec1_A))\notag
\\ \ge\, &  \inf\limits_{\vec x’\in \D'} H(f^L_1(\vec x’),\cdots,f^L_n(\vec x’)).\label{eq:inf-H}
\end{align}
Combining \eqref{eq:psi-H} with \eqref{eq:min-H}, we have $\inf\limits_{\vec x\in \D'} H(f^L_1(\vec x),\cdots,f^L_n(\vec x))\ge \min\limits_{A\in \A'}H(f_1(A),\cdots,f_n(A))$, and then together with \eqref{eq:min-H} and \eqref{eq:inf-H}, we get the reverse inequality. Hence, \eqref{eq:H-minimum} is proved for the original Lov\'asz extension $f^L$. For the  multi-way settings, the proof is similar and thus we omit them.
%
\end{proof}

\begin{remark}\label{remark:thm-H} Duality: If one replaces $H(\vec a+\vec b)\ge\min\{H(\vec a),H(\vec b)\}$ by $H(\vec a+\vec b)\le\max\{H(\vec a),H(\vec b)\}$, then
   \begin{equation}\label{eq:H-maximum}
   \max\limits_{A\in \A'}H(f_1(A),\cdots,f_n(A))=\sup\limits_{\vec x\in \D'} H(f^L_1(\vec x),\cdots,f^L_n(\vec x)).\end{equation}
   The proof of the  identity \eqref{eq:H-maximum} is similar to that of \eqref{eq:H-minimum}, and thus we omit it.
   \end{remark}
   \begin{remark}
A function $H:[0,+\infty)^n\to \overline{\R}$ has the (MIN) property
if
$$H\left(\sum_{i=1}^m t_i \vec a^i\right)\ge \min_{i=1,\cdots,m} H(\vec a^i),\;\forall t_i>0\,,m\in\mathbb{N}^+,\,\vec a^i\in [0,+\infty)^n.$$
The (MAX) property is formulated analogously.

We can verify that the (MIN) property is equivalent to the
zero-homogeneity and $H(\vec x+\vec y)\ge\min\{H(\vec x),H(\vec y)\}$. A similar correspondence holds for the (MAX) property.
\end{remark}

\begin{remark}Theorem \ref{thm:tilde-H-f} shows that if $H$ has the (MIN)  or (MAX) property, then the corresponding combinatorial optimization is equivalent to a continuous optimization by means of   
 the multi-way Lov\'asz extension. 
 Here are some examples:
 
 Given $c,c_i\ge 0$ with $\sum_i c_i>0$, let $H(f_1,\cdots,f_n)=\frac{c_1f_1+\cdots+c_nf_n-c\sqrt{f_1^2+\cdots+f_n^2}}{f_1+\cdots+f_n}$. Then $H$ satisfies the (MIN)  property, and by Theorem \ref{thm:tilde-H-f}, we have
 $$\min\limits_{A\in \A'}\frac{\sum_i c_if_i(A)-c\sqrt{\sum_if_i^2(A)}}{\sum_i f_i(A)}=\inf\limits_{\psi\in \D'}\frac{\sum_i c_if_i^L(\psi)-c\sqrt{\sum_i (f_i^L(\psi))^2}}{\sum_i f_i^L(\psi)}.$$

Taking $H(f_1,\cdots,f_n)=\frac{(c_1f_1^p+\cdots+c_nf_n^p)^{\frac1p}}{f_1+\cdots+f_n}$ for some $p> 1$, then $H$ satisfies the (MAX)  property, and by Theorem \ref{thm:tilde-H-f}, there holds $$\max\limits_{A\in \A'}\frac{(\sum_i c_if_i(A)^p)^{\frac1p}}{\sum_i f_i(A)}=\sup\limits_{\psi\in \D'}\frac{(\sum_i c_if_i^L(\psi)^p)^{\frac1p}}{\sum_i f_i^L(\psi)}.$$

\begin{proof}[Proof of Theorem \ref{thm:tilde-fg-equal-PQ}]
Without loss of generality, we may assume that $P(f_1,\cdots,f_n)$ is one-homogeneous and subaddtive, while $Q(f_1,\cdots,f_n)$ is one-homogeneous and superadditive on $(f_1,\cdots,f_n)\in\R_{\ge0}^n$. 

Then $H(f_1,\cdots,f_n)=\frac{P(f_1,\cdots,f_n)}{Q(f_1,\cdots,f_n)}$ is zero-homogeneous on $[0,+\infty)^n$,   and 
$$H(\vec f+\vec g)=\frac{P(\vec f+\vec g)}{Q(\vec f+\vec g)}\le \frac{P(\vec f)+P(\vec g)}{Q(\vec f)+Q(\vec g)}\le \max\{\frac{P(\vec f)}{Q(\vec f)},\frac{P(\vec g)}{Q(\vec g)}\}=\max\{H(\vec f),H(\vec g)\}$$
where $\vec f=(f_1,\cdots,f_n)$ and $\vec g=(g_1,\cdots,g_n)$. 



Then the proof is completed by Theorem \ref{thm:tilde-H-f} 
 (and Remark \ref{remark:thm-H}).
\end{proof}

\begin{example}\label{exam:maxcut1}
Given a finite graph $(V,E)$, for $\{i,j\}\in E$, let $f_{\{i,j\}}(A)=1$ if $A\cap\{i,j\}=\{i\}$ or $\{j\}$, and $f_{\{i,j\}}(A)=0$ otherwise. Let $g(A)=|A|$ for $A\subset V$. It is clear that $\frac{\left(\sum_{\{i,j\}\in E}f_{\{i,j\}}^p\right)^{\frac1p}}{g}$ satisfies the condition of  Theorem \ref{thm:tilde-fg-equal-PQ}. Thus, we  derive that
$$\max\limits_{A\ne\varnothing}\frac{|\partial A|^{\frac1p}}{|A|}=\max\limits_{A\ne\varnothing}\frac{(\sum_{\{i,j\}\in E}f_{\{i,j\}}^p(A))^{\frac1p}}{g(A)} =\max\limits_{x\in\R_+^V}\frac{(\sum_{\{i,j\}\in E}|x_i-x_j|^p)^{\frac1p}}{\sum_{i\in V}x_i}=\max\limits_{x\ne 0}\frac{(\sum_{\{i,j\}\in E}|x_i-x_j|^p)^{\frac1p}}{\sum_{i\in V}|x_i|}.$$
Similarly, we have
$$\max\limits_{A\ne\varnothing}|\partial
A|^{\frac1p}=\max\limits_{x\in\R_+^V}\frac{(\sum_{\{i,j\}\in
    E}|x_i-x_j|^p)^{\frac1p}}{\max\limits_{i\in V}x_i}=\max\limits_{x\ne
  0}\frac{(\sum_{\{i,j\}\in E}|x_i-x_j|^p)^{\frac1p}}{2\|\vec
  x\|_\infty},$$
which gives a  continuous representation of the Max-Cut problem. 
The last equality holds due to the following reason: letting $F(\vec x)=(\sum_{\{i,j\}\in
    E}|x_i-x_j|^p)^{\frac1p}$, we can check that  
$\max\limits_{x\in\R_+^V}\frac{F(\vec x)}{\max_{i\in V}x_i}$ achieves its maximum at some characteristic vector $\vec 1_A$, and then $\vec 1_A-\vec 1_{V\setminus A}$ is a maximizer of $\frac{F(\vec x)}{2\|\vec
  x\|_\infty}$ on $\R^V\setminus\{\vec0\}$.

  Similarly,    $\max\limits_{x\ne
  0}\frac{F(\vec x)}{2\|\vec
  x\|_\infty}$ achieves its maximum at $\vec 1_A-\vec 1_{V\setminus A}$ for some $A$, and then $\vec1_A$ indicates a maximizer of $\frac{F(\vec x)}{\max_{i\in V}x_i}$ on the first orthant $\R^V_+$. We need the factor 2  because $F(\vec 1_A-\vec 1_{V\setminus A})=2F(\vec 1_A)$. 
  
It should be noted that the two equivalent continuous reformulations are derived by the original and disjoint-pair Lov\'asz extensions in the following two ways: $$\max\limits_{A\ne\varnothing}|\partial
A|^{\frac1p}=\max\limits_{A\in\power(V)\setminus\{\varnothing\}}\frac{(\sum_{\{i,j\}\in E}f_{\{i,j\}}^p(A))^{\frac1p}}{1} = \max\limits_{x\in\R_+^V}\frac{(\sum_{\{i,j\}\in
    E}|x_i-x_j|^p)^{\frac1p}}{\max_{i\in V}x_i}$$
    where we use  $f^L_{\{i,j\}}(\vec x)=|x_i-x_j|$ and $1^L=\max_{i\in V}x_i$; 
      $$\max\limits_{A\ne\varnothing}|\partial
A|^{\frac1p}=\max\limits_{(A,B)\in\power_2(V)\setminus\{ (\varnothing,\varnothing)\}}\frac{\left(\sum_{\{i,j\}\in E}(f_{\{i,j\}}(A)+f_{\{i,j\}}(B))^p\right)^{\frac1p}}{2} =\max\limits_{\vec x\ne
  \bf{0}}\frac{(\sum_{\{i,j\}\in E}|x_i-x_j|^p)^{\frac1p}}{2\|\vec
  x\|_\infty}$$
  where we use the fact that the disjoint-pair Lov\'asz extension of $(A,B)\mapsto f_{\{i,j\}}(A)+f_{\{i,j\}}(B)$ is $|x_i-x_j|$ and the disjoint-pair Lov\'asz extension of $(A,B)\mapsto 1$ is $\|\vec x\|_\infty$.
\end{example}


\begin{example}\label{exam:maxcut2}
There are many other equalities that can be obtained by  Theorem \ref{thm:tilde-fg-equal-PQ}, such as:
$$\min\limits_{A\ne\varnothing,V}\frac{|\partial A|}{|A|^{\frac1p}}=\min\limits_{x\in\R^V:\,\min x=0}\frac{\sum_{\{i,j\}\in E}|x_i-x_j|}{(\sum_{i\in V}x_i^p)^{\frac1p}}$$
and 
$$\max\limits_{(A,B)\in\power_2(V)\setminus\{(\varnothing,\varnothing)\}}\frac{2|E(A,B)|^{\frac1p}}{\vol(A\cup B)}= \max\limits_{x\ne 0}\frac{(\sum_{\{i,j\}\in E}(|x_i|+|x_j|-|x_i+x_j|)^p)^{\frac1p}}{\sum_{i\in V}\deg_i|x_i|}$$
whenever $p\ge 1$. Here, $\vol A =\sum_{i\in A} \deg_i$.\\ 
The last equality shows a variant of the dual Cheeger constant. A slight modification gives 
$$\max\limits_{A\in\power(V)}2|\partial A|^{\frac1p}=\max\limits_{(A,B)\in\power_2(V)}2|E(A,B)|^{\frac1p}=\max\limits_{x\ne 0}\frac{\left(\sum_{\{i,j\}\in E}(|x_i|+|x_j|-|x_i+x_j|)^p\right)^{\frac1p}}{\|\vec x\|_\infty}$$
showing a new continuous formulation of the Maxcut problem.
\end{example}
\end{remark}
Taking $n=2$ and $H(f_1,f_2)=\frac{f_1}{f_2}$ in Theorem \ref{thm:tilde-H-f}, then such an $H$ satisfies both (MIN) and (MAX) properties. So, we get
$$\min\limits_{A\in \A'}\frac{f_1(A)}{f_2(A)}=\inf\limits_{\psi\in \D'}\frac{f_1^L(\psi)}{f_2^L(\psi)},\;\;\;\text{ and } \;\; \max\limits_{A\in \A'}\frac{f_1(A)}{f_2(A)}=\sup\limits_{\psi\in \D'}\frac{f_1^L(\psi)}{f_2^L(\psi)}.$$
In fact, we can get more:

\begin{pro}\label{pro:fraction-f/g}
 Given two  functions $f,g:\A\to [0,+\infty)$, let $\tilde{f},\tilde{g}:\D_\A\to \R$ satisfy $\tilde{f}\ge f^L$, $\tilde{g}\le g^L$, $\tilde{f}(\vec1_{A})=f(A)$ and $\tilde{g}(\vec1_{A})=g(A)$ for any $A\in \mathcal{A}$. Then
$$\min\limits_{A\in \A\cap\supp(g)}\frac{f(A)}{g(A)}=\inf\limits_{\psi\in \D_\A\cap\supp(\tilde{g})}\frac{\widetilde{f}(\psi)}{\widetilde{g}(\psi)}.$$

If we replace the condition $\tilde{f}\ge f^L$ and $\tilde{g}\le g^L$  by $\tilde{f}\le f^L$ and $\tilde{g}\ge g^L$, then $$\max\limits_{A\in \A\cap\supp(g)}\frac{f(A)}{g(A)}=\sup\limits_{\psi\in \D_\A\cap\supp(\tilde{g})}\frac{\widetilde{f}(\psi)}{\widetilde{g}(\psi)}.$$

For any $\alpha\ne0$,  then $\tilde{f}=((f^\alpha)^L)^{\frac1\alpha}$ and
$\tilde{g}=((g^\alpha)^L)^{\frac1\alpha}$ satisfy the above two
identities. 
\end{pro}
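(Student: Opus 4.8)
The plan is to deduce the first two identities from the $n=2$, $H(f_1,f_2)=f_1/f_2$ instance of Theorem~\ref{thm:tilde-H-f}, combined with the two defining features of $\tilde f,\tilde g$: the pointwise comparisons with $f^L,g^L$ and the interpolation values at indicator vectors. Since this $H$ has both the (MIN) and the (MAX) property, that theorem already supplies
$$\min\limits_{A\in\A\cap\supp(g)}\frac{f(A)}{g(A)}=\inf\limits_{\psi\in\D_\A\cap\supp(g^L)}\frac{f^L(\psi)}{g^L(\psi)}$$
together with its $\max/\sup$ dual. All that remains is to replace $f^L,g^L$ by $\tilde f,\tilde g$; I would do this by two one-sided estimates, noting throughout that $f^L,g^L\ge0$ on the nonnegative domain $\D_\A$.

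For the minimum identity, fix any $\psi\in\D_\A\cap\supp(\tilde g)$, so $\tilde g(\psi)>0$; because $\tilde g\le g^L$ we get $g^L(\psi)\ge\tilde g(\psi)>0$, hence $\psi\in\supp(g^L)$. Using $\tilde f\ge f^L\ge0$ and $0<\tilde g\le g^L$ gives
$$\frac{\tilde f(\psi)}{\tilde g(\psi)}\ge\frac{f^L(\psi)}{\tilde g(\psi)}\ge\frac{f^L(\psi)}{g^L(\psi)}\ge\inf\limits_{\psi'\in\D_\A\cap\supp(g^L)}\frac{f^L(\psi')}{g^L(\psi')}=\min\limits_{A\in\A\cap\supp(g)}\frac{f(A)}{g(A)},$$
and taking the infimum over $\psi$ yields ``$\ge$''. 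For the reverse inequality, each $A\in\A\cap\supp(g)$ furnishes an admissible test point $\vec 1_A$, since $\tilde g(\vec 1_A)=g(A)>0$; evaluating there gives $\tilde f(\vec 1_A)/\tilde g(\vec 1_A)=f(A)/g(A)$, so the infimum is $\le\min_A f(A)/g(A)$. The maximum identity is proved verbatim with every inequality reversed (using $\tilde f\le f^L$, $\tilde g\ge g^L$) and the (MAX) half of Theorem~\ref{thm:tilde-H-f}.

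The final assertion cannot be obtained this way, and recognizing this is the crux: a one-point check shows that $((f^\alpha)^L)^{1/\alpha}$ is only $\tfrac1\alpha$-homogeneous and lies neither above nor below $f^L$ pointwise, so the pointwise hypotheses of the first two parts genuinely fail. Instead I would apply the quotient identity of Theorem~\ref{thm:tilde-H-f} to the nonnegative functions $f^\alpha,g^\alpha$ in place of $f,g$, obtaining $\min_A (f(A)/g(A))^\alpha=\inf_\psi(\tilde f(\psi)/\tilde g(\psi))^\alpha$ and its $\max/\sup$ dual, using the key algebraic identity $(\tilde f/\tilde g)^\alpha=(f^\alpha)^L/(g^\alpha)^L$. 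Then one pushes the exponent $1/\alpha$ through the optimizations by monotonicity of $t\mapsto t^{1/\alpha}$ on $(0,\infty)$. When $\alpha>0$ this map is increasing, so it commutes with $\inf$ and with $\min_A(\cdot)^\alpha=(\min_A\cdot)^\alpha$, and taking $\alpha$-th roots turns the MIN (resp. MAX) identity for $(f^\alpha,g^\alpha)$ directly into the MIN (resp. MAX) identity for $(f,g)$. The delicate case is $\alpha<0$: now both $t\mapsto t^{1/\alpha}$ and $t\mapsto t^\alpha$ are order-reversing, so the MIN identity for $(f^\alpha,g^\alpha)$ produces the MAX identity for $(f,g)$ and vice versa; the two order-reversals cancel and one still recovers both claimed identities. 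Tracking this double reversal, together with the positivity and support bookkeeping needed to keep every quotient finite (in particular interpreting $\supp$ as the positivity set and handling zeros of $f,g$ when $\alpha<0$), is the one genuinely careful point of the argument.
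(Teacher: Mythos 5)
Your proposal is correct exactly where the paper's proof is correct, and the two differ mainly in how the work for the first identity is organized. You quote Theorem~\ref{thm:tilde-H-f} with $H(f_1,f_2)=f_1/f_2$ as a black box and then transfer from $(f^L,g^L)$ to $(\tilde f,\tilde g)$ by the monotone sandwich $\tilde f/\tilde g\ge f^L/\tilde g\ge f^L/g^L$, which is legitimate for the min identity because on $\supp(\tilde g)$ one has $g^L\ge\tilde g>0$. The paper instead runs a self-contained multiplicative argument: for fixed $\psi$ with $\tilde g(\psi)>0$ it sets $C=\min\{f(V^t(\psi))/g(V^t(\psi)):t\in W(\psi)\}$ with $W(\psi)=\{t:g(V^t(\psi))>0\}$, checks $f(V^t(\psi))\ge C\,g(V^t(\psi))$ for \emph{every} $t$ (the sets with $g(V^t(\psi))=0$ are absorbed because $C\ge0$ and $f\ge0$), integrates the level-set representation to get $f^L(\psi)\ge C\,g^L(\psi)$, and divides by $\tilde g(\psi)$ only at the end of the chain $\tilde f(\psi)\ge f^L(\psi)\ge C g^L(\psi)\ge C\tilde g(\psi)$. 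Your modular route is the one the paper itself gestures at just before the statement (``In fact, we can get more''); what the paper's version buys is that it never divides by $g^L$, a difference that becomes material in the dual case. For the $\alpha\ne0$ claim your argument coincides with the paper's line by line: apply the ratio identity to $(f^\alpha,g^\alpha)$, use $\tilde f^{\,\alpha}=(f^\alpha)^L$, push the monotone map $t\mapsto t^{1/\alpha}$ through the optimization, and track the min/max swap plus the positivity assumption on $f,g$ when $\alpha<0$. Your preliminary observation that these $\tilde f,\tilde g$ are only $\tfrac1\alpha$-homogeneous, hence cannot satisfy the pointwise hypotheses of the first two parts, is a correct justification (not spelled out in the paper) of why this part requires a separate argument.

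The one step of yours that fails as written is the ``verbatim reversal'' for the max identity: under the dual hypotheses $\tilde f\le f^L$, $\tilde g\ge g^L$, the condition $\tilde g(\psi)>0$ no longer forces $g^L(\psi)>0$, so your middle inequality $f^L(\psi)/\tilde g(\psi)\le f^L(\psi)/g^L(\psi)$ can divide by zero. You should know, however, that this is not a defect relative to the paper: the paper dismisses this case with ``the dual case is similar'', and its own argument breaks at the mirror-image spot, since the needed level-set bound $f(V^t(\psi))\le M\,g(V^t(\psi))$ fails on level sets where $g$ vanishes but $f$ does not. Indeed both the max identity and the (MAX) half of Theorem~\ref{thm:tilde-H-f} (Remark~\ref{remark:thm-H}) that you invoke require an additional support hypothesis of the type ``$g(A)=0\Rightarrow f(A)=0$'': on $V=\{1,2\}$ with $f$ the indicator of the set $\{1\}$ and $g(A)=1$ exactly when $2\in A$, one computes $f^L(\vec x)=\max\{x_1-x_2,0\}$ and $g^L(\vec x)=x_2$ on $\D_\A=[0,\infty)^2$, so the left-hand maximum is $0$ while the right-hand supremum is $+\infty$. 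In short: your min-identity and $\alpha$ arguments are sound and match the paper's in substance; your max-identity argument carries the same inherited gap as the paper's own, no worse and no better.
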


\begin{proof}
It is obvious that
 $$\inf\limits_{\psi\in \D_\A\cap\supp(\tilde{g})}\frac{\widetilde{f}(\psi)}{\widetilde{g}(\psi)}\le\min\limits_{A\in \A\cap\supp(g)} \frac{\widetilde{f}(\vec1_{A})}{\widetilde{g}(\vec1_{A})} = \min\limits_{A\in \A\cap\supp(g)}\frac{f(A)}{g(A)}.$$
 On the other hand, for any $\psi\in\D_\A\cap\supp(\tilde{g})$, $g^L(\psi)\ge \tilde{g}(\psi)>0$. Hence, there exists $t\in (\min \widetilde{\beta}\psi-1,\max \widetilde{\beta}\psi+1)$ satisfying $g(V^t(\psi))>0$. Here $\widetilde{\beta}\psi=\psi$ (resp., $|\psi|$), if $f^L$ represents either the original or the $k$-way Lov\'asz extension of $f$ (resp., either the disjoint-pair or the $k$-way disjoint-pair Lov\'asz  extension). So, the set $W(\psi):=\{t\in\R: g(V^t(\psi))>0\}$ is nonempty. Since $\{V^t(\psi):t\in W(\psi)\}$ is finite, there exists $t_0\in W(\psi)$ such that $\frac{f(V^{t_0} (\psi))}{g(V^{t_0} (\psi))}=\min\limits_{t\in W(\psi)}\frac{f(V^{t} (\psi))}{g(V^{t} (\psi))}$. Accordingly, $f(V^{t} (\psi))\ge \frac{f(V^{t_0} (\psi))}{g(V^{t_0} (\psi))}g(V^{t} (\psi))$ for any $t\in W(\psi)$, and thus
  $$f(V^{t} (\psi))\ge Cg(V^{t} (\psi)),\;\;\;\text{ with }\;\;C=\min\limits_{t\in W(\psi)}\frac{f(V^{t} (\psi))}{g(V^{t} (\psi))}\ge0,$$
  holds for any $t\in\R$ (because $g(V^{t} (\psi))=0$ for $t\in\R\setminus W(\psi)$ which means that the above inequality automatically holds).
  Consequently,
 \begin{align*}&\tilde{f}(\psi)\ge f^L(\psi)
\\ =\,& \int_{\min \widetilde{\beta}\psi}^{\max  \widetilde{\beta}\psi}f(V^t(\psi))dt+ f(V(\psi))\min \widetilde{\beta}\psi
\\ \ge\,& C\left( \int_{\min \widetilde{\beta}\psi}^{\max  \widetilde{\beta}\psi}g(V^t(\psi))dt+ g(V(\psi))\min \widetilde{\beta}\psi\right).
\\ =\,&C g^L(\psi)\ge C\tilde{g}(\psi).
 \end{align*}
 where we used  $\min \widetilde{\beta}\psi\ge0$. The proof of $\min \widetilde{\beta}\psi\ge0$ is straightforward: in fact, by   $\psi\in\D_\A$,  if we use the original or the $k$-way Lov\'asz extension introduced in Definition \ref{def:k-way-Lovasz}, then  $\widetilde{\beta}\psi=\psi$, and   $\D_\A$ lies in the closure of the first orthant of the Euclidean space, meaning that $\widetilde{\beta}\psi=\psi\ge0$; and if we use  the disjoint-pair or the $k$-way disjoint-pair Lov\'asz  extension in  Definition \ref{def:k-way-pair-Lovasz}, then $\widetilde{\beta}\psi=|\psi|\ge0$. 

 It follows that
 $$\frac{\widetilde{f}(\psi)}{\widetilde{g}(\psi)}\ge C\ge \min\limits_{A\in \A\cap\supp(g)}\frac{f(A)}{g(A)}$$
 and thus the proof is completed. The dual case is similar.

For $\alpha>0$, we can simply suppose $\supp(g)=\A$. Then 
 \begin{align*}
 \min\limits_{A\in \A}\frac{f(A)}{g(A)}=\min\limits_{A\in \A}\frac{ (f^\alpha)^{\frac1\alpha}(A)}{(g^\alpha)^{\frac1\alpha}(A)}=\left(\min\limits_{A\in \A}\frac{f^\alpha(A)}{ g^\alpha(A)}\right)^{\frac1\alpha}
 = \left(\inf\limits_{\psi\in \D_\A}\frac{( f^\alpha)^L(\psi)}{(g^\alpha)^L(\psi)}\right)^{\frac1\alpha}=\inf\limits_{\psi\in \D_\A}\frac{(( f^\alpha)^L)^{\frac1\alpha}(\psi)}{(( g^\alpha)^L)^{\frac1\alpha}(\psi)}.
 \end{align*}
 
For $\alpha<0$, we may suppose without loss of generality that $g(A)>0$ and $f(A)>0$ for any $A\in\A$. Then, in this case,  
 \begin{align*}
 \min\limits_{A\in \A}\frac{f(A)}{g(A)}=\min\limits_{A\in \A}\frac{ (f^\alpha)^{\frac1\alpha}(A)}{(g^\alpha)^{\frac1\alpha}(A)}=\left(\max\limits_{A\in \A}\frac{f^\alpha(A)}{ g^\alpha(A)}\right)^{\frac1\alpha}
= \left(\sup\limits_{\psi\in \D_\A}\frac{( f^\alpha)^L(\psi)}{(g^\alpha)^L(\psi)}\right)^{\frac1\alpha}=\inf\limits_{\psi\in \D_\A}\frac{(( f^\alpha)^L)^{\frac1\alpha}(\psi)}{(( g^\alpha)^L)^{\frac1\alpha}(\psi)}.
 \end{align*}
 
 This completes the proof.
\end{proof}

It is worth noting that in Proposition  \ref{pro:fraction-f/g}, $\A$ can be a family of some set-tuples, and $f^L$ is the  multi-way Lov\'asz extension of the corresponding  $f$. We point out that we can replace the Lov\'asz extension $f^L$ by any other extension $f^E$ with the property that $f^E/g^E$ achieves its minimum and maximum at some $0$-$1$ vector $\vec 1_A$ for some $A\in\A$. 
 Similarly, we have:

\begin{pro}\label{pro:maxconvex}
Let $f,g:\A\to [0,+\infty)$ be two set functions and $f:=f_1-f_2$ and $g:=g_1-g_2$ be decompositions of differences of submodular functions.

Let $\widetilde{f}_2,\widetilde{g}_1$ be the restriction of  positively one-homogeneous convex functions onto $\D_\A$, with $f_1(A)= \widetilde{f}_1(\vec1_{A})$ and $g_2(A)= \widetilde{g}_2(\vec1_{A})$ for any $A\in \mathcal{A}$. 
Define $\widetilde{f}=f_1^L-\widetilde{f}_2$ and $\widetilde{g}=\widetilde{g}_1-g_2^L$. Then,
$$\min\limits_{A\in \A\cap\supp(g)}\frac{f(A)}{g(A)}=\min\limits_{\vec x\in \D_\A\cap\supp(\widetilde{g})}\frac{\widetilde{f}(\vec x)}{\widetilde{g}(\vec x)}.$$
\end{pro}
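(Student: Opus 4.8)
The plan is to deduce the claimed identity directly from Proposition~\ref{pro:fraction-f/g}: I would simply verify that the pair $(\widetilde{f},\widetilde{g})$ defined in the statement meets the hypotheses required there, namely the two one-sided bounds $\widetilde{f}\ge f^L$ and $\widetilde{g}\le g^L$ on $\D_\A$, together with the boundary matching $\widetilde{f}(\vec1_A)=f(A)$ and $\widetilde{g}(\vec1_A)=g(A)$ for every $A\in\A$. The one structural fact that makes this clean is that the original Lov\'asz extension is \emph{linear} in its set function: each of \eqref{eq:Lovasum}--\eqref{eq:Lovaintegral} is an integral/finite sum of the values of $f$, so $f^L=f_1^L-f_2^L$ and $g^L=g_1^L-g_2^L$.

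The heart of the argument is a sublinear domination lemma that I would isolate first: if $\Phi$ is the restriction to $\D_\A$ of a positively one-homogeneous convex function (equivalently, a subadditive, positively homogeneous function) with $\Phi(\vec1_A)=h(A)$ for all $A\in\A$, then $\Phi\le h^L$ on $\D_\A$. To prove it, fix $\vec x\in\D_\A$, sort its coordinates as in \eqref{eq:Lovasum}, and use the telescoping chain decomposition $\vec x=\sum_{i=0}^{n-1}(x_{\sigma(i+1)}-x_{\sigma(i)})\vec1_{V^{\sigma(i)}(\vec x)}$. Since $\D_\A\subset[0,\infty)^n$ for the original extension (Convention~\ref{notification:fL}), all coefficients $x_{\sigma(i+1)}-x_{\sigma(i)}$ are nonnegative, and each set $V^{\sigma(i)}(\vec x)=V^{x_{\sigma(i)}}(\vec x)$ lies in $\A$ — this is exactly the meaning of $\vec x\in\D_\A$. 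Subadditivity and positive homogeneity then give
\[
\Phi(\vec x)\le\sum_{i=0}^{n-1}(x_{\sigma(i+1)}-x_{\sigma(i)})\,\Phi(\vec1_{V^{\sigma(i)}(\vec x)})=\sum_{i=0}^{n-1}(x_{\sigma(i+1)}-x_{\sigma(i)})\,h(V^{\sigma(i)}(\vec x))=h^L(\vec x),
\]
the last equality being the definition \eqref{eq:Lovasum}. Note that this lemma uses only convexity of $\Phi$, not submodularity of $h$.

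Applying the lemma to $(\Phi,h)=(\widetilde{f}_2,f_2)$ and to $(\Phi,h)=(\widetilde{g}_1,g_1)$ yields $\widetilde{f}_2\le f_2^L$ and $\widetilde{g}_1\le g_1^L$ on $\D_\A$. Using the linearity of $f^L$, the $f_1^L$ and $g_2^L$ terms cancel: $\widetilde{f}-f^L=(f_1^L-\widetilde{f}_2)-(f_1^L-f_2^L)=f_2^L-\widetilde{f}_2\ge0$ and $\widetilde{g}-g^L=(\widetilde{g}_1-g_2^L)-(g_1^L-g_2^L)=\widetilde{g}_1-g_1^L\le0$, so both required inequalities hold. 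The boundary matching is immediate, $\widetilde{f}(\vec1_A)=f_1^L(\vec1_A)-\widetilde{f}_2(\vec1_A)=f_1(A)-f_2(A)=f(A)$ and likewise $\widetilde{g}(\vec1_A)=g(A)$. With all hypotheses confirmed, Proposition~\ref{pro:fraction-f/g} (the $\min$ case) delivers the equality between the discrete minimum over $\A\cap\supp(g)$ and the continuous infimum over $\D_\A\cap\supp(\widetilde{g})$.

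I expect the only real obstacle to be the domination lemma — specifically, making the chain decomposition and the nonnegativity of its coefficients fully rigorous (including the harmless handling of ties in the sorting and of the possibly empty top level, where the coefficient vanishes) and confirming that every set in the chain genuinely lies in $\A$ so that $\Phi$ coincides with $h$ there. The linearity of $f^L$ in $f$ and the final cancellation are purely bookkeeping. It is worth recording explicitly that the whole argument relies on $\D_\A\subset[0,\infty)^n$, which is precisely the setting fixed for the original Lov\'asz extension in Convention~\ref{notification:fL}.
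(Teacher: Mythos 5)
Your proposal is correct, and it in fact supplies a proof that the paper never writes down: Proposition \ref{pro:maxconvex} is stated immediately after Proposition \ref{pro:fraction-f/g} with only the phrase ``Similarly, we have,'' so the reduction you perform is exactly the route the authors intend, and your domination lemma is the substantive ingredient they leave implicit. That lemma --- any positively one-homogeneous convex $\Phi$ with $\Phi(\vec1_A)=h(A)$ for all $A\in\A$ satisfies $\Phi\le h^L$ on $\D_\A$ --- is proved soundly by your chain decomposition: the coefficients $x_{\sigma(i+1)}-x_{\sigma(i)}$ are nonnegative because $\D_\A\subset[0,\infty)^n$, every level set in the chain lies in $\A$ by the definition of $\D_\A$ (with $V\in\A$ guaranteed by Convention \ref{notification:fL}), and sublinearity of the ambient convex function does the rest; as you note, no submodularity is needed here, since submodularity only serves to make $f_1^L$ and $g_2^L$ convex so that $\widetilde{f}$ and $\widetilde{g}$ have DC structure. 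The cancellations $\widetilde{f}-f^L=f_2^L-\widetilde{f}_2\ge0$ and $\widetilde{g}-g^L=\widetilde{g}_1-g_1^L\le0$ then follow from the linearity of $h\mapsto h^L$ in the set function, and Proposition \ref{pro:fraction-f/g} applies. Two small remarks: first, you silently (and correctly) repair a typo in the statement --- the matching conditions must read $f_2(A)=\widetilde{f}_2(\vec1_A)$ and $g_1(A)=\widetilde{g}_1(\vec1_A)$, since with the literal $\widetilde{f}_1,\widetilde{g}_2$ the definitions of $\widetilde{f},\widetilde{g}$ would be incoherent and the inequalities would point the wrong way for the minimum identity; second, Proposition \ref{pro:fraction-f/g} gives an infimum on the continuous side, so to justify the stated minimum one should add the one-line observation that the infimum is attained at $\vec1_{A^*}$ for any discrete minimizer $A^*$, which lies in $\D_\A\cap\supp(\widetilde{g})$. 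Your closing caveat about working with the original Lov\'asz extension is fine; the same argument carries over to the disjoint-pair and $k$-way settings by replacing the chain decomposition with its analogue sorted by $|x_{\sigma(i)}|$.
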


\begin{remark}
Hirai et al introduce the generalized Lov\'asz extension of $f:\mathcal{L}\to\overline{\R}$ on a graded poset $\mathcal{L}$ (see \cite{Hirai18,HH17-arxiv}). Since $f^L(\vec x)=\sum_i\lambda_i f(\vec p_i)$ for $\vec x=\sum_i\lambda_i \vec p_i$ lying in the orthoscheme complex $K(\mathcal{L})$, the same results as stated in Theorem \ref{thm:tilde-H-f} and Proposition \ref{pro:fraction-f/g} hold for such a generalized Lov\'asz extension $f^L$.   Propositions \ref{pro:fraction-f/g} and \ref{pro:maxconvex} are also generalizations of Theorem 3.1 in \cite{HS11} and Theorem 1 (b) in \cite{BRSH13}.
\end{remark}

Although the continuous representations translate the original problems into  equivalently difficult
optimization problems, 
we should point out that the continuous reformulations ensure that many fast algorithms in continuous programming  can be applied directly to certain  combinatorial optimization problems.  For example, the fractional form of the equivalent continuous optimizations shown in  Theorem \ref{thm:tilde-fg-equal} as well as Propositions \ref{pro:fraction-f/g} and \ref{pro:maxconvex}  implies that we can directly adopt the Dinkelbach iteration in Fractional Programming \cite{SI83} to solve them. In addition, since the equivalent continuous formulation is Lipschitz, we can also adopt the 
stochastic subgradient method  \cite{Davis19-FoCM} to solve certain discrete optimization problems directly.

Tables~\ref{tab:L-one} and \ref{tab:L-two} and Propositions \ref{pro:discrete:one-to-two}, \ref{pro:discrete:one-to-k} and \ref{pro:discrete:two-to-k} present a general correspondence between set or set-pair functions and their Lov\'asz extensions. We shall make use of several of those in Section \ref{sec:examples-Applications}. Note that the first four lines in Table \ref{tab:L-one} for the original Lov\'asz extension, and the first five lines in Table \ref{tab:L-two} for the disjoint-pair Lov\'asz extension are known (see \cite{HS11,CSZ18}).

\begin{table}
\centering
\caption{\small Original Lov\'asz extension of some objective functions.}
\begin{tabular}{|l|l|}
               \hline
               Set function $f(A)=$ & Lov\'asz extension $f^L(\vec x)=$  \\
               \hline
                 $\#E(A,V\setminus A)$ & $\sum\limits_{\{i,j\}\in E}|x_i-x_j|$ (see \cite{HS11})\\
                 \hline
               $C$&$C\max_i x_i$ (see \cite{HS11})\\
               \hline
               $\vol(A)$ & $\sum_i \deg_i x_i$ (see \cite{HS11})\\
               \hline
               $\min\{\vol(A),\vol(V\setminus A)\}$& $\min\limits_{t\in \mathbb{R}}\|\vec x-t \vec 1\|_1$ (see \cite{HS11})\\
               \hline
               $\#A\cdot\#(V\setminus A)$ & $\sum\limits_{
               \{i,j\}\subset V}|x_i-x_j|$ \\
               \hline
              $(\#A)^k$ & 
$\sum_{i_1,\cdots,i_k\in V}\min\{x_{i_1},\cdots,x_{i_k}\}$              
               \\
                  \hline
              $\vol(A)^k$ & 
$\sum_{i_1,\cdots,i_k\in V}\deg_{i_1}\cdots\deg_{i_k}\min\{x_{i_1},\cdots,x_{i_k}\}$              
               \\
               \hline $\#V(E(A,V\setminus A))$ & $\sum\limits_{i=1}^n(\max\limits_{j\in N(i)}x_j-\min\limits_{j\in N(i)}x_j)$
               \\
               \hline
            \end{tabular}
             \label{tab:L-one}
\end{table}

\begin{table}
\centering
\caption{\small Disjoint-pair Lov\'asz extension of several objective functions.}
\begin{tabular}{|l|l|}
               \hline
               Objective function $f(A,B)=$ & Disjoint-pair Lov\'asz extension $f^L(\vec x)=$  \\
               \hline
                 $\#E(A,V\setminus A)+\#E(B,V\setminus B)$& $\sum\limits_{\{i,j\}\in E}|x_i-x_j|$ (see \cite{CSZ18})\\
                 \hline
               $\#E(A,B)$&$\frac12\left(\sum\limits_{i\in V}\deg_i|x_i|- \sum\limits_{\{i,j\}\in E}|x_i+x_j|\right)$ (see \cite{CSZ18})\\
               \hline
               $C$&$C\|\vec x\|_\infty$ (see \cite{CSZ18})\\
               \hline
               $\vol(A)+\vol(B)$&$\sum\limits_{i\in V}\deg_i|x_i|$ (see \cite{CSZ18})\\
               \hline
               $\min\{\vol(A),\vol(V\setminus A)\}+\min\{\vol(B),\vol(V\setminus B)\}$&$\min\limits_{\alpha\in \mathbb{R}}\|(x_1,\cdots,x_n)-\alpha \vec 1\|$ (see \cite{CSZ18})\\
               \hline
              $ \# E(A\cup B,A\cup B)$ & $\sum_{i\sim j}\min\{|x_i|,|x_j|\}$ \\
              \hline
               $\# (A\cup B)\cdot\# E(A\cup B,A\cup B)$ & $\sum_{k\in V,i\sim j}\min\{|x_k|,|x_i|,|x_j|\}$\\ \hline
$\#(A\cup B)\cdot\#(V\setminus (A\cup B))$ & $\sum_{i>j}||x_i|-|x_j||$ \\ \hline
            \end{tabular}
             \label{tab:L-two}
\end{table}

\begin{pro}\label{pro:discrete:one-to-two}
Suppose $f,g:\power(V)\to [0,+\infty)$ are two set functions with $g(A)>0$ for any $A\in \power(V)\setminus\{\varnothing\}$. 
Then $$\min\limits_{A\in \power(V)\setminus\{\varnothing\}}\frac{f(A)}{g(A)}=\min\limits_{(A,B)\in \power(V)^2\setminus\{(\varnothing,\varnothing)\}}\frac{f(A)+f(B)}{g(A)+g(B)}=\min\limits_{(A,B)\in \power_2(V)\setminus\{(\varnothing,\varnothing)\}}\frac{f(A)+f(B)}{g(A)+g(B)},$$
where the right identity needs additional assumptions like
$f(\varnothing)=g(\varnothing)=0$\footnote{This setting is natural, as the
  Lov\'asz extension doesn't use the datum on $\varnothing$.} or  that $f$
  and $g$ are symmetric.\footnote{A function $f:\power(V)\to\R$ is {\sl symmetric} if $f(A)=f(V\setminus A)$, $\forall A\subset V$.} 
Replacing $f(B)$ and $g(B)$ by $f(V\setminus B)$ and $g(V\setminus B)$, all the above identities hold without any additional assumption. Clearly, replacing `min' by `max', all statements still hold.
\end{pro}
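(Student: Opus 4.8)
The plan is to reduce every assertion to the elementary \emph{mediant inequality}: for nonnegative reals $a_1,a_2,b_1,b_2$ with $b_1+b_2>0$ one has
$\min\{a_1/b_1,a_2/b_2\}\le (a_1+a_2)/(b_1+b_2)\le\max\{a_1/b_1,a_2/b_2\}$, where a ratio with vanishing denominator is read as $+\infty$ on the left side and simply omitted on the right. This is precisely the fact, already noted after Remark on Theorem~\ref{thm:tilde-H-f}, that $H(u,v)=u/v$ satisfies both the (MIN) and the (MAX) property, so the proposition is the purely discrete shadow of that theorem; I will nonetheless give the self-contained combinatorial argument. Throughout I invoke the standing convention $f(\varnothing)=g(\varnothing)=0$, which lets me treat an empty component of a pair as contributing $0$ to both numerator and denominator, and I write $\mu=\min_{A\ne\varnothing}f(A)/g(A)$ and $\nu=\min_{(A,B)\in\power(V)^2\setminus\{(\varnothing,\varnothing)\}}(f(A)+f(B))/(g(A)+g(B))$.

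For the first equality I prove $\nu\le\mu$ and $\mu\le\nu$ separately. The bound $\nu\le\mu$ is immediate: if $A_*\ne\varnothing$ realizes $\mu$, the diagonal pair $(A_*,A_*)$ gives $2f(A_*)/2g(A_*)=\mu$, so $\nu\le\mu$. For $\mu\le\nu$ I take any admissible pair $(A,B)\ne(\varnothing,\varnothing)$, discard an empty component (legitimate precisely because $f(\varnothing)=g(\varnothing)=0$), and apply the mediant lower bound to the remaining nonempty component(s), obtaining $(f(A)+f(B))/(g(A)+g(B))\ge\min\{f(A)/g(A),f(B)/g(B)\}\ge\mu$; taking the infimum over pairs yields $\mu\le\nu$, hence $\mu=\nu$.

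For the second (disjoint-pair) equality the inclusion $\power_2(V)\subset\power(V)^2$ gives $\min_{\power_2(V)}\ge\nu$ for free, so the only thing to produce is a \emph{disjoint} pair attaining $\nu=\mu$. Here the stated hypotheses enter: under $f(\varnothing)=g(\varnothing)=0$ the pair $(A_*,\varnothing)$ is disjoint and already evaluates to $f(A_*)/g(A_*)=\mu$; under the symmetry hypothesis $f(\cdot)=f(V\setminus\cdot)$, $g(\cdot)=g(V\setminus\cdot)$ the pair $(A_*,V\setminus A_*)$ is disjoint and evaluates to $(f(A_*)+f(V\setminus A_*))/(g(A_*)+g(V\setminus A_*))=2f(A_*)/2g(A_*)=\mu$. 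Either way $\min_{\power_2(V)}\le\mu$, closing the chain. The $\max$ versions are verbatim the same, using the mediant's upper bound in place of the lower one and exchanging the roles of the ``$\le\mu$'' and ``$\ge\mu$'' steps.

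Finally, for the variant in which $f(B),g(B)$ are replaced by $f(V\setminus B),g(V\setminus B)$: since $B\mapsto V\setminus B$ is a bijection of $\power(V)$, the middle minimum over $\power(V)^2$ is literally unchanged, so the first equality transfers unchanged. The point of this form is that the disjoint realizer no longer needs symmetry: the \emph{disjoint} pair $(A_*,V\setminus A_*)$ now reads $(f(A_*)+f(V\setminus(V\setminus A_*)))/(g(A_*)+g(V\setminus(V\setminus A_*)))=2f(A_*)/2g(A_*)=\mu$, producing the upper bound with no appeal to the datum on $\varnothing$. The one genuine obstacle in the whole argument is controlling the contribution of an empty component of a pair, since a spurious small value of $f(\varnothing)/g(\varnothing)$ is the only way the lower bound $\ge\mu$ can be violated; neutralizing it is exactly what the hypotheses ($f(\varnothing)=g(\varnothing)=0$, symmetry, or the complementation trick) accomplish, and everything else is a direct application of the mediant inequality.
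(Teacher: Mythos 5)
Your argument is correct in substance, and it is essentially the argument the paper intends: the proposition is stated there without proof, as an elementary consequence of the mediant inequality (equivalently, the (MIN)/(MAX) property of $H(u,v)=u/v$ behind Theorem \ref{thm:tilde-H-f}), and your two halves — the mediant bound for ``$\ge$'' and the explicit realizers $(A_*,A_*)$, $(A_*,\varnothing)$, $(A_*,V\setminus A_*)$ for ``$\le$'' — are exactly that. Your closing remark that the empty component is the only genuine obstacle is also accurate, and in fact sharper than the statement itself: the \emph{left} identity can already fail if the datum on $\varnothing$ is not controlled (take $V=\{1\}$, $f(\{1\})=g(\{1\})=1$, $f(\varnothing)=0$, $g(\varnothing)=1$; then the middle minimum is $1/2$ via the pair $(\{1\},\varnothing)$, while $\mu=1$), so invoking the standing convention $f(\varnothing)=g(\varnothing)=0$ throughout, as you do, is not optional, and the phrase ``without any additional assumption'' in the last part should likewise be read as ``under that convention, without symmetry.''

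One step of your write-up asserts something false and needs a one-line repair. The claim that the complementation ``leaves the middle minimum literally unchanged because $B\mapsto V\setminus B$ is a bijection'' is not right as stated: the bijection $(A,B)\mapsto(A,V\setminus B)$ carries the index set $\power(V)^2\setminus\{(\varnothing,\varnothing)\}$ onto $\power(V)^2\setminus\{(\varnothing,V)\}$, so the complemented middle minimum equals $\min_{(A,B)\ne(\varnothing,V)}\bigl(f(A)+f(B)\bigr)/\bigl(g(A)+g(B)\bigr)$ — a minimum over a \emph{different} index set, which now contains $(\varnothing,\varnothing)$ (whose denominator vanishes under your convention) and omits $(\varnothing,V)$. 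The conclusion survives: the pair $(\varnothing,\varnothing)$ is discarded as $0/0$, the omitted pair $(\varnothing,V)$ has ratio $f(V)/g(V)\ge\mu$ so dropping it cannot lower the minimum, and the realizer $(A_*,A_*)$ lies in both index sets; but this check must be stated, not waved through. A cosmetic version of the same issue appears in your formulation of the mediant inequality: with $b_2=0$ and $a_2>0$ the upper bound under the ``omit the infinite ratio'' convention is false, since $(a_1+a_2)/b_1>a_1/b_1$. You only ever apply it with $a_2=b_2=0$ (an empty component under the convention), where it is harmless, but as a general statement it should be corrected.
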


\begin{pro}\label{pro:discrete:one-to-k}
Suppose $f,g:\power(V)\to [0,+\infty)$ are two set functions with $g(A)>0$ for any $A\in \power(V)\setminus\{\varnothing\}$. 
Then $$\min\limits_{A\in \power(V)}\frac{f(A)}{g(A)}=\min\limits_{(A_1,\cdots,A_k)\in \power(V)^k}\frac{\sum_{i=1}^kf(A_i)}{\sum_{i=1}^kg(A_i)}=\min\limits_{(A_1,\cdots,A_k)\in \power(V)^k}\sqrt[k]{\frac{\prod_{i=1}^kf(A_i)}{\prod_{i=1}^kg(A_i)}}=\min\limits_{(A_1,\cdots,A_k)\in \power_k(V)}\frac{\sum_{i=1}^kf(A_i)}{\sum_{i=1}^kg(A_i)},$$
where the last identity needs additional assumptions like $f(\varnothing)=g(\varnothing)=0$. 
\end{pro}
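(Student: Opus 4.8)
The plan is to show that all three tuple-minima on the right coincide with the single-set quantity $m := \min\{f(A)/g(A) : A\in\power(V),\, g(A)>0\}$, which is attained at some $A^*$ because $\power(V)$ is finite. The only tool needed throughout is the elementary mediant inequality: for nonnegative reals $a_i$ and positive reals $b_i$ one has $\min_i a_i/b_i \le (\sum_i a_i)/(\sum_i b_i) \le \max_i a_i/b_i$. For each of the three tuple functionals I would establish the lower bound ``$\ge m$'' by a mediant (or factorwise) estimate, and the matching upper bound ``$\le m$'' by exhibiting one explicit tuple built from $A^*$.

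For the unrestricted sum form, take any $(A_1,\dots,A_k)$ with $\sum_i g(A_i)>0$. The indices $i$ with $g(A_i)=0$ contribute a nonnegative numerator and a zero denominator, so deleting them can only decrease the quotient; applying the mediant inequality to the surviving indices yields $\sum_i f(A_i)/\sum_i g(A_i)\ge \min_{i:g(A_i)>0} f(A_i)/g(A_i)\ge m$. For the product form, every admissible tuple must have all components nonempty (otherwise the denominator $\prod_i g(A_i)$ vanishes), so each factor obeys $f(A_i)/g(A_i)\ge m$ and hence $\sqrt[k]{\prod_i f(A_i)/g(A_i)}\ge m^{k/k}=m$. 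In both cases the reverse inequality is witnessed by the constant tuple $A_1=\dots=A_k=A^*$, whose quotient is exactly $f(A^*)/g(A^*)=m$.

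The disjoint form is where the hypothesis $f(\varnothing)=g(\varnothing)=0$ becomes indispensable. The lower bound ``$\ge m$'' is again the same mediant estimate, entirely unaffected by the disjointness constraint. For the upper bound, the constant tuple $A_1=\dots=A_k=A^*$ is no longer admissible, since its components overlap unless $A^*=\varnothing$; instead I would take $A_1=A^*$ and $A_2=\dots=A_k=\varnothing$, which is a legitimate member of $\power_k(V)$. Since the empty components contribute $f(\varnothing)=g(\varnothing)=0$ to both sums, this tuple has quotient exactly $f(A^*)/g(A^*)=m$, forcing the disjoint minimum down to $m$.

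The main obstacle is purely the bookkeeping around the empty set: one must fix the convention that the single-set minimum ranges over $A$ with $g(A)>0$, guarantee $\sum_i g(A_i)>0$ so that every quotient under consideration is well defined, and verify that the product form is evaluated only on tuples with all components nonempty. Once these degeneracies are dispatched, the assumption $f(\varnothing)=g(\varnothing)=0$ is seen to be precisely the feature that lets one pad a single optimal set with copies of $\varnothing$ without changing the ratio, which is what is lost when one passes from the unrestricted tuples to $\power_k(V)$ and hence what must be supplied by hand.
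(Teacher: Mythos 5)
Your proof is correct and complete: the mediant inequality $\min_i a_i/b_i \le (\sum_i a_i)/(\sum_i b_i)$ gives every lower bound, the constant tuple $(A^*,\dots,A^*)$ witnesses the upper bound for the two unrestricted minima, and padding $A^*$ with copies of $\varnothing$ (admissible in $\power_k(V)$, and harmless exactly because $f(\varnothing)=g(\varnothing)=0$) handles the disjoint case, with the degenerate tuples having zero denominator correctly excluded. The paper states this proposition without any proof, treating it as elementary, and your argument is precisely the standard one its brevity presupposes, so there is nothing to compare against and no gap to report.
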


\begin{pro}\label{pro:discrete:two-to-k}
Suppose $f,g:\power_2(V)\to [0,+\infty)$ are two set functions with $g(A,B)>0$ for any $(A,B)\in \power_2(V)\setminus\{(\varnothing,\varnothing)\}$. 
Then $$\min\limits_{A\in \power_2(V)}\frac{f(A,B)}{g(A,B)}=\min\limits_{(A_1,B_1,\cdots,A_k,B_k)\in \power_2(V)^k}\frac{\sum_{i=1}^kf(A_i,B_i)}{\sum_{i=1}^kg(A_i,B_i)}=\min\limits_{(A_1,B_1,\cdots,A_k,B_k)\in \power_{2k}(V)}\frac{\sum_{i=1}^kf(A_i,B_i)}{\sum_{i=1}^kg(A_i,B_i)},$$
where the last identity needs additional assumptions like $f(\varnothing,\varnothing)=g(\varnothing,\varnothing)=0$\footnote{This setting is natural, as the  disjoint-pair Lov\'asz extension doesn't use the information on $(\varnothing,\varnothing)$.}. 
\end{pro}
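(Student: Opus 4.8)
The plan is to prove the two stated equalities separately by the elementary mediant (Cauchy) inequality, exactly as in the proofs of Propositions \ref{pro:discrete:one-to-two} and \ref{pro:discrete:one-to-k}, and to handle the passage to the disjoint family $\power_{2k}(V)$ by padding with empty pairs. Write $L$, $M$, $R$ for the three minima appearing in the statement (over $\power_2(V)\setminus\{(\varnothing,\varnothing)\}$, over $\power_2(V)^k$, and over $\power_{2k}(V)$ respectively), each implicitly restricted to arguments with positive denominator. The first thing I would record is the mediant inequality: for $a_i\ge0$ and $b_i\ge0$ not all zero, $\min_{i:\,b_i>0}\frac{a_i}{b_i}\le\frac{\sum_i a_i}{\sum_i b_i}\le\max_{i:\,b_i>0}\frac{a_i}{b_i}$, which follows from $\sum_i a_i\ge(\min_{i:\,b_i>0}\frac{a_i}{b_i})\sum_i b_i$ after discarding the indices with $b_i=0$. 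Because the hypothesis $g(A,B)>0$ for $(A,B)\ne(\varnothing,\varnothing)$ forces $g(A_i,B_i)=0$ only when $(A_i,B_i)=(\varnothing,\varnothing)$, the index set in the mediant bound is never empty for an admissible tuple.

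For the equality $L=M$, one direction is immediate: feeding the single optimizer $(A^*,B^*)$ of $L$ into the constant tuple $(A^*,B^*,\dots,A^*,B^*)\in\power_2(V)^k$ gives ratio $\frac{k\,f(A^*,B^*)}{k\,g(A^*,B^*)}=L$, so $M\le L$. Conversely, for every admissible tuple the lower mediant bound yields $\frac{\sum_i f(A_i,B_i)}{\sum_i g(A_i,B_i)}\ge\min_{i:\,g(A_i,B_i)>0}\frac{f(A_i,B_i)}{g(A_i,B_i)}\ge L$, so $M\ge L$ and hence $M=L$.

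For the equality $M=R$, I would first use $\power_{2k}(V)\subset\power_2(V)^k$: minimizing over the smaller family only increases the infimum, giving $R\ge M$. For the reverse direction the normalization $f(\varnothing,\varnothing)=g(\varnothing,\varnothing)=0$ enters. Embed the single optimizer as the padded tuple $(A^*,B^*,\varnothing,\varnothing,\dots,\varnothing,\varnothing)$; since $A^*\cap B^*=\varnothing$ and empty sets are disjoint from everything, this tuple lies in $\power_{2k}(V)$, and the vanishing of $f,g$ at $(\varnothing,\varnothing)$ makes its ratio equal to $\frac{f(A^*,B^*)}{g(A^*,B^*)}=L=M$. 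Thus $R\le M$, and $R=M$ follows. The `max' version is identical with the inequalities reversed, using the upper mediant bound.

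The routine parts are the mediant inequality and the constant/padded tuple constructions. The one genuine subtlety, and the step I would treat most carefully, is the transition $M=R$: it is the only place the disjointness constraint of $\power_{2k}(V)$ interacts with the combinatorics, and the padding-by-empty-pairs trick both keeps the tuple inside $\power_{2k}(V)$ and leaves the quotient unchanged. This is precisely why the normalization $f(\varnothing,\varnothing)=g(\varnothing,\varnothing)=0$ is required for the last identity but not for the first.
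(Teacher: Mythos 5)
The paper never actually proves Proposition \ref{pro:discrete:two-to-k}; like Propositions \ref{pro:discrete:one-to-two} and \ref{pro:discrete:one-to-k} it is stated as a routine correspondence, so there is no official proof to compare against. Your mediant-inequality argument --- constant tuple for $M\le L$, mediant lower bound for $M\ge L$, and the inclusion $\power_{2k}(V)\subset\power_2(V)^k$ plus padding by empty pairs for $M=R$ --- is clearly the argument the authors intend, and in substance it is correct; your observation that the normalization $f(\varnothing,\varnothing)=g(\varnothing,\varnothing)=0$ is needed exactly for the padding step is the right explanation of the paper's footnote.

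There is, however, one corner case where your write-up has a real (though easily repaired) gap. You define $L$ as the minimum over $\power_2(V)\setminus\{(\varnothing,\varnothing)\}$, but the hypothesis only requires $g>0$ off the empty pair; it permits $g(\varnothing,\varnothing)>0$, and the first identity carries no assumption on the values at $(\varnothing,\varnothing)$. If $g(\varnothing,\varnothing)>0$, an admissible tuple may contain empty pairs whose indices survive in $\min_{i:\,g_i>0}f_i/g_i$, and that minimum need not dominate your $L$. Concretely, take $g\equiv1$ on all of $\power_2(V)$, $f(\varnothing,\varnothing)=0$, and $f\equiv1$ elsewhere: the all-empty tuple is admissible, so $M=0$, while your $L=1$, and the step $\min_{i:\,g_i>0}f_i/g_i\ge L$ fails. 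The repair is cheap: either read the left-hand minimum as ranging over \emph{all} pairs with positive denominator (then every index appearing in the mediant bound lies in the domain of $L$, and both directions of $L=M$ go through with no assumption at all), or invoke the paper's standing convention $f(\varnothing,\varnothing)=g(\varnothing,\varnothing)=0$ for the first identity as well as the last. With either reading your proof is complete.
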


Together with Propositions \ref{pro:setpair-original} and \ref{pro:discrete:one-to-two}, one may directly transfer the data from  Table \ref{tab:L-one} to Table \ref{tab:L-two}. Similarly, by employing Propositions \ref{pro:separable-summation}, \ref{pro:discrete:one-to-k} and \ref{pro:discrete:two-to-k}, the $k$-way Lov\'asz extension of some special functions can be transformed to the  original and the disjoint-pair versions.

\begin{pro}\label{pro:Lovasz-f-pre}
For any $a<b$, and for any $f$, 
$$ \min\limits_{x\in [a,b]^n}f^L(\vec x)= \min\limits_{x\in \{a,b\}^n}f^L(\vec x)=af(V)+(b-a)\min\limits_{A\subset V}f(A).$$
Clearly, we can replace all `min' by `max'.
\end{pro}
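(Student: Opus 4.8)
The plan is to reduce the minimization over the box $[a,b]^n$ to the unit cube $[0,1]^n$ and then exploit the integral representation of the original Lov\'asz extension. First I would use the translation identity $f^L(\vec x+t\vec 1_V)=f^L(\vec x)+tf(V)$ together with the positive one-homogeneity of $f^L$: writing any $\vec x\in[a,b]^n$ as $\vec x=a\vec 1_V+(b-a)\vec z$ with $\vec z\in[0,1]^n$ gives $f^L(\vec x)=af(V)+(b-a)f^L(\vec z)$, so that $\min_{\vec x\in[a,b]^n}f^L(\vec x)=af(V)+(b-a)\min_{\vec z\in[0,1]^n}f^L(\vec z)$, and the same affine change of variables carries the vertex set $\{a,b\}^n$ bijectively onto $\{0,1\}^n$. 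Hence it suffices to prove $\min_{\vec z\in[0,1]^n}f^L(\vec z)=\min_{\vec z\in\{0,1\}^n}f^L(\vec z)=\min_{A\subset V}f(A)$.

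The key step is the observation that for $\vec z\in[0,1]^n$ the integral formula \eqref{eq:Lovaintegral} collapses to
$$f^L(\vec z)=\int_0^1 f(V^t(\vec z))\,dt,$$
with the convention $f(\varnothing)=0$. Indeed, on $[0,\min_i z_i)$ one has $V^t(\vec z)=V$, which accounts exactly for the boundary term $f(V)\min_i z_i$, while on $[\max_i z_i,1]$ one has $V^t(\vec z)=\varnothing$, contributing nothing. From this representation the lower bound is immediate: for every $t$ the level set $V^t(\vec z)$ is some subset $A\subset V$, so $f(V^t(\vec z))\ge\min_{A\subset V}f(A)$, and integrating over $t\in[0,1]$ yields $f^L(\vec z)\ge\min_{A\subset V}f(A)$ for all $\vec z\in[0,1]^n$.

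For the matching upper bound I would simply evaluate $f^L$ at the vertices: $f^L(\vec 1_A)=f(A)$ for $A\ne\varnothing$ and $f^L(\vec 0)=0=f(\varnothing)$, so $\min_{\vec z\in\{0,1\}^n}f^L(\vec z)\le\min_{A\subset V}f(A)$. Combining this with the lower bound (which holds a fortiori on the larger set $[0,1]^n$) forces all three quantities $\min_{[0,1]^n}f^L$, $\min_{\{0,1\}^n}f^L$ and $\min_{A\subset V}f(A)$ to coincide, the continuous minimum being attained at a vertex $\vec 1_A$. Undoing the affine reduction then gives the stated chain of equalities, and the ``max'' version follows identically by reversing the inequalities.

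The one point requiring genuine care, and the main obstacle, is verifying the collapsed integral formula and, in particular, checking that no submodularity hypothesis sneaks in: this identity of minima is purely a consequence of $f^L$ being an integral average of the discrete values $f(V^t(\vec z))$, so it holds for an arbitrary $f$, in contrast with the classical Lov\'asz theorem where convexity of $f^L$ (hence submodularity of $f$) is needed to make the continuous relaxation a convex program. I would double-check the endpoint contributions, namely the terms at $t=\min_i z_i$ and $t=\max_i z_i$ and the role of the convention $f(\varnothing)=0$, so that the boundary term $f(V)\min_i z_i$ appearing in \eqref{eq:Lovaintegral} is correctly absorbed.
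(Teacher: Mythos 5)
Your proof is correct, but it takes a genuinely different route from the paper's. The paper argues geometrically: $f^L$ is piecewise linear, hence linear on each permutation cell $\triangle_\sigma\cap[a,b]^n$; a linear function on a polytope attains its extrema at vertices, the vertices of each such cell lie in $\{a,b\}^n$, so the extrema over the whole box are attained in $\{a,b\}^n$, where one evaluates $f^L(b\vec 1_B+a\vec 1_{V\setminus B})=af(V)+(b-a)f(B)$. You instead normalize to the unit cube via translation-linearity and positive one-homogeneity, and then run an averaging argument: on $[0,1]^n$ the extension collapses to $\int_0^1 f(V^t(\vec z))\,dt$ (under the standing convention $f(\varnothing)=0$), so its value at every point is sandwiched between $\min_{A\subset V}f(A)$ and $\max_{A\subset V}f(A)$, and the vertices $\vec 1_A$ achieve these bounds. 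What each approach buys: the paper's argument exposes the PL structure and applies verbatim to any polytope whose vertex set one controls; yours avoids the simplicial decomposition entirely, is in the same spirit as the paper's own proof of Theorem \ref{thm:tilde-H-f}, and actually yields the slightly stronger pointwise statement $\min_{A\subset V}f(A)\le f^L(\vec z)\le\max_{A\subset V}f(A)$ for all $\vec z\in[0,1]^n$. Your normalization step also isolates cleanly where the hypothesis $a<b$ and the convention $f(\varnothing)=0$ enter, and your closing remark that no submodularity is needed is exactly right: the identity rests only on $f^L$ being an integral average of the values $f(V^t(\vec z))$, not on convexity.
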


\begin{proof}
Since $f^L$ is linear on each piece $\triangle_\sigma\cap [a,b]^n:=\{\vec x\in
\R^n:a\le x_i\le b,x_{\sigma(1)}\le \cdots\le x_{\sigma(n)}\}$, where
$\sigma:\{1,\cdots,n\}\to \{1,\cdots,n\}$ is a permutation, the maximum and
minimum of $f^L$ on $\triangle_\sigma\cap [a,b]^n$ can be reached at some
vertices of the simplex  $\triangle_\sigma\cap [a,b]^n$. Note that the
vertices of $\triangle_\sigma\cap [a,b]^n$ are included in $\{a,b\}^n$ (i.e.,
the vertices of the hypercube $ [a,b]^n$). Thus, the maximum and minimum of
$f^L$ on $ [a,b]^n$ can be attained at some points in  $\{a,b\}^n$. By the definition of Lov\'asz extension, it is easy to check that $f^L(b\vec1_B+a\vec1_{V\setminus B})=af(V)+(b-a)f(B)$.  The proof is completed.
\end{proof}

  \noindent\textbf{Discrete vs.  continuous optimization  and our approach}: 
The general framework  
based on multi-way Lov\'asz extensions  is universal  
and fundamental  with potential to design some  simple iterative algorithms using  equivalent continuous optimization to approach and solve discrete optimization problems. 
 To illustrate this point, we present   in the last paragraph of this section some 
 advantages of our equivalent continuous formulations for certain  combinatorial optimization problems,  
 and we present  in Section  \ref{sec:algo} an algorithm for solving such combinatorial optimization problems.  

%
It is a fundamental aspect of our scheme that the critical data (including min-max data, saddle points, and optimal values)  
of the continuous representations that we develop   cover all the  
 key information 
of the original  combinatorial problems (see Section  \ref{sec:eigenvalue})%


From the viewpoint of applied and computational  mathematics, 
a significant  advantage of the formulations obtained by  multi-way Lov\'asz extensions in this paper is that the approach can be  generally applied to  many combinatorial optimization problems, and compared to other formulations, our  formulation is in  quotient  form, with both numerator and  denominator expressed as the difference of convex functions, which allows us to directly use   techniques from DC (Difference of Convex functions) programming and fractional programming.  Perhaps even  more importantly, our approach doesn't need any additional rounding techniques.  We refer to Section  \ref{sec:algo} for a detailed explanation for such  advantages. 
As can be seen in the next two subsections,  these formulations provide new insight into  structure and properties of certain combinatorial  problems, and allow one to develop more efficient algorithms for computing optimal and approximate solutions. 

\subsection{Eigenvalue problems for Lov\'asz extension}
\label{sec:eigenvalue}

For convenience, we shall work in a normed space $X$, and we will take $X$  as the usual Euclidean space $\R^n$ in this subsection, and a general normed space $X$ will be used in Section \ref{sec:algo}. 

For a convex function $F:X\to \mathbb{R}$,  its sub-gradient (or sub-derivative) $\nabla F(\vec x)$ is defined as the collection of $\vec u\in X^*$ satisfying  $F(\vec y)-F(\vec x)\ge \langle \vec u,\vec y-\vec x\rangle,\;\forall \vec y\in X$, where $X^*$ is the dual of $X$ and $\langle \vec u,\vec y-\vec x\rangle$ is the action of $\vec u$ on $\vec y-\vec x$.
The concept of a sub-gradient has been extended to Lipschitz functions. This is called the  Clarke derivative \cite{Clarke}:
 $$\nabla F(\vec x)=\left\{\vec u\in X^*\left|\limsup_{\vec y\to \vec x, t\to 0^+}\frac{F(\vec y+t\vec h)-F(\vec y)}{t}\ge \langle \vec u,\vec h\rangle,\forall \vec h\in X\right.\right\}.$$
 And it can even be  generalized to the class of lower semi-continuous functions \cite{DM94,D10}.

In this section, we give the proof of Theorem \ref{introthm:eigenvalue} by
establishing some  properties on the nonlinear eigenvalue problem of the
function pair $(f^L,g^L)$.
\begin{defn}
 $\lambda \in \R$ is called an eigenvalue, and $\vec x \in
\R^n\setminus\{\vec0\}$ a corresponding eigenvector of the
function pair $(f^L,g^L)$ if
$$\vec0\in \nabla f^L (\vec x)- \lambda\nabla  g^L (\vec x).$$
We then also call $(\lambda, \vec x)$ an eigenpair. 
    \end{defn}

\begin{pro}\label{pro:Lovasz-eigen-pre}
In the setting of the original Lov\'asz extension, for any eigenvalue $\lambda$ of $(f^L,g^L)$, there exists $A\in\power(V)\setminus\{\varnothing\}$  such that $\lambda=f(A)/g(A)$ and $\vec 1_A$ is a corresponding eigenvector.  Indeed, every eigenvalue has an  eigenvector in   $\{a,b\}^n$, for given distinct real numbers $a$ and $b$. 
Moreover, we have:
\begin{itemize}
    \item if $g(V)\ne0$, then $\lambda=f(V)/g(V)$ is the only possible eigenvalue of $(f^L,g^L)$; 
    \item if $g(V)=0$ and $(f^L,g^L)$ has at least one eigenvalue, then $f(V)=0$ and in this case, $(f^L,g^L)$ may have many distinct eigenvalues.
\end{itemize}
\end{pro}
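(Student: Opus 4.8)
The plan is to exploit the fact that the original Lov\'asz extension $F=f^{L}$ is positively one-homogeneous and piecewise linear, with linear pieces exactly the closed Weyl chambers $\overline{C_\sigma}=\{\vec x: x_{\sigma(1)}\le\cdots\le x_{\sigma(n)}\}$ indexed by orderings $\sigma$. Two standard facts drive everything. First, for such a function the Clarke subgradient is the convex hull of the gradients of the pieces active at the point, $\nabla f^{L}(\vec x)=\conv\{\nabla f^{L}|_{C_\sigma}:\vec x\in\overline{C_\sigma}\}$; note that the chamber decomposition depends only on the order of the coordinates, hence is shared by $f^{L}$ and $g^{L}$. Second, Euler's identity: since $f^{L}$ is positively one-homogeneous and Lipschitz, $\langle\vec u,\vec x\rangle=f^{L}(\vec x)$ for every $\vec u\in\nabla f^{L}(\vec x)$.

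The heart of the argument is a monotonicity lemma for subdifferentials. Writing $\Sigma(\vec x)$ for the set of orderings compatible with $\vec x$ (i.e. $\vec x\in\overline{C_\sigma}$), I would show that for every threshold $t$, setting $A=V^{t}(\vec x)=\{i:x_i>t\}$, one has $\Sigma(\vec x)\subseteq\Sigma(\vec1_A)$: any $\sigma$ arranging the entries of $\vec x$ nondecreasingly places all indices of $V\setminus A$ (where $x_i\le t$) before all indices of $A$ (where $x_i>t$), which is precisely the compatibility requirement for the two-valued vector $\vec1_A$. Taking convex hulls of the active gradients then gives
$$\nabla f^{L}(\vec x)\subseteq\nabla f^{L}(\vec1_A),\qquad \nabla g^{L}(\vec x)\subseteq\nabla g^{L}(\vec1_A).$$
In words, passing from $\vec x$ to a super-level indicator $\vec1_A$ only enlarges the subdifferentials.

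Granting this, the main claim is immediate. Given an eigenpair $(\lambda,\vec x)$ choose $\vec u\in\nabla f^{L}(\vec x)$ and $\vec v\in\nabla g^{L}(\vec x)$ with $\vec u=\lambda\vec v$. For any nonempty super-level set $A=V^{t}(\vec x)$ the lemma yields $\vec u\in\nabla f^{L}(\vec1_A)$ and $\vec v\in\nabla g^{L}(\vec1_A)$, so $\vec0=\vec u-\lambda\vec v\in\nabla f^{L}(\vec1_A)-\lambda\nabla g^{L}(\vec1_A)$ and $\vec1_A$ is an eigenvector for the same $\lambda$. Euler's identity at $\vec1_A$ gives $\langle\vec u,\vec1_A\rangle=f^{L}(\vec1_A)=f(A)$ and $\langle\vec v,\vec1_A\rangle=g(A)$, so $\vec u=\lambda\vec v$ forces $f(A)=\lambda g(A)$; choosing a super-level set with $g(A)\ne0$ gives $\lambda=f(A)/g(A)$. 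For the $\{a,b\}^n$ statement I would use that subgradients of a positively one-homogeneous function are invariant under positive scaling and that the relation $f^{L}(\cdot+s\vec1)=f^{L}(\cdot)+sf(V)$ makes them invariant under translation by $\vec1$; hence, for $a<b$, the vector $a\vec1+(b-a)\vec1_A\in\{a,b\}^n$ has the same subdifferentials as $\vec1_A$ and is again an eigenvector.

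Finally, the two bullets follow by specializing to $A=V$, which is always a super-level set: every eigenvalue satisfies $f(V)=\lambda g(V)$. If $g(V)\ne0$ this pins $\lambda=f(V)/g(V)$ as the unique possibility; if $g(V)=0$ then $f(V)=\lambda\cdot0=0$, and the possibility of several distinct eigenvalues is then shown by an explicit example. I expect the main obstacle to be the careful justification of the containment lemma, namely identifying the Clarke subdifferential of the piecewise-linear $f^{L}$ with the convex hull of the active chamber gradients and verifying the inclusion $\Sigma(\vec x)\subseteq\Sigma(\vec1_A)$, together with the bookkeeping for the degenerate case in which $f(A)=g(A)=0$ on every super-level set (so $\lambda$ is unconstrained) and the sign/zero edge cases (constant eigenvectors, $0\in\{a,b\}$) in the $\{a,b\}^n$ reduction.
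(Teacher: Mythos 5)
Your proof is correct and follows essentially the same route as the paper's: the paper's key ``Argument'' (for a piecewise linear function, the subgradient at a relative-interior point of a linearity region is contained in the subgradient at any boundary point) is exactly your chamber-containment lemma $\Sigma(\vec x)\subseteq\Sigma(\vec 1_A)$ combined with the convex-hull description of the Clarke subdifferential, and both arguments then transfer the eigenvector property from $\vec x$ to the indicators of its super-level sets, with the $\{a,b\}^n$ claim handled by homogeneity and translation along $\vec 1$. The only cosmetic difference is that the paper pins down the eigenvalue by dotting subgradients with $\vec 1$ (which is your Euler identity specialized to $A=V$), whereas you invoke Euler's identity at each $\vec 1_A$; both yield $f(V)=\lambda g(V)$ and hence the two bullet points, with the same appeal to the graph 1-Laplacian example when $f(V)=g(V)=0$.
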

\begin{proof}
 We need the following  basic statement.

\textbf{Argument}. For a piecewise linear function $F:\R^n\to\R$ with finite pieces, if $F$ is linear on a convex subset  $\Omega$, then $\nabla F(\vec x)\subset \nabla F(\vec y)$ for any relative interior point $\vec x$ of $\Omega$ and any relative boundary point $\vec y$ in $ \overline{\Omega}$. 

\vspace{0.16cm}

Suppose that $(\lambda,\vec x)$ is an eigenpair of $(f^L,g^L)$. Since $f^L$ is one-homogeneous and  linear along the direction $\vec 1$, we can assume without loss of generality that  $\vec x$ lies in the interior of the simplex $\triangle$ with vertices $\vec 1_{A_1},\cdots,\vec1_{A_k}$, where $A_1,\cdots,A_k$ are upper level sets of $\vec x$.  Applying  the above argument to the piecewise linear functions $f^L$ and $g^L$, we immediately get $\vec0\in\nabla f^L(\vec x)-\lambda \nabla g^L(\vec x)\subset \nabla f^L(\vec 1_{A_i})-\lambda \nabla g^L(\vec 1_{A_i})$ for any $i=1,\cdots,k$, meaning that each $\vec 1_{A_i}$ is an eigenvector of  $(f^L,g^L)$. 

Moreover, by the definition of Lov\'asz extension, we can check that $\forall \vec v\in\nabla f^L(\vec x)$, $\sum_{i\in V} v_i=f(V)$. Therefore, $\nabla f^L(\vec x)\cdot\vec1=f(V)$, $\forall \vec x$. Hence, for any eigenpair $(\lambda,\vec x)$, $f(V)-\lambda g(V)=(\nabla f^L(\vec x)-\lambda\nabla g^L(\vec x) )\cdot\vec1=0$, implying that $\lambda=f(V)/g(V)$ if $g(V)\ne 0$; and $f(V)=0$ if $g(V)= 0$.

For the case of $f(V)=g(V)= 0$, we may take a look at the example
$f(A)=|\partial A|$ and $g(A)=\min\{\vol(A),\vol(V\setminus A)\}$ defined on a
simple graph $G=(V,E)$. Then the eigenvalue problem of $(f^L,g^L)$ reduces to
the problem for the  graph 1-Laplacian. And it is known that the 1-Laplacian may have many different  eigenvalues \cite{CSZ17}.
\end{proof}

\begin{pro}\label{pro:Lovasz-eigen}
In the setting of the  disjoint-pair  Lov\'asz extension, for any eigenvalue $\lambda$ of $(f^L,g^L)$, there exists $(A,B)\in\power_2(V)\setminus\{(\varnothing,\varnothing)\}$  such that $\lambda=f(A,B)/g(A,B)$ and $\vec 1_A-\vec1_B$ is a corresponding eigenvector.  If we further assume that $2f(A,B)=f(A,V\setminus A)+f(V\setminus B,B)$ and $2g(A,B)=g(A,V\setminus A)+g(V\setminus B,B)$ for any $(A,B)\in \power_2(V)\setminus\{(\varnothing,\varnothing)\}$, then for any eigenvalue $\lambda$ of $(f^L,g^L)$, there exists $A\subset V$  such that $\lambda=f(A,V\setminus A)/g(A,V\setminus A)$ and $\vec 1_A-\vec1_{V\setminus A}$ is a corresponding eigenvector.
\end{pro}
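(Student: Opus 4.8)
The plan is to transcribe the cone-and-subgradient argument behind Proposition~\ref{pro:Lovasz-eigen-pre} to the disjoint-pair setting, where the role played there by the ordering cells $\{x_{\sigma(1)}\le\cdots\le x_{\sigma(n)}\}$ is now taken by the sign-and-absolute-value cells governing the summation form \eqref{eq:disjoint-pair-Lovasz-def}, and then to deduce the second (symmetrized) assertion from a short one-dimensional affineness computation that the hypothesis is engineered to produce.

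For the first claim, given an eigenpair $(\lambda,\vec x)$ I would use the expansion $\vec x=\sum_{i=0}^{n-1}(|x_{\sigma(i+1)}|-|x_{\sigma(i)}|)\vec 1_{A_i,B_i}$, where $(A_i,B_i)=(V^{\sigma(i)}_+(\vec x),V^{\sigma(i)}_-(\vec x))$ are the associated set-pairs of $\vec x$. Both $f^L$ and $g^L$ are positively one-homogeneous and piecewise linear (Proposition~\ref{pro:multi-way-property}) and are linear on the closed polyhedral cone $\Omega$ cut out by the sign pattern and the absolute-value ordering of $\vec x$; this cone is exactly the one generated by the rays $\R_{\ge0}\vec 1_{A_i,B_i}$, and those are its relative-boundary extreme rays. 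Since $\vec x$ lies in the relative interior of $\Omega$, the \textbf{Argument} recorded in the proof of Proposition~\ref{pro:Lovasz-eigen-pre}, applied to $f^L$ and to $g^L$ separately, gives $\nabla f^L(\vec x)\subset\nabla f^L(\vec 1_{A_i,B_i})$ and $\nabla g^L(\vec x)\subset\nabla g^L(\vec 1_{A_i,B_i})$, whence $\vec 0\in\nabla f^L(\vec 1_{A_i,B_i})-\lambda\nabla g^L(\vec 1_{A_i,B_i})$ for every $i$. The outermost pair $(A_0,B_0)=(V^0_+(\vec x),V^0_-(\vec x))$ is nonempty because $\vec x\neq\vec 0$, and Euler's relation $\langle\vec u,\vec z\rangle=f^L(\vec z)$ for $\vec u\in\nabla f^L(\vec z)$ (valid for one-homogeneous piecewise-linear functions) converts the inclusion into $f(A_0,B_0)=\lambda g(A_0,B_0)$, i.e. $\lambda=f(A_0,B_0)/g(A_0,B_0)$.

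For the second claim, let $\vec 1_{A,B}$ be an eigenvector produced above and put $C=V\setminus(A\cup B)$; if $C=\varnothing$ then $B=V\setminus A$ and there is nothing to prove, so assume $C\neq\varnothing$. The crucial step is to introduce the two full $\pm1$ vectors $\vec y_-=\vec 1_{A,V\setminus A}$ and $\vec y_+=\vec 1_{V\setminus B,B}$, which fill $C$ with $-1$ and $+1$ respectively and whose midpoint is $\tfrac12(\vec y_-+\vec y_+)=\vec 1_{A,B}$. Tracking the associated set-pairs of $\vec z(s)=(1-s)\vec y_-+s\vec y_+$ shows that $s\mapsto f^L(\vec z(s))$ equals $(1-2s)f(A,V\setminus A)+2s\,f(A,B)$ on $[0,\tfrac12]$ and $(2s-1)f(V\setminus B,B)+(2-2s)f(A,B)$ on $[\tfrac12,1]$, hence is affine on each half with slopes $2(f(A,B)-f(A,V\setminus A))$ and $2(f(V\setminus B,B)-f(A,B))$; these coincide exactly when $2f(A,B)=f(A,V\setminus A)+f(V\setminus B,B)$. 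Thus the symmetrization hypothesis is precisely the condition that $f^L$ (and likewise $g^L$) be linear on the whole segment $[\vec y_-,\vec y_+]$.

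With linearity on $\Omega'=[\vec y_-,\vec y_+]$ in hand, one last application of the \textbf{Argument} — with $\vec 1_{A,B}$ the relative interior (midpoint) of $\Omega'$ and $\vec y_-$ a relative boundary point — yields $\nabla f^L(\vec 1_{A,B})\subset\nabla f^L(\vec y_-)$ and $\nabla g^L(\vec 1_{A,B})\subset\nabla g^L(\vec y_-)$, so $\vec 0\in\nabla f^L(\vec y_-)-\lambda\nabla g^L(\vec y_-)$ and $\vec y_-=\vec 1_A-\vec 1_{V\setminus A}$ is an eigenvector, with $\lambda=f(A,V\setminus A)/g(A,V\setminus A)$ again by Euler's relation. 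I expect the only genuinely non-routine point to be recognizing that the two displayed hypotheses are nothing but the ``no-kink-at-the-midpoint'' conditions for $f^L$ and $g^L$ along $[\vec y_-,\vec y_+]$; once that segment is identified, every remaining step is a direct transcription of the proof of Proposition~\ref{pro:Lovasz-eigen-pre}, modulo the routine bookkeeping needed to pin down the linear pieces of the disjoint-pair extension.
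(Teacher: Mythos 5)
Your proposal is correct and takes essentially the same route as the paper: part 1 transcribes the argument of Proposition \ref{pro:Lovasz-eigen-pre} exactly as the paper intends (it omits this step as ``similar''), and part 2 establishes that $f^L$ and $g^L$ are affine on the segment $\mathrm{conv}\{\vec 1_{A}-\vec 1_{V\setminus A},\,\vec 1_{V\setminus B}-\vec 1_{B}\}$ with midpoint $\vec 1_A-\vec 1_B$, which is precisely the $k=2$ case of the paper's Claim, before invoking the same subgradient-inclusion Argument. The only difference is cosmetic: the paper proves linearity for chains of arbitrary length via absolute comonotonicity plus the midpoint identity, whereas you prove the one segment actually needed by directly computing the two affine pieces and matching slopes, the slope-matching condition being exactly the displayed hypothesis in both treatments.
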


\begin{proof}
 The general result on  the disjoint-pair  Lov\'asz extension is similar to  
 Proposition \ref{pro:Lovasz-eigen-pre}, and thus we omit the proof. 
 
Let's focus on the special case that $2f(A,B)=f(A,V\setminus A)+f(V\setminus B,B)$ and $2g(A,B)=g(A,V\setminus A)+g(V\setminus B,B)$  for any $(A,B)\in \power_2(V)\setminus\{(\varnothing,\varnothing)\}$. We shall prove that in this case, 
typical eigenvectors can be taken from  $\{-1,1\}^n$. 

\vspace{0.16cm}

\textbf{Claim}. For any $A_1\subset \cdots\subset A_k$ with $(A_1,A_k)\ne (\varnothing,V)$, $f^L$ is linear on  $\mathrm{conv}\{\vec1_{A_i}-\vec1_{V\setminus A_i}:i=1,\cdots,k\}$. 

\textbf{Proof}. The absolute  comonotonicity of the disjoint-pair Lov\'asz extension implies that $f^L$ is linear on $\mathrm{conv}\{\vec1_{A_i}-\vec1_{B_i}:i=1,\cdots,k\}$ whenever $A_1\subset \cdots\subset A_k$ and $B_1\subset \cdots\subset B_k$ and $A_k\cap B_k=\varnothing$. Thus, for any $\sigma,\tau:\{1,\cdots,k\}\to \{1,\cdots,k\}$  with $\sigma(1)\le \cdots\le \sigma(k)\le\tau(k)\le\cdots\le\tau(1)$, $f^L$ is linear on $\mathrm{conv}\{\vec1_{A_{\sigma(i)}}-\vec1_{V\setminus A_{\tau(i)}}:i=1,\cdots,k\}$. Note that
\begin{align*}
\mathrm{conv}\{\vec1_{A_i}-\vec1_{V\setminus A_i}:i=1,\cdots,k\}&=\bigcup_{\sigma,\tau} \mathrm{conv}\{\vec1_{A_{\sigma(i)}}-\vec1_{V\setminus A_{\tau(i)}}:i=1,\cdots,k\} 
\\&=\bigcup_{l=1}^k\bigcup_{\sigma(1)=1,\sigma(k)=\tau(k)=l,\tau(1)=k}\mathrm{conv}\{\vec1_{A_{\sigma(i)}}-\vec1_{V\setminus A_{\tau(i)}}:i=1,\cdots,k\}
\end{align*}
where in the second line we can  further assume that each $\mathrm{conv}\{\vec1_{A_{\sigma(i)}}-\vec1_{V\setminus A_{\tau(i)}}:i=1,\cdots,k\}$ is a $(k-1)$-dim simplex, and there are exactly $\sum_{l=1}^k{k-1\choose l-1}=2^{k-1}$ simplexes of dimension $(k-1)$.  
Since
\begin{align*}
2f^L(\vec1_{A_{\sigma(i)}}-\vec1_{V\setminus A_{\tau(i)}})
&=2f(A_{\sigma(i)},V\setminus A_{\tau(i)})
\\&=f(A_{\sigma(i)},V\setminus A_{\sigma(i)})+f(A_{\tau(i)},V\setminus A_{\tau(i)})
\\&=
f^L(\vec1_{A_{\sigma(i)}}-\vec1_{V\setminus A_{\sigma(i)}})+f^L(\vec1_{A_{\tau(i)}}-\vec1_{V\setminus A_{\tau(i)}})
\end{align*}
and $\vec1_{A_{\sigma(i)}}-\vec1_{V\setminus A_{\sigma(i)}}+\vec1_{A_{\tau(i)}}-\vec1_{V\setminus A_{\tau(i)}}=2(\vec1_{A_{\sigma(i)}}-\vec1_{V\setminus A_{\tau(i)}})$, $f^L$ must be linear on the segment $ \mathrm{conv}\{\vec1_{A_{\sigma(i)}}-\vec1_{V\setminus A_{\sigma(i)}},\vec1_{A_{\tau(i)}}-\vec1_{V\setminus A_{\tau(i)}}\}$. Therefore, one can check that $f^L$ is linear on the simplex  $\mathrm{conv}\{\vec1_{A_i}-\vec1_{V\setminus A_i}:i=1,\cdots,k\}$. 

\vspace{0.16cm}

Given an eigenvalue $\lambda$, let $\vec 1_A-\vec 1_B$ be a corresponding eigenvector for some $(A,B)\in\power_2(V)\setminus\{(\varnothing,\varnothing)\}$. By the above claim and argument, it can be verified that  both $\vec1_A-\vec 1_{V\setminus A}$ and  $\vec 1_{V\setminus B}-\vec1_B$ are eigenvectors w.r.t. $\lambda$.  
\end{proof}

\begin{remark}
In Proposition \ref{pro:Lovasz-eigen}, the  condition $2f(A,B)=f(A,V\setminus A)+f(V\setminus B,B)$ for any $(A,B)\in \power_2(V)$ is natural and easy to satisfy. Below, we provide two examples satisfying the condition. 

Example 1. $f(A,B)=\hat{f}(A)+\hat{f}(B)$ for some $\hat{f}:\power(V)\to\R$ with $\hat{f}(A)=\hat{f}(V\setminus A)$ for any $A\subset V$. 
 In this case,  $f^L(\vec x)=\hat{f}^L(\vec x)$. 

Example 2. $f(A,B)=1$ whenever $A\cup B\ne\varnothing$. In this case,  $f^L(\vec x)=\|\vec x\|_\infty$.  

One may observe  that in the above examples, $f$ is symmetric, i.e.,
$f(A,B)=f(B,A)$, but it is not a sufficient condition for Proposition
\ref{pro:Lovasz-eigen}. In fact, for symmetric functions  $f$ and $g$ on
$\power_2(V)$,  not every  eigenvalue has an eigenvector possessing the form
of $\vec 1_A-\vec1_{V\setminus A}$. In fact, taking $f(A,B)=\#E(A,V\setminus
A)+\#E(B,V\setminus B)$ and $g(A,B)=\vol(A\cup B)$, we have $f^L(\vec
x)=\sum_{\{i,j\}\in E}|x_i-x_j|$  and $g^L(\vec x)=\sum_{i\in
  V}\deg(i)|x_i|$. Letting $V=\{1,2,3\}$ and $E=\{\{1,2\},\{2,3\},\{1,3\}\}$,
it is known in \cite{Chang16} that $\vec 1_A-\vec1_{V\setminus A}$ cannot be
an eigenvector w.r.t. the largest eigenvalue of the  1-Laplacian, $\forall A\subset V$.
\end{remark}

 We conclude the following  result, which asserts that every 
vector 
in $\{-1,1\}^n$ is an eigenvector of $(f^L,\|\cdot\|_\infty)$ if $f^L$ is nonnegative,  where $f^L$ indicates the disjoint-pair Lov\'asz extension of $f$. 

\begin{pro}\label{pro:set-pair-infty-norm}
Let $f:\power_2(V)\to[0,+\infty)$ be nonnegative. Then, for any $A\subset V$,  $\vec 1_A-\vec 1_{V\setminus A}$ is an eigenvector of  $(f^L,\|\cdot\|_\infty)$.
\end{pro}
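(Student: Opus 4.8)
The plan is to fix $\vec x=\vec1_A-\vec1_{V\setminus A}$ and reduce the eigen-inclusion to a single assertion about $\nabla f^L(\vec x)$. Since $\vec x\in\{-1,1\}^n$ we have $\|\vec x\|_\infty=1$ with every coordinate attaining the maximal modulus, so the standard subdifferential of a norm gives $\nabla\|\cdot\|_\infty(\vec x)=\conv\{x_i\vec e_i:i\in V\}=\{\vec u:\|\vec u\|_1\le1,\ \langle\vec u,\vec x\rangle=1\}$, and hence $\lambda\nabla\|\cdot\|_\infty(\vec x)=\{\vec v:\ v_ix_i\ge0\ \forall i,\ \sum_i|v_i|=\lambda\}$ for $\lambda\ge0$. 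Therefore the requirement $\vec0\in\nabla f^L(\vec x)-\lambda\nabla\|\cdot\|_\infty(\vec x)$ for some $\lambda$ is equivalent to the existence of a subgradient $\vec v\in\nabla f^L(\vec x)$ lying in the closed orthant $K:=\{\vec v:\ v_ix_i\ge0\ \forall i\}$. Indeed, given such a $\vec v$, the sign condition $v_ix_i\ge0$ forces $\sum_i|v_i|=\langle\vec v,\vec x\rangle$, so putting $\lambda=\|\vec v\|_1$ and $\vec u=\vec v/\lambda$ (or $\lambda=0$, $\vec v=\vec0$) yields $\vec u\in\nabla\|\cdot\|_\infty(\vec x)$ and $\vec v=\lambda\vec u$, exactly the eigenpair condition.

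The crux will be to show that $\nabla f^L(\vec x)$ actually meets $K$, and for this I would first establish the local expansion
$$f^L(\vec x+\epsilon\vec d)=f^L(\vec x)+\epsilon\,f^L(\vec d),$$
valid for every direction $\vec d\in K$ and all small $\epsilon>0$. Writing $d_i=x_is_i$ with $s_i=d_ix_i\ge0$, each coordinate of $\vec x+\epsilon\vec d$ equals $x_i(1+\epsilon s_i)$, so it keeps the sign $x_i$ and has modulus $1+\epsilon s_i$. Splitting the defining integral $\int_0^\infty f(V_+^t(\vec x+\epsilon\vec d),V_-^t(\vec x+\epsilon\vec d))\,dt$ at $t=1$, the range $t\in(0,1)$ contributes $f(A,V\setminus A)=f^L(\vec x)$ because all moduli exceed $t$ there, while the substitution $t=1+\epsilon u$ turns the tail $t>1$ into $\epsilon\int_0^\infty f(V_+^u(\vec d),V_-^u(\vec d))\,du=\epsilon f^L(\vec d)$, using that $\{i\in A:s_i>u\}=V_+^u(\vec d)$ and $\{i\in V\setminus A:s_i>u\}=V_-^u(\vec d)$. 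In particular the one-sided directional derivative is $(f^L)'(\vec x;\vec d)=f^L(\vec d)\ge0$, where nonnegativity is precisely where the hypothesis $f\ge0$ enters.

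Finally I would pass from directional derivatives to the subgradient by a duality argument. The set $\nabla f^L(\vec x)$ is nonempty, convex and compact since $f^L$ is Lipschitz, and its support function is the Clarke derivative $f^{L,\circ}(\vec x;\cdot)\ge(f^L)'(\vec x;\cdot)$; hence $\max_{\vec v\in\nabla f^L(\vec x)}\langle\vec v,\vec d\rangle\ge f^L(\vec d)\ge0$ for every $\vec d\in K$. If $\nabla f^L(\vec x)\cap K=\varnothing$, strictly separating the compact convex set $\nabla f^L(\vec x)$ from the closed convex cone $K$ (equivalently, separating $\vec0$ from the closed convex set $\nabla f^L(\vec x)-K$) would produce, after the routine sign adjustment and using that the orthant $K$ is self-dual ($K^*=K$), a vector $\vec a\in K$ with $\max_{\vec v\in\nabla f^L(\vec x)}\langle\vec v,\vec a\rangle<0$, contradicting the inequality above taken at $\vec d=\vec a$. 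Thus $\nabla f^L(\vec x)\cap K\ne\varnothing$, and by the reduction in the first paragraph $\vec x=\vec1_A-\vec1_{V\setminus A}$ is an eigenvector of $(f^L,\|\cdot\|_\infty)$. The genuine obstacle here is exactly the middle step—locating a subgradient inside the sign orthant $K$—and the expansion $f^L(\vec x+\epsilon\vec d)=f^L(\vec x)+\epsilon f^L(\vec d)$ together with the self-duality of $K$ is what defeats it; the surrounding manipulations of the $\ell^\infty$-subdifferential and the choice $\lambda=\|\vec v\|_1$ are routine bookkeeping.
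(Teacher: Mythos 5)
Your proof is correct, and it takes a genuinely different route from the paper's. The paper makes the same initial reduction (it suffices that $\nabla f^L(\vec x)$ meet $\R^n_{\mathrm{sign}(x)}\cup(-\R^n_{\mathrm{sign}(x)})$), but then argues by contradiction: it invokes a polar-cone lemma, embeds $\mathrm{cone}^*(\nabla f^L(\vec x))$ into the tangent cone of the sublevel set $\{\vec y:f^L(\vec y)\le f^L(\vec x)\}$, and rules out the contradiction hypothesis by decomposing that sublevel set near $\vec x$ into the simplices $\triangle_\sigma$; crucially, this only works for positive-definite $f$, so the paper needs a final approximation step ($f_n\to f$ with $f_n$ positive-definite, plus limit points of subgradients) to reach general nonnegative $f$. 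You instead compute the one-sided directional derivative exactly: your expansion $f^L(\vec x+\epsilon\vec d)=f^L(\vec x)+\epsilon f^L(\vec d)$ for $\vec d\in K$ is an instance of the additivity already recorded in the paper --- since $\vec x\in\{-1,1\}^n$, the vectors $\vec x$ and $\epsilon\vec d$ are absolutely comonotonic, so part (c) of Proposition \ref{pro:setpair-property} (or Proposition \ref{pro:setpair-character}) gives it at once, and you could have cited that rather than re-deriving it from the integral formula. From there, $f\ge0$ yields $f^{L,\circ}(\vec x;\vec d)\ge f^L(\vec d)\ge0$ on $K$, and the support-function/strict-separation argument combined with self-duality of the orthant $K$ places a subgradient inside $K$. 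What your approach buys: it handles all nonnegative $f$ in one stroke (no positive-definiteness, no approximation), it replaces the sublevel-set geometry by a short convex-duality argument, and it produces the eigenpair explicitly with $\lambda=\|\vec v\|_1=\langle\vec v,\vec x\rangle\ge0$. Both proofs ultimately rest on the same geometric feature of the hypercube (the paper phrases it as $N_v=-T_v$ at vertices, you as self-duality of the sign orthant), but your packaging is cleaner. One small imprecision: since the paper allows eigenvalues $\lambda\in\R$ of either sign, the eigenpair condition is equivalent to $\nabla f^L(\vec x)\cap(K\cup(-K))\ne\varnothing$, not to $\nabla f^L(\vec x)\cap K\ne\varnothing$ as your first paragraph asserts; but you prove the stronger statement, and only the (trivially valid) sufficiency direction of your claimed equivalence is used, so nothing breaks.
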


\begin{proof}
By the definition of the eigenvalue problem of $(f^L,\|\cdot\|_\infty)$, we only need to prove that $\nabla f^L(\vec x)\cap ((-\R^n_{\mathrm{sign}(x)})\cup \R^n_{\mathrm{sign}(x)}) \ne \varnothing$ for any $\vec x\in\{-1,1\}^n$, where $\R^n_{\mathrm{sign}(x)}:=\{\vec y\in\R^n:y_ix_i\ge 0,\forall i\}=\mathrm{cone}(\nabla \|\vec x\|_\infty)$. We first assume that $f$ is positive-definite, i.e., $f(A,B)>0$ whenever $(A,B)\ne(\varnothing,\varnothing)$, and we shall apply the following argument about polar cones to this case. 

\vspace{0.16cm}

\textbf{Argument}. Let $C$ and $\Omega$ be two convex cones in $\R^n$ such that $\Omega\cap ((-C)\cup C)=\{\vec0\}$. Then $\Omega^*\cap C^*\ne\{\vec 0\}$ and $\Omega^*\cap (-C^*)\ne\{\vec 0\}$, where $C^*$ indicates the polar cone of $C$. 

Proof: Indeed, $\Omega^*\cap C^*=(\Omega\cup C)^*\supset (\Omega+ C)^*$, where $\Omega+ C$ is the Minkowski summation of $C$ and $\Omega$. If $\Omega+ C=\R^n$, then for any $-\vec c\in (-C)\setminus\{\vec0\}$, there exist $\vec a\in\Omega\setminus\{\vec0\}$ and $\vec c'\in C\setminus\{\vec0\}$ such that $\vec a+\vec c'=-\vec c$. This implies $\vec a=-\vec c'-\vec c\in -C\setminus\{\vec0\}$, which contradicts  the condition that $\Omega\cap(-C)=\{\vec0\}$. Therefore, the convex cone $\Omega+ C$ is not the whole space $\R^n$, which implies that $(\Omega+ C)^*\ne\{\vec0\}$. Consequently, $\Omega^*\cap C^*\ne\{\vec 0\}$ and similarly, $\Omega^*\cap (-C^*)\ne\{\vec 0\}$. The proof is completed.

\vspace{0.16cm}

Suppose on the contrary, that $\nabla f^L(\vec x)\cap ((-\R^n_{\mathrm{sign}(x)})\cup \R^n_{\mathrm{sign}(x)}) =\varnothing$ for some $\vec x\in\{-1,1\}^n$. 
Fixing such an $\vec x$, then $\mathrm{cone}(\nabla f^L(\vec x))\cap ((-\R^n_{\mathrm{sign}(x)})\cup \R^n_{\mathrm{sign}(x)}) =\{\vec0\}$, and by the above  argument, we have  $\mathrm{cone}^*(\nabla f^L(\vec x))\cap\R^n_{\mathrm{sign}(x)}=\mathrm{cone}^*(\nabla f^L(\vec x))\cap(-\R^n_{\mathrm{sign}(x)})^* \ne\{\vec0\}$. 

However, since $f$ is positive-definite, it is known  
that  $\mathrm{cone}^*(\nabla f^L(\vec x))\subset T_x(\{\vec y:f^L(\vec y)\le f^L(\vec x)\})$, meaning that $ T_x(\{\vec y:f^L(\vec y)\le f^L(\vec x)\})\cap \R^n_{\mathrm{sign}(x)}\ne\{\vec0\}$, where $T_x$ represents the tangent cone at $\vec x$. Now, suppose $\vec x=\vec 1_{A_n}-\vec 1_{B_n}$ with $A_n\sqcup B_n=V$. Every  permutation  $\sigma:\{1,\cdots,n\}\to \{1,\cdots,n\}$ determines a  sequence $\{(A_i,B_i):i=1,\cdots,n\}\subset \power_2(V)\setminus\{(\varnothing,\varnothing)\}$ by the iterative construction:   $A_1\cup B_1=\{\sigma(1)\}$ and  $A_{i+1}\cup B_{i+1}=A_i\cup B_i\cup \{\sigma(i+1)\}$, $i=1,\cdots,n-1$. 

Since $f(A_i,B_i)>0$, $f^L(\vec x^i)=1$ where $\vec x^i:=(\vec1_{A_i}-\vec1_{B_i})/f(A_i,B_i)$, $i=1,\cdots,n$. Also, $T_{x^n}\{\vec y:f^L(\vec y)\le 1\}=T_x(\{\vec y:f^L(\vec y)\le f^L(\vec x)\})$.
Without loss of generality, we may assume that $\vec x=\vec x^n$. 

The definition of $f^L$ yields that $\mathrm{conv}(\vec0,\vec x^1,\cdots,\vec x^n)\subset \{\vec y:f^L(\vec y)\le 1\}$.  We denote by $\triangle_\sigma=\mathrm{conv}(\vec0,\vec x^1,\cdots,\vec x^n)$ since the  construction of $\vec x^1,\cdots,\vec x^n$ depends on the permutation  $\sigma$. 
For any $\vec y=\sum_{i=1}^nt_i\vec x^i\in\mathrm{conv}(\vec0,\vec x^1,\cdots,\vec x^n)\setminus \{\vec x\}$,  
$(\vec y-\vec x)_{\sigma(n)}x_{\sigma(n)}=-(1-t_n)x_{\sigma(n)}^2<0$, and thus  $\vec y-\vec x\not\in \R^n_{\mathrm{sign}(x)}$. 
Hence, $T_x(\triangle_\sigma)\cap \R^n_{\mathrm{sign}(x)}=\{\vec0\}$. It follows from the fact  $T_x(\{\vec y:f^L(\vec y)\le 1\})=\bigcup_{\sigma}T_x(\triangle_\sigma)$ that $T_x(\{\vec y:f^L(\vec y)\le 1\})\cap \R^n_{\mathrm{sign}(x)}=\{\vec0\}$. This is a contradiction. 

\vspace{0.16cm}

Now we turn to the general case that $f\ge 0$. Take a sequence $\{f_n\}_{n\ge 1}$ of  positive-definite functions on $\power_2(V)$ such that $f_n\to f$ as $n$ tends to $+\infty$. Then it can be verified that for any $\vec v_n\in \nabla f_n^L(\vec x)$, all limit  points of $\{\vec v_n\}_{n\ge 1}$ belong to $\nabla f^L(\vec x)$. Now, there exist $\vec u_n\in \nabla \|\vec x\|_{\infty}$ and $\lambda_n=f^L_n(\vec x)/\|\vec x\|_{\infty}>0$ such that $\lambda_n\vec u_n\in \nabla f^L_n(\vec x)$. Then for any limit point $\vec u$ of $\{\vec u_n\}_{n\ge 1}$, $\vec u\in  \nabla \|\vec x\|_{\infty}$ and $\lambda\vec u\in \nabla f^L(\vec x)$ where $\lambda=\lim\limits_{n\to+\infty}\lambda_n$. Therefore, $(\lambda,\vec x)$ is an eigenpair of $(f^L,\|\cdot\|_\infty)$. 

The proof is completed. 
\end{proof}

By Propositions \ref{pro:Lovasz-eigen} and  \ref{pro:set-pair-infty-norm}, we have 
\begin{cor}
If $2f(A,B)=f(A,V\setminus A)+f(V\setminus B,B)$ for any $(A,B)\in \power_2(V)$, then the set of  eigenvalues of $(f^L,\|\cdot\|_\infty)$ coincides with $\{f(A,V\setminus A):A\subset V\}$, and every vector in $\{-1,1\}^n$ is an eigenvector.
\end{cor}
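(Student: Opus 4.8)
The plan is to obtain the two inclusions between the eigenvalue set of $(f^L,\|\cdot\|_\infty)$ and $\{f(A,V\setminus A):A\subset V\}$ by feeding the two cited propositions into one another, the only real input being that $\|\cdot\|_\infty$ is itself a disjoint-pair Lov\'asz extension meeting the symmetry hypothesis. First I would record that $\|\cdot\|_\infty=g^L$, where $g:\power_2(V)\to[0,+\infty)$ is the constant function $g(A,B)=1$ for $(A,B)\neq(\varnothing,\varnothing)$ (this is the second example in the remark following Proposition \ref{pro:Lovasz-eigen}). Since $A\cup(V\setminus A)=V\neq\varnothing$ and $B\cup(V\setminus B)=V\neq\varnothing$, one has $g(A,V\setminus A)=g(V\setminus B,B)=1$, so $2g(A,B)=g(A,V\setminus A)+g(V\setminus B,B)$ holds for every $(A,B)\neq(\varnothing,\varnothing)$. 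Thus both $f$ (by assumption) and $g$ satisfy the symmetry condition required by Proposition \ref{pro:Lovasz-eigen}.

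The key auxiliary computation, which I would isolate once and reuse, is the identification of the eigenvalue carried by a sign vector. Because $f^L$ and $g^L=\|\cdot\|_\infty$ are positively one-homogeneous (Proposition \ref{pro:multi-way-property}), Euler's identity in the Clarke sense gives $\langle\vec u,\vec x\rangle=f^L(\vec x)$ for all $\vec u\in\nabla f^L(\vec x)$ and $\langle\vec w,\vec x\rangle=\|\vec x\|_\infty$ for all $\vec w\in\nabla\|\vec x\|_\infty$. Hence if $(\lambda,\vec x)$ is an eigenpair, pairing $\vec0\in\nabla f^L(\vec x)-\lambda\nabla\|\vec x\|_\infty$ with $\vec x$ yields $f^L(\vec x)=\lambda\|\vec x\|_\infty$, so $\lambda=f^L(\vec x)/\|\vec x\|_\infty$ whenever $\|\vec x\|_\infty\neq0$. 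Specializing to $\vec x=\vec 1_A-\vec 1_{V\setminus A}$ and using $f^L(\vec 1_{A,V\setminus A})=f(A,V\setminus A)$ together with $\|\vec 1_A-\vec 1_{V\setminus A}\|_\infty=1$, the eigenvalue attached to such an eigenvector is exactly $f(A,V\setminus A)$.

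For the forward inclusion, I would invoke the second assertion of Proposition \ref{pro:Lovasz-eigen}: under the symmetry hypothesis on $f$ and $g$, every eigenvalue $\lambda$ of $(f^L,\|\cdot\|_\infty)$ admits an eigenvector of the form $\vec 1_A-\vec 1_{V\setminus A}$, whence $\lambda=f(A,V\setminus A)$ by the computation above; this shows the eigenvalue set is contained in $\{f(A,V\setminus A):A\subset V\}$. For the reverse inclusion, and for the final claim, I would apply Proposition \ref{pro:set-pair-infty-norm} (recalling $f$ is nonnegative there): for every $A\subset V$ the vector $\vec 1_A-\vec 1_{V\setminus A}$ is an eigenvector of $(f^L,\|\cdot\|_\infty)$, and its eigenvalue is again $f(A,V\setminus A)$. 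Since the vectors $\vec 1_A-\vec 1_{V\setminus A}$, as $A$ ranges over subsets of $V$, are precisely the elements of $\{-1,1\}^n$ (identify $A=\{i:x_i=1\}$), this simultaneously gives the inclusion $\{f(A,V\setminus A):A\subset V\}\subseteq\{\text{eigenvalues}\}$ and the statement that every vector in $\{-1,1\}^n$ is an eigenvector. Combining the two inclusions closes the argument. I do not expect a genuine obstacle here: the content is entirely a matter of checking that the constant function generating $\|\cdot\|_\infty$ verifies the hypothesis of Proposition \ref{pro:Lovasz-eigen} and that the homogeneity (Euler) relation pins the eigenvalue to $f(A,V\setminus A)$; everything else is a direct citation of the two preceding propositions.
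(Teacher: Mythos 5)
Your proof is correct and takes essentially the same route as the paper, which obtains the corollary by directly combining Propositions \ref{pro:Lovasz-eigen} and \ref{pro:set-pair-infty-norm}. The details you supply—that $\|\cdot\|_\infty$ is the disjoint-pair Lov\'asz extension of the constant function $g\equiv 1$, which satisfies the symmetry hypothesis, and the Euler-identity computation showing any eigenvalue attached to an eigenvector $\vec x$ must equal $f^L(\vec x)/\|\vec x\|_\infty$—are precisely the verifications the paper leaves implicit.
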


\begin{remark}
The proof of  Proposition  \ref{pro:set-pair-infty-norm} heavily depends on
the property  that $N_v(X)=-T_v(X)$ for any vertex $v$ of the  hypercube  $X:=\{\vec x:\|\vec x\|_\infty\le1\}$.  Characterizing  the class of  polytopes satisfying   $N_v=-T_v$ for any vertex $v$  remains an open problem, where $N_v$ is the normal cone at $v$ and $T_v$ is the tangent  cone at $v$. 
\end{remark}

Motivated by Propositions \ref{pro:Lovasz-eigen} and  \ref{pro:set-pair-infty-norm}, we suggest a combinatorial eigenvalue problem for $(f,g)$ as follows:

Given $A\subset V$ and a permutation  $\sigma:\{1,\cdots,n\}\to \{1,\cdots,n\}$, there exists a unique sequence $\{(A_i^\sigma,B_i^\sigma):i=1,\cdots,n\}\subset \power_2(V)\setminus\{(\varnothing,\varnothing)\}$   satisfying $A_1^\sigma\subset \cdots \subset A_n^\sigma=A$, $B_1^\sigma\subset \cdots \subset B_n^\sigma=V\setminus A$, $A_1^\sigma\cup B_1^\sigma=\{\sigma(1)\}$ and  $A_{i+1}^\sigma\cup B_{i+1}^\sigma=A_i^\sigma\cup B_i^\sigma\cup \{\sigma(i+1)\}$, $i=1,\cdots,n-1$. 
Let $\vec u^{A,\sigma}\in\R^n$ be defined by
$$u^{A,\sigma}_i=\begin{cases} f(A_{\sigma^{-1}(i)}^\sigma,B_{\sigma^{-1}(i)}^\sigma)-f(A_{\sigma^{-1}(i)-1}^\sigma,B_{\sigma^{-1}(i)-1}^\sigma)
,&\text{ if }i\in A,\\
f(A_{\sigma^{-1}(i)-1}^\sigma,B_{\sigma^{-1}(i)-1}^\sigma)-f(A_{\sigma^{-1}(i)}^\sigma,B_{\sigma^{-1}(i)}^\sigma)
,&\text{ if }i\not\in A.\end{cases}
$$
Denote by $S(f,A)=\{\vec u^{A,\sigma}:\sigma\in S_n\}$ and
$$\nabla f(A,B):=\mathrm{conv}\left(\bigcup\limits_{\tilde{A}:\,A\subset \tilde{A}\subset V\setminus B}S(f,\tilde{A})\right),\;\;\forall (A,B)\in\power_2(V)$$
where $S_n$ is the permutation group over $\{1,\cdots,n\}$.

\begin{defn}[Combinatorial eigenvalue problem]\label{def:comb-eigen}
Given $f,g:\power_2(V)\to \R$, the combinatorial eigenvalue problem of $(f,g)$ is to find $\lambda\in\R$ and $(A,B)\in \power_2(V)\setminus\{(\varnothing,\varnothing)\}$ such that $\nabla f(A,B)\cap \lambda \nabla g(A,B)\ne\varnothing$,   in which $\lambda$ is called an eigenvalue, and $(A,B)$ is called an eigenset.
\end{defn}

Since it can be verified that $\nabla f^L(\vec1_A-\vec1_B)=\nabla f(A,B)$, Proposition \ref{pro:Lovasz-eigen} (or Theorem \ref{introthm:eigenvalue}) implies that the combinatorial eigenvalue problem for $(f,g)$ is equivalent to the nonlinear eigenvalue problem of $(f^L,g^L)$.

By Propositions \ref{pro:Lovasz-eigen-pre} and \ref{pro:Lovasz-eigen}, for a pair of functions $f$ and $g$ on $\power(V)$ (resp.,  $\power_2(V)$), every eigenvalue of the function pair   $(f^L,g^L)$ generated by Lov\'asz extension has an eigenvector of the form $\vec1_A$ (resp., $\vec1_A-\vec1_B$) for some $A\in\power(V)\setminus\{\varnothing\}$ (resp., $(A,B)\in\power_2(V)\setminus\{(\varnothing,\varnothing)\}$). We  call such a set $A$ (resp., $(A,B)$) an  eigen-set of $(f,g)$. And, we are interested in the eigen-sets and the corresponding eigenvalues, which  encode the 
key information about the data structure generated by the function pair $(f,g)$.     The spectrum of $(f^L,g^L)$  provides a way to understand the interaction between  data on $f$ and data on $g$.  

Next, we study the second eigenvalue of the function pair $(f^L,g^L)$, which
is closely related to a combinatorial Cheeger-type constant  of the form 
$$\Ch(f,g):=\min\limits_{A\in\mathcal{P}(V)\setminus\{\varnothing,V\}}\frac{f(A)}{\min\{g(A),g(V\setminus A)\}}$$
where $f:\power(V)\to\R$ is symmetric, i.e., $f(A)=f(V\setminus A)$, $\forall A$, and $g:\power(V)\to\R_+$ is  submodular and non-decreasing. 

\begin{pro}\label{eq:Cheeger-identity}
Let $f_s,g_s:\power_2(V)\to\R$ be defined by $f_s(A,B)=f(A)+f(B)$ and $g_s(A,B)=g(A)+g(B)$. Then 
$$\Ch(f,g)=\text{the second eigenvalue of the function pair }(f_s^L,g_s^L)\;(\text{or equivalently }(f_s,g_s)).$$
\end{pro}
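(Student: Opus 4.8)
The statement is the special case $\hat f = f$, $\hat g = g$ of the third bullet of Theorem~\ref{introthm:eigenvalue}, and the plan is to prove the full chain $\lambda_2 = \min_{\vec x\perp \vec 1} f_s^L(\vec x)/\min_{t}g_s^L(\vec x - t\vec 1) = \Ch(f,g)$. First I would record the two extensions explicitly. Since $f$ is symmetric and $f_s(A,B)=f(A)+f(B)$, Proposition~\ref{pro:setpair-original}(b) gives $f_s^L=f^L$, the original Lov\'asz extension of $f$; and since $g_s(A,B)=g(A)+g(B)$, Proposition~\ref{pro:setpair-original}(e) gives $g_s^L(\vec x)=g^L(\vec x^+)+g^L(\vec x^-)$. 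In the natural Cheeger setting $f\ge 0$, so $f_s^L\ge 0$, and symmetry forces $f(V)=f(\varnothing)=0$; hence $f_s^L(\vec 1)=f(V)+f(\varnothing)=0$ is the global minimum of $f_s^L$, so $(\lambda_1,\vec x_1)=(0,\vec 1)$ is the first eigenpair.

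The core is the identity $\min_{\vec x\perp\vec 1} f_s^L(\vec x)/\min_t g_s^L(\vec x - t\vec 1)=\Ch(f,g)$, which I would obtain from the coarea (layer-cake) representation. Using $f(V)=0$ the numerator is $f_s^L(\vec x)=\int_{\R} f(V^u(\vec x))\,du$. For the denominator, the layer-cake forms of $g^L(\cdot^{\pm})$ give $g_s^L(\vec x - t\vec 1)=\int_t^{\infty} g(V^u(\vec x))\,du+\int_{-\infty}^{t} g(V\setminus V^u(\vec x))\,du$; since $g$ is non-decreasing, $u\mapsto g(V^u)$ is non-increasing and $u\mapsto g(V\setminus V^u)$ non-decreasing, so this convex function of $t$ is minimised at the median threshold $u^\ast$ with $g(V^{u^\ast})=g(V\setminus V^{u^\ast})$, yielding $\min_t g_s^L(\vec x - t\vec 1)=\int_{\R}\min\{g(V^u),g(V\setminus V^u)\}\,du$. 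The quotient is then a ratio of two integrals over the level sets of $\vec x$, and the elementary bound $\int f/\int h\ge \inf_u f/h$ --- the device underlying Theorem~\ref{thm:tilde-H-f} and Proposition~\ref{pro:fraction-f/g} --- forces it to be $\ge \Ch(f,g)$ (at boundary levels $V^u\in\{\varnothing,V\}$ both $f(V^u)$ and $\min\{g(V^u),g(V\setminus V^u)\}$ vanish, so the pointwise comparison is trivial there). The reverse inequality comes from the test vector $\vec x=\vec 1_{A^\ast}-\tfrac{\#A^\ast}{n}\vec 1\perp\vec 1$ at an optimal Cheeger set $A^\ast$: translation invariance $f^L(\vec x+t\vec 1)=f^L(\vec x)$ gives numerator $f(A^\ast)$, and the coarea formula gives denominator $\min\{g(A^\ast),g(V\setminus A^\ast)\}$, so the quotient equals $\Ch(f,g)$.

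It remains to identify this variational value with the second eigenvalue of $(f_s^L,g_s^L)$. Here I would invoke the nonlinear Courant--Fischer / Lusternik--Schnirelman characterization: with the first eigenvector being the constant $\vec 1$, orthogonality to the first eigenspace is implemented precisely by replacing $g_s^L(\vec x)$ by $\min_t g_s^L(\vec x - t\vec 1)$ in the denominator, so $\min_{\vec x\perp\vec 1} f_s^L(\vec x)/\min_t g_s^L(\vec x - t\vec 1)$ is the second variational eigenvalue. Proposition~\ref{pro:Lovasz-eigen}, applied to $(f_s,g_s)$, guarantees that the minimiser may be taken of the form $\vec 1_A-\vec 1_B$, so the min-max value is attained at a genuine eigenvector and is indeed the eigenvalue $\lambda_2$. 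For completeness one also checks the purely combinatorial identity $\Ch(f,g)=\min_{(A,B)}\max\{f(A)/g(A),f(B)/g(B)\}$, by testing the complementary pair $(A^\ast,V\setminus A^\ast)$ for one direction and, for the other, noting that for disjoint nonempty $A,B$ either $g(A)\le g(V\setminus A)$ (so $A$ witnesses $\Ch$) or else $B\subseteq V\setminus A$ forces $g(B)<g(V\setminus B)$ (so $B$ witnesses $\Ch$).

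I expect the genuine obstacle to be this last identification of the min-max quantity as the \emph{second} eigenvalue, together with the verification that its minimiser is an eigenvector: in the nonsmooth, positively one-homogeneous setting the variational eigenvalues, the Clarke critical values, and the eigenvalues of the pair only satisfy the inclusions noted earlier in the paper, so their coincidence cannot be taken for granted and one must lean on Proposition~\ref{pro:Lovasz-eigen} and the Lusternik--Schnirelman machinery. By contrast, the coarea/median computation of the denominator, though it is the analytic heart of the argument, is routine once both extensions are in hand, and the combinatorial identity is elementary.
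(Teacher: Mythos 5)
Your proposal is correct and follows essentially the same route as the paper: the reduction $f_s^L=f^L$ by symmetry, the median/monotonicity computation showing $\min_{t\in\R} g_s^L(\vec x-t\vec 1)$ is the Lov\'asz extension of $A\mapsto\min\{g(A),g(V\setminus A)\}$ (this is exactly the paper's auxiliary Proposition \ref{pro:original-vs-disjoint-pair}), the integral-comparison device (Theorem \ref{thm:tilde-fg-equal}) to pass between the continuous quotient and the combinatorial minimum, and an appeal to external nonlinear-spectral machinery for identifying the resulting variational value as the second eigenvalue. The paper resolves the step you flag as the genuine obstacle in precisely the way you anticipate --- by citing Theorem 2.1 and Proposition 2.4 of \cite{JostZhang}, after recording that $g_s$ is bisubmodular so that $g_s^L$ is convex --- so your proof matches the paper's both in structure and in where it defers to outside results.
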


We need the following  auxiliary proposition.
\begin{pro}\label{pro:original-vs-disjoint-pair}Suppose that $g:\power(V)\to\R_+$ is non-decreasing, i.e., $g(A)\le g(B)$ whenever $A\subset B$. 
Let $G:\R^n\to\R$ be the disjoint-pair Lov\'asz extension of the function $(A,B)\mapsto g(A)+g(B)$.   Then the  Lov\'asz extension of the function $A\mapsto\min\{g(A),g(V\setminus A)\}$ is $\min\limits_{t\in\R}G(\vec x-t\vec1)$. 
\end{pro}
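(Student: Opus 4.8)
The plan is to reduce both sides to explicit one–dimensional integrals of $g$ over level sets and then match them. Throughout I adopt the convention $g(\varnothing)=0$; this is exactly what the Lov\'asz extensions require (it makes $f(\varnothing,\varnothing)=g(\varnothing)+g(\varnothing)=0$, so the $\int_0^\infty$ form of $G$ converges), and it also forces $h(V)=\min\{g(V),g(\varnothing)\}=0$ and $h(\varnothing)=\min\{g(\varnothing),g(V)\}=0$, where I write $h(A):=\min\{g(A),g(V\setminus A)\}$. First I would rewrite $G(\vec x-t\vec1)$ as a clean integral. Starting from $G(\vec y)=\int_0^\infty\big(g(V_+^s(\vec y))+g(V_-^s(\vec y))\big)\,ds$ as in \eqref{eq:disjoint-pair-Lovasz-def-integral2}, substituting $y_i=x_i-t$, and changing variables $u=t+s$ in the first summand and $u=t-s$ in the second, I obtain
$$G(\vec x-t\vec1)=\int_t^{\infty} g(V^u(\vec x))\,du+\int_{-\infty}^{t} g(V\setminus V^u(\vec x))\,du=:\Phi(t),$$
using $\{i:x_i<u\}=V\setminus V^u(\vec x)$ for all but the finitely many $u\in\{x_i\}$, which is harmless under the integral. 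I set $\phi_1(u)=g(V^u(\vec x))$ and $\phi_2(u)=g(V\setminus V^u(\vec x))$; both integrals converge because $\phi_1$ vanishes for $u\ge\max_i x_i$ and $\phi_2$ vanishes for $u<\min_i x_i$ (this is where $g(\varnothing)=0$ enters).

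Next I would exploit monotonicity. Since $g$ is non-decreasing while $V^u(\vec x)$ shrinks and $V\setminus V^u(\vec x)$ grows as $u$ increases, $\phi_1$ is non-increasing and $\phi_2$ is non-decreasing. For every $t$, replacing each integrand by $\min\{\phi_1,\phi_2\}$ only decreases the value, so
$$\Phi(t)\ \ge\ \int_{-\infty}^{\infty}\min\{\phi_1(u),\phi_2(u)\}\,du\qquad\text{for all }t\in\R.$$

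For the reverse inequality I would use a threshold argument rather than locate an exact crossing of the step functions. By monotonicity the set $\{u:\phi_1(u)\le\phi_2(u)\}$ is closed upward, hence equals $[t^\ast,\infty)$ up to a null set for some $t^\ast$. Choosing $t=t^\ast$ gives $\min\{\phi_1,\phi_2\}=\phi_1$ a.e.\ on $(t^\ast,\infty)$ and $\min\{\phi_1,\phi_2\}=\phi_2$ a.e.\ on $(-\infty,t^\ast)$, whence $\Phi(t^\ast)=\int_{-\infty}^\infty\min\{\phi_1,\phi_2\}\,du$. Therefore $\min_{t\in\R}\Phi(t)=\int_{-\infty}^\infty\min\{g(V^u(\vec x)),g(V\setminus V^u(\vec x))\}\,du$. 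The integrand equals $h(V^u(\vec x))$ and vanishes outside $[\min_i x_i,\max_i x_i]$ (there $V^u$ is $V$ or $\varnothing$ and $h(V)=h(\varnothing)=0$), so $\min_t\Phi(t)=\int_{\min x}^{\max x}h(V^u(\vec x))\,du$. Comparing with \eqref{eq:Lovaintegral}, $h^L(\vec x)=\int_{\min x}^{\max x}h(V^u(\vec x))\,du+h(V)\min_i x_i$, and since $h(V)=0$ the boundary term drops, giving $\min_{t\in\R}G(\vec x-t\vec1)=h^L(\vec x)$.

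I expect the main obstacle to be the bookkeeping at the boundary: pinning down the convention $g(\varnothing)=0$ so that both the $\int_0^\infty$ representation of $G$ and the vanishing of the $h(V)\min_i x_i$ term are legitimate, and running the minimization through the upward-closed set $\{\phi_1\le\phi_2\}$ instead of through a pointwise crossing, since $\phi_1$ and $\phi_2$ are step functions that may jump past one another.
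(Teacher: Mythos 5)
Your proof is correct and follows essentially the same route as the paper's: both arguments exploit the monotonicity of $g$ to locate a crossing threshold ($t^\ast$ in your notation, $t_0$ in the paper's) where $\min\{g(V^u(\vec x)),g(V\setminus V^u(\vec x))\}$ switches branches, and then match the resulting integral with the shifted disjoint-pair extension. If anything, your version is slightly more complete, since you explicitly verify that $G(\vec x-t\vec 1)\ge\int\min\{\phi_1,\phi_2\}\,du$ for \emph{every} $t$ with equality at $t^\ast$, a step the paper compresses into its final unproved equality $g_s^{L}(\vec x-t_0\vec1)=\min_{t\in\R}g_s^{L}(\vec x-t\vec1)$.
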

\begin{proof}
We put $g_{m}(A)=\min\{g(A),g(V\setminus A)\}$ and $g_s(A,B)=g(A)+g(B)$, where $g_{m}^L$ is the original Lov\'asz extension of  $g_{m}$, and $g_s^{L}$ is the disjoint-pair Lov\'asz extension of $g_s$. 
Since $g$ is non-decreasing, $g(V^t(\vec x))$ must be non-increasing on $t\in \R$, i.e., $g(V^{t_1}(\vec x))\ge g(V^{t_2}(\vec x))$ whenever $t_1\le t_2$. Hence, there exists $t_0\in\R$ such that $g(V^t(\vec x))\ge g(V\setminus V^t(\vec x))$, $\forall t\le t_0$; and   $g(V^t(\vec x))\le g(V\setminus V^t(\vec x))$, $\forall t\ge t_0$.
Then \begin{align*}
g_{m}^L(\vec x)&= \int_{\min \vec x}^{\max \vec x}g_{m}(V^t(\vec x))dt+\min\vec x g_{m}(V)
\\&=\int_{\min \vec x}^{t_0}g(V\setminus V^t(\vec x))dt+\int_{t_0}^{\max\vec x}g(V^t(\vec x))dt
\\&=\int_{\min (\vec x-t_0\vec1)}^0g(V\setminus V^t(\vec x-t_0\vec1))dt+\int_{0}^{\max(\vec x-t_0\vec1)}g(V^t(\vec x-t_0\vec1))dt
\\&=\int_{-\|\vec x-t_0\vec1\|_\infty}^0g(V\setminus V^t(\vec x-t_0\vec1))dt+\int_{0}^{\|\vec x-t_0\vec1\|_\infty}g(V^t(\vec x-t_0\vec1))dt
\\&=\int_{0}^{\|\vec x-t_0\vec1\|_\infty}g(V^t(\vec x-t_0\vec1))+g(V\setminus V^{-t}(\vec x-t_0\vec1))dt
\\&=g_{s}^{L}(\vec x-t_0\vec1)=\min\limits_{t\in\R}g_s^{L}(\vec x-t\vec1).
\end{align*}
The proof is completed. 
\end{proof}

\begin{proof}[Proof of Proposition \ref{eq:Cheeger-identity}]
Since $f$ is symmetric, by Proposition \ref{pro:setpair-original}, $f_s^L(\vec x)=f^L(\vec x)=f_m^L(\vec x)$, where $f_m$ is defined by  $f_{m}(A):=\min\{f(A),f(V\setminus A)\}$, and $f_m^L$ is the original Lov\'asz extension of $f_m$. 

Since $g$ is positive,  submodular and non-decreasing, it is not difficult to check that 
$g_s$ is bisubmodular.  Thus, by the equivalence of submodularity and convexity, $g_s^L$ is a convex function. 
Therefore, we have
\begin{align*}
\min\limits_{\vec x\bot\vec 1}\frac{f_s^L(\vec x)}{\min\limits_{t\in\R}g_s^L(\vec x-t\vec 1)}&=\min\limits_{\text{nonconstant }\vec x\in\R_+^n}\frac{f_s^L(\vec x)}{\min\limits_{t\in\R}g_s^L(\vec x-t\vec 1)}\\&=\min\limits_{\vec x\in\R_+^n:\min\vec x=0}\frac{f_m^L(\vec x)}{g_m^L(\vec x)}=\min\limits_{A\ne\varnothing,V}\frac{f_m(A)}{g_m(A)}=\Ch(f,g),
\end{align*}
where the first equality is based on the fact that $\vec x\mapsto f_s^L(\vec x)=f^L(\vec x)$ and $\vec x\mapsto\min\limits_{t\in\R}g_s^L(\vec x-t\vec 1)$ are translation invariant along $\vec1$, the second equality is derived by Proposition \ref{pro:original-vs-disjoint-pair}, and the third one follows from Theorem \ref{thm:tilde-fg-equal}. 
 It follows from the nonlinear eigenvalue  characterization (Theorem 2.1 and Proposition 2.4 in \cite{JostZhang})  that $$\min\limits_{\vec x\bot\vec 1}\frac{f_s^L(\vec x)}{\min\limits_{t\in\R}g_s^L(\vec x-t\vec 1)}=\min\limits_{\vec x\text{ nonconstant}}\frac{f_s^L(\vec x)}{\min\limits_{t\in\R}g_s^L(\vec x-t\vec 1)}$$ is actually the second eigenvalue of the function pair $(f_s^L,g_s^L)$.
\end{proof}
Finally, we prove that for any $A,B\ne \varnothing$ with $A\cap B=\varnothing$,  $$\max\{\frac{f(A)}{g(A)},\frac{f(B)}{g(B)}\}\ge\min\{\frac{f(A)}{\min\{g(A),g(V\setminus A)\}},\frac{f(B)}{\min\{g(B),g(V\setminus B)\}}\}.$$
Suppose the contrary, and keep $f(A)=f(V\setminus A)$ in mind. Then, we have $g(A)>g(V\setminus A)$ and $g(B)>g(V\setminus B)$, implying $g(A)+g(B)>g(V\setminus A)+g(V\setminus B)$. 
Since $A\subset V\setminus B$ and $g$ is non-decreasing, one has $g(A)\le g(V\setminus B)$. Similarly, $g(B)\le g(V\setminus A)$, which leads to a contradiction. 

Combining all the results and discussions in this section, we complete the proof of Theorem \ref{introthm:eigenvalue}. 

In a general form, given  $\A\in\{\power(V),\power_2(V),\power(V_1)\times\cdots\times\power(V_k),\power_2(V_1)\times\cdots\times\power_2(V_k)\}$,   
for $f=\sum_i f_i$ and $g=\sum_j g_j$, where $f_i,g_j:\A\to\R$, one can also define the subgradient for functions on $\A$  via $\nabla f(A)=\nabla f^L(\vec 1_A)$  and 
define the combinatorial eigenvalue problem 
\begin{equation}\label{eq:relax-eigen-combina}
\vec 0\in\sum_i\nabla f_i(A)-\lambda\sum_j\nabla g_j(A)    
\end{equation}
which is  a slightly extended 
version of the combinatorial  eigenvalue problem for  $(f,g)$ (see Definition \ref{def:comb-eigen}). We shall note that \eqref{eq:relax-eigen-combina} is  equivalent in some sense to the nonlinear eigenvalue problem:
\begin{equation}\label{eq:relax-eigen-lovasz}
\vec 0\in\sum_i\nabla f_i^L(\vec x)-\lambda\sum_j\nabla g_j^L(\vec x).    
\end{equation}
In fact, similar to  Propositions \ref{pro:Lovasz-eigen-pre} and \ref{pro:Lovasz-eigen}, we have:
\begin{pro}\label{pro:general-eigen-equiv}
Any  associate set-tuple of any eigenvector $\vec x$ of the nonlinear eigenvalue problem \eqref{eq:relax-eigen-lovasz} is an eigenset of  the combinatorial  eigenvalue problem \eqref{eq:relax-eigen-combina}.  Conversely, for any eigenset $A$  satisfying   \eqref{eq:relax-eigen-combina}, its  indicator vector $\vec1_A$ is an eigenvector $\vec x$ satisfying  \eqref{eq:relax-eigen-lovasz}.

Any eigenpair  $(\lambda,\vec x)$ of $(f^L,g^L)$ satisfies 
\eqref{eq:relax-eigen-lovasz}. Moreover, the minimum and maximum eigenvalues of \eqref{eq:relax-eigen-lovasz} and their corresponding eigenvectors are also that  of $(f^L,g^L)$. 
\end{pro}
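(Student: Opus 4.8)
The plan is to reduce everything to the single–pair analysis already carried out in Propositions \ref{pro:Lovasz-eigen-pre} and \ref{pro:Lovasz-eigen}, using two elementary structural facts. First, the Lov\'asz extension is additive in the underlying set function, so that $(\sum_i f_i)^L=\sum_i f_i^L$ and $(\sum_j g_j)^L=\sum_j g_j^L$ (immediate from the integral definitions and Proposition \ref{pro:multi-way-property}). Second, all the functions $f_i^L$ and $g_j^L$ are piecewise linear with respect to the \emph{same} simplicial decomposition of $\D_\A$, since the linearity regions of a Lov\'asz extension depend only on the ordering of the coordinates (equivalently, on the chain of upper level sets) and not on the particular set function. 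I would also use throughout the identities $\nabla f_i^L(\vec 1_A)=\nabla f_i(A)$ and $\nabla g_j^L(\vec 1_A)=\nabla g_j(A)$ (respectively $\nabla f_i^L(\vec 1_A-\vec 1_B)=\nabla f_i(A,B)$ in the disjoint-pair case), which are exactly the definitions adopted just before the statement.

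For the forward direction, suppose $(\lambda,\vec x)$ solves \eqref{eq:relax-eigen-lovasz}. As in the proof of Proposition \ref{pro:Lovasz-eigen-pre} (and \ref{pro:Lovasz-eigen} for the disjoint-pair version), one-homogeneity lets me place $\vec x$ in the relative interior of a simplex $\triangle$ whose vertices are the indicator vectors of the associate set-tuples of $\vec x$. Every $f_i^L$ and every $g_j^L$ is linear on $\triangle$, so the \textbf{Argument} of Proposition \ref{pro:Lovasz-eigen-pre} yields $\nabla f_i^L(\vec x)\subset\nabla f_i^L(\vec 1_A)$ and $\nabla g_j^L(\vec x)\subset\nabla g_j^L(\vec 1_A)$ at each vertex $\vec 1_A$ of $\triangle$. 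Since Minkowski summation is monotone under inclusion, these inclusions pass to the sums, whence
$$\vec 0\in\sum_i\nabla f_i^L(\vec x)-\lambda\sum_j\nabla g_j^L(\vec x)\subset\sum_i\nabla f_i^L(\vec 1_A)-\lambda\sum_j\nabla g_j^L(\vec 1_A)=\sum_i\nabla f_i(A)-\lambda\sum_j\nabla g_j(A),$$
so each associate set-tuple $A$ is an eigenset. The converse is immediate: if $A$ is an eigenset then $\vec 0\in\sum_i\nabla f_i(A)-\lambda\sum_j\nabla g_j(A)=\sum_i\nabla f_i^L(\vec 1_A)-\lambda\sum_j\nabla g_j^L(\vec 1_A)$, which is precisely \eqref{eq:relax-eigen-lovasz} at $\vec x=\vec 1_A$.

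For the last two assertions I would first invoke the standard Clarke calculus inclusion $\nabla f^L(\vec x)=\nabla(\sum_i f_i^L)(\vec x)\subset\sum_i\nabla f_i^L(\vec x)$, and likewise for $g^L$; hence any eigenpair $(\lambda,\vec x)$ of $(f^L,g^L)$, i.e. $\vec 0\in\nabla f^L(\vec x)-\lambda\nabla g^L(\vec x)$, at once satisfies \eqref{eq:relax-eigen-lovasz}. For the extremal eigenvalues I combine two observations. Pairing any vertex solution $\vec 1_A$ of \eqref{eq:relax-eigen-lovasz} with $\vec 1_A$ and using the Euler relation $\langle\vec u,\vec x\rangle=F(\vec x)$ valid for $\vec u\in\nabla F(\vec x)$ and one-homogeneous $F$ gives $\sum_i f_i(A)-\lambda\sum_j g_j(A)=0$, so every eigenvalue of \eqref{eq:relax-eigen-lovasz} equals $f(A)/g(A)$ for an eigenset $A$ (when $g(A)\neq 0$). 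By Theorem \ref{thm:tilde-H-f} these quotients lie between $\min_{\vec x}f^L(\vec x)/g^L(\vec x)$ and $\max_{\vec x}f^L(\vec x)/g^L(\vec x)$, which are themselves the smallest and largest eigenvalues of $(f^L,g^L)$, attained at indicator vectors. Since these extremal eigenpairs of $(f^L,g^L)$ already solve \eqref{eq:relax-eigen-lovasz}, the minima and maxima of the two spectra coincide and share their eigenvectors.

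The main obstacle is exactly the gap exploited in the last paragraph: \eqref{eq:relax-eigen-lovasz} is a genuine relaxation, because the Minkowski sum $\sum_i\nabla f_i^L(\vec x)$ is in general strictly larger than the Clarke subgradient $\nabla(\sum_i f_i^L)(\vec x)$, so the relaxed problem may carry strictly more eigenpairs than $(f^L,g^L)$. Controlling this requires showing that the extra eigenpairs never leave the interval $[\min f^L/g^L,\max f^L/g^L]$; the homogeneity computation pinning each eigenvalue to a combinatorial quotient $f(A)/g(A)$, together with the identification of the Rayleigh-quotient extremes via Theorem \ref{thm:tilde-H-f}, is what makes the extremal coincidence go through despite the relaxation.
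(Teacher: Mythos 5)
Your strategy is essentially the one the paper intends: the paper's own proof is literally ``similar to Propositions \ref{pro:Lovasz-eigen-pre} and \ref{pro:Lovasz-eigen},'' and your first two paragraphs carry that out correctly --- the observation that all $f_i^L,g_j^L$ are linear on the same simplices (linearity regions depend only on the coordinate ordering), the vertex-by-vertex application of the piecewise-linear \textbf{Argument}, monotonicity of Minkowski sums, the definitional identity $\nabla f_i(A)=\nabla f_i^L(\vec 1_A)$ for the converse, and the Clarke sum rule $\nabla\bigl(\sum_i f_i^L\bigr)(\vec x)\subset\sum_i\nabla f_i^L(\vec x)$ for showing that eigenpairs of $(f^L,g^L)$ solve \eqref{eq:relax-eigen-lovasz}. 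All of this is sound.

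There is, however, one genuine slip at the very end: your justification for ``share their eigenvectors'' runs in the wrong direction. The proposition requires that an eigenvector $\vec x$ of the relaxed problem \eqref{eq:relax-eigen-lovasz} attaining the minimum (or maximum) eigenvalue is an eigenvector of $(f^L,g^L)$; the sentence ``since these extremal eigenpairs of $(f^L,g^L)$ already solve \eqref{eq:relax-eigen-lovasz}'' only gives the opposite containment (pair $\Rightarrow$ relax), which suffices for coincidence of the extremal eigenvalues but not for transferring eigenvectors from the relaxed problem back to the pair. The repair is short and uses only tools you already invoke: apply your Euler-identity pairing at the arbitrary eigenvector $\vec x$, not just at vertices $\vec 1_A$ (for a piecewise-linear positively one-homogeneous $F$, every active linear piece at $\vec x$ passes through the origin, so $\langle\vec u,\vec x\rangle=F(\vec x)$ for all $\vec u\in\nabla F(\vec x)$); this pins $\lambda=f^L(\vec x)/g^L(\vec x)$. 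When $\lambda$ equals $\inf f^L/g^L$, the vector $\vec x$ is then a global minimizer of the quotient, hence (assuming $g^L>0$ off the origin, as in the paper's setting) a global minimizer of $f^L-\lambda g^L$, which is nonnegative everywhere and vanishes at $\vec x$; therefore $\vec 0\in\nabla\bigl(f^L-\lambda g^L\bigr)(\vec x)\subset\nabla f^L(\vec x)-\lambda\nabla g^L(\vec x)$, which is exactly the eigenpair condition for $(f^L,g^L)$. Note that this ``global optimizer of the quotient is an eigenvector'' argument is also what underwrites your unproved assertion that $\inf f^L/g^L$ and $\sup f^L/g^L$ are themselves eigenvalues of $(f^L,g^L)$ attained at indicator vectors, so it is worth stating once explicitly and then citing twice. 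With that step inserted, your proof is complete and matches the paper's approach.
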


\subsection{
Dinkelbach-type schemes and mixed 
IP-SD algorithms}
 \label{sec:algo}

We would like to establish an iteration framework for finding minimal  and maximal eigenvalues.  These extremal eigenvalues play significant roles in optimization theory. They can be found via the  so-called Dinkelbach iterative scheme \cite{D67}. This will provide  a good starting point for an appropriate 
iterative algorithm for the resulting fractional programming. Actually, the equivalent continuous optimization has a fractional form, but such kind of fractions have been hardly touched in the field of fractional programming \cite{SI83}, where optimizing the ratio of a concave function to a convex one is usually considered. 

\begin{theorem}[Global convergence of a Dinkelbach-type  scheme \cite{D67}] \label{thm:global convergence}
Let $S$ be a compact set and let $F,G:S\to\mathbb{R}$ be two continuous functions with $G(\vec x)>0$, $\forall \vec x\in S$. Then the sequence $\{r^k\}$ generated by the two-step iterative scheme
\begin{numcases}{}
\vec x^{k+1}=\argopti\limits_{\vec  x\in S} \{F(\vec x)-r^k G(\vec x)\}, \label{iter0-1}
\\
r^{k+1}=\frac{F( \vec x^{k+1})}{G(\vec x^{k+1})},
\label{iter0}
\end{numcases}
 from any initial point $\vec  x^0\in S$, converges monotonically to a global optimum of $F(\cdot)/G(\cdot)$,
where `opti' is `min' or `max'.
\end{theorem}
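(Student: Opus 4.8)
The plan is to treat the two cases by duality and focus on $\mathrm{opti}=\min$; the $\max$ case follows by the symmetric argument (replacing the lower envelope below by an upper envelope). First I would record that $\mu^*:=\min_{\vec x\in S}F(\vec x)/G(\vec x)$ exists: since $G>0$ and $F,G$ are continuous, $F/G$ is continuous on the compact set $S$, so the minimum is attained. The key auxiliary object is the value function
\[
\phi(r):=\min_{\vec x\in S}\bigl(F(\vec x)-rG(\vec x)\bigr),\qquad r\in\R,
\]
whose minimum is again attained by compactness. The whole proof reduces to understanding $\phi$ together with one algebraic identity linking $\phi$ to the iteration.

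Next I would establish three elementary properties of $\phi$. (i) $\phi$ is \emph{concave and continuous}, being a pointwise minimum of the affine-in-$r$ functions $r\mapsto F(\vec x)-rG(\vec x)$. (ii) $\phi$ is \emph{strictly decreasing}: if $r_1<r_2$ and $\vec x_1$ attains $\phi(r_1)$, then $\phi(r_2)\le F(\vec x_1)-r_2G(\vec x_1)<F(\vec x_1)-r_1G(\vec x_1)=\phi(r_1)$, where the strict inequality uses $G(\vec x_1)>0$. (iii) $\phi$ has $\mu^*$ as its \emph{unique zero}, with $\phi(r)>0$ for $r<\mu^*$ and $\phi(r)<0$ for $r>\mu^*$. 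Indeed $F(\vec x)-\mu^*G(\vec x)\ge 0$ for every $\vec x$ (because $F(\vec x)/G(\vec x)\ge\mu^*$ and $G>0$), with equality at a minimizer of $F/G$, so $\phi(\mu^*)=0$; strict monotonicity then forces the sign pattern and uniqueness.

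The bridge to the iteration is the identity obtained by substituting $r^{k+1}=F(\vec x^{k+1})/G(\vec x^{k+1})$:
\[
\phi(r^k)=F(\vec x^{k+1})-r^kG(\vec x^{k+1})=G(\vec x^{k+1})\,(r^{k+1}-r^k),
\]
so that, since $G>0$, the sign of $r^{k+1}-r^k$ equals the sign of $\phi(r^k)$. Taking $r^0=F(\vec x^0)/G(\vec x^0)$, every iterate $r^k$ is a value of the ratio $F/G$ and hence satisfies $r^k\ge\mu^*$; by property (iii) this gives $\phi(r^k)\le 0$, whence $r^{k+1}\le r^k$. Thus $\{r^k\}$ is monotonically non-increasing and bounded below by $\mu^*$, so it converges to some $r^*\ge\mu^*$.

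Finally I would upgrade monotone convergence to convergence to the global optimum. Since $G$ is bounded on $S$, say by $M=\max_{\vec x\in S}G(\vec x)$, the identity above yields $|\phi(r^k)|\le M\,|r^{k+1}-r^k|\to 0$; continuity of $\phi$ then gives $\phi(r^*)=\lim_k\phi(r^k)=0$, and by the uniqueness of the zero in (iii) we conclude $r^*=\mu^*$. This is exactly the asserted monotone convergence of $\{r^k\}$ to the global minimum of $F/G$. The $\max$ case is handled verbatim with $\phi$ replaced by the convex upper envelope $\psi(r)=\max_{\vec x\in S}\bigl(F(\vec x)-rG(\vec x)\bigr)$, reversing the relevant inequalities. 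I expect the only genuinely delicate step to be (iii) — pinning down that $r^*$ equals $\mu^*$ rather than merely some value $\ge\mu^*$ — which is precisely what the strict monotonicity, continuity, and unique-root properties of $\phi$ are designed to guarantee; attainment of the $\argopti$ via compactness is what keeps $\phi$ finite and well behaved throughout.
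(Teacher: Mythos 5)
Your proof is correct, and it is essentially the classical Dinkelbach value-function argument: the paper itself omits the proof, deferring to Dinkelbach's original work \cite{D67}, which proceeds exactly via the auxiliary function $\phi(r)=\mathrm{opti}_{\vec x\in S}\,(F(\vec x)-rG(\vec x))$, its strict monotonicity, continuity, and unique root at the optimal ratio, combined with the identity $\phi(r^k)=G(\vec x^{k+1})(r^{k+1}-r^k)$. Both the monotonicity step and the passage to the limit (forcing $\phi(r^*)=0$, hence $r^*=\mu^*$) are handled correctly, including the degenerate case where some $\phi(r^k)=0$ and the sequence stabilizes.
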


\begin{cor}
If $F/G$ is a zero-homogeneous continuous function, then the iterative scheme \eqref{iter0-1}\eqref{iter0} from any initial point $\vec  x^0$ converges monotonically to a global optimum on the cone spanned by $S$ (i.e., $\{t\vec x: t>0, \vec x\in S\}$).
\end{cor}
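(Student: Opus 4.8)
The plan is to reduce the statement to Theorem~\ref{thm:global convergence} and then to enlarge the feasible domain from $S$ to $\cone(S):=\{t\vec x:t>0,\ \vec x\in S\}$ using only the zero-homogeneity of $F/G$. First I would record that the scheme \eqref{iter0-1}--\eqref{iter0} never leaves $S$, regardless of where it is started: the update $\vec x^{k+1}=\argopti_{\vec x\in S}\{F(\vec x)-r^kG(\vec x)\}$ returns a point of $S$ by construction, and then $r^{k+1}=F(\vec x^{k+1})/G(\vec x^{k+1})$ is well defined because $G>0$ on $S$. If the initial point is taken in $\cone(S)$, say $\vec x^0=t_0\vec y$ with $\vec y\in S$ and $t_0>0$, then the starting ratio satisfies $r^0=F(\vec x^0)/G(\vec x^0)=F(\vec y)/G(\vec y)$ by zero-homogeneity, so the run started from $\vec x^0$ produces exactly the same scalar sequence $\{r^k\}$ as the run started from $\vec y\in S$. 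Hence Theorem~\ref{thm:global convergence} applies verbatim, and $\{r^k\}$ converges monotonically to the global optimum of $F/G$ over $S$.

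The only substantive step is the following consequence of zero-homogeneity: for every $\vec x\in S$ and every $t>0$ one has $(F/G)(t\vec x)=(F/G)(\vec x)$, so the set of values attained by $F/G$ on $\cone(S)$ coincides with the set attained on $S$, namely
\begin{equation*}
\left\{\frac{F(\vec z)}{G(\vec z)}:\vec z\in\cone(S)\right\}=\left\{\frac{F(\vec x)}{G(\vec x)}:\vec x\in S\right\}.
\end{equation*}
In particular the two optimal values are equal, whether one optimizes with $\min$ or with $\max$. Since $S$ is compact and $F/G$ is continuous, this common optimal value is attained at some $\vec x^*\in S$, and by zero-homogeneity it is then attained along the entire ray $\{t\vec x^*:t>0\}\subset\cone(S)$. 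Combining this identification of the optima with the monotone convergence of $\{r^k\}$ to the optimal value over $S$ established above, I conclude that $\{r^k\}$ converges monotonically to a global optimum of $F/G$ on $\cone(S)$, as claimed.

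I do not expect a serious obstacle here; the proof is essentially a bookkeeping reduction. The two points that genuinely require care are (i) verifying that all iterates remain in $S$, so that the compactness and positivity hypotheses of Theorem~\ref{thm:global convergence} are never violated when the scheme is nominally launched from the larger set $\cone(S)$, and (ii) ensuring that $F/G$ is genuinely well defined on all of $\cone(S)$, i.e.\ that $G$ does not vanish on the cone, which is implicit in the hypothesis that $F/G$ is a continuous zero-homogeneous function there. Both are straightforward to check, and neither affects the core equality of the optimal values.
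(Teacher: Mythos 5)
Your proof is correct. The paper gives no separate argument for this corollary (it treats Theorem~\ref{thm:global convergence} and its corollary as standard Dinkelbach-type facts and omits the proofs), and your reduction is exactly the intended one: the iterates never leave $S$, the initial point enters only through $r^0$, which by zero-homogeneity is unchanged along rays, and the optimal values of $F/G$ over $S$ and over $\{t\vec x:t>0,\ \vec x\in S\}$ coincide.
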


We note that Theorem \ref{thm:global convergence} generalizes Theorem 3.1 in \cite{CSZ15} and Theorem 2 in \cite{CSZ18}.  Since it is a Dinkelbach-type iterative algorithm in the field of fractional programming, we omit the proof.

Many minimization problems in the field of fractional programming possess the form
$$
\min\,\frac{\text{convex }F}{\text{concave }G},
$$
which is not necessary for a  convex programming problem. The original Dinkelbach iterative scheme turns the ratio form to the inner problem \eqref{iter0-1} with the form like
$$
\min \; (\text{convex }F-\text{concave }G),
$$
which is indeed a convex programming problem. However, most of our examples are in the form
$$
\min\frac{\text{convex }F}{\text{convex }G},
$$
i.e., both the numerator and the denominator of the fractional object function are convex.
Since the difference of two convex functions may not be convex, the inner
problem \eqref{iter0-1} is no longer a convex optimization problem and hence might be very difficult to solve. 

In other practical applications, we may encounter optimization problems of the form
\begin{equation}\label{eq:RatioDC-form}
\min\frac{\text{convex }F_1 - \text{convex }F_2}{\text{convex }G_1- \text{convex }G_2}.    
\end{equation}
This is NP-hard in general. Fortunately, we can construct an effective relaxation of \eqref{iter0-1}.

The starting point of the relaxation step is the following  classical fact:
\begin{pro}\label{pro:difference-two-submodular}
For any function $f:\A\to \R$, there are two submodular functions $f_1$ and $f_2$ on $\A$ such that $f=f_1-f_2$.
\end{pro}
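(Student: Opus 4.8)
The plan is to exploit the fact that the submodular functions on $\A$ form a convex cone: the sum of two submodular functions is submodular, and any nonnegative multiple of a submodular function is submodular. Consequently the set of differences $f_1-f_2$ of submodular functions is a linear subspace of the finite-dimensional space $\R^\A$, and the assertion is precisely that this subspace is everything. To prove this it suffices, for a given $f$, to produce a single submodular $f_2$ for which $f_1:=f+f_2$ is again submodular; then $f=f_1-f_2$ is the desired decomposition.

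To organize the argument I would record the \emph{modularity defect} $\delta_h(A,B):=h(A\cup B)+h(A\cap B)-h(A)-h(B)$, so that $h$ is submodular exactly when $\delta_h(A,B)\le 0$ for all $A,B\in\A$, and note that the defect is additive, $\delta_{f+f_2}=\delta_f+\delta_{f_2}$. Two elementary observations drive the proof. First, whenever $A$ and $B$ are comparable (say $A\subseteq B$) one has $A\cup B=B$ and $A\cap B=A$, so $\delta_h(A,B)=0$ for \emph{every} $h$; thus only incomparable pairs require attention. Second, the function $q(A)=\#A\cdot\#(V\setminus A)$ (which appears in Table~\ref{tab:L-one}) is submodular with a \emph{uniform} negative defect: writing $a=\#(A\setminus B)$ and $b=\#(B\setminus A)$, a direct count gives $\delta_q(A,B)=-2ab$, which is $\le -2$ precisely when $A,B$ are incomparable (i.e. $a,b\ge 1$).

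The proof then reduces to calibrating a scaling constant. Since $\A$ is finite, $\|f\|_\infty:=\max_{A\in\A}|f(A)|$ is finite and the crude estimate $\delta_f(A,B)\le 4\|f\|_\infty$ holds for all pairs. Choosing any $M\ge 2\|f\|_\infty$ and setting $f_2:=Mq$ (submodular, as a nonnegative multiple of $q$), I get for incomparable pairs $\delta_{f+f_2}(A,B)=\delta_f(A,B)+M\delta_q(A,B)\le 4\|f\|_\infty-2M\le 0$, while for comparable pairs $\delta_{f+f_2}(A,B)=0$ by the first observation. Hence $f_1:=f+f_2$ is submodular and $f=f_1-f_2$ exhibits $f$ as a difference of two submodular functions.

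I expect no serious obstacle; the only genuine ingredient is exhibiting a \emph{uniformly strictly} submodular anchor $q$ whose defect is bounded away from $0$ on all incomparable pairs, so that a single global scaling $M$ can absorb the (bounded) defect of the arbitrary $f$. The verification that $q(A)=\#A\cdot\#(V\setminus A)$ has defect $-2ab$ is the one computation to carry out carefully; the reduction to incomparable pairs and the cone structure are then immediate. If one prefers, the same role is played by $-\#A^2$, whose defect is likewise $-2ab$, and the argument restricts verbatim to any algebra $\A\subseteq\power(V)$ closed under $\cup$ and $\cap$, since $A\cup B$ and $A\cap B$ remain in $\A$.
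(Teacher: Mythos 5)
Your proof is correct, and it follows the same decomposition strategy as the paper: take $f_2$ to be a large multiple of a uniformly strictly submodular anchor and set $f_1=f+f_2$. The difference lies in how the anchor is obtained, and here your version is actually the more careful one. The paper abstractly asserts that a strictly submodular function $g$ exists on the finite family $\A$ and scales it by $C>\max\{\delta(f)/\delta(g),0\}$, where $\delta(g)=\min_{A\ne A'}\bigl(g(A)+g(A')-g(A\vee A')-g(A\wedge A')\bigr)$; read literally this is problematic, because any comparable pair $A\subsetneq A'$ makes the bracket vanish, so $\delta(g)>0$ is unattainable, and the displayed chain $\delta(f)+C\delta(g)\ge\delta(f)+\frac{\delta(f)}{\delta(g)}\delta(g)=0$ also contains a sign slip (one needs $C>-\delta(f)/\delta(g)$ with the minimum restricted to incomparable pairs). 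Your argument repairs exactly these two points: you note that comparable pairs have zero defect for \emph{every} function, so only incomparable pairs matter, and you exhibit the concrete anchor $q(A)=\#A\cdot\#(V\setminus A)$ whose defect is exactly $-2ab\le -2$ on incomparable pairs (your computation is correct), which yields the explicit constant $M\ge 2\|f\|_\infty$. What the paper's abstract formulation buys is nominal applicability to all the lattice settings used later — Proposition \ref{pro:Lovasz-algorithm} invokes the decomposition also for $\power_2(V)$ and $\power_2(V)^k$ with the bisubmodular operations of (BS1) — whereas your anchor covers $\A\subseteq\power(V)$ and, implicitly, the $k$-way families of (KS), since $\power(V_1)\times\cdots\times\power(V_k)$ with componentwise $\cup,\cap$ is canonically $\power(V_1\sqcup\cdots\sqcup V_k)$; for the disjoint-pair setting one would still need a different anchor, strictly bisubmodular on the pairs with $\{A\vee B,A\wedge B\}\ne\{A,B\}$. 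Within the scope you state, your proof is complete, constructive, and in fact more rigorous than the paper's own.
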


Although this is  an old result,  for readers' convenience, we present a short
proof below. 

\begin{proof} We put  $$\delta(g):=\min\limits_{A\ne A'\in \A}\left(g(A)+g(A')-g(A\vee A')-g(A\wedge A')\right).$$ 
Recall that a function $g:\A\to\R$ is  {\sl strictly submodular} if $g(A)+g(A')>g(A\vee A')+g(A\wedge A')$ whenever $A\ne A'\in\A$. Since $\A$ has finitely many elements, it is known that there always exists a strict submodular function on $\A$. Clearly, $g$ is  submodular if and only if $\delta(g)\ge0$, while $g$ is strictly submodular if and only if $\delta(g)>0$. 
Let 
 $g:\A\to\R$  be  strictly  submodular, and pick  $C>\max\{\frac{\delta(f)}{\delta(g)},0\}$. 
Take $f_2=Cg$ and $f_1=f+f_2$. It is clear that $\delta(f_2)=C\delta(g)>0$ and $\delta(f_1)\ge \delta(f)+ \delta(f_2)= \delta(f)+C\delta(g)\ge \delta(f)+\frac{\delta(f)}{\delta(g)}\delta(g)=0$. Therefore, we have the decomposition  $f=f_1-f_2$, where $f_2$ is strictly submodular and $f_1$ is submodular.
\end{proof}

Thanks to Proposition \ref{pro:difference-two-submodular}, any discrete function can be expressed as the difference of two submodular functions. Since the Lov\'asz extension of a submodular function is convex, every Lov\'asz extension function is the difference of two convex functions. 

Then, for the fractional programming derived by Theorem \ref{thm:tilde-fg-equal} (or Propositions  \ref{pro:fraction-f/g} and \ref{pro:maxconvex}),  both the numerator
and denominator can be rewritten as the differences of two convex functions. This  implies  that a simple iterative  
algorithm can be obtained via further relaxing the Dinkelbach iteration by
techniques in DC Programming \cite{HT99}. It should be noted that the following recent works (especially the papers by Hein  et al \cite{HeinBuhler2010,HS11,TVhyper-13,TMH18}) motivated us to investigate more on this direction: 
\begin{enumerate}
\item  The efficient generalization of the inverse power method proposed by Hein et al  \cite{HeinBuhler2010} and the extended steepest descent 
method by Bresson et al  \cite{BLUB12}  deal with fractional programming  in the same spirit. For more relevant papers,  
we refer to \cite{HS11} for the RatioDCA method, and  \cite{TMH18} for the generalized RatioDCA technique.
\item In \cite{MaeharaMurota15,MMM18}, the authors address difference convex programming (DC programming)
for discrete convex functions, in which  an algorithm and a convergence result similar to Theorem \ref{th:gsd} are presented. 

\item A simple iterative algorithm based on the continuous reformulation by the disjoint-pair Lov\'asz extension provides the best cut values for maxcut on a G-set among all existing continuous algorithms \cite{SZZmaxcut}. 
\end{enumerate}
 In view of these recent  developments, and in order to enlarge the scope of fractional programming and RatioDCA method, it is helpful to study this
  aspect by general formulations (see also Remark \ref{remark:very-general-RatioDCA} for the most general form).   
  Thus,  
we begin to establish a method based on convex programming for solving $\min
\frac{F(\vec x)}{G(\vec x)}$ with $F=F_1-F_2$ and $G=G_1-G_2$ being two nonnegative functions, where $F_1,F_2,G_1,G_2$ are four  nonnegative convex functions on $X$. For any $\vec y\in X$, let $H_{\vec y}:X\to\R$ be a convex differentiable function such that $\vec y$ is a minimizer of $H_{\vec y}$. For example, we may simply take $H_{\vec y}(\vec x)=\|\vec x-\vec y\|_2^2$.  
Consider the following three-step iterative scheme
\begin{subequations}
\label{iter1}
\begin{numcases}{}
\vec  x^{k+1}\in \argmin\limits_{\vec x\in \mathbb{B}} \{F_1(\vec x)+r^k G_2(\vec x) -(\langle \vec u^k,\vec x\rangle+r^k \langle \vec v^k,\vec x\rangle) + H_{\vec x^k}(\vec x)\}, \label{eq:twostep_x2}
\\
r^{k+1}=F( \vec x^{k+1})/G( \vec x^{k+1}),
\label{eq:twostep_r2}
\\
 \vec u^{k+1}\in\nabla F_2( \vec x^{k+1}),\;
 \vec v^{k+1}\in\nabla G_1( \vec x^{k+1}),
\label{eq:twostep_s2}
\end{numcases}
\end{subequations}
where $\mathbb{B}$ is a convex body containing $\vec 0$ as its inner point.  The following slight modification  
\begin{subequations}
\label{iter2}
\begin{numcases}{}
\vec  y^{k+1}\in \argmin\limits_{\vec x\in X} \{F_1(\vec x)+r^k G_2(\vec x) -(\langle \vec u^k,\vec x\rangle+r^k \langle \vec v^k,\vec x\rangle) + H_{\vec x^k}(\vec x)\}, \label{eq:2twostep_x2}
\\
r^{k+1}=F( \vec y^{k+1})/G( \vec y^{k+1}),~~ \vec x^{k+1}=\partial \mathbb{B}\cap\{t\vec y^{k+1}:t\ge 0\} 
\label{eq:2twostep_r2}
\\
 \vec u^{k+1}\in\nabla F_2( \vec x^{k+1}),\;
 \vec v^{k+1}\in\nabla G_1( \vec x^{k+1}),
\label{eq:2twostep_s2}
\end{numcases}
\end{subequations}
 is available when $F/G$ is zero-homogeneous and \eqref{eq:2twostep_x2} has a
 solution. In  \eqref{eq:2twostep_r2}, $\vec x^{k+1}$ indicates  the
 normalization of $\vec y^{k+1}$ w.r.t. the convex body $\mathbb{B}$; in  particular, $\vec x^{k+1}:=\vec y^{k+1}/\|\vec y^{k+1}\|_2$ if we let  $\mathbb{B}$ be the unit ball.     
 These schemes  mixing the inverse power (IP) method and steepest descent (SD) method can be well used in computing special eigenpairs of $(F,G)$. Note that the inner problem \eqref{eq:twostep_x2} (resp.  \eqref{eq:2twostep_x2}) is a convex optimization and thus many algorithms in convex programming are applicable. We should note that the above schemes provide a generalization of the RatioDCA  technique in \cite{HS11},    and we  establish our proof by revising
the technique in \cite{HS11,HeinBuhler2010}.

\begin{theorem}[Local convergence for the  mixed IP-SD scheme]\label{th:gsd}
The sequence $\{r^k\}$ generated by the iterative scheme \eqref{iter1} (resp. \eqref{iter2}) from any initial point $\vec x^0\in \mathrm{supp}(G)\cap \mathbb{B}$ (resp. $\vec x^0\in \mathrm{supp}(G)$) converges monotonically, where $\mathrm{supp}(G)$ is the support of $G$.

Next we further assume that $X$ is of finite dimension. If one of the following  additional  conditions  holds, then  $\lim_{k\to+\infty} r^k=r^*$ is an eigenvalue of the function pair $(F,G)$ 
in the sense that it fulfills $\vec0\in \nabla F_1(\vec x^*)-\nabla F_2(\vec x^*)-r^*\left(\nabla G_1(\vec x^*)-\nabla G_2(\vec x^*)\right)$,  where $\vec x^*$ is a  cluster point of $\{\vec x^k\}$.

\begin{itemize}
\item[Case 1.] For the  scheme \eqref{iter1}, $F_2$ and $G_1$ are one-homogeneous, and  $F_1$ and $G_2$ are $p$-homogeneous with $p\ge 1$, and $H_{\vec x}=\text{const}$, $\forall \vec x\in\mathbb{B}$. 
\item[Case 2.1.]  For the  scheme \eqref{iter2}, $F_1$, $F_2$, $G_1$ and $G_2$ are $p$-homogeneous with  $p>1$.
\item[Case 2.2.]  For the  scheme \eqref{iter2}, $F_1$, $F_2$, $G_1$ and $G_2$
  are one-homogeneous, and $H_{\vec x}(\vec x)$ is a continuous
  function of $\vec x\in \mathbb{B}$ and  $\forall M>0$,  $\exists C>0$ such that   $H_{\vec x}(\vec y)>M \|\vec y\|_2$  whenever $\vec x\in\mathbb{B}$ and  $\|\vec y\|_2\ge C$. 
\end{itemize}

\end{theorem}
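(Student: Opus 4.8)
The plan is to run the standard majorization--minimization (RatioDCA) argument in three stages: (i) monotone convergence of $\{r^k\}$, (ii) vanishing of the per-step convexity gaps, and (iii) extraction of the eigenrelation at a cluster point, with the homogeneity hypotheses of Cases~1--2.2 entering only in stage~(iii). For monotonicity I would freeze $r^k$ and use the minimizing property of $\vec x^{k+1}$ in \eqref{eq:twostep_x2}, namely $\Phi^k(\vec x^{k+1})\le\Phi^k(\vec x^k)$ for $\Phi^k(\vec x)=F_1(\vec x)+r^kG_2(\vec x)-\langle\vec u^k+r^k\vec v^k,\vec x\rangle+H_{\vec x^k}(\vec x)$. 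Substituting the subgradient identities coming from $\vec u^k\in\nabla F_2(\vec x^k)$ and $\vec v^k\in\nabla G_1(\vec x^k)$, together with $D_H^k:=H_{\vec x^k}(\vec x^{k+1})-H_{\vec x^k}(\vec x^k)\ge0$ (as $\vec x^k$ minimizes $H_{\vec x^k}$), collecting terms and using $F(\vec x^k)-r^kG(\vec x^k)=0$ yields the master inequality
\begin{equation*}
F(\vec x^{k+1})-r^kG(\vec x^{k+1})\le-\bigl(D_{F_2}^k+r^kD_{G_1}^k+D_H^k\bigr)\le0,
\end{equation*}
where $D_{F_2}^k,D_{G_1}^k\ge0$ are the Bregman-type gaps of the convex $F_2,G_1$ and $r^k\ge0$. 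Dividing by $G(\vec x^{k+1})>0$ gives $r^{k+1}\le r^k$, and since $F,G\ge0$ the sequence is bounded below, hence convergent; the degenerate case $G(\vec x^{k+1})=0$ forces $F(\vec x^{k+1})=0$ and terminates the iteration at an optimum. The same computation applies to \eqref{iter2} with $\vec y^{k+1}$ in place of $\vec x^{k+1}$, the normalization being harmless because $F/G$ is scale-invariant there.

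Next I would show the gaps vanish: since $r^k\downarrow r^*$ and $\sup_{\vec x\in\mathbb{B}}G<\infty$ (compactness of $\mathbb{B}$, continuity of $G$), the master inequality forces $D_{F_2}^k\to0$, $r^kD_{G_1}^k\to0$ and $D_H^k\to0$. In Case~2.2 the coercivity/strict convexity of $H_{\vec x^k}$ (e.g.\ $H_{\vec y}(\vec x)=\|\vec x-\vec y\|_2^2$) upgrades $D_H^k\to0$ to $\|\vec x^{k+1}-\vec x^k\|\to0$, so consecutive iterates share every cluster point and $\nabla H_{\vec x^k}(\vec x^{k+1})\to0$; in Cases~1 and 2.1 the role of coercivity (keeping iterates bounded and making the inner problem well posed) is played instead by the $p$-homogeneous terms, with $H$ constant or $p>1$.

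Finally, to recover the eigenrelation I would write the first-order optimality condition of the convex inner problem: for \eqref{iter1} there exist $\vec p^{k+1}\in\nabla F_1(\vec x^{k+1})$, $\vec q^{k+1}\in\nabla G_2(\vec x^{k+1})$ and a normal-cone element $\vec n^{k+1}\in N_{\mathbb{B}}(\vec x^{k+1})$ with $\vec p^{k+1}+r^k\vec q^{k+1}-\vec u^k-r^k\vec v^k+\nabla H_{\vec x^k}(\vec x^{k+1})+\vec n^{k+1}=\vec0$, while the unconstrained problem in \eqref{iter2} removes $\vec n^{k+1}$. Passing to a subsequence along which $\vec x^k\to\vec x^*$ (and, by stage (ii), $\vec x^{k+1}\to\vec x^*$), boundedness of the subgradients lets me extract limits lying in $\nabla F_1(\vec x^*),\nabla F_2(\vec x^*),\nabla G_1(\vec x^*),\nabla G_2(\vec x^*)$ by closedness (upper semicontinuity) of the convex subdifferential, with $\nabla H\to0$ and $r^k\to r^*$. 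Pairing the optimality condition with $\vec x^{k+1}$ and invoking Euler's identity $\langle\vec p,\vec x\rangle=pF_1(\vec x)$ (and its analogues for the other homogeneous terms) then controls, and for the uniformly homogeneous Case~2 annihilates, the multiplier along $\vec x^*$, leaving $\vec0\in\nabla F_1(\vec x^*)-\nabla F_2(\vec x^*)-r^*\bigl(\nabla G_1(\vec x^*)-\nabla G_2(\vec x^*)\bigr)$, i.e.\ the asserted eigenpair $(r^*,\vec x^*)$.

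The hard part will be exactly this last passage to the limit: reconciling that $\vec u^k,\vec v^k$ are subgradients at $\vec x^k$ whereas $\vec p^{k+1},\vec q^{k+1}$ live at $\vec x^{k+1}$, and disposing of the boundary multiplier $\vec n^{k+1}$. This is where the three regimes genuinely diverge---Case~2.2 closes the gap through coercivity of $H$ (which yields $\vec x^{k+1}-\vec x^k\to0$), Case~2.1 through the coercivity and $t^{p-1}$-scaling of the strictly $p$-homogeneous ($p>1$) terms, and Case~1 through the Euler relations of the mixed $1$- and $p$-homogeneous pieces despite $H$ being constant. I would therefore present stages (i)--(ii) uniformly and branch into the three homogeneity cases only for the final limit, where the Euler pairing and the vanishing gaps must be combined with care to eliminate $\vec n^{k+1}$ and the degree mismatch.
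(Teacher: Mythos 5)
Your stages (i)--(ii) reproduce the paper's Steps 1--2: the master inequality you write is exactly the paper's inequality yielding $F(\vec x^{k+1})\le r^k G(\vec x^{k+1})$, so the monotone convergence of $\{r^k\}$ is fine. The genuine gap is in stage (iii). Writing first-order optimality at $\vec x^{k+1}$ and passing to the limit requires that cluster points of $\{\vec x^{k}\}$ and $\{\vec x^{k+1}\}$ coincide; otherwise the limiting inclusion mixes subgradients of $F_1,G_2$ (and the normal-cone element $\vec n^{k+1}$) at $\vec x^{**}=\lim \vec x^{k_i+1}$ with the subgradients $\vec u^*\in\nabla F_2(\vec x^*)$, $\vec v^*\in\nabla G_1(\vec x^*)$ taken at the \emph{different} point $\vec x^*=\lim\vec x^{k_i}$, which is not the asserted eigenrelation. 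You justify the coincidence only in Case 2.2, and there only by silently strengthening the hypothesis: the theorem's $H_{\vec y}$ is merely convex, differentiable, coercive and minimized at $\vec y$ --- not strictly convex --- so $D_H^k\to0$ does not force $\|\vec x^{k+1}-\vec x^k\|_2\to0$ (moreover, in scheme \eqref{iter2} the proximal gap is evaluated at $\vec y^{k+1}$, not at the normalized point $\vec x^{k+1}$). In Case 1 the situation is worse: $H$ is constant, so $D_H^k\equiv 0$ carries no information, and vanishing Bregman gaps of merely convex one-homogeneous functions (piecewise linear, e.g.\ Lov\'asz extensions) vanish on entire faces and rays, so they do not pull consecutive iterates together; the same objection applies to Case 2.1. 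Finally, ``eliminating $\vec n^{k+1}$ by Euler pairing'' is not a mechanism: pairing the optimality condition with $\vec x^{k+1}$ only controls $\langle \vec n^{k+1},\vec x^{k+1}\rangle$, i.e.\ a support-function value, not the multiplier itself.

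The paper closes this gap by a different device, which you would need to adopt (or replace by something equally strong). It considers the value function $g(r,\vec y,\vec u,\vec v)=\min_{\vec x\in\mathbb{B}}\{F_1(\vec x)+rG_2(\vec x)-\langle\vec u+r\vec v,\vec x\rangle+H_{\vec y}(\vec x)\}$, which is continuous by compactness of $\mathbb{B}$, and combines it with the already-established monotone convergence $r^k\downarrow r^*$ to show that the cluster point $\vec x^*$ of $\{\vec x^{k_i}\}$ is \emph{itself} a minimizer of the limiting inner objective $\Phi^*(\vec x):=F_1(\vec x)+r^*G_2(\vec x)-\langle\vec u^*+r^*\vec v^*,\vec x\rangle+H_{\vec x^*}(\vec x)$ on $\mathbb{B}$: if it were not, the computation of Step 1 applied at the limit would give $F(\vec x^{**})<r^*G(\vec x^{**})$, contradicting $F(\vec x^{**})/G(\vec x^{**})=\lim r^{k_i+1}=r^*$. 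With all data now anchored at the single point $\vec x^*$, the case hypotheses are used only to upgrade minimality over $\mathbb{B}$ to minimality over all of $X$: in Case 1, Euler's identities for the one-homogeneous $F_2,G_1$ give $\Phi^*(\vec x^*)=F(\vec x^*)-r^*G(\vec x^*)=0$, and $p$-homogeneity gives $\Phi^*(\alpha\vec x)\ge0$ for $\alpha\ge1$, which suffices because $X=\{\alpha\vec x:\vec x\in\mathbb{B},\,\alpha\ge1\}$; in Cases 2.1 and 2.2, coercivity (from the $p$-homogeneous terms, resp.\ from $H$) confines all inner minimizers to a bounded set. Then Fermat's rule at the unconstrained minimizer $\vec x^*$, together with $\nabla H_{\vec x^*}(\vec x^*)=0$ and graph-closedness of the subdifferentials (giving $\vec u^*\in\nabla F_2(\vec x^*)$, $\vec v^*\in\nabla G_1(\vec x^*)$), yields $\vec0\in\nabla F_1(\vec x^*)-\nabla F_2(\vec x^*)-r^*\left(\nabla G_1(\vec x^*)-\nabla G_2(\vec x^*)\right)$; this removes both the point mismatch and the normal-cone multiplier at one stroke.
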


Theorem \ref{th:gsd} partially generalizes  Theorem 3.4 in  \cite{CSZ15}, 
and it is indeed an extension of both the IP and the SD method \cite{BLUB12,CP11,M16,HeinBuhler2010}.

\begin{proof}[Proof of Theorem \ref{th:gsd}]

It will be helpful to divide this proof into several parts and steps:

\begin{enumerate}
\item[Step 1.] We may assume $G(\vec x^k)>0$ for any $k$. 
In fact, the initial point $\vec x^0$ satisfies $G(\vec x^0)> 0$. We will show $F(\vec x^1)=0$ if $G(\vec x^1)=0$ and thus the iteration should be terminated at $\vec x^1$. This tells us that we may assume $G(\vec x^k)>0$ for all $k$ before the termination of the iteration.

Note that
\begin{align*}&F_1(\vec x^1)+r^0 G_2(\vec x^1) -(\langle \vec u^0,\vec x^1\rangle+r^0 \langle \vec v^0,\vec x^1\rangle) + H_{\vec x^0}(\vec x^1)\\ \le~& F_1(\vec x^0)+r^0 G_2(\vec x^0) -(\langle \vec u^0,\vec x^0\rangle+r^0 \langle \vec v^0,\vec x^0\rangle) + H_{\vec x^0}(\vec x^0),
\end{align*}
which implies
\begin{align*}&F_1(\vec x^1)-F_1(\vec x^0)+r^0 (G_2(\vec x^1)-G_2(\vec x^0)) + H_{\vec x^0}(\vec x^1)-H_{\vec x^0}(\vec x^0)\\ \le~& \langle \vec u^0,\vec x^1-\vec x^0\rangle+r^0 \langle \vec v^0,\vec x^1-\vec x^0\rangle\le F_2(\vec x^1)-F_2(\vec x^0) +r^0 (G_1(\vec x^1)-G_1(\vec x^0)),
\end{align*}
i.e.,
\begin{align}F(\vec x^1)-F(\vec x^0)+  H_{\vec x^0}(\vec x^1)-H_{\vec x^0}(\vec x^0)&\le r^0 (G(\vec x^1)-G(\vec x^0))\label{eq:important-inequality}\\&=-r^0G(\vec x^0)=-F(\vec x^0).\notag
\end{align}
Since the equality holds, we have $F(\vec x^1)=0$, $H_{\vec x^0}(\vec x^1)=H_{\vec x^0}(\vec x^0)$, $\langle \vec u^0,\vec x^1-\vec x^0\rangle=F_2(\vec x^1)-F_2(\vec x^0)$ and $\langle \vec v^0,\vec x^1-\vec x^0\rangle=G_1(\vec x^1)-G_1(\vec x^0)$. So this step is finished.

\item[Step 2.] $\{r^k\}_{k=1}^\infty$ is monotonically decreasing and hence convergent.

Similar to \eqref{eq:important-inequality} in Step 1, we can arrive at
$$F(\vec x^{k+1})-F(\vec x^k)+  H_{\vec x^k}(\vec x^{k+1})-H_{\vec x^k}(\vec x^k) \le r^k (G(\vec x^{k+1})-G(\vec x^k)),$$
which leads to
$$F(\vec x^{k+1})\le r^k G(\vec x^{k+1}).$$
Since $G(\vec x^{k+1})$ is assumed to be positive,
$r^{k+1}=F(\vec x^{k+1})/G(\vec x^{k+1})\le r^k$.
 Thus, there exists $r^*\in [r_{\min},r^0]$ such that $\lim\limits_{k\to+\infty}r^k=r^*$, where $r_{\min}:=\min_{x\ne0} F(\vec x)/G(\vec x)$.
\end{enumerate}

In the sequel, we assume that the  dimension of $X$ is finite.

\begin{enumerate}
\item[Step 3.]   $\{\vec x^k\}$, $\{\vec u^k\}$ and $\{\vec v^k\}$ are sequentially compact.

In this setting, $\mathbb{B}$ must be compact. In consequence, there exist $k_i$, $r^*$, $\vec x^*$,  $\vec x^{**}$, $\vec u^*$ and $\vec v^*$ such that $\vec x^{k_i}\to \vec x^*$, $\vec x^{k_i+1}\to \vec x^{**}$, $\vec u^{k_i}\to \vec u^*$ and $\vec v^{k_i}\to \vec v^*$, as $i\to +\infty$.

 Clearly, the statements in Steps 1, 2 and 3 are also available for the  scheme \eqref{iter2}.

\item[Step 4.] For the  scheme \eqref{iter1},   $\vec x^*$ is a minimum of $F_1(\vec x)+r^* G_2(\vec x) -(\langle \vec u^*,\vec x\rangle+r^* \langle \vec v^*,\vec x\rangle) + H_{\vec x^*}(\vec x)$ on $\mathbb{B}$.  For the  scheme  \eqref{iter2}, under the additional assumptions introduced in  Case 2.1 or Case  2.2, $\vec x^*$ is a minimum of $F_1(\vec x)+r^* G_2(\vec x) -(\langle \vec u^*,\vec x\rangle+r^* \langle \vec v^*,\vec x\rangle) + H_{\vec x^*}(\vec x)$ on $X$. 

Let $g(r,\vec y,\vec u,\vec v)=\min\limits_{  \vec x\in \mathbb{B}} \{F_1(\vec x)+r G_2(\vec x) -(\langle \vec u,\vec x\rangle+r \langle \vec v,\vec x\rangle) + H_{\vec y}(\vec x)\}$. It is standard to verify that $g(r,\vec y,\vec u,\vec v)$ is continuous on $\mathbb{R}^{1}\times X\times X^*\times X^*$ according to the compactness of $\mathbb{B}$.

Since $g(r^{k_i},\vec x^{k_i},\vec u^{k_i},\vec v^{k_i})=r^{k_i+1}$, taking $i\to+\infty$, one obtains $g(r^*,\vec x^*,\vec u^*,\vec v^*)=r^*$.

By Step 3, $\vec x^{**}$ attains the minimum of $F_1(\vec x)+r^* G_2(\vec x) -(\langle \vec u^*,\vec x\rangle+r^* \langle \vec v^*,\vec x\rangle) + H_{\vec x^*}(\vec x)$ on $\mathbb{B}$. Suppose the contrary, that $\vec x^*$ is not a minimum of $F_1(\vec x)+r^* G_2(\vec x) -(\langle \vec u^*,\vec x\rangle+r^* \langle \vec v^*,\vec x\rangle) + H_{\vec x^*}(\vec x)$ on $\mathbb{B}$. Then
\begin{align*}&F_1(\vec x^{**})+r^* G_2(\vec x^{**}) -(\langle \vec u^*,\vec x^{**}\rangle+r^* \langle \vec v^*,\vec x^{**}\rangle) + H_{\vec x^*}(\vec x^{**})\\ <~& F_1(\vec x^*)+r^* G_2(\vec x^*) -(\langle \vec u^*,\vec x^*\rangle+r^* \langle \vec v^*,\vec x^*\rangle) + H_{\vec x^*}(\vec x^*),
\end{align*}
and thus $F(\vec x^{**})<r^* G(\vec x^{**})$ (similar to Step 1), which implies $G(\vec x^{**})>0$ and $F(\vec x^{**})/ G(\vec x^{**})<r^*$. This is a contradiction. Consequently, $  \vec x^*$ is a minimizer of $F_1(\vec x)+r^* G_2(\vec x) -(\langle \vec u^*,\vec x\rangle+r^* \langle \vec v^*,\vec x\rangle) + H_{\vec x^*}(\vec x)$ on $\mathbb{B}$.

 On the scheme  \eqref{iter2}, we refer to   Cases 2.1 and 2.2 below for details.

Next, we will 
verify that $(r^*,\vec x^*)$ is an eigenpair  under certain  additional conditions. 

\item[Case 1.]  On  the scheme \eqref{iter1}, $F_2$ and $G_1$ are one-homogeneous, and  $F_1$ and $G_2$ are $p$-homogeneous with $p\ge 1$, and $H_{\vec x}=Const$, $\forall \vec x\in\mathbb{B}$. 

Since $H_{\vec x^k}=Const$,  the above claim shows that $\vec x^*$ is a minimizer of $F_1(\vec x)+r^* G_2(\vec x) -(\langle \vec u^*,\vec x\rangle +r^* \langle \vec v^*,\vec x\rangle)$ on $\mathbb{B}$. Also, since $F_2$ and $G_1$ are one-homogeneous, the Euler identity on homogeneous functions gives $F_1(\vec x^*)+r^* G_2(\vec x^*) -(\langle \vec u^*,\vec x^*\rangle +r^* \langle \vec v^*,\vec x^*\rangle)=F_1(\vec x^*)+r^* G_2(\vec x^*) -(F_2(\vec x^*)+r^* G_1(\vec x^*))=F(\vec x^*)-r^* G(\vec x^*)=0$. Thus, $F_1(\vec x)+r^* G_2(\vec x) -(\langle \vec u^*,\vec x\rangle+r^* \langle \vec v^*,\vec x\rangle)\ge 0$, $\forall \vec x\in \mathbb{B}$, and the equality holds when $\vec x=\vec x^*$.   

Since $\mathbb{B}$ contains $0$ as its inner point, we have $\{\alpha \vec x: \vec x\in \mathbb{B},\alpha\ge 1\}=X$.
 Keeping  $\alpha\ge 1$ and $p\ge 1$ in mind, for any $\alpha\ge 1$ and $\vec x\in \mathbb{B}$,
 \begin{align*}
 &F_1(\alpha \vec x)+r^* G_2(\alpha \vec x) -(\langle \vec u^*,\alpha \vec x\rangle+r^* \langle \vec v^*,\alpha \vec x\rangle)
  \\ =~& \alpha\left(F_1( \vec x)+r^* G_2( \vec x) -(\langle \vec u^*, \vec x\rangle+r^* \langle \vec v^*, \vec x\rangle)\right)+(\alpha^p-\alpha)(F_1( \vec x)+r^* G_2( \vec x))
  \\ (\text{by Step 4})~\ge~& (\alpha^p-\alpha)(F_1( \vec x)+r^* G_2( \vec x)) \ge 0.
 \end{align*}

Consequently, $\vec x^*$ is a minimizer of $F_1(\vec x)+r^* G_2(\vec x) -(\langle \vec u^*,\vec x\rangle+r^* \langle \vec v^*,\vec x\rangle)$ on $X$, and thus
\begin{align*}
\vec 0 &\in \nabla|_{  \vec x=  \vec x^*} \left(F_1(\vec x)+r^* G_2(\vec x) -(\langle \vec u^*,\vec x\rangle+r^* \langle \vec v^*,\vec x\rangle) \right)
\\&=\nabla F_1(\vec x^*)+r^* \nabla G_2(\vec x^*) -\vec u^*-r^*\vec v^*
\\& \subset \nabla F_1(\vec x^*)-\nabla F_2(\vec x^*)+r^*\nabla G_2(\vec x^*)-r^*\nabla G_1(\vec x^*).
\end{align*}


\item[Case 2.1.] On the  scheme \eqref{iter2}, $F_1$, $F_2$, $G_1$ and $G_2$ are $p$-homogeneous with  $p>1$. 

Denote by $B:X\to[0,+\infty)$ the unique convex and one-homogeneous function satisfying $B(\partial\mathbb{B})=1$. Then the normalization of $\vec x$ in  \eqref{eq:2twostep_r2} can be expressed as $\vec x/B(\vec x)$.

The compactness of $\{\vec x:B(\vec x)\le 1\}$ and the upper semi-continuity and compactness of subderivatives imply that $\bigcup_{\vec x:B(\vec x)\le 1}\nabla F_2(\vec x)$ and $\bigcup_{\vec x:B(\vec x)\le 1}\nabla G_1(\vec x)$ are bounded sets. So, we have a uniform constant $C_1>0$ such that $\|\vec u\|_2+r^*\|\vec v\|_2\le C_1$, $\forall \vec u\in \nabla F_2(\vec x)$, $ \vec v\in \nabla G_1(\vec x)$, $\forall \vec x\in\mathbb{B}$. Let $C_2>0$ be such that  $\|\vec x\|_2\le C_2B(\vec x)$, and  $C_3=\min\limits_{B(\vec x)=1} F_1(\vec x)>0$ (here we assume without loss of generality that $F_1(\vec x)>0$ whenever $\vec x\ne \vec 0$). For any $\vec x$ with $B(\vec x)\ge \max\{2,(2C_1C_2/C_3)^{\frac{1}{p-1}}\}$, and for any $\vec x^*\in \mathbb{B}$, $ \vec u^*\in \nabla F_2(\vec x^*)$, $ \vec v^*\in \nabla G_1(\vec x^*)$, 
\begin{align*}
&F_1(\vec x)+r^* G_2(\vec x) -(\langle \vec u^*,\vec x\rangle+r^*\langle \vec v^*,\vec x\rangle) + H_{\vec x^*}(\vec x)
\\ =~&   B(\vec x)^p F_1(\frac{\vec x}{B(\vec x)})+r^*B(\vec x)^p G_2(\frac{\vec x}{B(\vec x)}) -(\|\vec x\|_2\langle \vec u^*,\frac{\vec x}{\|\vec x\|_2}\rangle+r^*\|\vec x\|_2\langle \vec v^*,\frac{\vec x}{\|\vec x\|_2}\rangle) + H_{\vec x^*}(\vec x)
\\ \ge~& B(\vec x)^pF_1(\frac{\vec x}{B(\vec x)})-\|\vec x\|_2(\|\vec u^*\|_2+r^*\|\vec v^*\|_2 ) + H_{\vec x^*}(\vec x^*)
\\ \ge~& B(\vec x)^pC_3-C_2C_1B(\vec x) + H_{\vec x^*}(\vec x^*)=B(\vec x)(B(\vec x)^{p-1}C_3-C_2C_1) + H_{\vec x^*}(\vec x^*)
> H_{\vec x^*}(\vec x^*)\\ >~& -(p-1)(F_2(\vec x^*)+r^* G_1(\vec x^*))+ H_{\vec x^*}(\vec x^*)
\\ =~& F_1(\vec x^*)+r^* G_2(\vec x^*) -(\langle \vec u^*,\vec x^*\rangle+r^*\langle \vec v^*,\vec x^*\rangle) + H_{\vec x^*}(\vec x^*)
\end{align*}
which means that the  minimizers of $F_1(\vec x)+r^* G_2(\vec x) -(\langle \vec u^*,\vec x\rangle+r^*\langle \vec v^*,\vec x\rangle) + H_{\vec x^*}(\vec x)$ exist and they always lie in the bounded set $\{\vec x:B(\vec x)< \max\{2,(2C_1C_2/C_3)^{\frac{1}{p-1}}\}\}$. Since $B(\vec x^k)=1$, $\{\vec y^k\}$ must be a bounded sequence.  There exists  $\{k_i\}\subset \{k\}$ such that $\vec x^{k_i} \to \vec x^*$, $\vec y^{k_i+1}\to \vec y^{**}$, $\vec x^{k_i+1} \to \vec x^{**}$ for some $\vec x^*$, $\vec y^{**}$ and $\vec x^{**}=\vec y^{**}/B(\vec y^{**})$. Similar to Step 4 and  Case 1,  $\vec x^*$ is a minimizer of $F_1(\vec x)+r^* G_2(\vec x) -(\langle \vec u^*,\vec x\rangle+r^* \langle \vec v^*,\vec x\rangle)+H_{\vec x^*}(\vec x)$ on $X$, and thus
\begin{align*}
\vec 0 &\in \nabla|_{  \vec x=  \vec x^*} \left(F_1(\vec x)+r^* G_2(\vec x) -(\langle \vec u^*,\vec x\rangle+r^* \langle \vec v^*,\vec x\rangle)+H_{\vec x^*}(\vec x) \right)
\\&=\nabla F_1(\vec x^*)+r^* \nabla G_2(\vec x^*) -\vec u^*-r^*\vec v^*
\subset \nabla F_1(\vec x^*)-\nabla F_2(\vec x^*)+r^*\nabla G_2(\vec x^*)-r^*\nabla G_1(\vec x^*).
\end{align*}

\item[Case 2.2.] On the scheme \eqref{iter2},  $F_1$, $F_2$, $G_1$ and $G_2$ are one-homogeneous; $H_x(\vec x)$ is continuous of $\vec x\in \mathbb{B}$ and for any $M>0$, there exists $C>0$ such that $H_{\vec x}(\vec y)>M\cdot B(\vec y)$ whenever $\vec x\in\mathbb{B}$ and  $B(\vec y)\ge C$. 

Taking $M=C_1C_2+2$ in which the constants  $C_1$ and $C_2$ are introduced in Case 2.1, there exists $C>\max\{\max\limits_{x\in \mathbb{B}}H_x(\vec x),1\}$ such that $H_{\vec x^*}(\vec x)\ge M\cdot B(\vec x)$ whenever $\vec x^*\in \mathbb{B}$ and $B(\vec x)\ge C$.  

Similar to Case 2.1, for any $\vec x^*\in \mathbb{B}$,  $\vec x\in X$ with $B(\vec x)\ge C$, and $\forall \vec u^*\in \nabla F_2(\vec x^*)$, $ \vec v^*\in \nabla G_1(\vec x^*)$, 
\begin{align*}
&F_1(\vec x)+r^* G_2(\vec x) -(\langle \vec u^*,\vec x\rangle+r^*\langle \vec v^*,\vec x\rangle) + H_{\vec x^*}(\vec x)
\\>~& B(\vec x)(C_3-C_2C_1) + (C_1C_2+2)\cdot B(\vec x)
\ge 2 B(\vec x) >H_{\vec x^*}(\vec x^*)
\\ =~& F_1(\vec x^*)+r^* G_2(\vec x^*) -(\langle \vec u^*,\vec x^*\rangle+r^*\langle \vec v^*,\vec x^*\rangle) + H_{\vec x^*}(\vec x^*).
\end{align*}
The remaining part can refer to  Case 2.1. 
   \end{enumerate}
\end{proof}

\begin{remark}\label{remark:very-general-RatioDCA}
As some direct extensions of the so-called {\sl generalized RatioDCA} in \cite{TMH18}, we have the following modified schemes:
\begin{subequations}
\label{iter1-}
\begin{numcases}{}
\vec  x^{k+1}\in \argmin\limits_{\vec x\in \mathbb{B}} F_1(\vec x)+r^k G_2(\vec x) -(\langle \vec u^k,\vec x\rangle+r^k \langle \vec v^k,\vec x\rangle) + H_{\vec x^k}(\vec x)\text{ if }r^k\ge0, \label{eq:twostep_x2-}
\\
\vec  x^{k+1}\in \argmin\limits_{\vec x\in \mathbb{B}} G_1(\vec x)-\langle \vec w^k,\vec x\rangle-\frac{1}{r^k}( F_1(\vec x) - \langle \vec u^k,\vec x\rangle) + H_{\vec x^k}(\vec x)\text{ if }r^k<0, \label{eq:twostep_x22-}
\\
r^{k+1}=F( \vec x^{k+1})/G( \vec x^{k+1}),
\label{eq:twostep_r2-}
\\
 \vec u^{k+1}\in\nabla F_2( \vec x^{k+1}),\;
 \vec v^{k+1}\in\nabla G_1( \vec x^{k+1}),\;\vec w^{k+1}\in\nabla G_2( \vec x^{k+1})
\label{eq:twostep_s2-}
\end{numcases}
\end{subequations}
and
\begin{subequations}
\label{iter2-}
\begin{numcases}{}
\vec  y^{k+1}\in \argmin\limits_{\vec x\in X} F_1(\vec x)+r^k G_2(\vec x) -(\langle \vec u^k,\vec x\rangle+r^k \langle \vec v^k,\vec x\rangle) + H_{\vec x^k}(\vec x)\text{ if }r^k\ge0, \label{eq:2twostep_x2-}
\\
\vec  y^{k+1}\in \argmin\limits_{\vec x\in X} G_1(\vec x)-\langle \vec w^k,\vec x\rangle-\frac{1}{r^k}( F_1(\vec x) - \langle \vec u^k,\vec x\rangle) + H_{\vec x^k}(\vec x)\text{ if }r^k<0, \label{eq:2twostep_x22-}
\\
r^{k+1}=F( \vec y^{k+1})/G( \vec y^{k+1}),~~ \vec x^{k+1}=\partial \mathbb{B}\cap\{t\vec y^{k+1}:t\ge 0\} 
\label{eq:2twostep_r2-}
\\
 \vec u^{k+1}\in\nabla F_2( \vec x^{k+1}),\;
 \vec v^{k+1}\in\nabla G_1( \vec x^{k+1}),
\label{eq:2twostep_s2-}
\end{numcases}
\end{subequations}
in which the previous assumption $F_1-F_2\ge 0$ in \eqref{iter1} and \eqref{iter2}  has been removed. For these modifications, a convergence property like Theorem \ref{th:gsd} still holds.

\end{remark}

\begin{remark}
Theorem \ref{th:gsd} shows the local convergence of a general relaxation of  Dinkelbach's algorithm  in the spirit of DC programming. 
 The DC programming  consists in  minimizing $F-G$ where $F$ and $G$ are convex functions. 
As described in \cite{MaeharaMurota15,MMM18}, both the original DC algorithm and its discrete version can be written as the simple iteration: $\vec u^k\in \nabla G(\vec x^k)$, $\vec x^{k+1}\in \nabla F^\star(\vec u^k)$, where $F^\star$ is the Fenchel conjugate of $F$. It is known that such an iteration is equivalent to the following scheme  \begin{subequations}
\label{F-Giter1}
\begin{numcases}{}
\vec  x^{k+1}\in \argmin\limits_{\vec x} F(\vec x) -\langle \vec u^k,\vec x\rangle, \label{eq:F-Gtwostep_x2}
\\ \vec u^{k+1}\in\nabla G( \vec x^{k+1}).
\label{eq:F-Gtwostep_s2}
\end{numcases}
\end{subequations}
Moreover, 
a slight variation of the above scheme by adding a normalization step 
\begin{subequations}
\label{FGiter1}
\begin{numcases}{}
  \hat{\vec x}^{k+1}\in \argmin\limits_{\vec x\in\R^n} F(\vec x) -\langle \vec u^k,\vec x\rangle, \label{eq:FGtwostep_x2}
\\ \vec x^{k+1}=  \hat{\vec x}^{k+1}/G(\hat{\vec x}^{k+1})^{\frac1p}
\\ \vec u^{k+1}\in\nabla G( \vec x^{k+1}).
\label{eq:FGtwostep_s2}
\end{numcases}
\end{subequations}
can be used to solve the fractional programming $\min F/G$, where $F$ and $G$ are convex and $p$-homogeneous with $p>1$. This scheme is nothing but 
Algorithm 2 in \cite{HeinBuhler2010}. 
In fact, we can say more about it. 
\end{remark}
\begin{pro}
Let $F$ and $G$ be convex,   $p$-homogeneous and positive-definite functions on $\R^n$, where $p>1$.  Then, for any  initial point $\vec x^0$, the sequence of the pairs $\{(r^k,\vec x^k)\}_{k\ge1}$ produced by the following scheme
\begin{subequations}
\label{iter1FG}
\begin{numcases}{}
\hat{\vec x}^{k+1}\in \argmin\limits_{\vec x\in \R^n} F(\vec x) -a_k\langle \vec u^k,\vec x\rangle, \label{eq:twostep_x2FG}
\\
\vec x^{k+1}= b_{k+1}  
\hat{\vec x}^{k+1}\; (\mathrm{scaling}),\;\; r^{k+1}=F( \vec x^{k+1})/G( \vec x^{k+1}),
\label{eq:twostep_r2FG}
\\
 \vec u^{k+1}\in\nabla G( \vec x^{k+1}),
\label{eq:twostep_s2FG}
\end{numcases}
\end{subequations}
converges to an eigenpair $(r^*,\vec x^*)$ of $(F,G)$ in the sense that $\lim\limits_{k\to+\infty}r^k= r^*$ and $\vec x^*$ is a limit point of $\{\vec x^k\}_{k\ge1}$,  whenever $a_k,b_k>0$ as well as  both $\{a_k\}_{k\ge 1}$ and  $\{b_k\hat{\vec x}^k\}_{k\ge 1}$
 are  bounded away from $0$ and $\infty$.
\end{pro}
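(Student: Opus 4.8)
The plan is to exploit that $F/G$ is zero-homogeneous and that the inner minimization is equivariant under scaling, so the whole analysis reduces to a clean canonical choice of the multipliers. Throughout I will use the Euler identity for a convex $p$-homogeneous function: if $\vec w\in\nabla F(\vec x)$ then $\langle\vec w,\vec x\rangle=pF(\vec x)$ (and likewise for $G$), which follows by differentiating $t\mapsto F(t\vec x)-F(\vec x)-(t-1)\langle\vec w,\vec x\rangle\ge 0$ at $t=1$. First I would record the elementary facts: since $F$ is positive-definite and $p$-homogeneous with $p>1$, the map $\vec x\mapsto F(\vec x)-a_k\langle\vec u^k,\vec x\rangle$ is coercive, so $\hat{\vec x}^{k+1}$ exists, and it is nonzero (moving a little along a direction $\vec w$ with $\langle\vec u^k,\vec w\rangle>0$ strictly decreases the objective, because the $p$-homogeneous term is $o(t)$). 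The optimality condition reads $a_k\vec u^k\in\nabla F(\hat{\vec x}^{k+1})$, while $\vec u^k\in\nabla G(\vec x^k)$ with $\langle\vec u^k,\vec x^k\rangle=pG(\vec x^k)>0$, so $\vec u^k\ne\vec 0$.

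The key observation is scale-equivariance. Because $\nabla F$ is $(p-1)$-homogeneous, if $\vec z$ solves $\vec u^k\in\nabla F(\vec z)$ then $a^{1/(p-1)}\vec z$ solves $a\vec u^k\in\nabla F(\cdot)$; hence the ray spanned by $\hat{\vec x}^{k+1}$ is independent of $a_k$, and since the rescaling by $b_{k+1}$ together with the zero-homogeneity of $F/G$ leaves $r^{k+1}=F(\vec x^{k+1})/G(\vec x^{k+1})$ unchanged, the whole sequence $\{r^k\}$ is independent of the choices $\{a_k\},\{b_k\}$. Consequently it suffices to prove monotonicity for the convenient choice $a_k=r^k$. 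With this choice, convexity of $F$ at $\hat{\vec x}^{k+1}$ tested at $\vec x^k$, together with the two Euler identities, gives
$$F(\vec x^k)\ \ge\ -(p-1)F(\hat{\vec x}^{k+1})+r^k\,p\,G(\vec x^k)=-(p-1)F(\hat{\vec x}^{k+1})+p\,F(\vec x^k),$$
so $F(\hat{\vec x}^{k+1})\ge F(\vec x^k)$; and convexity of $G$ at $\vec x^k$ tested at $\hat{\vec x}^{k+1}$ yields $G(\hat{\vec x}^{k+1})\ge G(\vec x^k)\big(pF(\hat{\vec x}^{k+1})-(p-1)F(\vec x^k)\big)/F(\vec x^k)$. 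Combining the two, and using $F(\hat{\vec x}^{k+1})\ge F(\vec x^k)>0$, one obtains $r^{k+1}=F(\hat{\vec x}^{k+1})/G(\hat{\vec x}^{k+1})\le F(\vec x^k)/G(\vec x^k)=r^k$. Since $F,G$ are positive-definite and continuous, $r^k\ge r_{\min}:=\min_{\|\vec x\|=1}F(\vec x)/G(\vec x)>0$, so $\{r^k\}$ is nonincreasing and bounded below, hence converges to some $r^*\ge r_{\min}$.

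For the eigenvector I would use the compactness built into the hypotheses: $\{b_k\hat{\vec x}^k\}=\{\vec x^k\}$ is bounded away from $\vec 0$ and $\infty$, so $\{\vec x^k\}$ lies in a compact annulus, the subgradients $\vec u^k\in\nabla G(\vec x^k)$ are then bounded (local boundedness of the convex subdifferential), and $\{a_k\}$ is bounded away from $0$ and $\infty$. Passing to a subsequence I may assume $\vec x^{k_i}\to\vec x^*\ne\vec 0$, $\vec x^{k_i+1}\to\vec x^{**}$, $\vec u^{k_i}\to\vec u^*$, $a_{k_i}\to a^*>0$ and $b_{k_i+1}\to b^*>0$. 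By the closed-graph (upper semicontinuity) property of the subdifferentials, $\vec u^*\in\nabla G(\vec x^*)$ and $a^*\vec u^*\in\nabla F(\hat{\vec x}^{**})$ with $\hat{\vec x}^{**}=\vec x^{**}/b^*$; equivalently $\hat{\vec x}^{**}$ minimizes $F(\cdot)-a^*\langle\vec u^*,\cdot\rangle$. Continuity of $F,G$ gives $F(\vec x^*)/G(\vec x^*)=\lim r^{k_i}=r^*$ and $F(\vec x^{**})/G(\vec x^{**})=\lim r^{k_i+1}=r^*$, so the limiting configuration realises one step of the scheme from $\vec x^*$ with $r(\vec x^{**})=r(\vec x^*)=r^*$. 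Invoking scale-equivariance once more, I replace the multiplier $a^*$ by $r^*$ without altering the ray of $\hat{\vec x}^{**}$ or its quotient; the monotonicity chain applied to this limiting step then holds with equality throughout. Tracing back the equality cases, equality in the $G$-convexity step together with $F(\hat{\vec x}^{**})=F(\vec x^*)$ forces equality in the $F$-convexity step, i.e.\ $r^*\vec u^*\in\nabla F(\vec x^*)$; combined with $\vec u^*\in\nabla G(\vec x^*)$ this is exactly $\vec 0\in\nabla F(\vec x^*)-r^*\nabla G(\vec x^*)$, so $(r^*,\vec x^*)$ is an eigenpair.

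The main obstacle is this last step: extracting the eigenrelation at a single limit point $\vec x^*$. The difficulty is that the $F$-subgradient information lives at $\hat{\vec x}^{k+1}$ while the $G$-subgradient lives at $\vec x^k$, so one must show that the angular part of the iteration stabilises; this is handled by the equality-case analysis of the monotonicity inequalities, which becomes available precisely because $r^{k+1}-r^k\to 0$. The homogeneity bookkeeping (the Euler identities and the $(p-1)$-homogeneity of $\nabla F$) and the closed-graph property of the Clarke/convex subdifferential, in the spirit of the proofs in \cite{HeinBuhler2010,HS11} and of Theorem \ref{th:gsd}, render the remaining estimates routine.
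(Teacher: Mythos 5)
Your overall route is the one the paper intends: the paper gives no detailed proof, deferring to Hein--B\"uhler's Theorem 3.1 in \cite{HeinBuhler2010} plus ``an additional trick like the proof of Case 2.1 in Theorem \ref{th:gsd}'', and your argument is exactly of that type. Your monotonicity computation is correct (I checked the Euler-identity bookkeeping: with $a_k=r^k$ one gets $F(\hat{\vec x}^{k+1})\ge F(\vec x^k)$ and then $r^{k+1}\le r^k$), the scale-equivariance reduction that lets you normalize $a_k=r^k$ is a clean and valid way to handle the arbitrary multipliers (it is the precise content of the paper's remark that $r^*$ is independent of $\{a_k\},\{b_k\}$, modulo corresponding selections of subgradients and minimizers), and your equality-case/subgradient-transfer argument at the limit is sound: equality in $F(\vec x^*)\ge F(\tilde{\vec x}^{**})+\langle r^*\vec u^*,\vec x^*-\tilde{\vec x}^{**}\rangle$ together with $r^*\vec u^*\in\nabla F(\tilde{\vec x}^{**})$ does transfer the subgradient to $\vec x^*$, yielding $\vec 0\in\nabla F(\vec x^*)-r^*\nabla G(\vec x^*)$.

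There is, however, one genuine gap, and it sits exactly where the paper's ``additional trick'' is needed. In your compactness step you write ``passing to a subsequence I may assume \dots $b_{k_i+1}\to b^*>0$''. This is not justified by the hypotheses: the proposition bounds $\{a_k\}$ and $\{b_k\hat{\vec x}^k\}=\{\vec x^k\}$ away from $0$ and $\infty$, but says nothing directly about $\{b_k\}$. If $\|\hat{\vec x}^{k}\|$ could tend to $0$ or $\infty$ along a subsequence, then $b_{k}$ would blow up or vanish, and you could not pass to the limit in the optimality condition $a_{k_i}\vec u^{k_i}\in\nabla F(\hat{\vec x}^{k_i+1})$ the way you do. To close this you must prove that $\{\hat{\vec x}^k\}$ itself is bounded away from $0$ and $\infty$, which is the Case 2.1-style coercivity estimate: for the upper bound, Euler's identity gives $pF(\hat{\vec x}^{k+1})=a_k\langle\vec u^k,\hat{\vec x}^{k+1}\rangle\le a_k\|\vec u^k\|\,\|\hat{\vec x}^{k+1}\|$, and combining with $F(\hat{\vec x}^{k+1})\ge c\,\|\hat{\vec x}^{k+1}\|^p$ (where $c=\min_{\|\vec x\|=1}F>0$) and the boundedness of $a_k$ and of the subgradients $\vec u^k$ yields $\|\hat{\vec x}^{k+1}\|^{p-1}\le a_k\|\vec u^k\|/(pc)$, a uniform upper bound; for the lower bound, Euler's identity shows the minimal value of the inner objective equals $-(p-1)F(\hat{\vec x}^{k+1})$, while testing the inner objective at $t\vec x^k$ and optimizing over $t>0$ shows this minimal value is $\le-\delta$ for a uniform $\delta>0$ (using that $F(\vec x^k)$ is bounded above, $G(\vec x^k)$ and $a_k$ are bounded below on the compact annulus), whence $F(\hat{\vec x}^{k+1})\ge\delta/(p-1)$ and $\|\hat{\vec x}^{k+1}\|$ is bounded below. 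With this lemma in place, $\{b_k\}$ is automatically bounded away from $0$ and $\infty$ and the rest of your argument goes through as written.
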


The proof is very similar to the original proof of Theorem 3.1 in  \cite{HeinBuhler2010},  with an additional trick  like the proof of Case 2.1 in Theorem \ref{th:gsd}. It can be regarded as a supplement of both Theorem 3.1 in  \cite{HeinBuhler2010} and Theorem \ref{th:gsd}. It is also interesting that the scheme is stable under   perturbations of $a_k$ and $b_k$. Besides, it can be seen that the resulting eigenvalue $r^*$ should be  independent of the choice of $a_k$ and $b_k$. In fact, $r^*$ only depends on the initial data and the choice of subgradient $\vec u^k$. 
The assumption that  $F$ is positive-definite can be removed in some sense. Indeed, if $r^k\le0$ for some $k$, we can modify \eqref{eq:twostep_x2FG} as $\hat{\vec x}^{k+1}\in \argmin\limits_{\vec x\in \R^n} F(\vec x) -r^kG(\vec x)$ or $\hat{\vec x}^{k+1}\in \argmin\limits_{\vec x\in \R^n} G(\vec x) -\frac{1}{r^k}F(\vec x)$ when $r^k<0$. Then $\{r^k\}$  converges to the global minimum of $F/G$. 

Now, we apply the above mixed 
IP-SD scheme to fractional  combinatorial optimization problems. 
By the results 
in Section \ref{sec:CC-transfer}, any combinatorial optimization in  ratio form can be translated to fractional programming of the form  \eqref{eq:RatioDC-form} via multi-way  Lov\'asz  extensions. 
Then, applying the mixed IP-SD  scheme to the resulting  optimization, we  get a solution of the equivalent  continuous optimization. And it is surprising that such a continuous solution  can produce a combinatorial solution of the original    problem directly, as precisely described in the following  proposition.

\begin{pro}\label{pro:Lovasz-algorithm}
Given $f:\A\to\R$ and $g:\A\to\R_+$, where $\A=\power(V)$ or $\power_2(V)$ or $\power^k(V)$ or $\power^k_2(V)$,   let $F=f^L$,   $G=g^L$, and  take  $\vec x^0=\vec1_A$ in the 
iteration scheme  \eqref{iter1} or \eqref{iter2} for some $A\in\A$. Suppose that 
$\vec x^*$ 
is a limit point of the iterative sequence $\{\vec x^k\}$ obtained by \eqref{iter1} or \eqref{iter2}. Then, 
any associated  set-tuple $A^*$ of $\vec x^*$ 
is an eigen-set 
of the corresponding combinatorial eigenvalue problem \eqref{eq:f1f2g1g2eigen},  and there holds 
$f(A^*)/g(A^*)\le f(A)/g(A)$.
\end{pro}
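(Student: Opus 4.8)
The plan is to combine the monotone convergence of the mixed IP--SD scheme with the equivalence between the continuous and the combinatorial eigenvalue problems, reading the claimed inequality off the limiting Rayleigh quotient. First I would record the trivial but crucial fact that the starting ratio is exactly the discrete value one wishes to improve: since $\vec x^0=\vec 1_A$ and $f^L(\vec 1_A)=f(A)$, $g^L(\vec 1_A)=g(A)$, the scheme produces $r^0=F(\vec x^0)/G(\vec x^0)=f(A)/g(A)$.

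Next I would invoke Theorem \ref{th:gsd}. Because $f$ and $g$ decompose as differences of submodular functions by Proposition \ref{pro:difference-two-submodular}, their Lov\'asz extensions split as $F=f^L=F_1-F_2$ and $G=g^L=G_1-G_2$ with all four pieces convex (Lov\'asz extensions of submodular functions) and, by Proposition \ref{pro:multi-way-property}, positively one-homogeneous; this places us in Case 1 (with $H=\mathrm{const}$) or Case 2.2 of Theorem \ref{th:gsd}, so its hypotheses are met. Hence $\{r^k\}$ decreases monotonically to some $r^*$ with $r^*\le r^0=f(A)/g(A)$, and the cluster point $\vec x^*$ satisfies $\vec 0\in\nabla F_1(\vec x^*)-\nabla F_2(\vec x^*)-r^*(\nabla G_1(\vec x^*)-\nabla G_2(\vec x^*))$, that is, $\vec x^*$ is an eigenvector of \eqref{eq:relax-eigen-lovasz} with eigenvalue $r^*$.

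The first assertion is then immediate from Proposition \ref{pro:general-eigen-equiv}: any associated set-tuple $A^*$ of the eigenvector $\vec x^*$ is an eigen-set of the combinatorial eigenvalue problem \eqref{eq:relax-eigen-combina} (equivalently \eqref{eq:f1f2g1g2eigen}), carrying the same eigenvalue $r^*$. For the quantitative claim I would read off that eigenvalue by pairing the combinatorial eigenvalue relation with the indicator $\vec 1_{A^*}$: applying the Euler identity to the one-homogeneous pieces gives $\langle\nabla f_i^L(\vec 1_{A^*}),\vec 1_{A^*}\rangle=f_i(A^*)$, and likewise for the $g_j$, so the relation collapses to $f(A^*)-r^*g(A^*)=0$. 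Therefore $f(A^*)/g(A^*)=r^*\le f(A)/g(A)$ for every associated set-tuple $A^*$, which is the desired rounding-free monotone improvement.

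The step I expect to be the main obstacle is the degenerate bookkeeping at the end: the pairing argument identifies $r^*$ with $f(A^*)/g(A^*)$ only when $g(A^*)>0$, so I would have to verify that the relevant associated set-tuples lie in $\supp(g)$ --- equivalently, track that the iterates and their cluster point remain in $\D_\A\cap\supp(G)$, as is guaranteed by the assumption $\vec x^0\in\supp(G)$ in Theorem \ref{th:gsd} --- and handle the extreme level values $t=\min\vec x^*$ and $t=\max\vec x^*$ separately. A secondary technical point is to confirm that the eigenvalue, and not merely the eigen-set property, is preserved under the correspondence of Proposition \ref{pro:general-eigen-equiv}; both facts are routine consequences of the one-homogeneity of all the extensions, but they are exactly where the care is needed.
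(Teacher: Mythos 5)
Your proposal is correct and follows essentially the same route as the paper's proof: the submodular decomposition of Proposition \ref{pro:difference-two-submodular}, Theorem \ref{th:gsd} for the monotone decrease $r^*\le r^0=f(A)/g(A)$ and the eigenvector property of $\vec x^*$, Proposition \ref{pro:general-eigen-equiv} to pass to the combinatorial eigen-set, and the identification $f(A^*)=r^*g(A^*)$ (the paper states this as the equality of Rayleigh quotients at $\vec 1_{A^*}$ and $\vec x^*$, which your Euler-identity pairing justifies in the same spirit). The obstacle you flag at the end is moot, since the hypothesis $g:\A\to\R_+$ means $g$ is strictly positive on all of $\A$, so every associated set-tuple automatically lies in $\supp(g)$.
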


\begin{proof}
It suffices to consider the case that $F=f_1^L-f_2^L$ and $G=g_1^L-g_2^L$ are one-homogeneous in Theorem \ref{th:gsd}, where $f=f_1-f_2$ and $g=g_1-g_2$ are submodular decompositions.  Then, from any initial point $\vec x^0:=\vec1_A$,  either  \eqref{iter1} or \eqref{iter2} provides a solution $\vec x^*$ which must be an eigenvector of the nonlinear eigenvalue problem
$$\vec0\in \nabla f_1^L(\vec x^*)-\nabla f_2^L(\vec x^*)-r^*\left(\nabla g_1^L(\vec x^*)-\nabla g_2^L(\vec x^*)\right).$$
Similar to  the proof of  Proposition \ref{pro:Lovasz-eigen-pre}, or simply using  Proposition \ref{pro:general-eigen-equiv}, 
for any associate set-tuple $A^*$ of $\vec x^*$, the indicator vector $\vec 1_{A^*}$ also satisfies 
\begin{align*}
\vec0&\in \nabla f_1^L(\vec 1_{A^*})-\nabla f_2^L(\vec 1_{A^*})-r^*\left(\nabla g_1^L(\vec 1_{A^*})-\nabla g_2^L(\vec 1_{A^*})\right)    
\end{align*}
which can be rewritten in the form of the combinatorial
eigenvalue problem 
\begin{equation}\label{eq:f1f2g1g2eigen}
\vec0\in\nabla f_1(A^*)-\nabla f_2(A^*)-r^*\left(\nabla g_1(A^*)-\nabla g_2(A^*)\right).
\end{equation}
Moreover, $$\frac{f(A^*)}{g(A^*)}=\frac{f^L(\vec 1_{A^*})}{g^L(\vec 1_{A^*})}=\frac{f^L(\vec x^*)}{g^L(\vec x^*)}\le \frac{f^L(\vec x^0)}{g^L(\vec x^0)}=\frac{f^L(\vec 1_{A})}{g^L(\vec 1_{A})}=\frac{f(A)}{g(A)}$$
where the inequality is due to Theorem \ref{th:gsd}.
\end{proof}


\noindent\textbf{Advantages of the  mixed IP-SD  algorithm}. 
By Proposition \ref{pro:Lovasz-algorithm},  
the advantage  of the mixed IP-SD scheme over existing continuous algorithms  for solving combinatorial optimization  in fractional form 
 is that it provides  an iterative solution  {\sl without}  rounding, and can be used  to improve initially given data.  %
In fact, it should be noted that  these two advantages, namely, an iterative solution  without rounding, and usage to improve initially given data, do not apply to  other continuous algorithms, like 
semi-definite relaxations \cite{GW95,Goemans97} and its variants \cite{BMZ01}, spectral cut method \cite{DP93,PR95} and its recursive implementations \cite{Trevisan2012}, as well as polynomial programming \cite{ST92}.

A special version of the previous     mixed IP-SD algorithm 
has been  actually used in some classic graph cut problems \cite{HeinBuhler2010,CSZ15,CSZ16,SZZmaxcut}. 
Although we have not yet systematically investigated   the solution quality and the computational complexity  in general, some good numerical simulations have  been reported  for certain problems,  including the Cheeger cut  \cite{BuhlerHein2009,HeinBuhler2010,CSZ15}, the dual Cheeger problem \cite{CSZ16}, and the maxcut problem \cite{SZZmaxcut}. 
In particular, in Section \ref{sec:maxcut}, we will    discuss the maxcut problem in detail to illustrate the performance, solution quality   and numerical simulations.  
Successful numerical experiments have shown that the mixed IP-SD iterative algorithm is likely to be efficient, and converges in  polynomial time. We propose to investigate  the computation time or  convergence rates required to obtain the  solution in future work. 

Furthermore, the mixed IP-SD scheme proposed  in this section can be generalized slightly  to compute the second eigenvalue of the function pair obtained by the Lov\'asz extension.  We refer the reader to Section 2.1 
in  \cite{JostZhang} for a more general  description.  

Another solver for the continuous optimization $\min\frac{F(\vec x)}{G(\vec x)}$ is 
 the stochastic subgradient method: 
\begin{equation*}
\vec x^{k+1}=\vec x^k-\alpha_k(\vec y^k+\vec\xi^k),\;\;\;\vec y^k\in\nabla\frac{F(\vec x^k)}{G(\vec x^k)},
\end{equation*}
where $\{\alpha_k\}_{k\ge1}$ is a step-size sequence and $\{\vec\xi^k\}_{k\ge1}$ is now a sequence of random variables (the ``noise'') on some probability space. Theorem 4.2 in \cite{Davis19-FoCM} shows that under some natural assumptions, almost surely, every limit point of the stochastic subgradient iterates $\{\vec x^k\}_{k\ge1}$ is critical for $F/G$, and the function values $\{\frac{F}{G}(\vec x^k)\}_{k\ge1}$ converge. 
Of course, many other  continuous optimization algorithms can be applied, and the mixed IP-SD  scheme is just one suitable option. It is expected that better algorithms can be designed based on the obtained equivalent continuous optimization problem via our multi-way Lov\'asz extensions.

\section{Examples and Applications}
\label{sec:examples-Applications}

\subsection{Submodular vertex cover and multiway partition problems}
As a first immediate application of Theorem \ref{thm:tilde-fg-equal}, we obtain an easy way to rediscover the famous identity by Lov\'asz, and the two typical submodular optimizations -- submodular vertex cover and multiway partition problems.
\begin{example}
The identity $\min\limits_{A\in\power(V)}f(A)=\min\limits_{\vec x\in[0,1]^V}f^L(\vec x)$ discovered by Lov\'asz in his original paper \cite{Lovasz} can be obtained by our result. In fact, $$\min\limits_{A\in\power(V)}f(A)=\min\limits_{A\in\power(V)}\frac{f(A)}{1}=\min\limits_{\vec x\in [0,\infty)^V}\frac{f^L(\vec x)}{\max\limits_{i\in V}x_i}=\min\limits_{\vec x\in [0,1]^V}\frac{f^L(\vec x)}{\max\limits_{i\in V}x_i}=\min\limits_{\vec x\in [0,1]^V,\max\limits_i x_i=1}f^L(\vec x).$$
Checking this is easy:
 if $f\ge 0$, then $\min\limits_{\vec x\in [0,1]^V,\max\limits_i x_i=1}f^L(\vec x)=0$; if $f(A)<0$ for some $A\subset V$, then $\min\limits_{\vec x\in [0,1]^V,\max\limits_i x_i=1}f^L(\vec x)=\min\limits_{\vec x\in [0,1]^V}f^L(\vec x)$.
\end{example}

\paragraph{Vertex cover number }
A vertex cover (or node cover) of a graph is a set of vertices such that each edge of the graph is incident to at least one vertex of the set. The vertex cover number is the minimal cardinality of a  vertex cover. Similarly, the independence number of a graph is the maximal number of vertices not connected by edges. The sum of the vertex cover number and the independence number is  the  cardinality of the vertex set. 

By a variation of the Motzkin-Straus theorem and Theorem \ref{thm:graph-numbers}, the vertex cover number thus has at least two equivalent continuous representations similar to the independence number.

\paragraph{Submodular vertex cover problem}
Given a graph $G=(V,E)$, and a submodular function $f:\power(V)\to[0,\infty)$,  find a vertex cover $S\subset V$ minimizing $f(S)$.

By Theorem \ref{thm:tilde-fg-equal},
$$\min\{f(S):S\subset V,\,S\text{ is a vertex cover}\}=\min\limits_{\vec x\in\D}\frac{f^L(\vec x)}{\|\vec x\|_\infty}=\min\limits_{\vec x\in \widetilde{\D}}f^L(\vec x)$$
where $\D=\{\vec x\in[0,\infty)^V:V^t(\vec x)\text{ vertex cover},\,\forall t\ge0\}=\{\vec x\in[0,\infty)^V:x_i+x_j>0,\forall\{i,j\}\in E,\,\{i:x_i=\max_j x_j\}\text{ vertex cover}\}$,  and $\widetilde{\D}=\{\vec x\in\D: \|\vec x\|_\infty=1\}=\{\vec x\ge\vec 0:x_i+x_j\ge 1,\forall\{i,j\}\in E,\,\{i:x_i=\max_j x_j\}\text{ vertex cover}\}$. Note that
$$\mathrm{conv}(\widetilde{\D})=\{\vec x:x_i+x_j\ge 1,\forall\{i,j\}\in E,\,x_i\ge 0,\forall i\in V\}.$$

Therefore, $\min\limits_{\vec x\in \mathrm{conv}(\widetilde{\D})}f^L(\vec x)\le \min\{f(S):\text{ vertex cover }S\subset V\}$, which rediscovers the convex programming relaxation.

\paragraph{Submodular multiway partition problem}
This problem  is about to minimize $\sum_{i=1}^k f(V_i)$ subject to $V=V_1\cup\cdots\cup V_k$, $V_i\cap V_j=\varnothing$, $i\ne j$, $v_i\in V_i$, $i=1,\cdots,k$, where $f:\power(V)\to\R$ is a submodular function.

 Letting $\A=\{\text{ partition }(A_1,\cdots,A_k)\text{ of }V:A_i\ni a_i,\,i=1,\cdots,k\}$, by Theorem \ref{thm:tilde-fg-equal}, 
 $$\min\limits_{(A_1,\cdots,A_k)\in\A}\sum_{i=1}^k f(A_i)=\inf\limits_{\vec x\in \D_\A}\frac{\sum_{i=1}^k f^L(\vec x^i)}{\|\vec x\|_\infty}=\inf\limits_{\vec x\in \D'}\sum_{i=1}^k f^L(\vec x^i),$$
  where $\D_\A=\{\vec x\in [0,\infty)^{kn}: (V^t(\vec x^1),\cdots,V^t(\vec x^k))\text{ is a partition}, V^t(\vec x^i)\ni a_i,\forall t\ge 0\}=\{\vec x\in [0,\infty)^{kn}: \vec x^i=t1_{A_i},A_i\ni a_i,\forall t\ge 0\}$, and $\D'=\{(\vec x^1,\cdots,\vec x^k):\vec x^i\in [0,\infty)^V,\,\vec x^i=\vec 1_{A_i},A_i\ni a_i\}$. Note that $$\mathrm{conv}(\D')=\{(\vec x^1,\cdots,\vec x^k):\sum_{v\in V} x^i_v=1,x^i_{a_i}=1,x^i_v\ge0\}.$$ So one rediscovers the corresponding convex programming relaxation $\min\limits_{\vec x\in \mathrm{conv}(\D')}\sum_{i=1}^k f^L(\vec x^i)$.
  
\subsection{Min-cut and Max-cut}
\label{sec:maxcut}
Given an undirected weighted   graph $(V,E,w)$, the  min-cut problem  
$$\min\limits_{S\ne\varnothing,V}|\partial S|:=\min\limits_{S\ne\varnothing,V}|E(S,V\setminus S)|=\min\limits_{S\ne\varnothing,V}\sum\limits_{i\in S,j\in V\setminus S}w_{ij}$$
 and the 
max-cut problem  $$\max\limits_{S\ne\varnothing,V}|\partial S|:=\max\limits_{S\ne\varnothing,V}|E(S,V\setminus S)|=\max\limits_{S\ne\varnothing,V}\sum\limits_{i\in S,j\in V\setminus S}w_{ij}$$
have been  investigated systematically.

\begin{theorem}\label{thm:mincut-maxcut-eigen}Let $(V,E,w)$ be a weighted  undirected graph. 
Then, we have the equivalent continuous optimization formulations for the min-cut and max-cut problems:
$$\min\limits_{S\ne\varnothing,V}|\partial S|=\min\limits_{\min _ix_i+\max_i x_i=0}\frac{\sum_{ij\in E}w_{ij}|x_i-x_j|}{2\|\vec x\|_\infty}=\tilde{\lambda}_2,$$
$$\max\limits_{S\ne\varnothing,V}|\partial S|= \max\limits_{\vec x\ne \vec0}\frac{\sum_{ij\in E}w_{ij}|x_i-x_j|}{2\|\vec x\|_\infty}=\tilde{\lambda}_{\max},$$
where $\tilde{\lambda}_2$ and $\tilde{\lambda}_{\max}$ are the second 
(i.e., the smallest  nontrivial)  eigenvalue and the largest eigenvalue of the nonlinear eigenvalue problem:
\begin{equation}\label{eq:mincut-maxcut-eigen}
\vec0\in \nabla\sum_{ij\in E}w_{ij}|x_i-x_j|-\lambda\nabla 2\|\vec x\|_\infty.    
\end{equation}\end{theorem}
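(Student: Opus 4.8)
The plan is to read \eqref{eq:mincut-maxcut-eigen} as the eigenvalue problem of the function pair $(f^L,g^L)$ associated with the disjoint-pair Lov\'asz extension, where $f(A,B)=|\partial A|+|\partial B|$ and $g(A,B)\equiv 2$ on $\power_2(V)$, with $|\partial A|=\sum_{i\in A,j\in V\setminus A}w_{ij}$. By the first and third rows of Table \ref{tab:L-two} (in their weighted form) one has $F:=f^L(\vec x)=\sum_{ij\in E}w_{ij}|x_i-x_j|$ and $G:=g^L(\vec x)=2\|\vec x\|_\infty$, so \eqref{eq:mincut-maxcut-eigen} is precisely $\vec 0\in\nabla F(\vec x)-\lambda\nabla G(\vec x)$. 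The whole statement then reduces to describing the spectrum of $(F,G)$ together with its two variational extremes.

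First I would settle the two middle equalities. For the max-cut side, $H(s,t)=s/t$ possesses both the (MIN) and (MAX) properties, so Theorem \ref{thm:tilde-H-f} gives $\sup_{\vec x\ne\vec 0}\frac{F(\vec x)}{G(\vec x)}=\max_{(A,B)\in\power_2(V)\setminus\{(\varnothing,\varnothing)\}}\frac{f(A,B)}{2}$, the supremum being attained after normalizing $\|\vec x\|_\infty=1$. I would then verify the combinatorial identity $\max_{(A,B)}\tfrac12(|\partial A|+|\partial B|)=\max_{S\ne\varnothing,V}|\partial S|$: ``$\ge$'' by choosing $B=V\setminus A$, since $f(A,V\setminus A)=2|\partial A|$, and ``$\le$'' from $|\partial A|+|\partial B|\le 2\max\{|\partial A|,|\partial B|\}$. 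For the min-cut side I would use that both $F$ and $\vec x\mapsto\max_i x_i-\min_i x_i$ are invariant under translation by $\vec 1$: translating any nonconstant $\vec x$ by $t^\ast=\tfrac12(\max_i x_i+\min_i x_i)$ moves it onto the slice $\{\min_i x_i+\max_i x_i=0\}$, on which $2\|\vec x\|_\infty=\max_i x_i-\min_i x_i=\min_{t\in\R}2\|\vec x-t\vec 1\|_\infty$, so that the constrained minimum equals $\min_{\vec x\text{ nonconstant}}\frac{F(\vec x)}{\min_{t}g^L(\vec x-t\vec 1)}$.

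Next I would identify the two spectral quantities. Setting $\hat f(A)=|\partial A|$, which is symmetric because $|\partial A|=|\partial(V\setminus A)|$, and $\hat g\equiv 1$, which is positive, non-decreasing and submodular, one has $f(A,B)=\hat f(A)+\hat f(B)$ and $g(A,B)=\hat g(A)+\hat g(B)$. Hence Proposition \ref{eq:Cheeger-identity} applies and shows that the (variational) second eigenvalue $\tilde\lambda_2$ of $(F,G)$ equals $\Ch(\hat f,\hat g)=\min_{A\ne\varnothing,V}\frac{|\partial A|}{\min\{1,1\}}=\min_{S\ne\varnothing,V}|\partial S|$; moreover its proof expresses $\tilde\lambda_2$ as $\min_{\vec x\text{ nonconstant}}\frac{F(\vec x)}{\min_t g^L(\vec x-t\vec 1)}$, which matches the reformulation obtained above. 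For the largest eigenvalue I would check the hypothesis $2f(A,B)=f(A,V\setminus A)+f(V\setminus B,B)$ of the Corollary following Proposition \ref{pro:set-pair-infty-norm}; this holds since both sides equal $2(|\partial A|+|\partial B|)$. That corollary then identifies the eigenvalues of $(f^L,\|\cdot\|_\infty)$ with $\{2|\partial A|:A\subseteq V\}$, so the scaling $g^L=2\|\cdot\|_\infty$ makes the spectrum of $(F,G)$ exactly $\{|\partial A|:A\subseteq V\}$, whose maximum is $\tilde\lambda_{\max}=\max_{S\ne\varnothing,V}|\partial S|$. (Alternatively, by Proposition \ref{pro:Lovasz-eigen} every eigenvalue equals some $f(A,B)/g(A,B)\le\sup F/G$, while $\sup F/G$ is itself a critical value, hence an eigenvalue, so it is the largest one.)

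The hard part will be the careful handling of $\tilde\lambda_2$: I must argue that the min-max (``nontrivial'') second eigenvalue delivered by the Lusternik--Schnirelman characterization underlying Proposition \ref{eq:Cheeger-identity} is really the quantity the theorem names $\tilde\lambda_2$, and that the gauge $\min_i x_i+\max_i x_i=0$ faithfully realizes the denominator $\min_t 2\|\vec x-t\vec 1\|_\infty$. A caveat to record is the disconnected case, where $\min_{S\ne\varnothing,V}|\partial S|=0$; there $\tilde\lambda_2=0$ coincides with the trivial value, consistent with the variational (rather than ``smallest strictly positive'') reading of ``nontrivial'', while for connected graphs the two readings agree.
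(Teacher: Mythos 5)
Your proposal is correct and follows essentially the same route as the paper: the same function pair $f(A,B)=|\partial A|+|\partial B|$, $g\equiv 2$, the same reformulation machinery (Theorem \ref{thm:tilde-H-f} / Theorem \ref{thm:tilde-fg-equal}), and the same spectral identification, except that you invoke the constituent results (Proposition \ref{eq:Cheeger-identity} for $\tilde{\lambda}_2$ and the corollary to Proposition \ref{pro:set-pair-infty-norm} for $\tilde{\lambda}_{\max}$) where the paper cites the packaged Theorem \ref{introthm:eigenvalue}, and you reach the slice $\{\min_i x_i+\max_i x_i=0\}$ via translation invariance where the paper instead applies the restricted-domain version of Theorem \ref{thm:tilde-fg-equal} to $\A=\{(A,B)\in\power_2(V):A,B\ne\varnothing\}$, noting $\D_\A=\{\vec x:\max_i x_i=-\min_i x_i>0\}$. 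Your explicit treatment of the max-cut half (which the paper omits, deferring to its earlier examples) and your caveat about the disconnected case are consistent with, and slightly more careful than, the paper's terse argument.
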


\begin{proof}
We only prove the min-cut case. It is clear that
$$ \min\limits_{S\ne\varnothing,V}|\partial S|= \min\limits_{A,B\ne\varnothing,A\cap B=\varnothing}\frac{ |\partial A|+|\partial B|}{2}$$
Let $\A=\{(A,B)\in \power_2(V):A,B\ne\varnothing\}$. Then $\D_\A=\{\vec x\in\R^n: \max_i x_i=-\min_i x_i>0\}$, and by Theorem  \ref{thm:tilde-fg-equal}, $$\min\limits_{S\ne\varnothing,V}|\partial S|=\min\limits_{(A,B)\in\A}\frac{ |\partial A|+|\partial B|}{2}=\min\limits_{\vec x\in\D_\A}\frac{\sum_{ij\in E}w_{ij}|x_i-x_j|}{2\|\vec x\|_\infty}. $$ 
 In addition,  according to Theorem \ref{introthm:eigenvalue}, the set of the eigenvalues of $(f^L,g^L)$ coincides with 
 $$\left\{\frac{f^L(\vec 1_A-\vec 1_{V\setminus A})}{g^L(\vec 1_A-\vec 1_{V\setminus A})}:A\subset V\right\}=\left\{\frac{f(A,V\setminus A)}{g(A,V\setminus A)}:A\subset V\right\}=\left\{\frac{|\partial A|+|\partial (V\setminus A)|}{2}:A\subset V\right\}=\{|\partial A|:A\subset V\},$$
 where $f(A,B)=|\partial A|+|\partial B|$ and $g(A,B)=2$. In consequence, $\min\limits_{S\ne\varnothing,V}|\partial S|$ is the second 
eigenvalue  of $(f^L,g^L)$. The proof is completed.
\end{proof}

Eq.~\eqref{eq:mincut-maxcut-eigen} shows the first nonlinear eigenvalue problem which possesses two nontrivial eigenvalues that are equivalent to two important graph optimization problems, respectively. 

In addition, by our results, we  present a lot of equivalent continuous optimizations for the maxcut problem (see Examples \ref{exam:maxcut1} and \ref{exam:maxcut2}):
\begin{align}
\max\limits_{S\subset V}|\partial S|&=\max\limits_{x\ne
  0}\frac{ \sum_{\{i,j\}\in E}w_{ij}(|x_i|+|x_j|-|x_i+x_j|)^p }{(2\|\vec
  x\|_\infty)^p}\notag
\\&=\max\limits_{x\ne
  0}\frac{ \sum_{\{i,j\}\in E}w_{ij}|x_i-x_j|^p }{(2\|\vec
  x\|_\infty)^p}=\max\limits_{\|\vec
  x\|_\infty\le\frac12 }  \sum_{\{i,j\}\in E}w_{ij}|x_i-x_j|^p  \label{eq:maxcut-p}
\end{align}
for any $p\ge1$.  We shall now show three applications of the above formulation. 

\noindent\textbf{CirCut
algorithm (by Burer et al  \cite{BMZ01})}.\; From the  equality \eqref{eq:maxcut-p}, it is easy to see   $$\max\limits_{S\subset V}|\partial S|=\frac{1}{2^p}\max\limits_{\theta\in\R^n}\sum_{\{i,j\}\in E}w_{ij}|\cos\theta_i-\cos\theta_j|^p,$$
and taking $p=2$, we have the equivalent formulation of the maxcut problem
\begin{equation}\label{eq:maxcut-theta}
\max\limits_{S\subset V}|\partial S|=\frac14\max\limits_{\theta\in\R^n}\sum_{\{i,j\}\in E}w_{ij}(\cos\theta_i-\cos\theta_j)^2=\max\limits_{\theta\in\R^n}\sum_{\{i,j\}\in E}w_{ij}\sin^2\frac{\theta_i-\theta_j}{2}\sin^2\frac{\theta_i+\theta_j}{2}.
\end{equation}
If we remove the term $\sin^2\frac{\theta_i+\theta_j}{2}$ on the right-hand-side of \eqref{eq:maxcut-theta}, 
that is, consider instead   the continuous relaxation 
$$\max\limits_{\theta\in\R^n}\sum_{\{i,j\}\in E}w_{ij}\sin^2\frac{\theta_i-\theta_j}{2}=
\frac12\sum_{\{i,j\}\in E}w_{ij}-\frac12\min\limits_{\theta\in\R^n}\sum_{\{i,j\}\in E}w_{ij}\cos(\theta_i-\theta_j)
\Longleftrightarrow \min\limits_{\theta\in\R^n}\sum_{\{i,j\}\in E}w_{ij}\cos(\theta_i-\theta_j)$$
we 
immediately recover the CirCut algorithm proposed by Burer, Monteiro and Zhang  \cite{BMZ01}, which is a smart  relaxation of the  maxcut problem. Until now, 
it is still one of the best  algorithms for solving maxcut in terms of numerical experiments. Burer et al  \cite{BMZ01} consider their method as a rank-two  relaxation of the Goemans-Williamson algorithm, where the latter  is a semi-definite  relaxation of the  maxcut problem.  
Thus, our new formulation \eqref{eq:maxcut-theta} indeed provides an   alternative  perspective to  the  Burer-Monteiro-Zhang's CirCut algorithm. 



\vspace{0.2cm}

\noindent\textbf{A simple iterative 
algorithm}.\; Based on the case of taking  $p=1$ in \eqref{eq:maxcut-p}, there is a previous work on computing the maxcut problem by the second author and his  collaborators  
\cite{SZZmaxcut}, in which the mixed IP-SD algorithm is  essentially  used. 
Specifically, the simple iterative algorithm in \cite{SZZmaxcut} is indeed an  implementation  of the mixed IP-SD scheme in Section \ref{sec:algo}  by taking $H=0$, $F(\vec x)= F_1(\vec x)=\sum_{\{i,j\}\in E}w_{ij}|x_i-x_j|$ and $G(\vec x)=G_1(\vec x)=\|\vec x\|_\infty$ in the iterative scheme \eqref{iter1}.  

We will  briefly report  the performance of the  mixed IP-SD algorithm applied to the maxcut problem,   which is presented in  \cite{SZZmaxcut}.  
As discussed in Section \ref{sec:algo}, our  algorithm is \emph{completely rounding-free}, whereas almost all other  algorithms require  additional   explicit or  implicit rounding operations;  for example,  the Procedure-CUT operation in the CirCut  algorithm of Burer et al \cite{BMZ01} can be seen as an implicit  rounding technique. 
More importantly, the iterative values obtained by our mixed IP-SD algorithm are monotonic to the equivalent continuous objective function  of the maxcut problem and can be used for  \emph{post-processing} to improve the quality of the solution obtained by any other algorithms. 
In particular, we would like to point out that our algorithm \emph{does  improve} the cuts obtained by the CirCut algorithm, while  conversely  the CirCut  algorithm \emph{cannot}  improve the quality of the solutions  produced by our mixed IP-SD  algorithm (see Section 4.4 in \cite{SZZmaxcut} for a detailed  comparison and illustration). 
 These numerical experiments in \cite{SZZmaxcut} show that the mixed IP-SD  algorithm is  efficient, and  converges in  polynomial time, and is one of the best continuous iterative algorithms for the maxcut problem.

\vspace{0.2cm}

\noindent\textbf{A new geometric perspective  for the Goemans-Williamson 
algorithm}.\; In addition, our  equivalent continuous reformulation of the maxcut problem also provides  a new geometric perspective  for Goemans-Williamson's SDP  algorithm, via the following relations:
\begin{align*}
\max\limits_{\|\vec
  x\|_\infty\le 1 }  \sum_{\{i,j\}\in E}w_{ij}|x_i-x_j|^2 &=\frac1n\max\limits_{\substack{\vec
  x^i\in\R^n\\ \|\vec
  x^i\|_\infty\le 1} }  \sum_{\{i,j\}\in E}w_{ij}\|\vec x^i-\vec x^j\|_2^2 = \max\limits_{\|\vec
  x^i\|_\infty\le 1/\sqrt{n} }  \sum_{\{i,j\}\in E}w_{ij}\|\vec x^i-\vec x^j\|_2^2
  \\&\le \max\limits_{\|\vec
  x^i\|_2\le 1 }  \sum_{\{i,j\}\in E}w_{ij}\|\vec x^i-\vec x^j\|_2^2 =\max\limits_{\|\vec
  x^i\|_2=1 }  \sum_{\{i,j\}\in E}w_{ij}\|\vec x^i-\vec x^j\|_2^2
 \\&=2\sum_{\{i,j\}\in E}w_{ij}-2\min\limits_{\|\vec
  x^i\|_2=1 }\langle \vec x^i,\vec x^j\rangle
\end{align*}
  where the inequality is based on the fact that $\|\vec
  x^i\|_\infty\le 1/\sqrt{n}$ implies $\|\vec
  x^i\|_2\le 1$, and the second-to-last  equality is  due to the convexity of the relaxed objective function.  
According to the above inequality, we can see that the famous  Goemans-Williamson  algorithm for the maxcut problem  actually relaxes the constraint domain from the $l^\infty$-ball (i.e., a hypercube) to its circumscribed sphere.

\subsection{Max $k$-cut problem}
\label{sec:max-k-cut}
The max $k$-cut problem is to determine a graph $k$-cut by solving
\begin{equation}\label{eq:maxk}
\mathrm{MaxC}_k(G)=\max_{\text{partition }(A_1,A_2,\ldots,A_k)\text{ of }V}\sum_{i\ne j}|E(A_i,A_j)|=\max_{(A_1,A_2,\ldots,A_k)\in \mathcal{C}_{k}(V)}\sum_{i=1}^k|\partial A_i|,
\end{equation}
where $\mathcal{C}_{k}(V)=\{(A_1,\ldots,A_k)\big|A_i\cap A_j = \varnothing, \bigcup_{i=1}^{k} A_i= V \}$,  and $\partial A_i:=E(A_i,V\setminus A_i)$. We may write \eqref{eq:maxk} as
$$ \mathrm{MaxC}_k(G)=\max_{(A_1,A_2,\ldots,A_{k-1})\in \mathcal{P}_{k-1}(V)}\sum_{i=1}^{k-1}|\partial A_i|+|\partial (A_1\cup\cdots\cup A_{k-1})|.$$
Taking $f_k(A_1,\cdots,A_k)=\sum_{i=1}^{k}|\partial A_i|+|\partial (A_1\cup\cdots\cup A_{k})|$, the $k$-way Lov\'asz extension is $$f^L_k(\vec x^1,\cdots,\vec x^k)=\sum_{l=1}^k\sum_{i\sim j}|x^l_i-x^l_j|+\sum_{j\sim j'}\left|\max\limits_{i=1,\cdots,k} x^i_j-\max\limits_{i=1,\cdots,k} x^i_{j'}\right|.$$
Applying  Theorem \ref{thm:tilde-fg-equal}, we have
$$ \mathrm{MaxC}_{k+1}(G)=\max\limits_{\vec x^i\in\R^n_{\ge0}\setminus\{\vec0\},\,\supp(\vec x^i)\cap \supp(\vec x^j)=\varnothing}\frac{\sum_{l=1}^k\sum_{i\sim j}|x^l_i-x^l_j|+\sum_{j\sim j'}\left|\max\limits_{i=1,\cdots,k} x^i_j-\max\limits_{i=1,\cdots,k} x^i_{j'}\right|}{\max\limits_{i,j}x^i_j}$$
Also, it is clear that 
$$\mathrm{MaxC}_k(G)=\max_{(A_1,A_2,\ldots,A_k)\in\power_k(V)}\sum_{i\ne j}|E(A_i,A_j)|=\max_{(A_1,A_2,\ldots,A_k)\in \power_{k}(V)}\sum_{i=1}^k|\partial A_i|,
$$
and by employing Theorem \ref{thm:tilde-fg-equal}, 
the max $k$-cut constant  $\mathrm{MaxC}_{k}(G)$ has the following equivalent continuous reformulations:
$$ \max\limits_{\vec x^i\in\R^n_{\ge0}\setminus\{\vec0\},\,\supp(\vec x^i)\cap \supp(\vec x^j)=\varnothing}\frac{\sum_{l=1}^k\sum_{i\sim j}|x^l_i-x^l_j|}{\max\limits_{l,j}x^l_j}=\max\limits_{\vec x^i\in\R^n_{\ge0}\setminus\{\vec0\},\,\supp(\vec x^i)\cap \supp(\vec x^j)=\varnothing}\sum_{l=1}^k\frac{\sum_{i\sim j}|x^l_i-x^l_j|}{\max\limits_{j}x^l_j}
$$
$$ 
=\max\limits_{\vec x^i\in\R^n\setminus\{\vec0\},\supp(\vec x^i)\cap \supp(\vec x^j)=\varnothing}\frac{\sum_{l=1}^k\sum_{i\sim j}|x^l_i-x^l_j|}{2\max_l\|\vec x^l\|_\infty}=\max\limits_{\vec x^i\in\R^n\setminus\{\vec0\},\supp(\vec x^i)\cap \supp(\vec x^j)=\varnothing}\sum_{l=1}^k\frac{\sum_{i\sim j}|x^l_i-x^l_j|}{2\|\vec x^l\|_\infty}
$$
\subsection{Relative isoperimetric constants on a subgraph with boundary}
\label{sec:boundary-graph-1-lap}
Given a finite graph $G=(V,E)$ and a subgraph, we 
consider the Dirichlet and Neumann eigenvalue problems for the corresponding 1-Laplacian. For $A\subset V$, put $\overline{A}=A\cup \delta A$, where $\delta A$ is the set of points in  $A^c$ that are adjacent to some points in $A$ (see Fig.~\ref{fig:AdeltaA}).

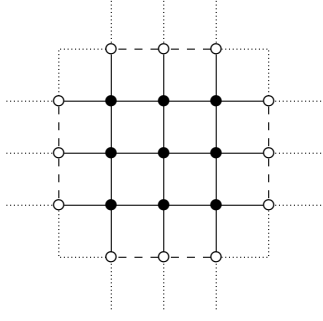
\begin{figure}[!h]\centering
\begin{tikzpicture}[scale=0.69]
\draw (0,0) to (1,0);
\draw (0,0) to (0,1);
\draw (1,0) to (2,0);
\draw (1,0) to (1,1);
\draw (1,1) to (0,1);
\draw (0,2) to (0,1);
\draw (0,2) to (1,2);
\draw (2,0) to (2,1);
\draw (2,2) to (1,2);
\draw (2,2) to (2,1);
\draw (1,1) to (1,2);
\draw (1,1) to (2,1);
\draw (0,0) to (-0.9,0);
\draw (0,0) to (0,-0.9);
\draw (1,0) to (1,-0.9);
\draw (2,0) to (2,-0.9);
\draw (0,2) to (-0.9,2);
\draw (0,1) to (-0.9,1);
\draw[densely dotted] (-2,2) to (-1.1,2);
\draw[densely dotted] (-2,1) to (-1.1,1);
\draw[densely dotted] (-2,0) to (-1.1,0);
\draw[densely dotted] (4,2) to (3.1,2);
\draw[densely dotted] (4,1) to (3.1,1);
\draw[densely dotted] (4,0) to (3.1,0);
\draw[densely dotted] (2,-2) to (2,-1.1);
\draw[densely dotted] (1,-2) to (1,-1.1);
\draw[densely dotted] (0,-2) to (0,-1.1);
\draw[densely dotted] (2,4) to (2,3.1);
\draw[densely dotted] (1,4) to (1,3.1);
\draw[densely dotted] (0,4) to (0,3.1);
\draw[densely dotted] (-0.1,3) to (-1,3);
\draw[densely dotted] (-0.1,-1) to (-1,-1);
\draw[densely dotted] (2.1,3) to (3,3);
\draw[densely dotted] (2.1,-1) to (3,-1);
\draw[densely dotted] (3,-0.1) to (3,-1);
\draw[densely dotted] (-1,-0.1) to (-1,-1);
\draw[densely dotted] (3,2.1) to (3,3);
\draw[densely dotted] (-1,2.1) to (-1,3);
\draw[dashed] (-1,1.9) to (-1,1.1);
\draw[dashed] (-1,0.9) to (-1,0.1);
\draw[dashed] (3,1.9) to (3,1.1);
\draw[dashed] (3,0.9) to (3,0.1);
\draw[dashed] (1.9,-1) to (1.1,-1);
\draw[dashed] (0.9,-1) to (0.1,-1);
\draw[dashed] (1.9,3) to (1.1,3);
\draw[dashed] (0.9,3) to (0.1,3);
\node (00) at (0,0) {$\bullet$};
\node (10) at (1,0) {$\bullet$};
\node (11) at (1,1) {$\bullet$};
\node (01) at (0,1) {$\bullet$};
\node (02) at (0,2) {$\bullet$};
\node (20) at (2,0) {$\bullet$};
\node (12) at (1,2) {$\bullet$};
\node (21) at (2,1) {$\bullet$};
\node (22) at (2,2) {$\bullet$};
\node (03) at (0,3) {$\circ$};
\node (30) at (3,0) {$\circ$};
\node (13) at (1,3) {$\circ$};
\node (31) at (3,1) {$\circ$};
\node (23) at (2,3) {$\circ$};
\node (32) at (3,2) {$\circ$};
\node (01') at (0,-1) {$\circ$};
\node (1'0) at (-1,0) {$\circ$};
\node (11') at (1,-1) {$\circ$};
\node (1'1) at (-1,1) {$\circ$};
\node (21') at (2,-1) {$\circ$};
\node (1'2) at (-1,2) {$\circ$};
\draw (2.9,0) to (2,0);
\draw (0,2.9) to (0,2);
\draw (2.9,1) to (2,1);
\draw (1,2.9) to (1,2);
\draw (2.9,2) to (2,2);
\draw (2,2.9) to (2,2);
\end{tikzpicture}
\caption{\label{fig:AdeltaA} In this graph, let  $A$ be the set of solid
  points, $\delta A$ the set of hollow points. We only consider the edges
   for which one vertex is in $A$ and the other in $\overline{A}$ (solid lines). We will ignore the dashed lines in $\delta A$, and the dotted lines outside $\overline{A}$.}
\end{figure}

Given $S\subset \overline{A}$, denote the boundary of $S$ relative to $A$ by 
$$\partial_A S=\{(u,v)\in E:u\in S\cap A,v\in \delta A\setminus S\text{ or }u\in S, v\in A\setminus S\}.$$
If $S\subset A$, then $\partial_A S=\{(u,v)\in E:u\in S, v\in \overline{A}\setminus S\}$.~

The Cheeger (cut) constant of the subgraph $A$ of $G$ is defined as
$$h(A)=\min_{S\subset \overline{A}}\frac{|\partial_A S|}{\min\{\vol(A\cap S),\vol(A\setminus S)\}}.$$
A set pair $(S,\overline{A}\setminus S)$ that achieves the Cheeger constant is called a Cheeger cut.

The Cheeger isoperimetric constant\footnote{Some authors call it the Dirichlet isoperimetric constant.} of $A$ is defined as
$$h_1(A)=\min_{S\subset  A}\frac{|\partial_A S|}{\vol(S)},$$
where a set $S$ achieving the Cheeger isoperimetric constant is called a Cheeger set. In the sequel, we fix  $A\subset V$,  and we write $h(G)$ and $h_1(G)$ instead of $h(A)$ and $h_1(A)$, respectively.

According to our generalized Lov\'asz extension, we have
\begin{equation}\label{eq:Dirichlet-Cheeger-h_1}
h_1(G)
=\inf_{\vec x\in \mathbb{R}^n\setminus\{0\},\,\mathrm{supp}(\vec x)\subset A}\frac{\sum_{i\sim j} |x_i-x_j|+\sum_{i\in A} p_i|x_i|}{\sum_{i\in A}d_i|x_i|}
\end{equation}
and
$$h(G)
=\inf_{\vec x\in \mathbb{R}^n\setminus\{0\}}\frac{\sum_{i\sim j,i,j\in A}|x_i-x_j|+\sum_{i\sim j,i\in A,j\in\delta A}|x_i-x_j|}{\inf_{c\in \mathbb{R}}\sum_{i\in A}d_i|x_i-c|}.$$

Note that the term on the right hand side of \eqref{eq:Dirichlet-Cheeger-h_1} can be written as
$$
\inf\limits_{ \vec x|_{V\setminus A}=0,\, \vec x\ne 0}\mathcal{R}_1(x)
$$
which is called the {\sl Dirichlet $1$-Poincare constant} (see \cite{OSY19}) over $S$,
where $$\mathcal{R}_1(\vec x):=\frac{\sum\limits_{\{i,j\}\in E} |x_i-x_j|}{\sum_i d_i|x_i|}$$
is called the $1$-Rayleigh quotient of $\vec x$.

We can consider the corresponding spectral problems.
\begin{itemize}
\item Dirichlet eigenvalue problem:
$$\begin{cases}
\Delta_1 \vec x\cap \mu D \Sgn \vec x\ne\varnothing,& \text{ in } A\\
\vec x = 0,&\text{ on }  \delta A
\end{cases}$$  where $D$ is the diagonal matrix of the vertex degrees,  
that is,
$$\begin{cases}
(\Delta_1 \vec x)_i-\mu d_i\Sgn x_i\ni 0,&i\in A\\
x_i=0,&i\in\delta A
\end{cases}$$~
whose component form is: $\exists$~$c_i\in \Sgn(x_i)$,~$z_{ij}\in \Sgn(x_i-x_j)$ satisfying $z_{ji}=-z_{ij}$ and
$$\sum_{j\sim i} z_{ij}+ p_ic_i\in \mu d_i\Sgn(x_i),~i\in A,$$
in which $p_i$ is the number of neighbors of $i$ in $\delta A$.
\item Neumann eigenvalue problem: There exists $c_i\in \Sgn(x_i)$,~$z_{ij}\in \Sgn(x_i-x_j)$ with $z_{ji}=-z_{ij}$ such that
$$
\begin{cases}
\sum_{j\sim i,j\in \overline{A}}z_{ij}-\mu d_i c_i=0,&i\in A\\
\sum_{j\sim i,j\in A}z_{ij}=0,&i\in\delta A.
\end{cases}
$$
\end{itemize}
For a graph $G$ with boundary, we use $\Delta_1^D(G)$ and $\Delta_1^N(G)$ to denote the Dirichlet 1-Laplacian and the Neumann 1-Laplacian, respectively. Then
\begin{pro}
$$h_1(G)=\lambda_1(\Delta_1^D(G))\;\text{ and }\;h(G)=\lambda_2(\Delta_1^N(G)).$$
\end{pro}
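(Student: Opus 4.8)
The plan is to prove the two identities separately, in each case bridging the Rayleigh-quotient description already obtained with the set-valued eigenvalue equation of the corresponding $1$-Laplacian through the general eigenvalue machinery for pairs of nonnegative, positively one-homogeneous convex functions. Throughout I write $F$ for the numerator and $G$ for the denominator of the relevant quotient, noting that $F$ is the (disjoint-pair) Lov\'asz extension of a boundary-type set function and $G$ that of a volume-type set function, so both are convex and one-homogeneous.

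For the Dirichlet identity $h_1(G)=\lambda_1(\Delta_1^D(G))$, I would start from the representation \eqref{eq:Dirichlet-Cheeger-h_1}, setting $F(\vec x)=\sum_{i\sim j}|x_i-x_j|+\sum_{i\in A}p_i|x_i|$ and $G(\vec x)=\sum_{i\in A}d_i|x_i|$ on the subspace $W=\{\vec x:\supp(\vec x)\subset A\}$; the edges from $A$ to $\delta A$ produce the term $\sum_{i\in A}p_i|x_i|$ precisely because $x\equiv 0$ on $\delta A$, so $F$ is the Lov\'asz extension of $S\mapsto|\partial_A S|$ under the Dirichlet constraint and $G$ that of $S\mapsto\vol(S)$. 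Thus \eqref{eq:Dirichlet-Cheeger-h_1} already gives $h_1(G)=\inf_{\vec x\in W\setminus\{0\}}F(\vec x)/G(\vec x)$, and it remains to identify this infimum with the smallest eigenvalue of $(F,G)$ on $W$. By one-homogeneity I would restrict to the compact set $\{G=1\}\cap W$, on which $F$ attains its minimum at some $\vec x^{*}$ with value $\mu^{*}=h_1(G)$; since $F\ge\mu^{*}G$ on $W$ with equality at $\vec x^{*}$, the point $\vec x^{*}$ minimizes the DC function $F-\mu^{*}G$, and the Clarke sum rule (with $-\mu^{*}G$ concave, hence regular) yields $\vec 0\in\nabla F(\vec x^{*})-\mu^{*}\nabla G(\vec x^{*})$, so $(\mu^{*},\vec x^{*})$ is an eigenpair. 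Conversely, any eigenpair $(\mu,\vec x)$ admits $\vec u\in\nabla F(\vec x)$, $\vec v\in\nabla G(\vec x)$ with $\vec u=\mu\vec v$, and the Euler identity $\langle\vec u,\vec x\rangle=F(\vec x)$, $\langle\vec v,\vec x\rangle=G(\vec x)$ for one-homogeneous convex functions forces $\mu=F(\vec x)/G(\vec x)\ge\mu^{*}$. Hence $\mu^{*}=\lambda_1$, and writing $\nabla F,\nabla G$ componentwise with $z_{ij}\in\Sgn(x_i-x_j)$, $z_{ji}=-z_{ij}$ and $c_i\in\Sgn(x_i)$ reproduces exactly the stated Dirichlet system $\sum_{j\sim i}z_{ij}+p_ic_i\in\mu d_i\Sgn(x_i)$ on $A$ together with $x\equiv 0$ on $\delta A$.

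For the Neumann identity $h(G)=\lambda_2(\Delta_1^N(G))$ I would instead invoke the structural result of Theorem \ref{introthm:eigenvalue} (third bullet) together with Proposition \ref{eq:Cheeger-identity}. Take $\hat f(S)=|\partial_A S|$, which is symmetric in the sense $\hat f(S)=\hat f(\overline A\setminus S)$ because an edge incident to $A$ has exactly one endpoint in $S$ iff it has exactly one endpoint in $\overline A\setminus S$, and $\hat g(S)=\vol(A\cap S)$, which is modular (hence submodular) and non-decreasing with $\hat g(\overline A\setminus S)=\vol(A\setminus S)$. Lifting by $(S,T)\mapsto\hat f(S)+\hat f(T)$ and $(S,T)\mapsto\hat g(S)+\hat g(T)$, Proposition \ref{pro:setpair-original} gives $f_s^L(\vec x)=\sum_{\{i,j\}\in E_A}|x_i-x_j|=F(\vec x)$ and $g_s^L(\vec x)=\sum_{i\in A}d_i|x_i|$, while Proposition \ref{pro:original-vs-disjoint-pair} identifies $\min_{t}g_s^L(\vec x-t\vec 1)=\inf_{c}\sum_{i\in A}d_i|x_i-c|$ with the Lov\'asz extension of $S\mapsto\min\{\vol(A\cap S),\vol(A\setminus S)\}$. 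Proposition \ref{eq:Cheeger-identity} then yields that the second eigenvalue of $(f_s^L,g_s^L)$ equals $\Ch(\hat f,\hat g)=h(G)$ and coincides with the stated continuous form of $h(G)$. Reading off $\vec 0\in\nabla F(\vec x)-\lambda\nabla G(\vec x)$ componentwise gives $\sum_{j\sim i,j\in\overline A}z_{ij}=\lambda d_ic_i$ for $i\in A$, while vanishing of the $\delta A$-components of $\nabla F$ (where $\nabla G$ has no support) forces the Neumann relation $\sum_{j\sim i,j\in A}z_{ij}=0$ for $i\in\delta A$, exactly the stated Neumann eigensystem.

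The main obstacle I anticipate lies in the Neumann part. First, one must carefully verify the two hypotheses of Theorem \ref{introthm:eigenvalue}, namely the symmetry $\hat f(S)=\hat f(\overline A\setminus S)$ for the \emph{relative} boundary $\partial_A$ (the apparent asymmetry between $A$ and $\delta A$ in its definition must cancel) and the monotonicity plus submodularity of $\hat g$. Second, and more delicate, is confirming that the free ($\delta A$) boundary condition is exactly what the translation-invariance encoded by $\min_{c}$ produces: one must check that the optimal translation realizing $\inf_c\sum_{i\in A}d_i|x_i-c|$ is consistent with $\sum_{j\sim i,j\in A}z_{ij}=0$ on $\delta A$, and that $h(G)$ is genuinely the \emph{second} variational eigenvalue rather than merely some eigenvalue, which relies on the nonlinear min-max characterization of \cite{JostZhang} as already used in the proof of Proposition \ref{eq:Cheeger-identity}. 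By contrast, the Dirichlet part is the standard ``minimum equals first eigenvalue'' argument and should be routine once the subdifferential calculus above is recorded.
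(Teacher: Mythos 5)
Your argument is correct and is precisely the proof the paper intends: the proposition appears there without a written proof, as an immediate assembly of \eqref{eq:Dirichlet-Cheeger-h_1} and the Neumann quotient with the eigenvalue machinery of Section \ref{sec:eigenvalue}. Your Dirichlet half is the standard smallest-eigenvalue/Rayleigh-quotient argument (minimizer of $F-\mu^*G$ plus the Euler identity for one-homogeneous convex functions), and your Neumann half applies Proposition \ref{eq:Cheeger-identity} (equivalently, the third bullet of Theorem \ref{introthm:eigenvalue}) to $\hat f(S)=|\partial_A S|$ and $\hat g(S)=\vol(A\cap S)$ on the ground set $\overline{A}$, with the hypothesis checks — symmetry of $\hat f$, monotone modularity of $\hat g$, and the componentwise identification of the Dirichlet and Neumann eigen-systems with the subdifferential inclusions — all carried out correctly.
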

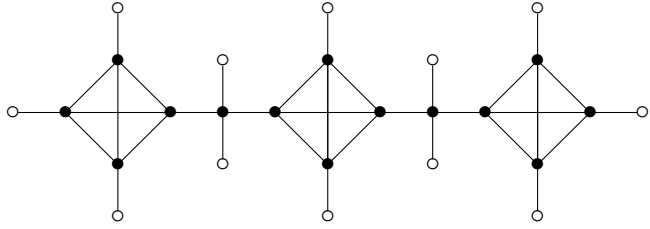
\begin{figure}\centering
\begin{tikzpicture}[scale=0.69]
\draw (0,0) to (2,0);\draw (0,0) to (-0.9,0);
\draw (0,0) to (1,1);
\draw (0,0) to (1,-1);
\draw (1,1) to (1,-1);\draw (1,1) to (1,1.9);
\draw (1,1) to (2,0);
\draw (1,-1) to (2,0);\draw (1,-1) to (1,-1.9);
\draw (3,0) to (2,0);\draw (3,0) to (3,0.9);
\draw (3,0) to (4,0);\draw (3,0) to (3,-0.9);
\draw (6,0) to (4,0);
\draw (5,1) to (4,0);
\draw (5,-1) to (4,0);
\draw (5,1) to (6,0);
\draw (5,-1) to (6,0);
\draw (5,-1) to (5,1);
\draw (6,0) to (7,0);
\draw (8,0) to (7,0);
\draw (8,0) to (10,0);
\draw (8,0) to (9,1);
\draw (8,0) to (9,-1);
\draw (10,0) to (9,1);
\draw (10,0) to (9,-1);
\draw (9,1) to (9,-1);
\draw (10,0) to (10.9,0);
\draw (5,1) to (5,1.9);
\draw (5,1) to (5,-1.9);
\draw (9,1) to (9,1.9);
\draw (9,1) to (9,-1.9);
\draw (7,0) to (7,0.9);
\draw (7,0) to (7,-0.9);
\node (00) at (0,0) {$\bullet$};
\node (11) at (1,1) {$\bullet$};
\node (1-1) at (1,-1) {$\bullet$};
\node (20) at (2,0) {$\bullet$};
\node (30) at (3,0) {$\bullet$};
\node (40) at (4,0) {$\bullet$};
\node (60) at (6,0) {$\bullet$};
\node (51) at (5,1) {$\bullet$};
\node (5-1) at (5,-1) {$\bullet$};
\node (70) at (7,0) {$\bullet$};
\node (80) at (8,0) {$\bullet$};
\node (100) at (10,0) {$\bullet$};
\node (91) at (9,1) {$\bullet$};
\node (9-1) at (9,-1) {$\bullet$};
\node (-10) at (-1,0) {$\circ$};
\node (110) at (11,0) {$\circ$};
\node (12) at (1,2) {$\circ$};
\node (1-2) at (1,-2) {$\circ$};
\node (31) at (3,1) {$\circ$};
\node (31) at (3,-1) {$\circ$};
\node (71) at (7,1) {$\circ$};
\node (7-1) at (7,-1) {$\circ$};
\node (52) at (5,2) {$\circ$};
\node (5-2) at (5,-2) {$\circ$};
\node (92) at (9,2) {$\circ$};
\node (9-2) at (9,-2) {$\circ$};
\end{tikzpicture}
\caption{\label{fig:k-nodal-domain} In this example, there are $3$ nodal domains of an eigenvector corresponding to the first Dirichlet eigenvalue of the graph 1-Laplacian. Each nodal domain is the vertex set of the $4$-order complete subgraph shown in the figure. }
\end{figure}

For a connected graph, the first eigenvector of $\Delta_1^N(G)$ is constant 
and it has only one nodal domain while the first eigenvector of $\Delta_1^D(G)$ 
 may have any number of nodal domains. 
 In fact, we have:
 \begin{pro}
 For any $k\in \mathbb{N}^+$, there exists a connected graph $G$ with boundary such that its Dirichlet 1-Laplacian $\Delta_1^D(G)$ has an  eigenvector  corresponding to $\lambda_1(\Delta_1^D(G))$  with exactly $k$ nodal domains; and there exists a connected graph $G'$ with boundary such that its Neumann 1-Laplacian $\Delta_1^N(G')$ possesses an  eigenvector  corresponding to $\lambda_2(\Delta_1^N(G'))$  with exactly $k$ nodal domains.
 \end{pro}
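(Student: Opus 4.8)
The plan is to realize any prescribed number of nodal domains by a balanced ``gadget'' construction matching Figure~\ref{fig:k-nodal-domain}. I would take $k$ disjoint copies $B_1,\dots,B_k$ of a complete graph $K_m$, arrange them in a row, join consecutive blocks $B_i,B_{i+1}$ by a single bridge vertex $w_i\in A$ adjacent to one vertex of each, and attach vertices of $\delta A$ so that every vertex of $G$ has degree exactly $m$: each block vertex carries exactly one escape edge (to $\delta A$ or to a bridge) and each bridge vertex $w_i$ carries two block edges plus $m-2$ boundary edges. Since the end blocks meet only one bridge and the interior blocks meet two, the boundary multiplicities are chosen to compensate, so that $|\partial_A B_i|=m$ and $\vol(B_i)=m^2$ are independent of $i$; thus every block has the common Dirichlet ratio $r:=1/m$. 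Because each bridge is excluded from every block, forming the union loses no boundary edges, so $S:=\bigcup_i B_i$ satisfies $|\partial_A S|=\sum_i|\partial_A B_i|$ and $\vol(S)=\sum_i\vol(B_i)$, whence $S$ has the \emph{same} ratio $r$; a direct check using \eqref{eq:Dirichlet-Cheeger-h_1} confirms $\mathcal{R}_1(\vec 1_S)=1/m$.

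First I would establish the isoperimetric optimality $h_1(G)=r$, i.e.\ that no $S'\subset A$ beats $1/m$. As all degrees equal $m$, this is equivalent to the expansion bound $|\partial_A S'|\ge |S'|$ for every $S'\subset A$, with equality for whole blocks and their unions. The completeness of each $K_m$ forces any proper sub-block to have large internal boundary, while whole blocks meet the bound with equality. Proving this expansion estimate uniformly in $k$ is the main obstacle, and it is precisely where the balanced design of the gadget is used.

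Granting $h_1(G)=r$, both $\vec 1_{B_1}$ and $\vec 1_S$ are Cheeger sets, hence first Dirichlet eigenvectors by the identity $h_1(G)=\lambda_1(\Delta_1^D(G))$ together with the Cheeger-set/eigenvector correspondence of Section~\ref{sec:eigenvalue}. Equivalently, one verifies the Dirichlet eigen-equation for $\vec 1_S$ directly with $\mu=r$: on a block all values coincide, so the internal flows $z_{ij}\in\Sgn(x_i-x_j)=[-1,1]$ are free and may be set to $0$, while at each zero bridge $w_i$ the multivalued $\Sgn(0)=[-1,1]$ together with a choice of $c_{w_i}$ absorbs the two forced $-1$ contributions of the incident block edges. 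The nodal domains of $\vec 1_S$ are the connected components of the subgraph induced on $\{x\neq 0\}=S$; deleting the zero bridges disconnects $S$ into exactly the $k$ blocks, each of which is connected, so $\vec 1_S$ has exactly $k$ nodal domains, which settles the Dirichlet assertion.

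For the Neumann statement I would use the companion identity $h(G')=\lambda_2(\Delta_1^N(G'))$ and the same block--bridge gadget, now balancing the attachment so that the optimal configuration is sign-changing. The candidate eigenvector takes value $+1$ on half of the blocks, $-1$ on the other half, and $0$ on the bridges, with the two signs carrying equal volume; then the median $c$ realising $\inf_c\sum_{i\in A}d_i|x_i-c|$ is $c=0$, the Neumann Rayleigh quotient reduces to a balanced block ratio, and the same expansion lemma identifies it with $h(G')$. The nodal domains are again the $k$ blocks separated by the zero bridges. The extra subtlety relative to the Dirichlet case is exactly this median/volume balancing in the denominator, which dictates the parities and boundary multiplicities in the design; for odd $k$ one adjusts a single block's boundary so that the signed volumes still balance.
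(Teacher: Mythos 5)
The paper offers no written proof of this proposition: its entire justification is the pair of example figures, Figure~\ref{fig:k-nodal-domain} (Dirichlet) and Figure~\ref{fig:second-eigen-k-nodal-domain} (Neumann). So the question is whether you have made those pictures rigorous, and you have not. You correctly reduce the Dirichlet claim to the expansion bound $|\partial_A S'|\ge |S'|$ for every $S'\subset A$ (equivalently, $h_1(G)=1/m$), but you then leave exactly this statement unproven, calling it ``the main obstacle'' --- and it is the entire mathematical content of the proposition. Your fallback, verifying the Dirichlet eigen-equation for $\vec 1_S$ directly with $\mu=1/m$, does not close the gap: it shows only that $1/m$ is \emph{some} eigenvalue of $\Delta_1^D(G)$, whereas the proposition needs $1/m=\lambda_1(\Delta_1^D(G))$. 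Since the $\lambda_1$-eigenvectors are precisely the global minimizers of the Rayleigh quotient \eqref{eq:Dirichlet-Cheeger-h_1} (by one-homogeneity and the Euler identity, every eigenpair has quotient equal to its eigenvalue), the identification $\lambda_1=1/m$ \emph{is} the expansion bound; there is no way around proving it.

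The omission is fatal as written, because the bound is false for the gadget you specify. With all degrees equal to $m$, $k$ blocks $K_m$, and $k-1$ bridges each carrying $m-2$ pendant boundary edges, one has $\#A=km+k-1$ and $2\#E(A)=km(m-1)+4(k-1)$, hence for $S'=A$ the deficit is $|\partial_A A|-\#A=(m-5)(k-1)$. This is negative for $m=4$, which is exactly the $K_4$ case of Figure~\ref{fig:k-nodal-domain} that you set out to match: for $k=3$ one gets $|\partial_A A|/\vol(A)=12/56=3/14<1/4$, so the whole set $A$ strictly beats every union of blocks, $h_1(G)<1/m$, your candidate $\vec 1_S$ is an eigenvector but \emph{not} a first one, and the nodal-domain conclusion collapses. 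The construction is salvageable: take $m\ge 5$ (or attach more pendants to each bridge), set $j_i=\#(S'\cap B_i)$, $J=\sum_i j_i$, and $t=\#\{\text{bridges in }S'\}$, and use the disjoint edge counts $|\partial_A S'|\ge \sum_i j_i(m-j_i)+\max(J-2t,\,0)+t(m-2)$; the two cases $J\ge 2t$ and $J<2t$ then give $|\partial_A S'|\ge J+t$ whenever $m\ge 5$, with equality attained by unions of whole blocks, which is what the argument needs. The Neumann half suffers from the same unproven core --- you must show the alternating $\pm1/0$ vector realizes $\lambda_2=h(G')$, not merely that it is some eigenvector --- and the volume/median balancing for odd $k$ is only gestured at (``one adjusts a single block's boundary''), which is not a proof; note that even in the paper's star-shaped example the forced flows at the zero vertex cancel only when the positive and negative blocks are equinumerous, so small odd $k$ genuinely requires a modified design.
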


 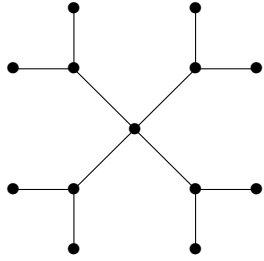
\begin{figure}\centering
 \begin{tikzpicture}[scale=0.8]
\node (1) at (0,1) {$\bullet$};
\node (2) at (1,0) {$\bullet$};
\node (3) at (1,1) {$\bullet$};
\node (4) at (2,2) {$\bullet$};
\node (5) at (3,1) {$\bullet$};
\node (6) at (4,1) {$\bullet$};
\node (7) at (3,0) {$\bullet$};
\node (8) at (1,3) {$\bullet$};
\node (9) at (1,4) {$\bullet$};
\node (10) at (0,3) {$\bullet$};
\node (11) at (3,3) {$\bullet$};
\node (12) at (3,4) {$\bullet$};
\node (13) at (4,3) {$\bullet$};
\draw (0,1) to (1,1);
\draw (1,0) to (1,1);
\draw (1,1) to (2,2);
\draw (2,2) to (3,1);
\draw (3,1) to (4,1);
\draw (3,1) to (3,0);
\draw (2,2) to (1,3);
\draw (1,3) to (1,4);
\draw (1,3) to (0,3);
\draw (2,2) to (3,3);
\draw (3,3) to (3,4);
\draw (3,3) to (4,3);
\end{tikzpicture}
\caption{\label{fig:second-eigen-k-nodal-domain} In this example, there are $4$ nodal domains of an eigenvector corresponding to the second Neumann eigenvalue of the graph 1-Laplacian. Each nodal domain is the vertex set of the $3$-order subgraph after removing the center vertex and its edges.}
\end{figure}

 We provide a final comment on the computational aspects for  Cheeger constants and  1-Laplacians on graphs. The work \cite{BuhlerHein2009} shows  that the  spectral clustering based on the graph $p$-Laplacian for $p\to1$ generally has a superior performance compared to the  standard linear spectral clustering. 
In their subsequent work \cite{HeinBuhler2010}, the authors also developed an improved method based on the eigenvectors of the graph 1-Laplacian, which can be computed using their nonlinear inverse power method.  This method runs faster and produces better cuts, and in fact, this process achieved  state-of-the-art results of its time in terms of solution quality and runtime  \cite{HeinBuhler2010}. 
Their nonlinear inverse power algorithms \cite{BuhlerHein2009,HeinBuhler2010} have  been  subsumed into our mixed IP-SD scheme in Section \ref{sec:algo}.

\subsection{Independence number}\label{sec:independent-number}
The independence number $\alpha(G)$ of an unweighted and undirected simple graph $G$ is  the largest cardinality of a subset of vertices in $G$, no two of which are adjacent. It can be seen as an optimization problem $\max\limits_{S\subset V \text{ s.t. }E(S)=\varnothing}\#S$. However, such a graph optimization is not global, and the feasible domain seems to be very complicated. 
But we may simply  multiply by a truncated term $(1-\#E(S))$. 
The independence number can then be expressed as a global optimization on the power set of vertices:
\begin{equation}\label{eq:independent-multiple}
\alpha(G)=\max\limits_{S\subset V}\#S(1-\#E(S)),
\end{equation}
and thus the Lov\'asz extension can be applied.
\begin{proof}[Proof of Eq.~\eqref{eq:independent-multiple}] Since $G$ is simple, $\#S$ and $\#E(S)$ take values in the natural numbers. Therefore,
$$
\#S(1-\#E(S))\;\; \begin{cases}\le 0,&\text{ if } E(S)\ne\varnothing\text{ or }S=\varnothing,\\
\ge 1,&\text{ if } E(S)=\varnothing\text{ and }S\ne\varnothing.\end{cases}
$$
Thus, $\max\limits_{S\subset V}\#S(1-\#E(S))=\max\limits_{S\subset V \text{ s.t. }E(S)=\varnothing}\#S=\alpha(G)$.
\end{proof}
However, Eq.~\eqref{eq:independent-multiple} is  difficult to calculate. By the disjoint-pair Lov\'asz extension, it equals 
$$
\alpha(G)=\max\limits_{\vec x\ne \vec 0}\frac{\|\vec x\|_1-\sum\limits_{k\in V,i\sim j}\min\{|x_k|,|x_i|,|x_j|\}}{\|\vec x\|_\infty},
$$
but we don't know how to further simplify it.

Fortunately, there is a known  representation of the independence number as follows, and  we present a proof for convenience. 
\begin{pro}The independence number $\alpha(G)$ of a finite simple graph $G=(V,E)$ satisfies
\begin{equation}\label{eq:independent-difference}
\alpha(G)=\max\limits_{S\subset V}\left(\#S-\#E(S)\right).
\end{equation}
\end{pro}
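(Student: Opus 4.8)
The plan is to prove the identity by establishing the two inequalities separately, the easy one first. For the direction $\alpha(G)\le\max_{S\subset V}(\#S-\#E(S))$ I would simply evaluate the right-hand side at a maximum independent set $S^*$: since $S^*$ is independent, $\#E(S^*)=0$, so $\#S^*-\#E(S^*)=\#S^*=\alpha(G)$, and hence the maximum over all $S$ is at least $\alpha(G)$.

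The substantive direction is $\max_{S\subset V}(\#S-\#E(S))\le\alpha(G)$, and for this I would use a greedy vertex-deletion argument. Fix any $S\subset V$. If $S$ still contains an edge, choose an endpoint $v$ of such an edge; then $v$ is incident to $d\ge 1$ edges lying inside $S$, and deleting $v$ changes the objective by
$$\bigl(\#(S\setminus\{v\})-\#E(S\setminus\{v\})\bigr)-\bigl(\#S-\#E(S)\bigr)=-1+d\ge 0.$$
Thus deleting a vertex that carries an internal edge does not decrease the quantity $\#S-\#E(S)$.

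Iterating this deletion produces a strictly decreasing chain $S=S_0\supsetneq S_1\supsetneq\cdots\supsetneq S_m$, which must terminate (because $V$ is finite) in a set $S_m$ containing no edges, i.e.\ an independent set, and along which $\#S_i-\#E(S_i)$ is nondecreasing. Therefore
$$\#S-\#E(S)\le \#S_m-\#E(S_m)=\#S_m\le\alpha(G),$$
the last step because $S_m$ is independent. Taking the maximum over $S\subset V$ yields the reverse inequality, and combining it with the first direction gives the claimed equality.

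I expect the only point requiring genuine care is the monotonicity step: one must ensure that the deleted vertex really has degree at least one within the current set (which is guaranteed by selecting it as an endpoint of a surviving internal edge, so that $d-1\ge 0$), and that the iteration actually halts at an edge-free set rather than stalling. Both are immediate once the bookkeeping is set up, so I anticipate no serious obstacle beyond writing the deletion process cleanly.
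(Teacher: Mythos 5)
Your proof is correct, and the first inequality (evaluating at a maximum independent set) is exactly what the paper does. The substantive direction, however, is argued by a genuinely different route. You run a greedy vertex-deletion process: the local computation that removing a vertex incident to an internal edge changes $\#S-\#E(S)$ by $d-1\ge 0$ is right, the process terminates in an independent set by finiteness, and the chain of inequalities $\#S-\#E(S)\le \#S_m\le\alpha(G)$ is valid (including the degenerate case $S_m=\varnothing$). The paper instead takes a maximizer $B$, decomposes the induced subgraph $(B,E(B))$ into connected components $(B_i,E(B_i))$, invokes the fact that a connected graph satisfies $\#B_i\le\#E(B_i)+1$, and builds an independent set by choosing one vertex from each component, giving $\alpha(G)\ge k\ge\sum_i(\#B_i-\#E(B_i))=\#B-\#E(B)$ in one shot. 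Your argument is more elementary and algorithmic: it needs no structural fact about connected graphs and applies to an arbitrary $S$, showing that the deletion dynamics themselves never hurt the objective. The paper's argument is a static decomposition that buys a little extra structural information, namely that equality in $\#B_i\le\#E(B_i)+1$ holds exactly when a component is a tree, which locates where the maximum can be tight. Either proof is complete and rigorous.
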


\begin{proof}
Let $A$ be an independent set of $G$, then $\alpha(G)=\#A=\#A-\#E(A)\le \max\limits_{S\subset V}\left(\#S-\#E(S)\right)$ because there is no edge connecting points in $A$.

Let $B\subset V$ satisfy $ \#B-\#E(B) =\max\limits_{S\subset V}\left(\#S-\#E(S)\right)$. Assume the induced subgraph $(B,E(B))$ has $k$ connected components, $(B_i,E(B_i))$, $i=1,\cdots,k$. Then $B=\sqcup_{i=1}^k B_i$ and $E(B)=\sqcup_{i=1}^k E(B_i)$. Since $(B_i,E(B_i))$ is connected, $\# B_i\le \# E(B_i)+1$ and  equality holds if and only if $(B_i,E(B_i))$ is a tree.
Now taking $B'\subset B$ such that $\#(B'\cap B_i)=1$, $i=1,\cdots,k$, then $B'$ is an independent set and thus  \begin{align*}
\alpha(G)&\ge \#B'=k=\sum_{i=1}^k 1\ge \sum_{i=1}^k (\# B_i- \# E(B_i))
= \sum_{i=1}^k\# B_i- \sum_{i=1}^k\# E(B_i)\\&=\#(\cup_{i=1}^kB_i)-\#(\cup_{i=1}^kE(B_i)) = \#B-\#E(B)=\max\limits_{S\subset V}\left(\#S-\#E(S)\right).
\end{align*}
As a result, Eq.~\eqref{eq:independent-difference} is proved.
\end{proof}

According to Lov\'asz extension, we get
\begin{equation}\label{eq:independent-continuous}
\alpha(G)=\max\limits_{\vec x\ne \vec 0}\frac{\|\vec x\|_1-\sum\limits_{i\sim j}\min\{|x_i|,|x_j|\}}{\|\vec x\|_\infty}.
\end{equation}
By the elementary identities: $\sum_{i\sim j}|x_i+x_j|+\sum_{i\sim j}|x_i-x_j|=2\sum_{i\sim j}\max\{|x_i|,|x_j|\}=\sum_{i\sim j}\left||x_i|-|x_j|\right|+\sum_i\mathrm{deg}_i|x_i|$ and $\sum_i\mathrm{deg}_i|x_i|=\sum_{i\sim j}\max\{|x_i|,|x_j|\}+\sum_{i\sim j}\min\{|x_i|,|x_j|\}$,  Eq.~\eqref{eq:independent-continuous} can be reduced to
\begin{equation}\label{eq:independent-continuous-1}
\alpha(G)=\max\limits_{\vec x\ne \vec 0}\frac{2\|\vec x\|_1+I^-(\vec x)+I^+(\vec x)- 2\|\vec x\|_{1,\mathrm{deg}}}{2\|\vec x\|_\infty},
\end{equation}
where $I^\pm(\vec x)=\sum_{i\sim j}|x_i\pm x_j|$ and $\|\vec x\|_{1,\mathrm{deg}}=\sum_i\mathrm{deg}_i|x_i|$. One would like to write Eq.~\eqref{eq:independent-continuous-1} as
\begin{equation}\label{eq:independent-continuous-2}
\alpha(G)=\max\limits_{\vec x\ne \vec 0}\frac{I^-(\vec x)+I^+(\vec x)- 2\|\vec x\|_{1,\mathrm{deg}'}}{2\|\vec x\|_\infty},
\end{equation}
where $\|\vec x\|_{1,\mathrm{deg}'}=\sum\limits_{i\in V}(\mathrm{deg}_i-1)|x_i|$.

\begin{remark}
The maximum clique number can be reformulated in a similar way. 
In addition, we refer to \cite{BBcontinuous05,BBcontinuous06,SBBB20} for some other continuous formulations of the independence number. 
\end{remark}

\paragraph{Chromatic number of a  perfect graph}
Berge's strong perfect graph conjecture has been proved in \cite{Annals06}.  A graph $G$ is perfect if for every induced subgraph $H$ of $G$, the chromatic number of $H$ equals the size of the largest clique of $H$. The complement of every perfect graph is perfect. 


So for a  perfect graph, we have an easy way to calculate the  chromatic number. In a general simple graph, we refer to Section \ref{sec:chromatic-number} for transforming the chromatic number.

\paragraph{Maximum matching }
A matching $M$ in $G$ is a set of pairwise non-adjacent edges, none of which are loops; that is, no two edges share a common vertex.
A maximal matching  is one with the largest possible number of edges.

Consider the line graph $(E,R)$ whose vertex set $E$ is the edge set of $G$, and whose edge set is $R=\{\{e,e'\}:e\cap e'\not=\varnothing,\,e,e'\in E\}$. Then the maximum matching number of $(V,E)$ coincides with the independence number of $(E,R)$. So, we have an equivalent continuous optimization for a maximum matching problem.

Hall's Marriage Theorem  provides a characterization of bipartite graphs which have a perfect matching and the Tutte theorem provides a characterization for arbitrary graphs.

The Tutte-Berge formula says that the size of a maximum matching of a graph is
$$\frac12\min\limits_{U\subset V}\left(\#V+\#U-\#\text{ odd connected components of }G|_{V\setminus U}\right).$$
Can one  transform the above discrete optimization problem into an explicit continuous optimization via some extension?

\paragraph{$k$-independence number }
The independence number admits several generalizations:
 the maximum size of a set of vertices in a graph whose induced subgraph has maximum degree $(k-1)$ \cite{CaroH13}; the size of the largest $k$-colourable subgraph \cite{Spacapan11}; the
size of the largest set of vertices such that any two vertices in the set are at short-path distance larger than $k$ (see \cite{Fiol97}). For the $k$-independence number involving short-path distance, one can easily transform it into the following two continuous representations:
$$\alpha_k= \max\limits_{\vec x\in\R^V\setminus\{\vec 0\}}\frac{\|\vec x\|_1^2}{\|\vec x\|_1^2-2\sum\limits_{\dist(i,j)\ge k+1}x_ix_j} = \max\limits_{\vec x\in \R^n\setminus\{\vec 0\}}\frac{\sum\limits_{\dist(i,j)\le k}(|x_i-x_j|+|x_i+x_j|)- 2\sum\limits_{i\in V}(\deg_{k}(i)-1)|x_i|}{2\|\vec x\|_\infty},$$
where $\deg_{k}(i)=\#\{j\in V:\dist(j,i)\le k\}$, $i=1,\cdots,n$. 
\subsection{Various and variant Cheeger problems}
\label{sec:variantCheeger}
In \cite{HS11}, the equality relating the  Cheeger constant on graphs and the second eigenvalue of the graph 1-Laplacian was reproved via Lov\'asz extension.  
Moreover, an equality relating the  dual Cheeger constant on graphs and the first eigenvalue of the signless  1-Laplacian
has been obtained by the second author and his coauthors via the disjoint-pair Lov\'asz extension \cite{CSZ16,CSZ18}. As the reported results on both  analytical properties and numerical experiments  are very satisfactory, 
we believe that the multi-way Lov\'asz extension in our general framework 
should be useful to obtain more results  on other types of discrete Cheeger constants, from which the mixed IP-SD iterative algorithm is expected to be efficient.  In this section,   several  Cheeger-type  constants on graphs have been proposed that are  different from  the classical one. 
And based on our general multi-way Lov\'asz extension framework, we establish some equivalent continuous representations of these Cheeger-type  constants. 

\paragraph{Multiplicative Cheeger constant}
For instance
$$h=\min\limits_{\varnothing\ne A\subsetneqq V}\frac{ \#E(A,V\setminus A) }{\#A\cdot\#(V\setminus A)}.$$
It is called the normalized cut problem which has many applications in image segmentation and spectral clustering \cite{SM00,Luxburg07,HS11}.  By Proposition \ref{pro:fraction-f/g}, it is equal to
$$\min\limits_{\langle\vec x,\vec1\rangle=0,\vec x\ne \vec0}\frac{\sum_{i\sim j}|x_i-x_j|}{\sum_{i< j}|x_i-x_j|}.$$

\paragraph{(Weighted) sparsest cut problem}
Given non-negative weights $w_{ij}$ and $\mu_{ij}$ for  $i,j\in V$, the weighted sparsest cut problem is to solve  $$\min\limits_{\varnothing\ne A\subsetneqq V}\frac{ \sum_{i\in A,j\in V\setminus A}w_{ij}}{\sum_{i\in A,j\in V\setminus A}\mu_{ij}}$$ which is related to some famous  open problems in 
theoretical computer science such as the Unique Games
Conjecture 
\cite{ARV09,Goemans97,Linial02}. By Proposition \ref{pro:fraction-f/g}, the sparsest cut problem  is equivalent to solve 
$$\min\limits_{\vec x:\text{ denominator nonzero}
}\frac{\sum_{i,j\in V}w_{ij}|x_i-x_j|}{\sum_{i,j\in V}\mu_{ij}|x_i-x_j|}=\min\limits_{\vec y^1,\cdots,\vec y^n:\text{ denominator nonzero}
}\frac{\sum_{i,j\in V}w_{ij}\|\vec y^i-\vec y^j\|_1}{\sum_{i,j\in V}\mu_{ij}\|\vec y^i-\vec y^j\|_1}
,$$
which provides a  direct  way to get the $l^1$-metric tight relaxation, 
and if we replace the $l^1$-norm by the squared $l^2$-norm with the  additional constraint $\|\vec y^i-\vec y^j\|_2^2\le \|\vec y^i-\vec y^k\|_2^2+\|\vec y^k-\vec y^j\|_2^2$ for all $i,j,k$,  we immediately get the  relaxed sparsest cut problem.

\paragraph{Isoperimetric profile}
The isoperimetric profile $IP:\mathbb{N}\to [0,\infty)$ is defined by
$$IP(k)= \inf\limits_{A\subset V,\#A\le k} \frac{\#E(A,V\setminus A)}{\#A}.$$
Then by Lov\'asz extension, it is equal to
$$\inf\limits_{\vec x\in\R^V,\,1\le \# \supp(\vec x)\le k}\frac{\sum_{\{i,j\}\in E}|x_i-x_j|}{\|\vec x\|_1}=\min\limits_{\vec x\in CH_k(\R^V)}\frac{\sum_{\{i,j\}\in E}|x_i-x_j|}{\|\vec x\|_1},$$
where $CH_n:=\{\vec x\in\R^V,\,\# \supp(\vec x)\le k\}$ is the union of all $k$-dimensional coordinate hyperplanes in $\R^V$. 
%
%

\paragraph{Modified Cheeger constant}

On a graph $G=(V,E)$, there are three definitions of the vertex-boundary of a subset $A\subset V$:
\begin{align}
& \partial_{\textrm{ext}} A:=\{j\in V\setminus A\,\left|\,\{j,i\}\in E\text{ for some }i\in A\right.\} \label{eq:ext-vertex-boundary}\\
& \partial_{\textrm{int}} A:=\{i\in A\,\left|\,\{i,j\}\in E\text{ for some }j\in V\setminus A\right.\}\label{eq:int-vertex-boundary}\\
& \partial_{\textrm{ver}} A:=\partial_{\textrm{out}} A\cup \partial_{\textrm{int}} A=V(E(A,V\setminus A))=V(\partial_{\textrm{edge}} A)\label{eq:vertex-boundary}
\end{align}
The  {\sl external vertex boundary} \eqref{eq:ext-vertex-boundary} and the  {\sl internal vertex boundary} \eqref{eq:int-vertex-boundary} are introduced and studied recently in \cite{Vigolo19tams,VigoloPHD}.  Research on metric measure space \cite{HMT19} suggests to consider the {\sl vertex boundary} \eqref{eq:vertex-boundary}. 

Denote by $N(i)=\{i\}\cup\{j\in V:\{i,j\}\in E\}$ the 1-neighborhood of $i$. Then the Lov\'asz extensions of $\#\partial_{\textrm{ext}} A$, $\#\partial_{\textrm{int}} A$ and $\#\partial_{\textrm{ver}} A$ are
$$\sum\limits_{i=1}^n(\max\limits_{j\in N(i)}x_j-x_i),\;\;\;\sum\limits_{i=1}^n(x_i-\min\limits_{j\in N(i)}x_j)\;\;\text{ and }\;\;\sum\limits_{i=1}^n(\max\limits_{j\in N(i)}x_j-\min\limits_{j\in N(i)}x_j),$$ respectively. 
They can be seen as the  `total variation' of $\vec x$ with respect to $V$ in $G$, while the usual {\sl edge boundary} leads to $\sum\limits_{\{i,j\}\in E}|x_i-x_j|$ which is  regarded as the total variation of $\vec x$ with respect to $E$ in $G$.  Their disjoint-pair Lov\'asz extensions are $$\sum_{i=1}^n \max\limits_{j\in N(i)} |x_j|-\|\vec x\|_1,\;\;\;\|\vec x\|_1-\sum_{i=1}^n \min\limits_{j\in N(i)} |x_j|,\;\;\;\sum_{i=1}^n \left(\max\limits_{j\in N(i)} |x_j|-\min\limits_{j\in N(i)} |x_j|\right).$$
Comparing with the graph $1$-Poincare profile (see \cite{Hume17,HMT19,Hume19arxiv}) $$P^1(G):=\inf\limits_{\langle\vec x,\vec 1\rangle=0,\vec x\ne\vec 0}\frac{\sum_{i\in V} \max\limits_{j\sim i}|x_i-x_j|}{\|\vec x\|_1},$$
we easily get the following
\begin{pro}
$$ \frac12\max\{h_{\mathrm{int}}(G),h_{\mathrm{ext}}(G)\}\le P^1(G)\le h_{\mathrm{ver}}(G):=\min\limits_{A\in\power(V)\setminus\{\varnothing,V\}}
\frac{\#\partial_{\mathrm{ver}} A}{\min\{\#(A),\#(V\setminus A)\}}$$
where $h_{\mathrm{int}}(G)$,  $h_{\mathrm{ext}}(G)$ and $h_{\mathrm{ver}}(G)$ are modified Cheeger constants w.r.t. the type of vertex-boundary.
\end{pro}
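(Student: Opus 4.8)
The plan is to route everything through the Lovász-extension dictionary of Section \ref{sec:CC-transfer} and then reduce both inequalities to elementary pointwise comparisons between the three ``vertex total variations'' and the Poincar\'e numerator $T(\vec x):=\sum_{i\in V}\max_{j\sim i}|x_i-x_j|$. Writing $M_i=\max_{j\sim i}x_j$ and $m_i=\min_{j\sim i}x_j$ for the max/min of $\vec x$ over the genuine neighbours of $i$, a short case check (according to whether $x_i$ lies below $m_i$, inside $[m_i,M_i]$, or above $M_i$) gives the per-vertex identities $\max_{k\in N(i)}x_k-x_i=(M_i-x_i)_+$, $x_i-\min_{k\in N(i)}x_k=(x_i-m_i)_+$, and $\max_{k\in N(i)}x_k-\min_{k\in N(i)}x_k=(M_i-x_i)_++(x_i-m_i)_+$, while the Poincar\'e term equals $\max\{(M_i-x_i)_+,(x_i-m_i)_+\}$ whenever $i$ has a neighbour. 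Summing over $i$, and letting $E,I,W$ denote the Lovász extensions of $\#\partial_{\mathrm{ext}}$, $\#\partial_{\mathrm{int}}$, $\#\partial_{\mathrm{ver}}$ displayed above, I obtain for every $\vec x$
\begin{equation}\label{eq:TV-compare-plan}
\max\{E(\vec x),I(\vec x)\}\le T(\vec x)\le E(\vec x)+I(\vec x)=W(\vec x).
\end{equation}
These comparisons are routine and form the backbone of both bounds.

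For the upper bound $P^1(G)\le h_{\mathrm{ver}}(G)$ I would use a single test vector. Fix a minimiser $A$ of $h_{\mathrm{ver}}$ and assume $\#A\le\#(V\setminus A)$, so that $\min\{\#A,\#(V\setminus A)\}=\#A$ and $\#(V\setminus A)\ge n/2$ with $n=\#V$. Put $\vec x=\vec1_A-\tfrac{\#A}{n}\vec1$; then $\langle\vec x,\vec1\rangle=0$ and $\vec x\ne\vec0$ (as $A\ne\varnothing,V$), so $\vec x$ is admissible for $P^1(G)$. Translation invariance of $T$ along $\vec1$ gives $T(\vec x)=T(\vec1_A)=\#\partial_{\mathrm{int}}A+\#\partial_{\mathrm{ext}}A=\#\partial_{\mathrm{ver}}A$, while a direct computation gives $\|\vec x\|_1=2\,\#A\,\#(V\setminus A)/n\ge\#A$. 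Hence $P^1(G)\le T(\vec x)/\|\vec x\|_1\le \#\partial_{\mathrm{ver}}A/\#A=h_{\mathrm{ver}}(G)$.

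For the lower bound I would first record the continuous forms of $h_{\mathrm{ext}}$ and $h_{\mathrm{int}}$. Applying Proposition \ref{pro:fraction-f/g} with $f=\#\partial_{\mathrm{ext}}$ (resp.\ $\#\partial_{\mathrm{int}}$) and $g(A)=\min\{\#A,\#(V\setminus A)\}$ --- whose Lovász extension is $\min_{t\in\R}\|\vec x-t\vec1\|_1$ by Proposition \ref{pro:original-vs-disjoint-pair} with $\hat g=\#$ --- and noting that no symmetry of $f$ is needed there, yields
\begin{equation}\label{eq:cont-hext-plan}
h_{\mathrm{ext}}(G)=\inf_{\vec x\text{ nonconstant}}\frac{E(\vec x)}{\min_{t}\|\vec x-t\vec1\|_1},\qquad h_{\mathrm{int}}(G)=\inf_{\vec x\text{ nonconstant}}\frac{I(\vec x)}{\min_{t}\|\vec x-t\vec1\|_1}.
\end{equation}
The decisive elementary lemma is that for any mean-zero $\vec x$ one has $\tfrac12\|\vec x\|_1\le\min_t\|\vec x-t\vec1\|_1\le\|\vec x\|_1$: the upper bound is the choice $t=0$, and the lower bound holds because for $t\ge0$ the negative coordinates already give $\sum_{x_i<0}|x_i-t|\ge\sum_{x_i<0}|x_i|=\tfrac12\|\vec x\|_1$ (mean zero makes the positive and negative parts equal), with the symmetric argument for $t<0$. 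Combining this with \eqref{eq:TV-compare-plan} and \eqref{eq:cont-hext-plan}, for every admissible $\vec x$,
\[
h_{\mathrm{ext}}(G)\le\frac{E(\vec x)}{\min_t\|\vec x-t\vec1\|_1}\le\frac{T(\vec x)}{\tfrac12\|\vec x\|_1}=\frac{2T(\vec x)}{\|\vec x\|_1},
\]
and likewise for $h_{\mathrm{int}}$; taking the infimum over $\vec x\perp\vec1$, $\vec x\ne\vec0$, gives $\max\{h_{\mathrm{int}},h_{\mathrm{ext}}\}\le 2P^1(G)$.

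I expect the only genuinely delicate point to be the bookkeeping that makes \eqref{eq:cont-hext-plan} legitimate, namely checking that Proposition \ref{pro:fraction-f/g} applies to the \emph{non-symmetric} functions $\#\partial_{\mathrm{ext}}$ and $\#\partial_{\mathrm{int}}$ together with the median-type denominator $\min_t\|\vec x-t\vec1\|_1$, and that the mean-zero/median discrepancy between $P^1$ and the Cheeger continuous form is absorbed precisely by the factor-$2$ comparison $\tfrac12\|\vec x\|_1\le\min_t\|\vec x-t\vec1\|_1$. Everything else reduces to the pointwise case analysis behind \eqref{eq:TV-compare-plan} and the single test vector above.
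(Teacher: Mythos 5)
Your proof is correct, and it takes a genuinely different---and in fact more careful---route than the paper's. Write $T(\vec x)=\sum_{i\in V}\max_{j\sim i}|x_i-x_j|$ for the Poincar\'e numerator and $E,I$ for the Lov\'asz extensions of $\#\partial_{\mathrm{ext}},\#\partial_{\mathrm{int}}$, as in your notes. The paper runs both inequalities through $h_{\mathrm{ver}}$ alone: citing Theorem~\ref{introthm:eigenvalue} it asserts the identity $h_{\mathrm{ver}}(G)=\inf_{\langle\vec x,\vec1\rangle=0,\vec x\ne\vec 0}T(\vec x)/\min_{t\in\R}\|\vec x-t\vec1\|_1$, deduces $P^1(G)\le h_{\mathrm{ver}}(G)$ from $\min_t\|\vec x-t\vec1\|_1\le\|\vec x\|_1$, deduces $h_{\mathrm{ver}}(G)\le 2P^1(G)$ from $\min_t\|\vec x-t\vec1\|_1\ge\frac12\|\vec x\|_1$ on mean-zero vectors, and finishes with $\max\{h_{\mathrm{int}},h_{\mathrm{ext}}\}\le h_{\mathrm{ver}}$. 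You instead prove the upper bound with a single explicit test vector and the lower bound by applying Proposition~\ref{pro:fraction-f/g} to $\#\partial_{\mathrm{ext}}$ and $\#\partial_{\mathrm{int}}$ \emph{separately}, combined with the pointwise bound $\max\{E,I\}\le T$. Your route buys rigor exactly where the paper is fragile: as the paper's own remark after the proposition concedes, $T$ is \emph{not} a Lov\'asz extension; the Lov\'asz extension of $\#\partial_{\mathrm{ver}}$ is $W=E+I\ge T$, so Theorem~\ref{introthm:eigenvalue} justifies the paper's displayed identity only with $W$ in the numerator, and with $T$ it can genuinely fail---on the three-vertex path, $\vec x=(1,0,-1)$ gives $T(\vec x)/\min_t\|\vec x-t\vec1\|_1=3/2$ while $h_{\mathrm{ver}}=2$. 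The paper's step $h_{\mathrm{ver}}\le 2P^1$ uses precisely that failing direction (and repairing it via $W\le 2T$ only yields a factor $4$), whereas your decomposition into the two genuine Lov\'asz extensions $E$ and $I$, each dominated by $T$, is what produces the stated factor $2$. The trade-off is that you do not recover the paper's stronger intermediate claim $\frac12 h_{\mathrm{ver}}\le P^1$, but that claim is not part of the statement being proved. One small point to make explicit in a final write-up: Proposition~\ref{pro:fraction-f/g} gives your continuous identities for $h_{\mathrm{ext}},h_{\mathrm{int}}$ with the infimum over \emph{nonnegative} nonconstant vectors, and passing to all nonconstant $\vec x$ (in particular the mean-zero ones competing in $P^1$) uses that $E$, $I$ and $\min_{t\in\R}\|\vec x-t\vec1\|_1$ are invariant under adding multiples of $\vec 1$.
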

\begin{proof}
By Theorem \ref{introthm:eigenvalue}, $$h_{\mathrm{ver}}(G)=\min\limits_{A\in\power(V)\setminus\{\varnothing,V\}}
\frac{\#\partial_{\mathrm{ver}} A}{\min\{\#(A),\#(V\setminus A)\}}=\inf\limits_{\langle\vec x,\vec 1\rangle=0,\vec x\ne\vec 0}\frac{\sum_{i\in V} \max\limits_{j\sim i}|x_i-x_j|}{\min\limits_{t\in\R}\|\vec x-t\vec1\|_1}\ge P^1(G).$$
On the other hand, it is easy to check that $\min\limits_{t\in\R}\|\vec x-t\vec1\|_1\ge \frac12 \|\vec x\|_1$ whenever $\langle\vec x,\vec 1\rangle=0$. Thus, $h_{\mathrm{ver}}(G)\le 2P^1(G)$. The proof is then completed by noting that $\max\{h_{\mathrm{int}}(G),h_{\mathrm{ext}}(G)\}\le h_{\mathrm{ver}}(G)$.
\end{proof}

\begin{remark}
We remark here that the numerator term $\sum_{i\in V} \max_{j\sim i}|x_i-x_j|$ in general is neither the Lov\'asz extension of any discrete function $f:\power(V)\to\R$ nor the disjoint-pair  Lov\'asz extension of any discrete function $f:\power_2(V)\to\R$.
\end{remark}

\paragraph{Cheeger-like constant}
Some further recent results  \cite{JM19arxiv} can be also rediscovered via  Lov\'asz extension.

A main equality in \cite{JM19arxiv} can be absorbed into the following identities:
\begin{align}
	      \max_{\text{edges }(v,w)}\biggl(\frac{1}{\deg v}+\frac{1}{\deg w}\biggr)
&=
\max_{\gamma:E\rightarrow\mathbb{R}}\frac{\sum_{v\in V}\frac{1}{\deg v}\cdot \biggl|\sum_{e_{\text{in}}: v\text{ input}}\gamma(e_{\text{in}})-\sum_{e_{\text{out}}: v\text{ output}}\gamma(e_{\text{out}})\biggr|}{\sum_{e\in E}|\gamma(e)|} \notag
\\&=\max_{\hat{\Gamma}\subset\Gamma \text{ bipartite}} \frac{\sum_{v\in V}\frac{\deg_{\hat{\Gamma}}(v)}{\deg_\Gamma (v)}}{|E(\hat{\Gamma})|}, \label{eq:mainJM19}
\end{align}
where the left quantity is called a Cheeger-like constant \cite{JM19arxiv}.

In fact, given $c_i\ge 0$, $i\in V$,
$$\max\limits_{\{i,j\}\in E}(c_i+c_j)=\max\limits_{E'\subset E}\frac{\sum_{\{i,j\}\in E'}(c_i+c_j)}{\# E'},$$
and then via Lov\'asz extension, one immediately gets that the above constant equals to
$$
\max\limits_{\vec x\in [0,\infty)^{E}\setminus\{\vec0\}} \frac{\sum\limits_{e=\{i,j\}\in E}x_e(c_i+c_j)}{\sum_{e\in E}x_e}=\max\limits_{\vec x\in[0,\infty)^{E}\setminus\{\vec0\}} \frac{\sum_{i\in V}c_i\sum_{e\ni i}x_e}{\sum_{e\in E}x_e}=\max\limits_{\vec x\in \R^{E}\setminus\{\vec0\}} \frac{\sum_{i\in V}c_i\left|\sum_{e\ni i}x_e\right|}{\sum_{e\in E}|x_e|}.
$$
Thus, for any family $\E\subset\power(E)$ such that $E'\in \E$ $\Rightarrow$ $E'\supset \{\{e\}:e\in E\}$, we have
$$\max\limits_{\{i,j\}\in E}(c_i+c_j)=\max\limits_{\vec x\in \R^{E}\setminus\{\vec0\}} \frac{\sum_{i\in V}c_i\left|\sum_{e\ni i}x_e\right|}{\sum_{e\in E}|x_e|}=\max\limits_{E'\in\E}\frac{\sum_{\{i,j\}\in E'}(c_i+c_j)}{\# E'},$$
which recovers the interesting equality \eqref{eq:mainJM19}  by taking $c_i=\frac{1}{\deg i}$ and $\E$ the collections of all edge sets of bipartite subgraphs.

A similar simple trick gives
\begin{equation*}
    \min_{(v,w)}\frac{\bigl|\mathcal{N}(v)\cap \mathcal{N}(w)\bigr|}{\max\{\deg v,\deg w\}}=\min\limits_{\vec x\in \R^{E}\setminus\{\vec0\}} \frac{\sum_{i\in V}\sum_{e\ni i}\left|x_e\right|\cdot\#\text{ triangles containing }e}{\sum_{e=\{i,j\}\in E}|x_e|\max\{\deg i,\deg j\}}.
\end{equation*}

 By our general spectral theory 
 for discrete structures 
\cite{JostZhang}, we immediately obtain the $k$-way Cheeger inequality and the $k$-way dual  Cheeger inequality involving  the graph 1-Laplacian \cite{CSZ17,JostZhang}. For more results 
on various types of Cheeger constants on hypergraphs,  we refer the reader to  \cite{JostZhang}.  

\subsection{Frustration in signed networks}
\label{sec:frustration}
In this section, we apply our theory to signed graphs, a concept first introduced by Harary \cite{Harary55}.
\begin{defn}
  A \emph{signed graph} $\Gamma$ consists of a vertex set $V$ and a set $E$ of undirected edges with a sign function
\begin{equation}\label{sign1}
s:E \to \{+1,-1\}.
\end{equation}
The adjacency matrix of $(\Gamma,s)$, is denoted by  $\mathrm{A}^s:=(s_{ij})_{i,j\in V}$, where $s_{ij}:=s(e)$ if $e=\{i,j\}\in E$, and  $s_{ij}:=0$ otherwise.
\end{defn}
When we replace the sign function $s$ by $-s$, we shall call the resulting graph \emph{antisigned}.
\begin{defn}
 The signed cycle $C_m$ (consisting of $m$ vertices that are cyclically connected by $m$ edges) is \emph{balanced} if 
\begin{equation}\label{sign3}
\prod_{i=1}^m s(e_i)=1.
\end{equation}
A  signed graph $(\Gamma,s)$ is \emph{balanced}  if every cycle contained in it is balanced.\\
$(\Gamma,s)$ is \emph{antibalanced}  if $(\Gamma,-s)$ is balanced. \\
The frustration index of  a signed graph $\Gamma=(V,E)$ is  
\begin{equation}\label{eq:frustration}
\min_{x_i\in \{-1,1\},\forall i} \sum_{\{i,j\}\in E}|x_i-s_{ij}x_j|,
\end{equation}  where $s_{ij}\in\{-1,1\}$  indicates the sign of the edge $(i,j)$.
\end{defn}
The frustration index then vanishes iff the graph is balanced.\\
\begin{defn}
 The  (normalized) Laplacian $\Delta^s$ of a signed graph is defined by
\begin{equation}
\label{sign5}
(\Delta^s \vec x)_i:= x_i-\frac{1}{\deg i}\sum_{j \sim i}s_{ij}x_j=\frac{1}{\deg i}\sum_{j \sim i}(x_i-s_{ij}x_j)
\end{equation}
for a vector $\vec x\in\R^V$. 
\end{defn}
\begin{remark}
The Laplacian thus is of the form $\Delta^s =\mathrm{id}-\mathrm{A}^s$, and 
 when we change the signs of all the edges, that is, go from a signed graph to the corresponding antisigned graph, the operator becomes $\Delta^{-s} =\mathrm{id}+\mathrm{A}^s$. Therefore, the eigenvalues simply  change from $\lambda$ to $2-\lambda$ (and therefore, also the ordering gets reversed). 
\end{remark}

By Proposition \ref{pro:Lovasz-eigen}, it is easy to verify that every eigenvalue of the function pair $(F,G)$ has an  eigenvector in  $\{-1,0,1\}^n$,  where $F(\vec x)=\sum_{\{i,j\}\in E}|x_i-s_{ij}x_j|$ and $G(\vec x)=\|\vec x\|_\infty$. One may  relax \eqref{eq:frustration} as  \begin{equation}\label{eq:frustration-relax}
\min_{\vec x\in \{-1,0,1\}^n\setminus\{\vec0\}} \sum_{(i,j)\in E}|x_i-s_{ij}x_j|.    
\end{equation}

This suggests the  eigenvalue problem of $(F(\vec x),\|\vec x\|_\infty)$ on a signed graph, 
where $F(\vec x)=\sum_{\{i,j\}\in E}|x_i-s_{ij}x_j|$. Below,  we show some key properties.





\begin{itemize}
\item 
The coordinate form of the eigenvalue problem $\nabla \sum_{\{i,j\}\in E}|x_i-s_{ij}x_j| \cap \lambda\nabla\|\vec x\|_\infty\ne\varnothing$ reads as

 $\exists\,z_{ij}\in \Sgn(x_i-s_{ij}x_j) \mbox{ with }z_{ij}+s_{ij}z_{ji}=0$~such that
\begin{align}
\label{eq:1LinftyN1}&\sum\limits_{j\sim i} z_{ij} = 0, &i\in D_0(\vec x),\\
\label{eq:1LinftyN2}&\sum\limits_{j\sim i} z_{ij} \in \lambda\ \mathrm{sign}(x_i)\cdot [0,1],& i\in D_\pm(\vec x),\\
\label{eq:1LinftyN3}&\sum\limits_i^n \big|\sum\limits_{j\sim i} z_{ij} \big|=\lambda,&
\end{align}
where $D_\pm(\vec x) =\{i\in V\big| \pm x_i = \|\vec x\|\}$, 
and $D_0(\vec x)=\{i\in V\big| |x_i|<\|\vec x\|\}$.
\item All eigenvalues are integers in  $\{0,1,\cdots,\vol(V)\}$. And each eigenvalue has an eigenvector in $\{-1,0,1\}^n$.

Proof: This is a direct consequence of Proposition \ref{pro:Lovasz-eigen}. 

\item The largest eigenvalue has an eigenvector in $\{-1,1\}^n$.

Proof: Let $\vec 1_A-\vec 1_B$ be an eigenvector  w.r.t. the largest eigenvalue. Note that $\vec 1_A-\vec 1_B=\frac12(\vec1_A-\vec1_{V\setminus A}+\vec1_{V\setminus B}-\vec1_B)$. By the convexity of $F$, we have $F(\vec 1_A-\vec 1_B)\le\max\{F(\vec1_A-\vec1_{V\setminus A}),F(\vec1_{V\setminus B}-\vec1_B)\}$. Hence, either $\vec1_A-\vec1_{V\setminus A}$ or $\vec1_{V\setminus B}-\vec1_B$ is an eigenvector w.r.t. the largest eigenvalue. 

\item \textbf{The frustration index  is an eigenvalue}. 
However, in general, we don't know which eigenvalue the  frustration  index is.

 Proof: We shall check that for any $A\subset V$, the binary vector $\vec x:=\vec1_A-\vec1_{V\setminus A}$ is an eigenvector w.r.t. the eigenvalue $\lambda:=2(|E_+(A,V\setminus A)|+|E_-(A)|+|E_-(V\setminus A)|)$, where  $|E_+(A,V\setminus A)|$ indicates the number of positive edges lying between  $A$ and $V\setminus A$, while  $|E_-(A)|$ denotes the number of negative edges lying in $A$.  Indeed, $D_+(\vec x)=A$ and $D_-(\vec x)=V\setminus A$.   For $i\in A$, taking $z_{ij}=1$ if $s_{ij}x_j<0$; and $z_{ij}=0$ if $s_{ij}x_j>0$.  Similarly, for $i\in V\setminus A$, letting $z_{ij}=0$ if $s_{ij}x_j<0$; and $z_{ij}=-1$ if $s_{ij}x_j>0$. It is easy to see that $z_{ij}\in \mathrm{Sgn}(x_i-s_{ij}x_j)$ and $z_{ij}+s_{ij}z_{ji}=0$ for any edge $ij$. Next, we verify the conditions \eqref{eq:1LinftyN2} and \eqref{eq:1LinftyN3}.

Note that  $\sum_{j\sim i}z_{ij}=\#(\{j\in A:ij\text{ is negative}\}\cup \{j\in V\setminus A:ij\text{ is positive}\})\in[0,\lambda]$ for $i\in A$, and  $\sum_{j\sim i}z_{ij}=-\#(\{j\in A:ij\text{ is positive}\}\cup \{j\in V\setminus A:ij\text{ is negative}\})\in[-\lambda,0]$ for $i\in V\setminus A$. 
Therefore, $\sum_{i\in V}|\sum_{j\sim i}z_{ij}|=2(|E_+(A,V\setminus A)|+|E_-(A)|+|E_-(V\setminus A)|)=\lambda$.

In particular, for $\vec x\in\{-1,1\}^n$ that realizes the frustration index, $\vec x$ must be an eigenvector, and the frustration index is the corresponding eigenvalue. This fact can also be derived by  Proposition  \ref{pro:set-pair-infty-norm}.  

\item We can use the the   Dinkelbach-type scheme in Section \ref{sec:algo}  directly to  calculate the smallest eigenvalue. When we get an eigenvector $\vec x$, we can take $\vec 1_{D_+(\vec x)}-\vec 1_{D_-(\vec x)}$ instead of $\vec x$. 
\item We construct a recursive method to approximate the frustration index:
\begin{itemize}
\item Input a signed graph $G$,  and use the   Dinkelbach-type algorithm to get a subpartition $(U_{+},U_{-})$ where $U_+=D_+(\vec x)$ and $U_-=D_-(\vec x)$ with $\vec x$ being an eigenvector w.r.t. the smallest eigenvalue. 
\item Let $G$ be the signed graph induced by $V\setminus(U_+\cup U_-)$, and let $(U_+',U_-')$ be the subpartition found by the  Dinkelbach-type  algorithmm; return $(U_+\cup U_+',U_-\cup U_-')$ or $(U_+\cup U_-',U_-\cup U_+')$, whichever is better.
\item Repeat the above process, until we get a partition $(V_+,V_-)$ of $V$, which derives an  approximate solution of the  frustration index. There are at most $n$ iterations.
\end{itemize}
In other words, the relaxation problem  \eqref{eq:frustration-relax} can approximate the frustration index \eqref{eq:frustration} in a recursive way.  This is inspired by the recursive spectral cut  algorithm for the   maxcut problem proposed by Trevisan  \cite{Trevisan2012}.
\end{itemize}

Next, we show some equivalent continuous representations of the  frustration index. Let $E_+$ (resp. $E_-$) collect all the positive (resp. negative) edges of $(V,E)$.
  Note that up to a scale factor,  \eqref{eq:frustration} is equivalent to solve $\min\limits_{A\subset V}|E_+(A,V\setminus A)|+|E_-(A)|+|E_-(V\setminus A)|$, where $|E_+(A,V\setminus A)|$ denotes the number of  positive edges between $A$ and $V\setminus A$, while $|E_-(A)|$ indicates the number of negative edges in $A$. By Lov\'asz extension,  the frustration index  is equivalent to
$$|E_-|+\min\limits_{\vec x\ne0}\frac{\sum_{\{i,j\}\in E_+}|x_i-x_j|+\sum_{i\in V}\deg_i|x_i|-\sum_{\{i,j\}\in E_-}(|x_i-x_j|+|x_i+x_j|)}{\|x\|_\infty}.$$ Also, \eqref{eq:frustration} is equivalent to $|E_-|+\min\limits_{A\subset V}(|E_+(A,V\setminus A)|-|E_-(A,V\setminus A)|)$, and by Lov\'asz extension, the frustration index equals 
$$|E_-|+\min\limits_{\vec x\ne0}\frac{\sum_{\{i,j\}\in E_+}|x_i-x_j|-\sum_{\{i,j\}\in E_-}|x_i-x_j|}{2\|x\|_\infty}.$$
One can then apply  the   Dinkelbach-type scheme in Section \ref{sec:algo}   straightforwardly  to  compute the  frustration index.

 


\begin{remark}
We should point out that the notion $|E_+(A)|$ (resp. $|E_-(A)|$) indicates the number of positive (resp. negative)  edges (unordered pairs) whose vertices are in $A$. Therefore, in   our paper, the values of  $|E_+(A)|$ and  $|E_-(A)|$ are  
half of those of 
Atay-Liu \cite{AtayLiu}, in which they count the  ordered pairs. 
\end{remark}
Besides, by Theorem \ref{thm:tilde-fg-equal-PQ} (or Theorem \ref{thm:tilde-H-f}), we can derive another continuous formulation of the frustration index:
$$\min\limits_{A\subset V}|E_+(A,V\setminus A)|+|E_-(A)|+|E_-(V\setminus A)|=\min\limits_{x\ne0}\frac{\sum\limits_{\{i,j\}\in E_+}|x_i-x_j|^\alpha+\sum\limits_{\{i,j\}\in E_-}(2\|\vec x\|_\infty-|x_i-x_j|)^\alpha}{(2\|x\|_\infty)^\alpha}$$
whenever $0<\alpha\le 1$. It is interesting that by taking $\alpha\to 0^+$, we immediately get 
$$\min\limits_{A\subset V}|E_+(A,V\setminus A)|+|E_-(A)|+|E_-(V\setminus A)|=\min\limits_{x\ne0}\sum\limits_{\{i,j\}\in E_+}\mathrm{sign}(|x_i-x_j|)+\sum\limits_{\{i,j\}\in E_-}\mathrm{sign}(2\|\vec x\|_\infty-|x_i-x_j|).$$

\subsection{Modularity measure}\label{sec:modularity-measure}

For a weighted graph $(V,(w_{ij})_{i,j\in V})$, the   modularity measure \cite{TMH18} is defined as
$$Q(A)=\sum_{i,j\in A}w_{ij}-\frac{\vol(A)^2}{\vol(V)},\;\;\text{where }A\subset V,$$
and it satisfies the following equalities (see Theorem 3.7 and Theorem 3.9 in \cite{TMH18}, respectively)
\begin{equation}\label{eq:Q-modularity-measure}
\max\limits_{A\subset V}Q(A)=\max\limits_{x\ne 0}\frac{\sum_{i,j\in V}(\frac{\deg(i)\deg(j)}{\vol(V)}-w_{ij})|x_i-x_j|}{4\|\vec x\|_\infty}
\end{equation}
and
\begin{equation}\label{eq:Q/mu-modularity-measure-mu}
\max\limits_{A\in\power(V)\setminus\{\varnothing,V\}}\frac{Q(A)}{\mu(A)\mu(V\setminus A)}=\max\limits_{\sum_{i\in V} \mu_ix_i=0}\frac{\sum_{i,j\in V}(\frac{\deg(i)\deg(j)}{\vol(V)}-w_{ij})|x_i-x_j|}{\mu(V)\sum_{i\in V}\mu_i|x_i|}.
\end{equation}

It is clear that  \eqref{eq:Q-modularity-measure} can be obtained more directly by Theorem \ref{thm:tilde-fg-equal}. We shall also state a new analog of \eqref{eq:Q/mu-modularity-measure-mu}: 
\begin{equation}\label{eq:Q/mu-modularity-measure}
\max\limits_{A\in\power(V)\setminus\{\varnothing,V\}}\frac{Q(A)}{\mu(A)\mu(V\setminus A)}=\max\limits_{\sum_{i\in V} x_i=0}\frac{\sum_{i,j\in V}(\frac{\deg(i)\deg(j)}{\vol(V)}-w_{ij})|x_i-x_j|}{\sum_{i,j\in V}\mu_i\mu_j|x_i-x_j|}
\end{equation}
which can be derived straightforwardly by Theorem \ref{thm:tilde-fg-equal}. 

By Proposition \ref{pro:Lovasz-f-pre}, we immediately obtain  Theorem 1 in \cite{CRT20}, i.e., 
for any $a,b>0$, 
$$ \max\limits_{-a\le x_i\le b,\forall i}\frac12\sum_{i,j\in V}(\frac{\deg(i)\deg(j)}{\vol(V)}-w_{ij})|x_i-x_j|=(a+b)\max\limits_{A\subset V}Q(A).$$

\textbf{A relation with the frustration index}

 For a signed weighted graph with real weights $(w_{ij})_{i,j\in V}$ and signs $s_{ij}=\mathrm{sign}(w_{ij})$, we define the frustration index as \begin{equation}\label{eq:frustration-weight}
\min_{x_i\in \{-1,1\},\forall i} \sum_{\{i,j\}}|w_{ij}|\cdot|x_i-s_{ij}x_j|.
\end{equation} 

The following result reveals an interesting relation between the modularity
measure and the frustration index.
\begin{pro}For a weighted graph $(V,(w_{ij})_{i,j\in V})$, let $\tilde{w}_{ij}=w_{ij}-\frac{\deg(i)\deg(j)}{\vol(V)}$. 
In the signed weighted graph $(V,(\tilde{w}_{ij})_{i,j\in V})$, 
 $\{i,j\}$ is a positive (resp. negative) edge if $\tilde{w}_{ij}>0$ (resp. $\tilde{w}_{ij}<0$). Then, the frustration index of $(V,(\tilde{w}_{ij})_{i,j\in V})$ equals 
$2\left(\sum_{\{i,j\}:\tilde{w}_{ij}<0}|\tilde{w}_{ij}|-\max\limits_{A\subset V}Q(A)\right)$.
\end{pro}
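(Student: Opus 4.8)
The plan is to evaluate the frustration index \eqref{eq:frustration-weight} of the signed weighted graph $(V,(\tilde w_{ij}))$ directly on the discrete feasible set $\{-1,1\}^n$ and to translate the resulting quantity into a cut-type optimization over subsets of $V$. First I would fix $\vec x\in\{-1,1\}^n$ and set $A=\{i\in V:x_i=1\}$, so that $A$ ranges over all subsets of $V$. Since $x_i,x_j\in\{-1,1\}$, one has $|x_i-s_{ij}x_j|\in\{0,2\}$, and a short case analysis shows that a positive edge ($\tilde w_{ij}>0$) contributes $2|\tilde w_{ij}|$ exactly when it crosses the cut $(A,V\setminus A)$, while a negative edge ($\tilde w_{ij}<0$) contributes $2|\tilde w_{ij}|$ exactly when both endpoints lie on the same side. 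Writing $W_-=\sum_{\{i,j\}:\tilde w_{ij}<0}|\tilde w_{ij}|$ and $C(A)=\sum_{\{i,j\}\text{ crossing}}\tilde w_{ij}$, I would collect the positive-crossing and negative-internal terms and use $\sum_{\{i,j\}:\tilde w_{ij}<0}\tilde w_{ij}=-W_-$ to obtain the clean identity $\sum_{\{i,j\}}|\tilde w_{ij}|\,|x_i-s_{ij}x_j|=2\bigl(C(A)+W_-\bigr)$; minimizing over $\vec x$ then amounts to minimizing $C(A)$ over $A\subset V$.

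The heart of the argument is the identification of $-C(A)$ with the modularity value $Q(A)$. Here I would exploit the defining feature of the modularity modification, namely that every row of the matrix $\tilde w_{ij}=w_{ij}-\deg(i)\deg(j)/\vol(V)$ sums to zero: indeed $\sum_{j\in V}\tilde w_{ij}=\deg(i)-\deg(i)\vol(V)/\vol(V)=0$. Recalling that $Q(A)=\sum_{i,j\in A}\tilde w_{ij}$ (the diagonal terms being carried by $\vol(A)^2/\vol(V)$), I would split, for each $i\in A$, the inner sum as $\sum_{j\in A}\tilde w_{ij}=\sum_{j\in V}\tilde w_{ij}-\sum_{j\notin A}\tilde w_{ij}=-\sum_{j\notin A}\tilde w_{ij}$, and summing over $i\in A$ gives $Q(A)=-C(A)$. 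Combining this with the previous paragraph yields $\min_{\vec x}\sum_{\{i,j\}}|\tilde w_{ij}|\,|x_i-s_{ij}x_j|=2\bigl(W_-+\min_A C(A)\bigr)=2\bigl(W_--\max_A Q(A)\bigr)$, which is the asserted formula.

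The main obstacle is bookkeeping rather than depth: I must be careful about (i) the unordered-versus-ordered convention and the resulting factor of $2$ in passing between $|x_i-s_{ij}x_j|$ and the cut, (ii) the role of the diagonal entries $\tilde w_{ii}=-\deg(i)^2/\vol(V)$, which are genuinely nonzero and must be kept inside $Q(A)$ for the row-sum cancellation to work, and (iii) making sure the crossing set is the same for $A$ and $V\setminus A$, so that $C(A)=C(V\setminus A)$ and hence $Q(A)=Q(V\setminus A)$, which is exactly what makes the single-set maximum $\max_A Q(A)$ on the right-hand side legitimate. None of these require new ideas beyond the vanishing row-sum identity, so once that identity is invoked the statement follows by the elementary chain of equalities above.
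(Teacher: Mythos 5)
Your proof is correct, and it takes a genuinely different route from the paper's. The paper proves this proposition entirely inside its Lov\'asz-extension framework: it quotes the continuous reformulation of the weighted frustration index from Section \ref{sec:frustration}, namely $2\bigl(\sum_{\{i,j\}:\tilde{w}_{ij}<0}|\tilde{w}_{ij}|+\min_{\vec x\ne0}\tfrac{\sum_{i,j\in V}\tilde{w}_{ij}|x_i-x_j|}{4\|\vec x\|_\infty}\bigr)$, and then observes that this minimum is exactly $-\max_{A\subset V}Q(A)$ by the already-established identity \eqref{eq:Q-modularity-measure}, so the proof is two lines of matching continuous representations. You instead stay entirely in the discrete world: parametrizing $\{-1,1\}^n$ by cuts, reducing the frustration sum to $2(C(A)+W_-)$, and proving $Q(A)=-C(A)$ from the vanishing row sums $\sum_{j\in V}\tilde{w}_{ij}=0$ of the modularity matrix. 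Your bookkeeping is sound on all three points you flag: positive/negative edges contribute on crossing/internal pairs respectively; the diagonal entries $\tilde{w}_{ii}=-\deg(i)^2/\vol(V)$ must indeed sit inside $Q(A)$ for the row-sum cancellation (while any negative self-loop contributes the same constant $2|\tilde{w}_{ii}|$ to both sides of the claimed identity, so the convention is harmless); and $\sum_{i\in A,\,j\notin A}\tilde{w}_{ij}$ counts each unordered crossing pair exactly once, so $Q(A)=-C(A)$ holds with no stray factor of $2$. What each approach buys: yours is elementary and self-contained, and it exposes the actual mechanism (zero row sums force the internal modularity mass to equal minus the crossing mass) --- in effect you re-derive the discrete content that underlies both continuous identities the paper invokes; the paper's proof is shorter given its machinery and serves its expository purpose of showing that such combinatorial identities drop out of the general discrete-to-continuous framework with no case analysis at all.
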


\begin{proof}
We know from Section \ref{sec:frustration} (or by Theorem \ref{thm:tilde-fg-equal}) that the frustration index of $(V,(\tilde{w}_{ij})_{i,j\in V})$ equals 
$$2\left(\sum_{\{i,j\}:\tilde{w}_{ij}<0}|\tilde{w}_{ij}|+\min\limits_{\vec x\ne0}\frac{\sum_{i,j\in V}\tilde{w}_{ij}|x_i-x_j|}{4\|x\|_\infty}\right).$$
The proof is then completed by \eqref{eq:Q-modularity-measure}.
\end{proof}

\subsection{Chromatic number}\label{sec:chromatic-number}
The chromatic number (i.e., the smallest vertex coloring number) of a graph is the smallest number of colors needed to color the vertices 
so that no two adjacent vertices share the same color.
Given a simple connected graph $G=(V,E)$ with $\#V=n$, its chromatic number $\gamma(G)$ can be expressed as a global optimization on the $n$-power set of vertices:
\begin{equation}\label{eq:coloring-number-sum}
\gamma(G)=\min\limits_{(A_1,\cdots,A_n)\in\power_n(V)}\left\{n\sum_{i=1}^n\#E(A_i)
+\sum_{i=1}^n\sgn(\#A_i)+n\left(n-\sum_{i=1}^n\#A_i\right)^2\right\}
\end{equation}
and similarly, we get the following
\begin{pro}The chromatic number $\gamma(G)$ of a finite simple graph $G=(V,E)$ satisfies
\begin{equation}\label{eq:coloring-number}
\gamma(G)=\min\limits_{(A_1,\cdots,A_n)\in\power(V)^n}\left\{n\sum_{i=1}^n\#E(A_i)+\sum_{i=1}^n\sgn(\#A_i)+n\left(n-\#\bigcup_{i=1}^n A_i\right)\right\}
\end{equation}
\end{pro}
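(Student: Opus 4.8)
The plan is to prove the identity \eqref{eq:coloring-number} directly by establishing the two inequalities $\gamma(G)\le \mathrm{RHS}$ and $\gamma(G)\ge \mathrm{RHS}$, where $\mathrm{RHS}$ denotes the minimum on the right-hand side. The argument parallels the one behind \eqref{eq:coloring-number-sum}, but now over the enlarged, possibly overlapping, domain $\power(V)^n$. Throughout I will use the elementary bound $\gamma(G)\le n$, valid for any simple graph on $n$ vertices, together with the fact that for a nonnegative integer $m$ one has $\sgn(m)\in\{0,1\}$, so that $\sum_{i=1}^n\sgn(\#A_i)$ is exactly the number of nonempty sets among $A_1,\dots,A_n$.

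For the upper bound, I would take an optimal proper coloring of $G$ realizing $\gamma(G)$ colors, encode its color classes as disjoint independent sets $A_1,\dots,A_{\gamma(G)}$ with $\bigcup_i A_i=V$, and pad the tuple with empty sets $A_{\gamma(G)+1}=\cdots=A_n=\varnothing$. Since the coloring is proper, each $\#E(A_i)=0$; since it covers $V$, we have $\#\bigcup_i A_i=n$; and exactly $\gamma(G)$ of the sets are nonempty. Hence the objective evaluates to $\gamma(G)$, giving $\mathrm{RHS}\le\gamma(G)$.

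For the lower bound I would show that every tuple $(A_1,\dots,A_n)\in\power(V)^n$ has objective value at least $\gamma(G)$, via a three-case analysis. If some $\#E(A_i)\ge 1$, then the first term alone is $\ge n\ge\gamma(G)$ while the remaining two terms are nonnegative. If $\bigcup_i A_i\ne V$, then $n-\#\bigcup_i A_i\ge 1$, so the last term is $\ge n\ge\gamma(G)$. In the remaining case every $A_i$ is independent and $\bigcup_i A_i=V$, so the first and third terms vanish and the objective equals $\sum_i\sgn(\#A_i)$, the number of nonempty classes; here the key step is that, even though the $A_i$ may overlap, choosing for each vertex $v$ a single index $i(v)$ with $v\in A_{i(v)}$ yields disjoint classes $A_i'=\{v:i(v)=i\}\subseteq A_i$ that remain independent and cover $V$, so $G$ is properly colored with at most $\sum_i\sgn(\#A_i)$ colors, whence $\gamma(G)\le\sum_i\sgn(\#A_i)$. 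Combining the two inequalities yields \eqref{eq:coloring-number}.

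The main obstacle is precisely the non-disjointness permitted in $\power(V)^n$: one must verify that allowing overlapping tuples cannot push the minimum below $\gamma(G)$, which is exactly what the selection map $v\mapsto i(v)$ in the third case secures. A secondary point to check carefully is that the coefficient $n$ in front of $\sum_i\#E(A_i)$ and of $n-\#\bigcup_i A_i$ is large enough to dominate any spurious ``savings'': since the number of colors used can never exceed $n$, a single monochromatic edge or a single uncovered vertex already incurs a penalty $\ge n\ge\gamma(G)$, so no infeasible configuration can beat a genuine optimal coloring.
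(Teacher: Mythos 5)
Your proof is correct and takes essentially the same route as the paper's: show that a padded optimal coloring attains the value $\gamma(G)$, then prove every tuple scores at least $\gamma(G)$ by a case analysis separating monochromatic edges, uncovered vertices, and the remaining case of an independent cover of $V$. If anything, your write-up is slightly more careful at the one delicate point: where the paper dismisses the overlapping-cover case with an appeal to ``the definition of the chromatic number,'' you make the required selection map $v\mapsto i(v)$ (assigning each vertex to a single containing class to produce disjoint independent classes) explicit.
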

\begin{proof}
 Let $f:\power(V)^n\to\R$ be defined by 
 \begin{equation*}\label{eq:color-f}
f(A_1,\cdots,A_n)=n\sum_{i=1}^n\#E(A_i)+\sum_{i=1}^n\sgn(\#A_i)+n\left(n-\#\bigcup_{i=1}^n A_i\right).     
 \end{equation*}
 Let $\{C_1,\cdots,C_{\gamma(G)}\}$ be a proper coloring class of $G$, and set  $C_{\gamma(G)+1}=\cdots=C_n=\varnothing$. Then we have $E(C_i)=\varnothing$, $\#\cup_{i=1}^n C_i=n$,  $\#C_i\ge1$ for $1\le i\le \gamma(G)$, and $\#C_i=0$ for $i> \gamma(G)$. In consequence, $f(C_1,\cdots,C_n)=\gamma(G)$. Thus, it suffices to prove $f(A_1,\cdots,A_n)\ge \gamma(G)$ for any $(A_1,\cdots,A_n)\in\power(V)^n$.

If $\bigcup_{i=1}^n A_i\ne V$, then $f(A_1,\cdots,A_n)\ge n+1> \gamma(G)$.

If there exist at least $\gamma(G)+1$  nonempty sets $A_1,\cdots,A_{\gamma(G)+1}$, then $f(A_1,\cdots,A_n)\ge \gamma(G)+1> \gamma(G)$.

So we focus on the case that $\bigcup_{i=1}^n A_i= V$ and $A_{\gamma(G)+1}=\cdots=A_n=\varnothing$. If there further exists $i\in\{1,\cdots,\gamma(G)\}$ such that $A_i=\varnothing$, then by the definition of  the chromatic number, there is $j\in \{1,\cdots,\gamma(G)\}\setminus\{i\}$ with $E(A_j)\ne \varnothing$. So $f(A_1,\cdots,A_n)\ge n+1> \gamma(G)$.
Accordingly, each of $A_1,\cdots,A_{\gamma(G)}$ must be nonempty, and thus $f(A_1,\cdots,A_n)\ge\gamma(G)$.

Also, when the equality $f(A_1,\cdots,A_n)=\gamma(G)$ holds, one may see from the above discussion that $A_1,\cdots,A_{\gamma(G)}$ are all independent sets of $G$ with $\bigcup_{i=1}^n A_i\ne V$.
\end{proof}

Let $\hat{f}:\power_2(V)^n\to\R$ be defined by 
\begin{equation*}\label{eq:color-f-pair}
\hat{f}(A_1^+,A_1^-,\cdots,A_n^+,A_n^-)=\sum_{i=1}^n(\sgn(\# (A_i^+\cup A_i^-))+n\#E(A_i^+\cup A_i^-))+n\left(n-\#\bigcup_{i=1}^n A_i^+\cup A_i^-\right).     
 \end{equation*}
 and based on \eqref{eq:coloring-number}, it is clear that $\gamma(G)=\min\limits_{(A_1^+,A_1^-,\cdots,A_n^+,A_n^-)\in\power_2(V)^n}\hat{f}(A_1^+,A_1^-,\cdots,A_n^+,A_n^-)$. 
Note that
$$\#\bigcup_{i=1}^n V^t(\vec x^i)=\#\{j\in V:\exists i \text{ s.t. }x_{i,j}>t\}=\sum_{j=1}^n\max\limits_{i=1,\cdots,n} 1_{x_{i,j}>t}=\sum_{j=1}^n 1_{\max\limits_{i=1,\cdots,n}x_{i,j}>t}$$
So the $n$-way Lov\'asz extension of $\#\bigcup_{i=1}^n A_i$ is
\begin{align*}
\int_{\min\vec x}^{\max\vec x}\#\bigcup_{i=1}^n V^t(\vec x^i)dt+\min\vec x\#\bigcup_{i=1}^n V(\vec x^i)&=\sum_{j=1}^n \int_{\min\vec x}^{\max\vec x}1_{\max\limits_{i=1,\cdots,n}x_{i,j}>t}dt+\min\vec x\#V
\\&=\sum_{j=1}^n(\max\limits_{i=1,\cdots,n}x_{i,j}-\min\vec x)+n\min\vec x
\\&=\sum_{j=1}^n\max\limits_{i=1,\cdots,n}x_{i,j}
\end{align*}
And the $n$-way disjoint-pair Lov\'asz extension of $\#\bigcup_{i=1}^n A_i^+\cup A_i^-$ is $\sum\limits_{j=1}^n\max\limits_{i=1,\cdots,n}|x_{i,j}|=\sum\limits_{j=1}^n\|\vec x^{,j}\|_\infty$.

The $n$-way Lov\'asz extension of $\sgn(\#A_i)$ is
\begin{align*}
\int_{\min\vec x}^{\max\vec x}\sgn(\#V^t(\vec x^i))dt+\min\vec x\sgn(\#V(\vec x^i))&= \int_{\min\vec x}^{\max\vec x^i}1dt+\min\vec x\sgn(\#V)
\\&= \max\limits_{j=1,\cdots,n}x_{i,j}-\min\vec x+\min\vec x= \max\limits_{j=1,\cdots,n}x_{i,j}
\end{align*}
and the $n$-way disjoint-pair  Lov\'asz extension of $\sgn(\#(A_i^+\cup A_i^-))$ is $\|\vec x^{i}\|_\infty$. Similarly, the $n$-way disjoint-pair Lov\'asz extension of $\#E(A_i^+\cup A_i^-)$ is $\sum_{j\sim j'}\min\{|x_{i,j}|,|x_{i,j'}|\}$. 
Thus, 
\begin{align*}
\hat{f}^L(\vec x)&=n\sum_{i=1}^n\sum_{j\sim j'}\min\{|x_{i,j}|,|x_{i,j'}|\} +\sum_{i=1}^n\|\vec x^{i}\|_\infty+n\left(n\|\vec x\|_\infty-\sum_{j=1}^n\|\vec x^{,j}\|_\infty\right)
\\&
=
n\sum_{i=1}^n\sum_{j\sim j'}\frac{2|x_{i,j}|+2|x_{i,j'}|-|x_{i,j}+x_{i,j'}|-|x_{i,j}-x_{i,j'}|}{2}+\sum_{i=1}^n\|\vec x^{i}\|_\infty+n^2\|\vec x\|_\infty-n\sum_{j=1}^n\|\vec x^{,j}\|_\infty
\\&
=
n^2\|\vec x\|_\infty
+n\sum_{i,j=1}^n\deg_j|x_{i,j}|
-n\frac{\sum\limits_{i=1}^n\sum\limits_{\{j,j'\}\in E}(|x_{i,j}+x_{i,j'}|+|x_{i,j}-x_{i,j'}|)}{2}+\sum_{i=1}^n\|\vec x^{i}\|_\infty-n\sum_{j=1}^n\|\vec x^{,j}\|_\infty
\end{align*}
According to Proposition \ref{pro:fraction-f/g} in the context  of the multi-way  disjoint-pair  Lov\'asz extension, we obtain
\begin{align*}\label{eq:chromatic-continuous}
\gamma(G)&=\min\limits_{(A_1^+,A_1^-,\cdots,A_n^+,A_n^-)\in\power_2(V)^n}\frac{\hat{f}(A_1^+,A_1^-,\cdots,A_n^+,A_n^-)}{1}= \min\limits_{\vec x\in \R^{n^2}\setminus\{\vec 0\}} \frac{\hat{f}^L(\vec x)}{\|\vec x\|_\infty}
\\&=n^2-\max\limits_{\vec x\in\R^{n^2}\setminus\{\vec 0\}}\sum\limits_{k\in V}\frac{n\sum\limits_{\{i,j\}\in E}(|x_{ik}-x_{jk}|+|x_{ik}+x_{jk}|)+2n\|\vec x^{,k}\|_{\infty}-2n\deg_k\|\vec x^{,k}\|_1- 2\|\vec x^{k}\|_{\infty}}{2\|\vec x\|_\infty}.
\end{align*}
%

\paragraph{Clique covering number }

The clique covering number of a graph $G$ is the minimal number of cliques in $G$ needed to cover the vertex set. It is equal to the chromatic number of the graph complement of $G$. Consequently, we can explicitly write down the continuous representation of a clique covering number by employing Theorem \ref{thm:graph-numbers}.

\section{Conclusions and Discussion}

The firm bridge between the discrete data world and the continuous mathematical  field with well-established mathematics such as analytic techniques, topological schemes and algebraic structures should be tremendously helpful. 
In 
\cite{JostZhang,JZ-prepare21} 
and in this  paper, we build these fruitful connections in a variety of areas through  Lov\'asz  extension and some more general  discrete-to-continuous extensions. 
Our contribution is two-fold:  the  theoretical  framework  for  Lov\'asz-type extensions and the corresponding  spectral theory; and
their practical applications  to the computation of the resulting optimization and eigenvalue problems. 
 Let us describe  
the contributions  of this paper in more specific terms. 

\vspace{0.2cm}

\noindent \textbf{Contributions to   optimization}.  Continuous  approaches for solving combinatorial optimization problems have been widely used in practice, such as spectral clustering and its recursive versions,   SDP-type techniques, and  polynomial methods. 
Overall, continuous approaches can be  roughly  classified into continuous relaxations and  continuous reformulations, where the continuous reformulations are also called the equivalent continuous representations (or  tight relaxations) of the original combinatorial problems. 
However, most of these approaches require certain additional  rounding techniques, even for many  equivalent continuous formulations.    
In addition, although some tight relaxations (i.e., equivalent continuous formulation) have been constructed for certain combinatorial optimization problems,  many of the constructions  are specific  and   not general enough to be applied to a wider range of   combinatorial optimization problems. 

Our constructions based on the multi-way Lov\'asz  extensions  overcome these inconveniences.  In fact, the 
equivalent continuous optimization problem we obtained 
fully  inherits all the local optimal data of the original combinatorial objective function,  and therefore fits better with the original combinatorial optimization problem. Therefore, our discrete-to-continuous framework   
is  more convenient and appropriate  for obtaining new  relaxations and reformulations than   
many other  approaches.   
Also, the reformulations  obtained  by multi-way Lov\'asz  extension are of  simple ratio form,  which offer new possibilities for designing continuous optimization algorithms for combinatorial problems in practical terms.  
In particular, we provide the mixed IP-SD scheme to  confirm the effectiveness of our discrete-to-continuous framework, which has worked well in many practical combinatorial optimization problems. 
For example, in \cite{SZZmaxcut}  we  proposed  a simple iterative  algorithm for maxcut, which is based on a previous specific version of the mixed IP-SD scheme and which  performs very well in numerical experiments. 
This method can also be used to  find specific  eigenvalues of a  function pair (see \cite{JostZhang}).   
We believe that the mixed IP-SD algorithm is one of the best continuous iterative schemes  for solving certain combinatorial optimization problems like independence number, coloring number and frustration index,  because it fully exploits some new equivalent continuous formulations. 
It is expected that  further efficient  continuous optimization algorithms will be designed for combinatorial optimization problems based on our discrete-to-continuous framework.

\vspace{0.2cm}

\noindent \textbf{Contributions to nonlinear  eigenvalue problems
}.  Nonlinear eigenvalue problems arise in many contexts, including quantum chemistry, physics, engineering, and image processing. Recently, nonlinear operators and the associated spectral theories have  allowed for more general, accurate and efficient models and techniques  for handling network problems. 
For example, the  1-Laplace operator on graphs has been successfully applied to  spectral clustering with a spectrum that has many good properties and is closely related to  multi-way Cheeger constants. 
However, it is not entirely clear why the  1-Laplacian is good, and whether similar properties can be generalized to other nonlinear operators. 

Our framework on multi-way Lov\'asz extensions  establishes a  systematic and deep spectral theory for a class of nonlinear operators. 
We prove that the spectrum of the function pair obtained by the multi-way Lov\'asz extension encodes all the key  data of the original combinatorial functions, and we particularly characterize the second eigenvalue  in terms of combinatorial quantities.   
This  generalizes  the important fact that the second eigenvalue of the  graph 1-Laplacian equals the graph Cheeger constant. We also provide many other applications, for example, we  found that  the min-cut and max-cut problems are equivalent to solving  the first nontrivial  eigenvalue and the largest eigenvalue of a certain nonlinear eigenvalue  problem provided by the Lov\'asz extension,  respectively.  
Further progress is collected in \cite{JostZhang},  and based on these fundamental  results, we can analyze the structure of eigenspaces in depth.

\vspace{0.2cm}

\noindent \textbf{Relations to other works and further remarks}. There are many other applications of Lov\'asz extension beyond this paper, for example, critical point theory for combinatorial  functions can be studied with the help of Lov\'asz extension. In \cite{JZ-prepare21}, we build the relationship between the Morse theory of a discrete Morse function and its Lov\'asz extension. We also propose a combinatorial version of the  Lusternik-Schnirelman category on abstract simplicial complexes  to bridge the classical Lusternik-Schnirelman theorem and its discrete analog on finite simplicial complexes. 

For further applications, we introduce the piecewise multilinear extension in  \cite{JostZhang}, and we provide several min-max relations based on such general extension. 
The mountain pass characterizations, linking theorems, nodal domain inequalities, inertia bounds, duality theorems and distribution of eigenvalues for pairs of $p$-homogeneous functions are derived.  In particular, we show a simple one-to-one correspondence between the nonzero eigenvalues of the vertex $p$-Laplacian and the edge $p^*$-Laplacian of a graph. We  also apply the extension theory to Cheeger inequalities and $p$-Laplacians on oriented hypergraphs and simplicial complexes, which contribute to the field of expander graph and spectral graph theory.   In addition, these results  have some applications on tensor eigenvalues, providing a strong spectral estimate for  the adjacency tensor of a hypergraph.

\vspace{1em}

{\bf Acknowledgements.} 
 Much of this work was done when the second author was working  at the Max Planck Institute for Mathematics in the Sciences (MPI MiS). 
He is very grateful to the Max Planck Institute for the excellent working conditions and atmosphere. 
This work is supported by grants from  Fundamental Research Funds for the Central Universities (No. 7101303088).

{ \linespread{0.95} \small 
}

\end{document}